\let\Horig\H
\newtheorem{thm}{Theorem}[section]
\newtheorem{lem}[thm]{Lemma}
\newtheorem{prop}[thm]{Proposition}
\newtheorem{cor}[thm]{Corollary}
\theoremstyle{definition}
\newtheorem{definition}[thm]{Definition}
\newtheorem*{definition-nono}{Definition}
\newtheorem{example}[thm]{Example}
\newtheorem{remark}[thm]{Remark}
\newtheorem*{acknowledgement}{Acknowledgements}
\newtheoremstyle{case}{}{}{}{}{}{:}{ }{}
\theoremstyle{case}
\newcommand{\N}{\mathbb{N}}
\newcommand{\Z}{\mathbb{Z}}
\newcommand{\R}{\mathbb{R}}
\newcommand{\C}{\mathbb{C}}
\renewcommand{\H}{\mathbb{H}}
\newcommand{\mc}{\mathcal}
\newcommand{\mf}{\mathfrak}
\newcommand{\mbf}{\mathbf}
\newcommand{\mrm}{\mathrm}
\renewcommand{\a}{\alpha}
\renewcommand{\b}{\beta}
\newcommand{\g}{\gamma}
\newcommand{\G}{\Gamma}
\renewcommand{\d}{\delta}
\newcommand{\e}{\varepsilon}
\renewcommand{\l}{\lambda}
\renewcommand{\L}{\Lambda}
\newcommand{\vp}{\varphi}
\renewcommand{\t}{\tau}
\renewcommand{\k}{\kappa}
\newcommand{\oa}{\overline{a}}
\newcommand{\ob}{\overline{b}}
\newcommand{\set}[1]{\left\{#1\right\}}
\def\multiset#1#2{\ensuremath{\left(\kern-.3em\left(\genfrac{}{}{0pt}{}{#1}{#2}\right)\kern-.3em\right)}}
\newcommand{\norm}[1]{\left\lVert#1\right\rVert}
\newcommand{\Acal}{\mc{A}}
\newcommand{\Bcal}{\mc{B}}
\newcommand{\Jcal}{\mc{J}}
\newcommand{\Pcal}{\mc{P}}
\newcommand{\Qcal}{\mc{Q}}
\newcommand{\Wcal}{\mc{W}}
\newcommand{\cB}{\mathcal{B}}
\newcommand{\cN}{\mathcal{N}}
\newcommand{\cL}{\mathcal{L}}
\newcommand{\cE}{\mathcal{E}}
\newcommand{\cO}{\mathcal{O}}
\newcommand{\cW}{\mathcal{W}}
\newcommand{\cA}{\mathcal{A}}
\newcommand{\diam}[1]{\mrm{diam}\left(#1\right)}
\numberwithin{equation}{section}
\renewcommand{\a}{\mathbf{a}}
\renewcommand{\b}{\mathbf{b}}
\renewcommand{\c}{\mathbf{c}}
\newcommand{\bd}{\mbf{d}}
\newcommand{\be}{\mbf{e}}
\newcommand{\ps}{\mu^{\mrm{PS}}}
\newcommand{\Ad}{\mrm{Ad}}
\newcommand{\dist}{\mrm{dist}}
\newcommand{\id}{\mrm{Id}}
\newcommand{\murhoj}{\mu^u_{y^j_\rho}}
\newcommand{\1}{\mathrm{\mathbf{1}}}
\newcommand{\eps}{\varepsilon}
\newcommand{\p}{\mathbf{p}}
\newcommand{\q}{\mathbf{q}}
\newcommand{\T}{\mathbb{T}}
\renewcommand{\k}{\mathrm{\mathbf{k}}}
\newcommand{\yrho}{y_{\rho}}
\newcommand{\supp}{\mrm{supp}}
\title[Fourier Decay from $L^2$-Flattening]{Fourier Decay from $L^2$-Flattening}
\author{Simon Baker}
\address{Department of Mathematical Sciences, Loughborough University, Loughborough, LE11 3TU, UK}
\email{simonbaker412@gmail.com}
\author{Osama Khalil}
\address{Department of Mathematics, Statistics, and Computer Science, University of Illinois Chicago, IL}
\email{okhalil@uic.edu}
\author{Tuomas Sahlsten}
\address{Department of Mathematics and Statistics, University of Helsinki, Finland}
\email{tuomas.sahlsten@helsinki.fi}
\date{}
\begin{document}

\begin{abstract}

We develop a unified approach for establishing rates of decay for the Fourier transform of a wide class of dynamically defined measures. 
Among the key features of the method is the systematic use of the $L^2$-flattening theorem obtained in~\cite{Khalil-Mixing}, coupled with non-concentration estimates for the derivatives of the underlying dynamical system. This method yields polylogarithmic Fourier decay for Diophantine self-similar measures, and polynomial decay for Patterson-Sullivan measures of convex cocompact hyperbolic manifolds, Gibbs measures associated to non-integrable $C^2$ conformal systems, as well as stationary measures for carpet-like non-conformal iterated function systems. Applications include an essential spectral gap on convex cocompact hyperbolic manifolds independent of the doubling constant through a fractal uncertainty principle and an equidistribution theorem for typical vectors on self-similar sets and other fractals in $\mathbb{R}^d$.

\end{abstract}

\maketitle

\section{Introduction}

\subsection{Background and summary of main results}

The Fourier transform of a Borel probability measure $\mu$ on $\R^d$ is defined as follows:
\begin{align}
    \widehat{\mu}(\xi) := \int_{\R^d} e^{2\pi i\langle \xi, x\rangle} \;d\mu(x), \qquad \xi\in\R^d.
\end{align}
Rates of decay of $|\widehat{\mu}(\xi)|$ as $\|\xi\| \to \infty$ for $\mu$ arising from dynamical systems have been extensively studied in recent years (see survey \cite{Sahlsten-survey} for history and recent developments). Beyond its intrinsic interest, this question has found many applications in other areas of mathematics; e.g.~essential spectral gaps on hyperbolic manifolds~\cite{DyatlovZahl,BourgainDyatlov,LiNaudPan}, the uniqueness problem~\cite{LiSahlsten}, quantum chaos and fractal uncertainty principles~\cite{Dyatlov-IntroFUP}, normality to arbitrary integer bases~\cite{DavenportErdosLeVeque}, and geometric measure theory~\cite{Shmerkin-AbsContBernoulliConv,Mattila-FourierBook} to name a few, see Section~\ref{sec:applications} for further discussion. Moreover, this problem has motivated the development of many new methods. These methods draw on a wide variety of tools ranging from spectral gaps of the underlying dynamics~\cite{AlgomRHWang-Polynomial,BakerSahlsten}, to renewal theory~\cite{Li-Stationary}, sum-product phenomena~\cite{BourgainDyatlov,LiNaudPan,Leclerc-JuliaSets}, large deviation estimates for Fourier transforms~\cite{MosqueraShmerkin,AlgomChangWuWu,BakerBanaji,BaYu}, and many more; cf.~\cite{AlgomHertzWang-Normality,AlgomHertzWang-Log,Sahlsten-Gauss,SahlstenStevens,LiSahlsten,LiSahlsten-SelfAffine,VarjuYu,Bremont} for a non-exhaustive list.

In this article, we systematically develop a unified approach to obtain rates of Fourier decay for a wide class of dynamically defined measures.
Our strategy can be summarized as follows:

\medskip
{\noindent \textbf{Step 1. Averaging.}}

Use the dynamics (or the multiscale/convolution structure of $\mu$) to express the Fourier transform of $\mu$ at frequency $\xi$ as an \textit{average} over the Fourier transforms of measures $\mu_{x}$, $x\in X$, evaluated at a range of frequencies determined by $\xi$ and the underlying dynamics.
The family $\set{\mu_x}$ is typically given by scaled images of $\mu$ under the dynamics or rescaled restrictions of $\mu$ to pieces of its support.

\medskip

{\noindent \textbf{Step 2. Flattening.}}
    
    Find a mechanism to show that for each $x\in X,$ the Fourier transform of $\mu_{x}$ has the desired rate of decay for a large set of frequencies. Here it is crucial that the size of the set of exceptional frequencies can be bounded independently of $x.$

\medskip

 {\noindent \textbf{Step 3. Separation.}}   
    
    Show (through a non-linearity or Diophantine assumption on the dynamics) that the frequencies appearing in the average in \textbf{Step 1} are sufficiently well-distributed that they have a suitably small intersection with the exceptional set of frequencies arising in \textbf{Step 2}.

\medskip

 A key to this strategy is provided by recent developments towards \textbf{Step 2}, where verifiable criteria have been shown to imply polynomial Fourier decay\footnote{We say that that a measure $\mu$ has \textit{polynomial Fourier decay} if $|\widehat{\mu}(\xi)| = \cO(\norm{\xi}^{-\kappa})$ for some $\kappa>0$ as $\norm{\xi}\to \infty$. Similarly, we say that $\mu$ has \textit{polylogarithmic Fourier decay} if $|\widehat{\mu}(\xi)| = \cO((\log\norm{\xi})^{-\kappa})$ for some $\kappa>0$ as $\norm{\xi}\to \infty$.} outside a sparse set of frequencies; cf.~Theorem~\ref{thm:flattening} below. In particular, such criteria are known to hold for a very wide class of dynamically defined measures, thus distilling the difficulty in implementing the above strategy to achieving the separation in \textbf{Step 3}. In this article, we develop techniques for overcoming this difficulty, thus yielding the following broad range of results.

\medskip
\begin{enumerate}
    \item\label{item:result 1} Polylogarithmic Fourier decay for \textbf{self-similar measures} satisfying Diophantine conditions either on their contraction ratios or their rotation parts: Theorems~\ref{thm:selfsim} and~\ref{thm:selfsim rotations}.

    \medskip
    
    \item\label{item:result 2} Polynomial Fourier decay for \textbf{Patterson-Sullivan measures} of convex cocompact hyperbolic manifolds: Theorem~\ref{thm:PS}.

    \medskip

    \item\label{item:result 3} Polynomial Fourier decay for \textbf{Gibbs measures for conformal IFSs}
    satisfying a spectral gap assumption: Theorem~\ref{thm:mainNonlinear}.

    \medskip

    \item\label{item:result 4} 
    Polynomial Fourier decay for the stationary measures of a class of \textbf{carpet like non-conformal IFSs} exhibiting non-linearity in each principal direction: Theorem~\ref{Theorem:Non-conformal}.

    \medskip
\end{enumerate}

Note that polylogarithmic decay is optimal for self-similar measures under our hypotheses; cf.~Remarks~\ref{rem:selfsim rate dependence} and~\ref{rem:covers} for further discussion of the rates of decay in Items~\ref{item:result 1} and~\ref{item:result 2}.
Important recent developments have yielded several special cases of the above results in low dimensions by a variety of different methods. These are recalled below in detail. 
We remark that our argument provides a new streamlined proof in all these cases, in addition to extending them to arbitrary dimensions, and to new settings such as Item~\ref{item:result 4}.
This requires handling the significant added difficulty posed by the presence of subspaces of intermediate dimension, which is also a well-known difficulty in the related study of \textit{Fractal Uncertainty Principles} in higher dimensions addressed in Corollary~\ref{cor:gap} below; cf.~\cite{Dyatlov-IntroFUP,CladekTao,BackusLengTao,Cohen-FUP} and Section \ref{sec:FUP} for further discussion.

Variants of our strategy have been utilized in prior works, with perhaps the earliest precursor being Kaufman's work on non-linear pushforwards of homogeneous self-similar measures on the line, where the analog of \textbf{Step 2} was achieved using Erd\Horig{o}s-Kahane arguments~\cite{Kaufman}.
Our strategy also bears similarities to the one employed by Bourgain and Dyatlov in \cite{BourgainDyatlov}, who considered Patterson-Sullivan measures in dimension one, achieving the analog of \textbf{Step 2} using Bourgain’s discretized sum-product theorem~\cite{BourgainRing,Bourgain-SumProduct}.
In both cases, the argument was tailored to the special structure of the setting in question (linearity in the former, and non-linearity in the latter). 
By contrast, one of the new features of our approach is that it avoids Erd\Horig{o}s-Kahane and sum-product estimates, rendering it applicable to a much broader range of settings in a unified fashion. 

\smallskip

The above results also have the following applications:

\smallskip

\begin{enumerate}
    \item \textbf{Normality and equidistribution of vectors in fractal sets}: let $\mu$ be one of the measures in Items~\ref{item:result 1}-\ref{item:result 4} above viewed as a measure on $\R^d$. Let $A$ be any expanding integer matrix on $\R^d$. Then, $\mu$-almost every $x$ is $A$-normal, i.e., the orbit $(A^n x)_{n\geq 1}$ is equidistributed $\mrm{mod} \; 1$ for $\mu$-almost every $x$: Corollary~\ref{cor:mainEquidistribution}.

\medskip
    
    \item \textbf{Essential spectral gaps and Fractal Uncertainty Principles}: let $\Gamma$ be a Zariski-dense convex cocompact group of isometries of real hyperbolic space $\H^{d+1}$ with limit set $\L_\G$, critical exponent $\d_\G \leq d/2 $, and quotient manifold $M=\H^{d+1}/\Gamma$.
    Then, $\L_\G$ satisfies a generalized Fractal Uncertainty Principle with exponent $\beta= d/2 - \d_\G + \e$, $\e>0$ and, hence, the resolvent of the Laplace-Beltrami operator $\Delta_M$ of $M$ admits an essential spectral gap of size $ \beta$. The improvement $\e$ depends only on the non-concentration parameters of the PS measure of $\G$, but not on its doubling constant: Corollary~\ref{cor:gap}.
\end{enumerate}

Details and context of the above applications are discussed in detail in Section~\ref{sec:applications} below.

\subsection{Discussion of the method}\label{sec:method}
\label{sec:discussion of method}

As noted above, a very general result towards \textbf{Step 2} was obtained in~\cite[Corollary 11.5]{Khalil-Mixing} using methods from additive combinatorics.
Before recalling this result, it will be convenient to make the following definition.

\begin{definition}\label{def:non-conc}
We say that a Borel measure $\mu$ on $\R^d$ is \textit{uniformly affinely non-concentrated} if for every $\e>0$, there exists $\d(\e)>0$ so that $\d(\e)\to 0$ as $\e\to 0$ and for all $x\in \mrm{supp}(\mu)$, $0<r\leq 1$, and every affine hyperplane $W<\R^d$, we have
        \begin{align}\label{eq:uniform affine non-conc}
            \mu( W^{(\e r)} \cap B(x,r)) \leq \d(\e) \mu(B(x,r)),
        \end{align}
        where $W^{(r)}$ and $B(x,r)$ denote the $r$-neighborhood of $W$ and the $r$-ball around $x$ respectively.
        We refer to the function $\d(\e)$ as \textit{the non-concentration parameters} of $\mu$.

\end{definition}

\begin{remark}
    It can be shown by an inductive argument that a measure $\mu$ is uniformly affinely non-concentrated if and only if~\eqref{eq:uniform affine non-conc} holds with $\d(\e) = C\e^\alpha$ for some constants $C\geq 1$ and $\alpha>0$.
    Hence, we often refer to such $\mu$ as being $(C,\alpha)$-affinely non-concentrated when we wish to emphasize the dependence on these parameters.
\end{remark}

The following result shows that affine non-concentration is sufficient to guarantee polynomial Fourier decay outside a sparse set of frequencies.

\begin{thm}[{\cite[Corollary 11.5]{Khalil-Mixing}}]
\label{thm:flattening}
Let $\mu$ be a compactly supported, Borel probability measure on $\R^d$, which is uniformly affinely non-concentrated.
Then, $\mu$ is \textit{polynomially decaying on average}, that is, for every $\epsilon>0$, there exists effectively computable $\tau=\tau(\epsilon)>0$, depending only on $\epsilon$ and the non-concentration parameters on $\mu$, such that the set
 $$\set{\norm{\xi}\leq T: |\widehat{\mu}(\xi)|>T^{-\tau}} $$
can be covered by $\cO_{\epsilon}(T^{\epsilon})$ balls of radius $1$. Moreover, the implicit constants in the $\cO_{\epsilon}(T^{\epsilon})$ bound only depend upon the non-concentration parameters of $\mu$ as well as the diameter of its support.
    
\end{thm}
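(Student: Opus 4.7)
The plan is to reduce the covering claim to an $L^2$-flattening bound for high self-convolutions $\mu^{*n}$. One passes from the hypothetical existence of many frequencies with large Fourier transform to a lower bound for $\int_{\norm{\xi}\leq T}|\widehat\mu(\xi)|^{2n}\,d\xi$, uses Plancherel to rewrite that $L^{2n}$ integral as a small-scale self-energy of $\mu^{*n}$, and then invokes flattening to force a polynomial drop of this self-energy in $n$. The hard content is the flattening bound, which uses the uniform affine non-concentration hypothesis essentially; the rest is bookkeeping.

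For the lower bound, let $N(T)$ denote the covering number of the set $E_T := \set{\norm{\xi}\leq T : |\widehat\mu(\xi)|>T^{-\tau}}$ by unit balls. A greedy extraction produces a $2$-separated subset $\Xi\subset E_T$ with $|\Xi|\gtrsim N(T)$. Since $\widehat\mu$ is Lipschitz with constant $2\pi\,\mrm{diam}(\supp\mu)$, a pigeonhole argument (for instance, after replacing $|\widehat\mu|$ by its average over a small ball around each $\xi\in\Xi$) yields, for every integer $n\geq 1$,
$$\int_{\norm{\xi}\leq 2T}|\widehat\mu(\xi)|^{2n}\,d\xi \;\gtrsim\; N(T)\cdot T^{-2n\tau}.$$
For the upper bound, choose a Schwartz majorant $\chi$ of $\mathbf{1}_{B(0,2T)}$ whose Fourier transform is essentially supported in $B(0,C/T)$ with $\check\chi(0)\asymp T^d$. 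The identity $\widehat{\mu^{*n}}=(\widehat\mu)^n$ combined with the standard manipulation $\int|\widehat\rho|^2\chi\,d\xi=(\rho*\widetilde\rho*\check\chi)(0)$ gives
$$\int|\widehat\mu(\xi)|^{2n}\chi(\xi)\,d\xi \;\lesssim\; T^d\,\sup_{x\in\R^d}\mu^{*n}\bigl(B(x,C/T)\bigr).$$

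The third and decisive step invokes the $L^2$-flattening theorem that is the core technical input of \cite{Khalil-Mixing}: for every $\alpha_0>0$ there exists $n=n(\alpha_0)$, depending only on the non-concentration parameters of $\mu$ and on $\mrm{diam}(\supp\mu)$, such that
$$\sup_{x\in\R^d}\mu^{*n}\bigl(B(x,1/T)\bigr) \;\leq\; T^{-d+\alpha_0}.$$
Chaining the three displays gives $N(T)\lesssim T^{\alpha_0+2n\tau}$. Given $\epsilon>0$, fix $\alpha_0=\epsilon/2$ (which pins down $n=n(\epsilon)$), then set $\tau=\epsilon/(4n)$, producing $N(T)\lesssim T^\epsilon$. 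All implicit constants depend only on $\epsilon$, the non-concentration parameters of $\mu$, and $\mrm{diam}(\supp\mu)$, as required.

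The main obstacle is the flattening step: the polynomial density loss for $\mu^{*n}$ at the small scale $1/T$. This is where affine non-concentration of $\mu$ is actually used, through Bourgain-style discretised inverse sumset theorems applied across all dyadic scales between $1/T$ and $1$; the mechanism is that non-concentration survives convolution and compounds to yield genuine polynomial flattening. Steps one and two are routine Fourier analysis once this input is in hand, so the entire novelty lies in the flattening bound.
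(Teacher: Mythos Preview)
The paper does not contain a proof of this statement: Theorem~\ref{thm:flattening} is quoted verbatim from \cite[Corollary 11.5]{Khalil-Mixing} and used throughout as a black box. There is therefore nothing in the present paper to compare your argument against.

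That said, your sketch is the expected route and is essentially how the covering statement is derived from the underlying flattening input in \cite{Khalil-Mixing}: extract a separated set from $E_T$, convert this via Lipschitz continuity of $\widehat\mu$ into a lower bound on $\int_{|\xi|\le 2T}|\widehat\mu|^{2n}$, bound the latter above by $T^d\sup_x\mu^{*n}(B(x,C/T))$ through Plancherel, and then invoke the genuine flattening theorem to make the right side small. You correctly isolate the only substantive step as the polynomial flattening of $\mu^{*n}$ at scale $1/T$, and correctly note that this is where affine non-concentration enters via discretised additive combinatorics; the remaining steps are routine. One minor quibble: the Lipschitz argument as you wrote it needs $\tau$ small relative to the separation scale so that $|\widehat\mu|$ stays above $T^{-\tau}/2$ on a ball of fixed radius around each $\xi\in\Xi$; this is automatic once $\tau$ is chosen at the end, but worth making explicit. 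Otherwise the argument is sound and matches the intended derivation.
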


Special cases of Theorem~\ref{thm:flattening} for measures on $\R$ were known before by different methods by works of Kaufman~\cite{Kaufman} and Tsujii~\cite{Tsujii-selfsimilar} for self-similar measures, and by Rossi-Shmerkin~\cite{RossiShmerkin} more generally.

In light of Theorem~\ref{thm:flattening}, the main difficulty in implementing the above strategy lies in achieving \textbf{Step 3} concerning separation of the images of the frequency under the dynamics. We note that such separation is the heart of the difficulty in obtaining Fourier decay. 
Indeed, all the other steps of the above strategy, including the conclusion of Theorem~\ref{thm:flattening}, apply equally well to the Hausdorff measure on Cantor's middle thirds set.
However, it is well-known that the Fourier transform of this measure does not tend to $0$ at infinity~\cite{Mattila-FourierBook}.
Nonetheless, we believe that the directness of the strategy will enable it to be applied in much broader contexts than those covered in this article.

\begin{remark} 
We end this discussion with some remarks on the scope of Theorem~\ref{thm:flattening}, and hence of the method developed in this article.
\begin{enumerate}
    \item It is shown in \cite[Corollary 11.5]{Khalil-Mixing} that the conclusion of Theorem~\ref{thm:flattening} holds under a much weaker non-concentration hypothesis which allows concentration to happen at some scales and on small measure sets.
    This property is satisfied for instance by Patterson-Sullivan measures for \textit{cusped} real hyperbolic manifolds. 

    \item In~\cite{BaYu}, it was observed that the proof of Theorem~\ref{thm:flattening} goes through under the following weaker form of~\eqref{eq:uniform affine non-conc} allowing the ball on the right side to have a larger radius:
    \begin{align}\label{eq:expanded ball affine non-conc}
            \mu( W^{(\e r)} \cap B(x,r)) \leq \d(\e) \mu(B(x,cr)),
    \end{align}
    where $c\geq 1$ is a fixed constant. 
    This property holds for instance for certain self-similar measures which do not satisfy~\eqref{eq:uniform affine non-conc}, e.g.~in the absence of separation conditions.
\end{enumerate}
\end{remark}

We now describe in detail the main results of this article.

\subsection{Self-similar systems}

Consider a self-similar iterated function system (IFS) of the form $\Phi = \set{x \mapsto f_{a}(x) = r_aO_{a} x + t_a: a\in \cA}$, where $\cA$ is a finite set, $|r_{a}|\in (0,1)$, $O_{a}\in O(d)$ and $t_{a}\in \R^{d}$. We say that $\Phi$ is affinely irreducible if $\Phi$ does not preserve an affine subspace of $\R^{d}.$
Given
a probability vector $\p=(p_a)_{a\in \cA}$ then it is well-known that there exists a unique Borel probability measure satisfying 
$$\mu = \sum_{a \in \cA} p_a f_a\mu.$$ 
Here, $f_{a}\mu$ denotes the pushforward of $\mu$ under $f_a$. We call this measure the self-similar measure corresponding to $\Phi$ and $\p$. Throughout this paper we will always assume that our IFS is non-trivial in the sense that the similarities do not all share a common fixed point. This ensures that the associated self-similar measures are non-atomic.

The Fourier decay properties of self-similar measures are well studied in the literature. Recently Rapaport \cite{Rapaport} completed a classification of those self-similar measures satisfying $$\limsup_{\|\xi\|\to\infty}|\widehat{\mu}(\xi)|>0.$$ This built upon earlier work of Br\'emont \cite{Bremont} and Varj\'u-Yu \cite{VarjuYu}. 

Far less in known about explicit rates of decay when $\widehat{\mu}(\xi)\to 0$ as $\norm{\xi}\to \infty$.
In dimension $d = 1$, a result of Solomyak \cite{Solomyak} states that typical self-similar measures in $\R$ have polynomial Fourier decay. That being said, polynomial Fourier decay has only been verified under certain restrictive algebraic assumptions on the contractions ratios. See the articles of Dai, Feng, and Wang \cite{DFW}, and Streck \cite{Streck}. 

Li and the third author in \cite{LiSahlsten} have shown that, under the assumption that the log-contraction ratios $\log |r_a| / \log |r_b|$, $a \neq b$, contain a  Diophantine number: for some $c > 0$ and $l > 2$ we have for all rational numbers $p/q$ that:
	$$\left|\frac{\log |r_a|}{\log |r_b|} - \frac{p}{q}\right| \geq \frac{c}{q^l},$$
	then $|\widehat{\mu}(\xi)| \to 0$ polylogarithmically. This happens for example if the IFS contains contractions by $1/2$ and $1/3$. Li and the third author's proof used ideas from renewal theory. We also refer the reader to \cite{AlgomHertzWang-Normality} where similar results are proven under a weaker Diophantine condition. Varj\'u-Yu \cite{VarjuYu} considered the complementary case where the contraction ratios are all integer powers of a common number. In this setting, they proved polylogarithmic Fourier decay for self-similar measures satisfying certain number theoretic assumptions.
 
 In higher dimensions, the work of Lindenstrauss and Varj\'u \cite{LindenstraussVarju} introduced a representational theoretic method to bound Fourier transforms of self-similar measures with dense rotation parts $O_a$.
 In dimension $d\geq 3$, this method implies sub-polynomial Fourier decay for all self-similar measures, where the rotational components $O_a$ of the contractions give a dense subgroup; cf.~the survey \cite{Sahlsten-survey}.
 Moreover, under a spectral gap condition for the transfer operator $f \mapsto \sum_{a \in \cA} p_a f \circ O_a^{-1}$ on $L^2(\mrm{SO}(d))$, $d\geq 3$, it is possible to show that the associated self-similar measure has polynomial Fourier decay.

 Our first result for self-similar measures is a higher dimensional generalization of the aforementioned result of Li and Sahlsten \cite{LiSahlsten}.
 
 \begin{thm}
		\label{thm:selfsim}
	Let $\Phi = \set{x \mapsto f_{a}(x) = r_aO_{a} x + t_a}_{a\in \cA}$ be an affinely irreducible self-similar IFS such that two contractions have log-contraction ratio $\frac{\log |r_a|}{\log |r_b|}$ which is Diophantine.
    Then every self-similar measure for this IFS has polylogarithmic Fourier decay.
	\end{thm}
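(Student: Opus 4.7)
The plan is to implement the general three-step averaging/flattening/separation strategy outlined in Section~\ref{sec:method}. Iterating the self-similarity relation $n$ times and applying the triangle inequality yields
\[
|\hat{\mu}(\xi)| \;\leq\; \sum_{\mathbf{a}\in \cA^n} p_\mathbf{a}\, \bigl|\hat{\mu}\bigl(r_\mathbf{a} O_\mathbf{a}^{-1}\xi\bigr)\bigr|,
\]
where $r_\mathbf{a}$ and $O_\mathbf{a}$ are the scalar and orthogonal parts of the composition $f_\mathbf{a}$. I would replace this fixed-length sum by a stopping-time version, stopping each word at the first depth $n(\mathbf{a})$ where $|r_\mathbf{a}|\,\norm{\xi}$ enters a target annulus $\{r_{\min}T \leq \norm{\eta}\leq T\}$, with target scale $T := (\log\norm{\xi})^A$ for an exponent $A>0$ to be optimized.

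Under the affine irreducibility hypothesis on $\Phi$, the self-similar measure $\mu$ is uniformly affinely non-concentrated (a standard consequence of irreducibility combined with the self-similar multiscale structure: any concentration near an affine hyperplane at some scale would, by applying the inverse branches $f_\mathbf{a}^{-1}$, propagate to concentration at all scales, contradicting irreducibility). Theorem~\ref{thm:flattening} then provides, for any $\epsilon>0$, an exceptional set $E_T \subset \{\norm{\eta}\leq T\}$ coverable by $\cO(T^\epsilon)$ unit balls, with $|\hat{\mu}(\eta)|\leq T^{-\tau}$ for every $\eta\notin E_T$, where $\tau=\tau(\epsilon)>0$. The heart of the argument is to bound the total stopped-word mass landing in $E_T$. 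Here the Diophantine hypothesis on $\log|r_a|/\log|r_b|$ enters: it yields quantitative equidistribution for the stopped log-scales $\log|r_\mathbf{a}|$ via a Weyl-sum analysis of the sub-IFS generated by the Diophantine letters $a,b$ (in the spirit of the renewal-theoretic argument of~\cite{LiSahlsten}). Concretely, for any log-scale window of length $\sigma \geq n^{-c(l)}$ (where $l$ is the Diophantine exponent), the stopped-word mass lying in that window should be of size $\cO(\sigma)$. Since each unit ball in $E_T$ can only be hit by frequencies $\eta_\mathbf{a} := r_\mathbf{a} O_\mathbf{a}^{-1}\xi$ whose scale lies in a log-window of width $\cO(1/T)$, summing over the $\cO(T^\epsilon)$ balls bounds the total exceptional mass by $\cO(T^{\epsilon-1} + n^{-c})$. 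Combined with the flattening bound on the complement of $E_T$, one obtains
\[
|\hat{\mu}(\xi)| \;\ll\; T^{-\tau} + T^{\epsilon-1} + n^{-c}.
\]
Taking $n \asymp \log\norm{\xi}$ and $T = (\log\norm{\xi})^A$ with $A$ large and $\epsilon<1$ small then yields $|\hat{\mu}(\xi)|\ll (\log\norm{\xi})^{-\kappa}$ for an explicit $\kappa>0$.

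The principal obstacle lies in the separation step. The scalar Diophantine condition only furnishes polynomial-in-$n$ equidistribution of $\log|r_\mathbf{a}|$, which forces $T$ to remain polylogarithmic in $\norm{\xi}$ (otherwise the $n^{-c}$ error overwhelms the gain), and this is precisely what accounts for the polylogarithmic (rather than polynomial) final rate. A secondary complication in dimensions $d\geq 2$ is the presence of the non-trivial rotations $O_\mathbf{a}$: since these are isometries, only the scalar $|r_\mathbf{a}|$ enters the pigeonhole count of words against the cover of $E_T$ by unit balls, and any additional directional constraint coming from $O_\mathbf{a}^{-1}\xi$ only strengthens the bound, so the scalar Diophantine input alone suffices without any additional hypothesis on the rotational parts.
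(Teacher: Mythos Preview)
Your three-step outline matches the paper's overall strategy, and your handling of the rotations $O_\a$ (reducing to the scalar $|r_\a|$ via the norm) is exactly how the paper sidesteps the higher-dimensional issue. However, the separation step you propose is genuinely different from what the paper does, and there is one imprecision in the flattening step.

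On flattening: your claim that affine irreducibility implies uniform affine non-concentration in the sense of Definition~\ref{def:non-conc} is not correct without a separation hypothesis on the IFS. What holds (and what the paper actually uses, via Theorem~\ref{thm:Banaji and Yu} and \cite{BaYu}) is the weaker expanded-ball inequality~\eqref{eq:expanded ball affine non-conc}, which still suffices for the conclusion of Theorem~\ref{thm:flattening}. This is a minor fix, but it matters since Theorem~\ref{thm:selfsim} is stated without any separation assumption.

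On separation: you propose to invoke renewal-theoretic equidistribution of the stopped log-contractions $\log|r_\a|$, in the style of \cite{LiSahlsten}, to bound the mass landing in each unit ball of the exceptional set. The paper instead runs an elementary combinatorial argument. It first restricts (via Hoeffding) to words whose letter-counts $(|\a|_a)_{a\in\cA}$ lie in a window of width $\asymp(\log\|\xi\|)^{1/2+\delta}$ around the mean, then \emph{conditions} on the counts of the non-Diophantine letters $a\in\cA\setminus\{a_1,a_2\}$. Within each conditioning class the Diophantine hypothesis forces distinct $(|\a|_{a_1},|\a|_{a_2})$-pairs to land in distinct unit intervals (Lemma~\ref{Lemma:Diophantine separation}), so each bad interval carries the mass of a single multinomial cell, which is $\ll(\log\|\xi\|)^{-(\#\cA-1)/2}$ by a Stirling estimate (Lemma~\ref{Lemma:Maximising multinomial}). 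Summing over $\cO((\log\|\xi\|)^{\epsilon})$ bad intervals and $\cO((\log\|\xi\|)^{(1/2+\delta)(\#\cA-2)})$ conditioning classes gives a negative net exponent for suitable $\epsilon,\delta$. Your renewal route should also work and is arguably more conceptual, but the equidistribution statement you assert (mass in a log-window of width $\sigma\geq n^{-c}$ is $\cO(\sigma)$) is the entire technical content and would need to be proved; the paper's approach trades that analytic input for an elementary counting argument. Note also that summing your per-ball error $n^{-c}$ over $T^\epsilon$ balls gives $T^\epsilon n^{-c}$ rather than $n^{-c}$, so you would need $A\epsilon<c$ when choosing parameters.
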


 \begin{remark}[Optimality of Theorem~\ref{thm:selfsim}]
We emphasize that we do not impose any separation assumptions on the IFS in Theorem \ref{thm:selfsim}. 
Moreover, the affine irreduciblility assumption appearing in this theorem is necessary. 
Indeed, if it is not satisfied, then any measure supported on the invariant set of $\Phi$ cannot be Rajchman as such measure will be supported on a finite union of proper invariant affine subspaces. 
Finally, in the upcoming article~\cite{PSS}, Paukkonen, Sahlsten, and Streck  construct an inhomogeneous self-similar measure in $\R$, satisfying the assumptions of Theorem \ref{thm:selfsim}, for which there exists $C > 0$ and a sequence of frequencies $(\xi_{n})$ tending to infinity such that for all $n \in \N$ one has $C^{-1} \leq |\widehat{\mu}(\xi_n)| \sqrt{\log|\xi_n|}\leq C$. Thus, polylogarithmic Fourier decay is the best rate of decay possible under the assumptions of Theorem \ref{thm:selfsim}.    
\end{remark}

It is also possible to use the Diophantine properties of the orthogonal matrices in our IFS to prove Fourier decay results. 
Before stating our result in this direction we need to introduce some notation. Given a matrix $O\in SO(2)$, then $O$ acts on $\mathbb{R}^{2}$ by rotating anticlockwise by $2\pi \theta$ for some $\theta\in [0,1]$.  We define $\theta_{O}$ to be the unique $\theta$ defined this way. Moreover, given a self-similar IFS $\set{x \mapsto f_{a}(x) = r_aO_{a} x + t_a}_{a\in \cA}$ acting on $\mathbb{R}^{2}$ such that  $O_{a}\in SO(2)$ for all $a\in \cA$, we define $\theta_{a}=\theta_{O_{a}}$. We say that a pair of real numbers $(\theta_1,\theta_2)$ is Diophantine if there exists $C,l>0 $ such that for any non-zero $(p,q,r)\in\mathbb{Z}$ we have 
$$\left|p\theta_{1}+q\theta_{2}+r \right| \geq \frac{C}{\max\set{|p|,|q|}^{l}}.$$  
With this terminology we can state our theorem.
\begin{thm}
\label{thm:selfsim rotations}
Let $\Phi = \set{x \mapsto f_{a}(x) = r_aO_{a} x + t_a}_{a\in \cA}$ be a self-similar IFS acting on $\mathbb{R}^{2}$ such that  $O_{a}\in SO(2)$ for all $a\in \cA$, and there exists two contractions such that $(\theta_{a},\theta_{b})$ is Diophantine. Then every self-similar measure for this IFS has polylogarithmic Fourier decay. 
\end{thm}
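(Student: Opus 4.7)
The plan is to implement the three-step framework of Section~\ref{sec:method}, with the Diophantine condition on $(\theta_a,\theta_b)$ supplying the separation step.

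For \textbf{Step 1}, iterate self-similarity $n$ times and apply Jensen's inequality to obtain
$$|\widehat{\mu}(\xi)|^2 \leq \sum_{\mathbf{a}\in\cA^n} p_{\mathbf{a}}\,\bigl|\widehat{\mu}\bigl(r_{\mathbf{a}} O_{\mathbf{a}}^{-1}\xi\bigr)\bigr|^2, \qquad r_{\mathbf{a}} = \prod_i r_{a_i},\ O_{\mathbf{a}} = O_{a_1}\cdots O_{a_n}.$$
I would take $n \asymp \log T$ where $T = \|\xi\|\to\infty$. A standard large deviation estimate shows that all but an exponentially small $p_{\mathbf{a}}$-mass of words satisfies $|r_{\mathbf{a}}| = T^{-\alpha + o(1)}$ for a fixed $\alpha\in(0,1)$, so the typical frequencies $\eta_{\mathbf{a}} := r_{\mathbf{a}} O_{\mathbf{a}}^{-1}\xi$ concentrate at scale $S \asymp T^{1-\alpha}$. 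For \textbf{Step 2}, the Diophantine hypothesis forces $\theta_a,\theta_b\notin\Q$, so the IFS is affinely irreducible; standard multiscale arguments (in the form of~\eqref{eq:expanded ball affine non-conc} if necessary) then yield uniform affine non-concentration of $\mu$. Theorem~\ref{thm:flattening} applied at scale $S$ therefore covers $\set{\|\eta\|\leq S : |\widehat{\mu}(\eta)| > S^{-\tau}}$ by $O(S^\epsilon)$ unit balls; as each such ball subtends angular width $O(1/S)$ when viewed from the origin, its total angular footprint is $O(S^{\epsilon-1})$.

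The main obstacle is \textbf{Step 3}: controlling the angular distribution of $\set{\eta_{\mathbf{a}}}$. The argument of $\eta_{\mathbf{a}}$ equals $\arg\xi - 2\pi\sum_i \theta_{a_i} \bmod 1$, where the $\theta_{a_i}$ are i.i.d.~with distribution $(p_a)_{a\in\cA}$. The characteristic function of a single step, $\phi(k) = \sum_{a\in\cA} p_a e^{2\pi ik\theta_a}$, is analyzed as follows: specializing the Diophantine hypothesis to triples $(k,-k,m)$ yields $|k(\theta_a - \theta_b) + m|\geq Ck^{-\ell}$ for all $k\neq 0$, hence $|p_a e^{2\pi ik\theta_a} + p_b e^{2\pi ik\theta_b}| \leq (p_a + p_b)(1 - c'k^{-2\ell})$, and therefore $|\phi(k)| \leq 1 - c''k^{-2\ell}$ for some $c''>0$. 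Consequently $|\phi(k)|^n \leq \exp(-c''n k^{-2\ell})$, and the Erd\Horig{o}s--Tur\'an inequality produces a discrepancy bound $D(n) = O(n^{-\kappa})$ with $\kappa = \kappa(\ell)>0$ for the distribution of $\sum_i \theta_{a_i}\bmod 1$.

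Combining the three ingredients, the $p_{\mathbf{a}}$-mass of words whose frequency $\eta_{\mathbf{a}}$ lies in the bad set is at most the sum of the angular footprint of the bad set and the discrepancy of the angular walk, yielding $O(S^{\epsilon - 1}) + D(n) = O((\log T)^{-\kappa})$. The contribution of good-frequency words is polynomially small (bounded by $S^{-2\tau}$) and the contribution of atypical words is exponentially small in $n$, both negligible compared to the polylogarithmic term. Altogether $|\widehat{\mu}(\xi)|^2 = O((\log T)^{-\kappa})$, which establishes the claimed polylogarithmic Fourier decay.
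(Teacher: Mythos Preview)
Your Step 3 contains a genuine gap. The claim that the $p_{\a}$-mass of words landing in the bad set is at most $O(S^{\epsilon-1})+D(n)$ is not justified: the Erd\Horig{o}s--Tur\'an bound $D(n)$ controls the deviation from uniformity on a \emph{single} arc, and for a union of $N$ arcs the correct estimate is $|\text{footprint}|+N\cdot D(n)$. With your scale $S\asymp T^{1-\alpha}$ the flattening theorem produces $N=O(S^{\epsilon})$ bad unit balls, hence $O(S^{\epsilon})$ arcs in the angular footprint, and the bound becomes
\[
O\bigl(S^{\epsilon-1}\bigr)+O\bigl(S^{\epsilon}\bigr)\cdot(\log T)^{-\kappa}
=O\bigl(T^{(1-\alpha)(\epsilon-1)}\bigr)+O\bigl(T^{(1-\alpha)\epsilon}(\log T)^{-\kappa}\bigr),
\]
whose second term diverges for every fixed $\epsilon>0$. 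No uniform improvement is available here: the distribution of $\sum_i\theta_{a_i}\bmod 1$ is purely atomic (it is an $n$-fold convolution of a finitely supported measure), so one cannot hope for an $L^\infty$ density bound that would treat arbitrary Borel sets at once.

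The paper avoids this in two ways. First, it stops the iteration at the polylogarithmic scale $|r_{\a}|\asymp(\log\|\xi\|)^{3l}/\|\xi\|$, so the flattening theorem yields only $O((\log\|\xi\|)^{\epsilon})$ bad balls rather than a power of $\|\xi\|$. Second, and more substantially, it does not use angular discrepancy at all: it conditions on the digit counts $(|\a|_c)_{c\notin\{a_1,a_2\}}$ of the non-Diophantine letters, and within each conditioning class Lemma~\ref{Lemma:Rotation separation} shows (via the Diophantine hypothesis on $(\theta_{a_1},\theta_{a_2})$) that distinct values of $(|\a|_{a_1},|\a|_{a_2})$ force the full frequency vectors $r_{\a}O_{\a}^{T}\xi$ to be $1$-separated, so each bad ball meets at most one frequency per class. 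The multinomial bound of Lemma~\ref{Lemma:Maximising multinomial} then caps the mass of any single digit-count configuration by $\ll(\log\|\xi\|)^{-(\#\cA-1)/2}$, and summing over the $O((\log\|\xi\|)^{\epsilon})$ bad balls and $O((\log\|\xi\|)^{(\#\cA-2)(1/2+\delta)})$ classes gives a negative power of $\log\|\xi\|$ once $\epsilon,\delta$ are small.
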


\begin{remark}\label{rem:selfsim rate dependence}

Although we do not pursue this here, we remark that the exponent appearing in our polylogarithmic rate of Fourier decay provided by our proof of Theorems~\ref{thm:selfsim} and~\ref{thm:selfsim rotations} can be made explicit.
Explicit rates obtained in this manner depend only on the Diophantine exponent $l$ as well as on the function $\tau=\tau(\epsilon)$ provided by the polynomial Fourier decay on average hypothesis; cf. Theorem \ref{thm:flattening}.
In particular, when the self-similar measure is uniformly affinely non-concentrated, the proof of Theorem~\ref{thm:flattening} provides explicit bounds on the average decay rate $\tau(\epsilon)$, which in turn depend only on the non-concentration parameters of $\mu$.
Additionally, in the case of one-dimensional self-similar measures, arguments of the first author together with Banaji in~\cite{BakerBanaji} can also be used to provide explicit bounds on $\tau(\epsilon)$.
\end{remark}

\subsection{Patterson-Sullivan measures}

Our next result concerns Patterson-Sullivan (PS for short) measures for convex cocompact hyperbolic manifolds.
In this setting, the driving mechanism behind Fourier decay comes from non-linearity of the dynamics.
Namely, the fact that the strong stable and unstable foliations of the geodesic flow are not jointly integrable.

To formulate our result, let $\G$ be a discrete, Zariski-dense, convex cocompact, group of isometries of real hyperbolic space $\H^{d+1}$, $d\geq 1$.
Let $\L_\G$ be the limit set of $\G$ on $\partial \H^{d+1}$ and $\mu$ be the Patterson-Sullivan probability measure on $\L_\G$; cf.~Section~\ref{sec:PS prelims} for detailed definitions.
The following is our main result in this setting.

\begin{thm}\label{thm:PS}
There exists $\kappa>0$ such that the following holds for all $\vp\in C^2(\partial\H^{d+1})$, $\psi\in C^1(\partial\H^{d+1})$ satisfying
\begin{align*}
    \norm{\vp}_{C^2}+ \norm{\psi}_{C^1} \leq A,  \qquad \inf_{x\in \L_\G} \norm{\nabla_x\vp}> a,
\end{align*}
for some constants $a>0$ and $A\geq 1$. 
There exists a constant $C=C(A,a,\mu)\geq 1$, so that for all $\l\neq 0$, we have
\begin{align*}
    \left|\int_{\L_\G} e^{2\pi i\l \vp(x)} \psi(x) \;d\mu(x)\right| \leq C |\l|^{-\kappa}.
\end{align*}
\end{thm}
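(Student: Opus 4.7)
The plan is to execute the three-step strategy of Section~\ref{sec:method}. A preliminary ingredient is that the Patterson-Sullivan measure $\mu$ is uniformly affinely non-concentrated on $\L_\G$: this is classical under our hypotheses, combining Ahlfors $\d_\G$-regularity of $\mu$ with the fact that Zariski-density of $\G$ prevents $\mu$ from concentrating on any proper round subsphere of $\partial\H^{d+1}$; the estimate on arbitrary affine hyperplanes then follows by induction on dimension after the standard identification of $\partial\H^{d+1}\setminus\{\infty\}$ with $\R^d$.

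For Step~1, I would work with a Markov coding of the expanding map on $\partial\H^{d+1}$ dual to the geodesic flow, producing at each level $n$ a decomposition
\begin{equation*}
    \mu = \sum_{|w|=n} p_w\, (f_w)_*\mu_w,
\end{equation*}
where the $f_w$ are conformal contractions (obtained from the $\G$-action) of ratio $r_w\asymp e^{-|w|}$, $p_w\asymp r_w^{\d_\G}$, and each $\mu_w$ is a normalised conformal rescaling of a cylinder restriction of $\mu$, hence inherits uniform affine non-concentration parameters from those of $\mu$. Given $\l$, I would choose $n=n(\l)$ so that $r_w\asymp |\l|^{-\alpha}$ for some $\alpha\in(\tfrac{1}{2},1)$. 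Writing $x_w := f_w(y_w)$ for a reference point $y_w\in\supp\mu_w$, the first-order Taylor expansion of $\vp\circ f_w$ together with M\"obius distortion rearranges the integral as
\begin{equation*}
    \int e^{2\pi i\l\vp(x)}\psi(x)\,d\mu(x) = \sum_{|w|=n} p_w\, e^{2\pi i\l\vp(x_w)}\, \widehat{\nu_w}(-\xi_w) + E,
\end{equation*}
with $\xi_w := \l\, (Df_w(y_w))^\top\nabla\vp(x_w)$, $\nu_w$ a finite measure with $C^1$ density against $\mu_w$, and a phase error $E = \cO(|\l| r_w^2) = \cO(|\l|^{1-2\alpha})$ that is $o(1)$ precisely because $\alpha>\tfrac{1}{2}$.

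For Step~2, Theorem~\ref{thm:flattening} applied to each $\mu_w$ at scale $T := |\l|^{1-\alpha}$ (with the uniform non-concentration parameters) yields, for every $\e>0$, some $\tau=\tau(\e)>0$ and an exceptional set $\cE_w\subset B(0,T)$, coverable by $\cO_\e(T^\e)$ unit balls, outside of which $|\widehat{\mu_w}(\xi)|\leq T^{-\tau}$; a routine integration-by-parts using $\psi\in C^1$ transfers the same estimate to $\widehat{\nu_w}$. The argument then reduces to Step~3: the separation bound
\begin{equation*}
    \sum_{w : \xi_w\in B(\xi_0,1)} p_w \;\lesssim\; |\l|^{-\eta}, \qquad \text{uniformly in } \xi_0\in\R^d,
\end{equation*}
for some $\eta>0$ depending only on the non-concentration parameters of $\mu$. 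Combined with the covering bound on $\cE_w$, this yields a total saving of $|\l|^{\e(1-\alpha)-\eta}$ from the bad $w$'s, absorbed by taking $\e$ sufficiently small.

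The separation bound is the main obstacle. Writing the conformal derivative as $Df_w(y_w) = r_w O_w$ with $O_w\in O(d)$, and using both $r_w\asymp|\l|^{-\alpha}$ and the lower bound $|\nabla\vp|\geq a$, the event $\xi_w\in B(\xi_0,1)$ converts into a joint constraint on the pair $(x_w,O_w)\in\L_\G\times O(d)$ at scale $\rho := |\l|^{-(1-\alpha)}$. I would establish the desired non-concentration by combining (i) the affine non-concentration of $\mu$ on $\L_\G$, which controls the $x_w$-marginal, with (ii) the non-joint integrability of the strong stable/unstable foliations of the geodesic flow on $\uT(\H^{d+1}/\G)$---equivalent to Zariski-density of $\G$---which forces the holonomy cocycle $w\mapsto O_w$ to vary with quantitatively effective non-triviality across cylinders. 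Concretely, the smoothness and lower bound on $\nabla\vp$ ensure that the composite $y\mapsto O(y)^\top\nabla\vp(y)$, linearised along cylinders at scale $n$, has a non-degenerate derivative in some direction; affine non-concentration of $\mu$ in that direction then provides a Frostman-type bound at scale $\rho$, and summing over cylinders completes the estimate. This ``curvature plus non-concentration'' mechanism is what replaces Bourgain's sum-product theorem used in the one-dimensional case of~\cite{BourgainDyatlov}.
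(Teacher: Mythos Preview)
Your outline captures the overall three-step strategy, but the separation step (Step~3) contains a genuine gap, and your route also diverges structurally from the paper's in a way that matters.

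First, the structural difference: the paper deliberately avoids any Markov coding of the boundary action and works directly with the geodesic flow on $G/\G$, using partitions of unity by ``thin'' flow boxes of unstable width $\asymp\norm{\xi}^{-1/3}$, the equivariance relations~\eqref{eq:g_t equivariance}--\eqref{eq:N equivariance}, and \emph{explicit} formulas for the local stable holonomy (Lemma~\ref{lem:phi_ell formula}, derived from the matrix realisation of $\mrm{SO}(d+1,1)$). Crucially, the paper inserts a Cauchy--Schwarz step \emph{before} invoking flattening: this converts the problem to analysing differences of frequencies $\b^j_{k,\ell}$ indexed by \emph{pairs} of return indices, which is what makes the separation tractable. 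Your proposal applies flattening directly to the single-index frequencies $\xi_w$ and therefore needs the stronger bound $\sum_{w:\xi_w\in B(\xi_0,1)}p_w\ll|\l|^{-\eta}$ uniformly in $\xi_0$.

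This is where the gap lies. The constraint $\xi_w\in B(\xi_0,1)$ pins down $O_w^\top\nabla\vp(x_w)$ to a ball of radius $\asymp|\l|^{-(1-\alpha)}$, a joint condition on $(r_w,O_w,x_w)$. You assert that affine non-concentration of $\mu$ handles the $x_w$-marginal and that ``non-joint integrability'' forces $w\mapsto O_w$ to vary effectively, but you give no mechanism for controlling the \emph{joint} distribution, nor do you explain how to avoid analysing the separation of the rotation matrices $O_w$ themselves---which is precisely the hard point in dimensions $d\geq 2$. The paper's substitute is Lemma~\ref{lem:tau and freq}: after Cauchy--Schwarz, the difference frequencies satisfy $\norm{\b^j_{k,\ell}}\gg\norm{r\xi_t}\,|e^{-\t_\ell(v_j)}-e^{-\t_k(v_j)}|$, which eliminates the rotations via $M$-invariance of the norm and reduces everything to \emph{scalar} holonomy times $\t_\ell(v_j)$ with explicit dependence on the stable parameters $w_{\rho,\ell}$. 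Separation then follows from two further non-concentration estimates (Lemmas~\ref{lem:count close freqs} and~\ref{lem:stable separation}), both proved using Theorem~\ref{thm:friendly} applied to neighbourhoods of hyperplanes and spheres. Your ``curvature plus non-concentration'' sentence gestures at this but supplies none of the actual reductions; in particular, the phrase ``$y\mapsto O(y)^\top\nabla\vp(y)$ linearised along cylinders has a non-degenerate derivative in some direction'' does not by itself yield the required power saving, and in a coding-based approach the analogous step in Section~\ref{sec:proofnon-linear} is handled via a spectral gap for twisted transfer operators---an ingredient you do not invoke and which the paper specifically avoids for PS measures (cf.\ Remark~\ref{Remark}(a)).
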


Theorem~\ref{thm:PS} generalizes prior work of Bourgain and Dyatlov in the case of hyperbolic surfaces \cite{BourgainDyatlov} and of Li, Naud, and Pan in the case of Schottky hyperbolic $3$-manifolds \cite{LiNaudPan}.
These results are based on Bourgain's sum-product theory, while the proof of Theorem~\ref{thm:PS} is based on Theorem~\ref{thm:flattening}, which was obtained using purely additive methods.
Another feature of our proof of Theorem~\ref{thm:PS} compared to~\cite{BourgainDyatlov,LiNaudPan} is that it does not require any symbolic coding of the action of $\G$ on $\L_\G$.

\begin{remark}\label{rem:covers}

\begin{enumerate}
    \item 
    Our proof shows that the rate $\kappa$ provided by Theorem~\ref{thm:PS} depends only on non-concentration parameters of $\mu$ in the sense of Definition~\ref{def:non-conc}. In particular, the rate of decay does not change upon replacing $\G$ by a finite index subgroup since the measure $\mu$ remains the same in this case~\cite{Roblin-2}.

\item

    PS measures for convex cocompact manifolds are known to be $(C,\alpha)$-uniformly affinely non-concentrated \cite{Dasetal}.
    In this language, the exponent of polynomial Fourier decay obtained in~\cite{BourgainDyatlov,LiNaudPan} is shown to depend only on the exponent $\alpha$, while the exponent provided by Theorem~\ref{thm:PS} depends on both $C$ and $\alpha$.
    Roughly speaking, this difference comes from the fact that the rates provided by Bourgain's sum-product theorem depend only on $\alpha$, while those provided by the flattening theorem, Theorem~\ref{thm:flattening}, depend on both $C$ and $\alpha$. 
    On the other hand, the flexible nature of Theorem~\ref{thm:flattening} enables our unified approach to the variety of different settings considered in this article.
    It is thus of interest to strengthen Theorem~\ref{thm:flattening} to remove such dependence on the constant $C$; cf.~Remark \ref{rem:FUP} for further consequences.
\end{enumerate}
\end{remark}

To keep the presentation clear, we restricted our setup to the case of convex cocompact groups.
Using the recurrence results obtained in~\cite{Khalil-Mixing}, the proof of Theorem~\ref{thm:PS} can be adapted to handle the general case of geometrically finite manifolds.

Theorem \ref{thm:PS} has immediate applications in the setting of Fractal Uncertainty Principles. We will explain this in detail in Section \ref{sec:FUP}. 

\subsection{Gibbs measures for self-conformal iterated function systems and their subshifts}

Next, let us consider the problem of studying Fourier transforms of iterated function systems $\Phi=\set{f_a}_{a \in \cA}$ for general $C^2$ contractions $f_a : [0,1]^d \to [0,1]^d$, $a \in \cA$, where $d \geq 1$. In this context, we say that an iterated function system is \textit{conformal} if for every $a\in \cA$ and $x\in [0,1]^{d}$ the Jacobian of $f_{a}$ evaluated at $x$ satisfies $$D_{x}f_{a}=\lambda_{a}(x)O_{a}(x)$$ for some $\lambda_{a}(x)\in (-1,1)\setminus\{0\}$ and $O_{a}(x)\in O(d),$ i.e. an iterated function system is conformal if, for every $a\in \cA$ and $x\in [0,1]^{d}$, the Jacobian is a similarity map. Given $x\in [0,1]^{d}$ and $\a=(a_1,\ldots,a_n)\in \cA^{n},$ we also define 
$$\lambda_{\a}(x)=\prod_{i=1}^{n}\lambda_{a_{i}}(f_{a_{i+1},\ldots a_{n}}(x)).$$ It follows from the chain rule that $$|\lambda_{\a}(x)|=\|D_{x}f_{\a}\|$$ for any $\a\in \cup_{n=1}^{\infty}\cA^{n}$ and $x\in [0,1]^{d}.$

In this context, we will prove Fourier decay results for Gibbs measures associated to subshifts of finite type and $C^{1}$ potentials. This framework incorporates self-similar measures. However, instead of using the number-theoretic properties of an IFS, this section will focus on how the non-linearity within the IFS can be used to prove polynomial Fourier decay. Such an assumption rules out self-similar measures.

\subsubsection{Background}
The idea that the non-linearity present within an IFS can be used as a tool for proving Fourier decay is well-established within the literature. In the case $d=1$, the first result in this direction was due to Kaufman \cite{Kaufman2}. For each $N\in \N$, he considered the set $B_{N}$ consisting of those badly approximable numbers whose digits in the continued fraction expansion are bounded above by $N$. Kaufman proved that $B_{N}$ supports a measure that has polynomial Fourier decay for $N\geq 3$. 
This result was later extended by Queff\'{e}lec and Ramar\'{e} \cite{QueRam} to the case $N=2$, and later to the case of more general invariant measures for the Gauss map by Jordan and the third author \cite{Sahlsten-Gauss}.

In \cite{BourgainDyatlov}, Bourgain and Dyatlov introduced a new method to study Fourier transforms of measures using ideas from Additive Combinatorics. They applied this method to prove polynomial Fourier decay for Patterson-Sullivan measures for limit sets of Fuchsian groups. This method was subsequently refined by the third author and Stevens \cite{SahlstenStevens}, and applied to a more general class of iterated functions systems. 

In recent years, more substantial progress has been made in this one-dimensional setting.
In particular, the first and third authors in \cite{BakerSahlsten}, and Algom, Rodriguez-Hertz, and Wang \cite{AlgomRHWang-Polynomial}, established polynomial Fourier decay for a large class of self-conformal measures without any separation assumptions on the underlying IFS. These results were recently built upon by the first author and Banaji \cite{BakerBanaji}, and by Algom et al. \cite{AlgomChangWuWu}, who independently proved that if an IFS consists of analytic contractions one of which is not an affine map, then every self-conformal measure has polynomial Fourier decay. 

In an as yet unpublished work, Avila, Lyubich and Zhang \cite{AvLyZh} proved polylogarithmic Fourier decay for certain measures coming from $C^{1+\alpha}$ iterated function systems. We also refer the reader to the papers \cite{AlgomHertzWang-Log,AlgomHertzWang-Normality} for further results in this direction. 
In higher dimensions, we know far less.  A result of Leclerc \cite{Leclerc-JuliaSets} gives sufficient conditions for the Julia set of a hyperbolic rational map to support a measure with polynomial Fourier decay. 
More recently, a generalization of the results of~\cite{BakerSahlsten,AlgomRHWang-Polynomial} was obtained in~\cite{AlgomRHWang-SelfConformalDimensionTwo} for irreducible $C^\omega$-IFSs in dimension $d=2$.

\subsubsection{Statement of results}

As mentioned above, our results hold under a spectral gap hypothesis on the underlying conformal IFS. To properly formulate this result, we need to introduce some preliminaries.

Let $\Phi=\{f_{a}\}_{a\in \cA}$ be an iterated function system. As the maps are contracting, there exists a unique non-empty compact set satisfying the relation
    $$X_\Phi = \bigcup_{a \in \cA} f_a (X_\Phi).$$ For each $a\in \cA$ we let $X_{a}=f_{a}(X)$. We say that $\Phi$ satisfies the strong separation condition if $$X_{a}\cap X_{b}=\emptyset$$ for distinct $a,b\in \cA.$
    $X_{\Phi}$ can be viewed as the image of the full shift $\cA^{\N}$ under the map $\pi:\cA^{\N}\to X_{\Phi}$ given by $$\pi(\a) = \lim_{n \to \infty} (f_{a_1}\circ \cdots \circ f_{a_n})(x_0),$$ where $x_0 \in X_{\Phi}$ is an arbitrary point. By considering a small neighborhood of each $X_{a},$ it can be shown that under the strong separation condition for each $a\in \cA$ there exists an open set $U_{a}\subset \R^d$ satisfying the following properties:
    \begin{itemize}
        \item $U_{a}\cap U_{b}=\emptyset$ for $a\neq b$
        \item If we let $U:=\cup_{a\in \cA}U_{a}$ then $f_{a}(U)\subset U_{a}$ for each $a\in \cA.$
    \end{itemize}
In what follows, when an IFS satisfies the strong separation condition we will fix a choice of $U_{a}$ for each $a\in \cA$ and a corresponding $U.$

Given a $\#\cA\times \#\cA$ matrix $A$ taking values in $\{0,1\}$ we can define the associated  \textit{subshift of finite type} to mean the subset $\Sigma_A \subset \cA^\N$ given by $$\Sigma_A = \set{(a_i) \in \cA^\infty : A(a_{i},a_{i+1}) = 1}.$$ Thus each such matrix $A$ defines a compact subset $X_A \subset X_\Phi$ via the projection $X_A = \pi(\Sigma_A)$. We will always assume that $\Sigma_{A}$ is topologically mixing, this is equivalent to $A^{n}>0$ for all $n$ sufficiently large. We write $\cW_{A}$ as the set of all admissible words, that is, words that exist as a prefix of any word in $\Sigma_A$. Given $\a=(a_1,\ldots,a_n)\in \cW_{A}$ we define  
 $$X_{\a}^{A}=\pi\left(\left\{(b_i)\in \Sigma_{A}:b_{1},\ldots b_{n}=\a\right\}\right).$$ 
When the choice of matrix $A$ is implicit, we will by an abuse of notation write $X_{\a}$ instead of $X_{\a}^{A}.$ Given $\a,\b \in \cW_{A}$ we write $\a \leadsto \b$ if $\a\b\in \cW_{A}$. Let us assume that $\Phi$ satisfies the strong separation condition, given a point $x\in U$ then $x\in U_{b}$ for some unique $b\in \cA$, we write $\a \leadsto x$ for $\a\in \cW_{A}$ if $\a \leadsto b$.

  Given a subshift of finite type $\Sigma_A$ and a $C^1$ function $\psi : U \to \R,$ if $x\in U,$ $a\in \cA$ and $a\leadsto x,$ then we define $w_{a}(x)=e^{\psi(f_{a}(x))}.$ More generally, given $\a=(a_1,\ldots,a_n)\in \cW_{A}$ such that $\a \leadsto x,$ we define $w_{\a}(x)=e^{\sum_{j=0}^{n-1}\psi(f_{a_{j+1}\ldots a_{n}}(x))}.$
  We define the \textit{pressure} of $\psi$ to be
$$P(\psi) = \lim_{n \to \infty} \frac{1}{n} \log\Big(\sum_{\a \in \cA^n: \a\in \cW_{A}}\sup_{x\in \Sigma_{A}:\a\leadsto x} w_{\a}(x)\Big).$$   For a proof that the pressure of a $C^1$ function always exists see \cite{Walters}. For each $\psi$, there exists a unique probability measure $\mu_{\psi}$ supported on $X_{A}$ called the Gibbs measure of $\psi.$ The defining property of this measure is that it satisfies the Gibbs condition: There exists $C>0$ such that for all $\a\in \cW_{A}$ and $x\in X_{A}$ satisfying $\a\leadsto x$ we have \begin{align}
\label{eq:GibbsProperty}C^{-1} \leq \frac{\mu_{\psi}(X_{\a})}{w_{\a}(x)e^{nP(\psi)}} \leq C.
\end{align} Gibbs measures are well studied within the ergodic theory community. See \cite{Bowen} and \cite{ParryPollicott} for a more thorough introduction to these measures.

Given a conformal IFS $\Phi$ satisfying the strong separation condition, a subshift of finite type $\Sigma_{A}$ and a $C^{1}$ potential $\psi:U\to \mathbb{R}$ satisfying $P(\psi)=0$, we define the twisted transfer operators $\mathcal{L}_{ib}$, $b \in \R$, by the formula
$$\mathcal{L}_{ib} h (x) := \sum_{\substack{a\in \cA \\ a \leadsto x}}w_{a}(x)|\lambda_{a}(x)|^{ib} h(f_a(x)), \quad h \in C^{1,b}(U), x \in U,$$
in the Banach space $C^{1,b}(U)$ of differentiable functions equipped with the norm
$$\|h\|_b := \|h\|_\infty + \frac{\sup_{x\in U}\|D_{x}h\|}{|b|}.$$ 

\begin{definition}
    Let $\Phi$, $\Sigma_{A}$, and $\psi$ be given. We say that $(\Phi,\Sigma_{A},\psi)$ has a spectral gap if there exists $0 < \rho,\upsilon < 1$ such that for all large enough $|b|$, $n \in \N$ and $h \in C^{1,b}(U)$, we have
$$\|\mathcal{L}_{ib}^{n} h\|_{b} \ll \rho^{n}|b|^{\upsilon}\|h\|_b.$$
\end{definition}

Our main result in this section is the following general theorem.

\begin{thm}
\label{thm:mainNonlinear}
Let $\Phi$ be a conformal IFS satisfying the strong separation condition, let $\Sigma_{A}$ be a subshift of finite type, and let $\psi$ be a $C^{1}$ potential satisfying $P(\psi)=0$. Assume that $(\Phi,\Sigma_{A},\psi)$ has a spectral gap and suppose that $\mu_{\psi}$ is uniformly affinely non-concentrated. Then, $\mu_{\psi}$ has polynomial Fourier decay. 
\end{thm}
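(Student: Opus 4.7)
My plan is to implement the three-step Averaging--Flattening--Separation strategy outlined in Section~\ref{sec:method}, with the spectral gap of the twisted transfer operator $\mathcal{L}_{ib}$ supplying the mechanism for the Separation step. Fix $\xi\in\R^d$ with $\|\xi\|$ large, and choose an integer $n \asymp \log\|\xi\|$ so that each admissible word $\a$ of length $n$ yields a cylinder $X_\a$ with contraction $|\lambda_\a| \asymp \|\xi\|^{-\theta}$, for a parameter $\theta\in(0,1)$ to be optimized. Iterating the Gibbs self-conformality of $\mu_\psi$, I would write
\begin{align*}
\widehat{\mu_\psi}(\xi) \;=\; \sum_{\a\in\cW_{A},\,|\a|=n} \int w_\a(y)\, e^{2\pi i\langle \xi, f_\a(y)\rangle}\, d\mu_\psi(y) \;+\; (\text{admissibility corrections}),
\end{align*}
and then use the conformal identity $D_y f_\a = \lambda_\a(y)\,O_\a(y)$ together with $C^2$ bounded distortion to linearize $f_\a$ around a basepoint $y_\a$ on sufficiently fine subpieces of each cylinder. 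Up to a negligible phase error of order $\|\xi\|\cdot|\lambda_\a|\cdot\rho^2$ on subpieces of diameter $\rho$, each integral becomes a rescaled Fourier coefficient of $\mu_\psi$ at the new frequency $\eta_\a := \lambda_\a(y_\a)\,O_\a(y_\a)^T\xi$, of magnitude $\|\eta_\a\|\asymp\|\xi\|^{1-\theta}$.

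Next I would invoke Theorem~\ref{thm:flattening} for the uniformly affinely non-concentrated measure $\mu_\psi$ at scale $T := \|\xi\|^{1-\theta}$ to produce an exceptional set $\Xi_T$ coverable by $O(T^\eps)$ unit balls, outside of which $|\widehat{\mu_\psi}(\eta)| \leq T^{-\tau}$. Splitting the $\a$-sum according to whether $\eta_\a\in\Xi_T$, the contribution from $\a$ with $\eta_\a\notin\Xi_T$ is bounded immediately by $T^{-\tau}$ times the total Gibbs mass, which is already the desired polynomial decay for the good part.

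The core remaining task is to bound the total $\mu_\psi$-mass of cylinders $X_\a$ with $\eta_\a\in\Xi_T$ by $O(\|\xi\|^{-\delta})$ for some $\delta>0$. Covering $\Xi_T$ by unit balls $B_j$, $j\leq O(\|\xi\|^\eps)$, the condition $\eta_\a\in B_j$ becomes a joint constraint on the radial variable $\log|\lambda_\a(y_\a)|$ and on the directional variable $O_\a(y_\a)^T(\xi/\|\xi\|)$. I would reformulate the mass $\sum_{\a:\,\eta_\a\in B_j}\mu_\psi(X_\a)$ as a pairing $\int g_j\,\mathcal{L}_\psi^n h_j\, d\mu_\psi$ for smooth cutoffs $g_j,h_j$ that isolate the relevant radial and directional windows. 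Fourier-expanding these cutoffs in the radial variable then couples the sum to the twisted transfer operators $\mathcal{L}_{ib}$ at frequencies $|b|\asymp\|\xi\|^{1-\theta}$, and the spectral gap bound $\|\mathcal{L}_{ib}^n h\|_b \ll \rho^n|b|^\upsilon\|h\|_b$ yields the required polynomial mass bound provided $n\asymp\log\|\xi\|$ is tuned so that $\rho^n$ dominates the polynomial loss $|b|^\upsilon$. Summing over the $O(\|\xi\|^\eps)$ balls preserves the polynomial smallness, and combining with the flattening estimate completes the proof.

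The main obstacle is this Separation step: one must exploit a \emph{one-parameter} spectral gap (in the scalar variable $|\lambda_\a|^{ib}$) to control the \emph{vector-valued} displacement $\eta_\a$, whose direction carries all the rotational information $O_\a(y_\a)^T\xi$. The right maneuver is to absorb this directional information into the test function $h_j\in C^{1,b}(U)$; since Theorem~\ref{thm:flattening} tolerates up to $\|\xi\|^\eps$ many exceptional balls, one can afford $h_j$ whose Lipschitz constant grows linearly in $|b|$, and such growth is exactly what is absorbed by the polynomial factor $|b|^\upsilon$ in the spectral-gap bound together with the scaled norm $\|\cdot\|_b = \|\cdot\|_\infty + |b|^{-1}\sup\|D\cdot\|$.
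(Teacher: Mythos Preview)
Your overall architecture matches the paper's, and the spectral gap of $\mathcal{L}_{ib}$ is indeed the mechanism for the Separation step. However, there are two places where your proposal is either imprecise or misses a simplification that the paper exploits.

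\textbf{The Separation step is simpler than you think.} You identify as ``the main obstacle'' the need to control the vector-valued frequency $\eta_\a$, whose direction carries the rotational information $O_\a(y_\a)^T(\xi/\|\xi\|)$, and you propose to absorb this into the test function $h_j\in C^{1,b}(U)$. This maneuver is unnecessary. Because the IFS is conformal, $D_xf_\a=\lambda_\a(x)O_\a(x)$ with $O_\a(x)\in O(d)$, so
\[
\Big\|(D_{x_\b}f_\a)^T\frac{\xi}{\|\xi\|^{2/3}}\Big\| \;=\; |\lambda_\a(x_\b)|\cdot\|\xi\|^{1/3}
\]
depends only on the scalar $|\lambda_\a|$; the rotation drops out of the \emph{norm}. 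The paper exploits this by collapsing the exceptional set from Theorem~\ref{thm:flattening} to the radial direction: one defines $\mathrm{Bad}_{\xi,\b}$ to be the set of integers $n$ for which some $\zeta$ with $\|\zeta\|\in[n,n+1]$ lies in the exceptional set, which still has $O_\gamma(\|\xi\|^{\gamma/2})$ elements. The bad condition $\eta_\a\in\Xi_T$ then forces the purely one-dimensional constraint $\log|\lambda_\a(x_\b)|\in I_{n,\xi}$ for some $n\in\mathrm{Bad}_{\xi,\b}$, with $|I_{n,\xi}|\asymp 1/n\leq\|\xi\|^{-1/6}$. Mollifying $\chi_{I_{n,\xi}}$, Fourier-inverting in the single real variable $\log|\lambda_\a|$, and applying the spectral-gap bound to $\mathcal{L}_{2\pi i\eta}^{n(\xi)}(1)(x_\b)$ gives $\sum_{\a\leadsto\b}w_\a(x_\b)\chi_{[n,n+1]}(\|\eta_\a\|)=O(\|\xi\|^{-\gamma})$ with no directional bookkeeping whatsoever. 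By contrast, your proposed encoding of the directional window into $h_j$ would require $h_j$ to resolve generation-$n$ cylinders (since $O_\a$ depends on the word, not on the point $f_\a(x)$), and it is not clear such an $h_j$ stays in $C^{1,b}$ with usable norm.

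\textbf{The Averaging step requires localized measures, not $\mu_\psi$ itself.} You write that after linearization ``each integral becomes a rescaled Fourier coefficient of $\mu_\psi$,'' and then apply Theorem~\ref{thm:flattening} directly to $\mu_\psi$. But linearizing $f_\a$ on a subpiece produces a Fourier coefficient of the \emph{restriction} of $\mu_\psi$ to that subpiece, not of $\mu_\psi$. The paper makes this explicit via a two-scale decomposition: one first partitions $X_A$ into cylinders $X_\b$, $\b\in\cA_\xi$, of diameter $\asymp\|\xi\|^{-2/3}$, and only then iterates the transfer operator $n(\xi)$ times, linearizing each $f_\a$ around a basepoint $x_\b\in X_\b$. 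The resulting integrals are Fourier coefficients of the rescaled restrictions $\mu_{\b,\xi}$, and one must then verify (as the paper does in a separate proposition) that these measures are uniformly affinely non-concentrated with parameters \emph{independent of $\b$ and $\xi$}. This uniformity is essential: it is what allows Theorem~\ref{thm:flattening} to be applied with a single $\tau$ and a uniform $O(\|\xi\|^{\gamma/2})$ bound on $\#\mathrm{Bad}_{\xi,\b}$ across all $\b$. Your outline omits this step.
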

To the best of the authors' knowledge, Theorem \ref{thm:mainNonlinear} is the first polynomial Fourier decay result for self-conformal iterated function systems and Gibbs measures in arbitrary dimensions.

It is a well established fact that a suitable uniform non-integrability (UNI) condition can imply the existence of a spectral gap for a twisted transfer operator. See for instance the works of Naud \cite{Naud-Cantor} and Stoyanov \cite[Theorem 1.1]{Stoyanov} who adapted Dolgopyat's method \cite{Dolgopyat} to prove spectral gap results. We also refer the reader to the works of Avila, Gou\"ezel and Yoccoz \cite{AGY}, Sarkar-Winter \cite{SarkarWinter} and Li-Pan \cite{LiPan} for similar spectral gap results. The uniform non-integrability condition we will work with is the following definition. It is inspired by the UNI condition introduced by Li and Pan \cite[Lemma 4.5]{LiPan} in the context of Patterson-Sullivan measures.

\begin{definition}[UNI]\label{def:UNI}
Let $\Phi$ be a conformal IFS satisfying the strong separation condition and $\Sigma_{A}$ be a subshift of finite type. We say that the pair $(\Phi,\Sigma_{A})$ satisfies the \textit{Uniform Non-Integrability} (UNI) condition if  there exists $\epsilon_{0}>0$ such that the following holds for infinitely many $n\in \N$: There exists $x\in X_{A}$ such that for any unit vector $e\in\mathbb{R}^{d}$ there exists $\a_{1},\a_{2}\in \cW_{A}\cap \cA^{n}$ such that: 
    \begin{itemize}
    \item We have $$\left|\partial_{e}\left(\log |\lambda_{\a_1}(x)|-\log |\lambda_{\a_2}(x)|\right)\right|\geq \epsilon_{0}.$$
        \item $\a_1\leadsto x$ and $\a_2\leadsto x$.
    \end{itemize} 
\end{definition} 

Roughly speaking, the UNI condition holds if for infinitely many $n$ one can find an $x\in \Sigma_{A}$ at which we can observe non-linearity in all directions. We include some further discussion on this definition and some examples of iterated function systems satisfying it at the end of this subsection. The significance of our UNI condition is demonstrated by the following proposition.

\begin{prop}[Spectral gap] \label{prop:spectralgap} 
Let $\Phi$ be a conformal IFS satisfying the strong separation condition and $\Sigma_{A}$ be a subshift of finite type. Suppose that $(\Phi,\Sigma_{A})$ satisfies the UNI condition, then for any $C^{1}$ potential $\psi$ satisfying $P(\psi)=0,$  there exists $0 < \rho < 1$ such that for all large enough $|b|$, $n \in \N$ and $h \in C^{1,b}(U)$, we have
$$\|\mathcal{L}_{ib}^{n} h\|_{b} \ll \rho^{n}|b|^{1/2}\|h\|_b.$$ In particular, if $(\Phi,\Sigma_{A})$ satisfies the UNI condition, then for any $C^{1}$ potential $\psi$ satisfying $P(\psi)=0$ the triple $(\Phi,\Sigma_{A},\psi)$ has a spectral gap.
\end{prop}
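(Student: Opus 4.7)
The plan is to establish Proposition \ref{prop:spectralgap} via Dolgopyat's method, adapted to the higher-dimensional conformal setting in the style of Naud \cite{Naud-Cantor}, Stoyanov \cite{Stoyanov}, and the Li-Pan \cite{LiPan} treatment of the Patterson-Sullivan analogue. After the standard reduction (conjugating $\psi$ by the leading eigenfunction of the untwisted operator $\mathcal{L}_{0}$) one may assume $\mathcal{L}_{0}1 = 1$ and $\mathcal{L}_{0}^{*}\mu_{\psi}=\mu_{\psi}$. The strategy is then to prove, for $|b|$ large, an $L^{2}(\mu_{\psi})$-contraction of a suitable iterate $\mathcal{L}_{ib}^{n_{0}}$, and transfer it into the $\|\cdot\|_{b}$ norm via a Lasota-Yorke step at the cost of a factor $|b|^{1/2}$.

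First, I would fix $n_{0}$ in the infinite set of UNI-admissible depths, sufficiently large that the cylinders $X_{\a}$ with $|\a|=n_{0}$ have small diameter. Writing
\begin{equation*}
    \mathcal{L}_{ib}^{n_{0}}h(x)=\sum_{\substack{\a\in\cW_{A}\cap\cA^{n_{0}}\\ \a\leadsto x}} w_{\a}(x)\,e^{ib\log|\lambda_{\a}(x)|}\,h(f_{\a}(x)),
\end{equation*}
and taking $|b|$ large, the phases $b\log|\lambda_{\a}|$ oscillate at non-trivial scale across $\supp\mu_{\psi}$. At the UNI-point $x_{0}$ the hypothesis supplies, for every unit $e\in\R^{d}$, admissible pairs $\a_{1}(e),\a_{2}(e)\in \cW_{A}\cap\cA^{n_{0}}$ with $\a_{i}(e)\leadsto x_{0}$ and
\begin{equation*}
    \bigl|\partial_{e}\bigl(\log|\lambda_{\a_{1}(e)}(x_{0})|-\log|\lambda_{\a_{2}(e)}(x_{0})|\bigr)\bigr|\geq \epsilon_{0}.
\end{equation*}
By $C^{1}$-continuity in the base point this bound persists uniformly in $e$ on a neighborhood $B(x_{0},r_{0})$, and by topological mixing of $\Sigma_{A}$ one can propagate the non-integrability to all of $\supp\mu_{\psi}$ by prepending common words of appropriate depth. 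Consequently, in every ball of radius $\asymp |b|^{-1}$, the sum defining $\mathcal{L}_{ib}^{n_{0}}h$ contains a pair of terms whose phases cannot add coherently, producing substantial partial cancellation.

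Second, I would convert this pointwise cancellation into an $L^{2}$ contraction via a Dolgopyat majorant construction. Cover $\supp\mu_{\psi}$ by balls $B$ of radius $\asymp |b|^{-1}$, attach to each a direction $e_{B}$ and the associated UNI pair $(\a_{1}(e_{B}),\a_{2}(e_{B}))$, and build a dampening function $\beta=\beta_{b}:U\to[\eta,1]$ taking the value $\eta<1$ on a definite $\mu_{\psi}$-fraction of each $B$. The associated operator
\begin{equation*}
    \mathcal{M}H(x):=\sum_{\substack{\a\in\cW_{A}\cap\cA^{n_{0}}\\ \a\leadsto x}} w_{\a}(x)\,\beta(f_{\a}(x))\,H(f_{\a}(x))
\end{equation*}
then satisfies the pointwise majorization $|\mathcal{L}_{ib}^{n_{0}}h|\leq \mathcal{M}H$ for every $H\geq|h|$ in a suitable Lipschitz cone (whose invariance uses the Gibbs property \eqref{eq:GibbsProperty} and conformality), together with the $L^{2}$ estimate $\int(\mathcal{M}H)^{2}\,d\mu_{\psi}\leq \rho_{0}\int H^{2}\,d\mu_{\psi}$ for some $\rho_{0}=\rho_{0}(\eta,\epsilon_{0})<1$. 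Iterating $N$ times yields $\|\mathcal{L}_{ib}^{Nn_{0}}h\|_{L^{2}(\mu_{\psi})}\ll \rho_{0}^{N/2}\|h\|_{b}$, and a standard Lasota-Yorke argument then transfers this into a $\|\cdot\|_{b}$ bound at the cost of a factor $|b|^{1/2}$, giving the stated conclusion with $\rho:=\rho_{0}^{1/(2n_{0})}$.

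The main obstacle is the construction of the dampening $\beta$ in dimension $d\geq 2$. In dimension one a pigeonhole on dyadic intervals suffices; for $d\geq 2$, the UNI pairs $(\a_{1}(e_{B}),\a_{2}(e_{B}))$ must be selected coherently as $e_{B}$ varies from ball to ball, the cancellation must be integrated over $(d-1)$-dimensional tubes transverse to each $e_{B}$, and the ``bad'' sets on which coherent superposition could still occur must be controlled simultaneously across all unit directions. The UNI hypothesis in Definition \ref{def:UNI} is formulated precisely so that pairs realizing the required non-integrability are available for every unit direction at a common depth $n_{0}$, which is what enables the construction to close in arbitrary dimension.
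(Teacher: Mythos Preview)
Your proposal is correct and follows essentially the same route as the paper's proof: reduce to $\mathcal{L}_{0}1=1$, build Dolgopyat dampening operators $\mathcal{N}_{b}^{J}$ on a cover of $\supp\mu_{\psi}$ by balls of radius $\asymp|b|^{-1}$, prove cone invariance, pointwise domination $|\mathcal{L}_{ib}^{N}u|\le\mathcal{N}_{b}^{J}H$, and an $L^{2}(\mu_{\psi})$-contraction, then iterate and pass back to $\|\cdot\|_{b}$ via Lasota--Yorke type inequalities at the cost of $|b|^{1/2}$.

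One point worth flagging: the higher-dimensional obstacle you describe, namely selecting UNI pairs ``coherently as $e_{B}$ varies'' and integrating over $(d-1)$-dimensional tubes, is handled more simply in the paper than you anticipate. For each center $y_{l}$ the paper chooses a single auxiliary point $x_{l}'\in B(y_{l},\tfrac{\epsilon_{1}D}{|b|})$ with $x_{l}'\in X_{A}$ at a definite distance from $y_{l}$, applies UNI to the \emph{single} direction $e_{0}=(y_{l}-x_{l}')/\|y_{l}-x_{l}'\|$, and runs a one-variable argument along the segment $t\mapsto y_{l}+t(x_{l}'-y_{l})$ to force a definite phase difference at either $y_{l}$ or $x_{l}'$; continuity then gives cancellation on a full sub-ball $B(x_{l}',\tfrac{\epsilon_{2}}{|b|})$. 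So no coherent selection across balls or transverse tube integration is needed; the UNI-for-every-direction hypothesis is used only locally, one direction per ball. Your sketch of propagating UNI from the single point $x_{0}$ to all of $X_{A}$ via topological mixing is exactly what the paper does (in its Appendix~\ref{appendix:Weak uni}) to upgrade to the Li--Pan form of UNI before running the Dolgopyat argument.
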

The proof of Proposition \ref{prop:spectralgap} is provided in Appendix \ref{appendix:Spectral gap}. Our proof of Proposition \ref{prop:spectralgap} relies heavily on arguments due to Li and Pan \cite{LiPan}, and to Naud \cite{Naud-Cantor}.

Given a Gibbs measure $\mu_{\psi}$ associated to some $C^1$ potential $\psi,$ it can be shown that $\mu_{\psi}$ is the Gibbs measure for another $C^{1}$ function $\psi'$ that satisfies $P(\psi')=0.$ Combining this observation with Theorem \ref{thm:mainNonlinear} and Proposition \ref{prop:spectralgap} yields the following statement. 

 \begin{thm}
		\label{thm:UNINonlinear}
		Let $\Phi$ be a conformal IFS satisfying the strong separation condition and $\Sigma_{A}$ be a subshift of finite type. Suppose that $(\Phi,\Sigma_{A})$ satisfies the UNI condition. Then any Gibbs measure $\mu_{\psi}$ that is uniformly affinely non-concentrated has polynomial Fourier decay.	
	\end{thm}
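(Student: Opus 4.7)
The plan is to combine Proposition \ref{prop:spectralgap} with Theorem \ref{thm:mainNonlinear} after a short reduction to a cohomologous normalized potential. Proposition \ref{prop:spectralgap} requires $P(\psi) = 0$, while the hypothesis of Theorem \ref{thm:UNINonlinear} makes no such assumption, so the first step is to replace $\psi$ by a $C^1$ potential $\psi'$ with $P(\psi') = 0$ and $\mu_{\psi'} = \mu_\psi$; once this is done, the rest is a matter of feeding one result into the other.

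For the reduction I would invoke the classical Ruelle--Perron--Frobenius theorem applied to the (untwisted) transfer operator $\mathcal{L}_\psi$: under the strong separation and $C^1$ hypotheses there is a strictly positive $C^1$ eigenfunction $h \colon U \to \R_{>0}$ satisfying $\mathcal{L}_\psi h = e^{P(\psi)} h$. Setting
\[ \psi'(x) := \psi(x) + \log h(x) - \log h(Tx) - P(\psi), \]
where $T$ denotes the piecewise $C^1$ expanding map conjugate to the shift on $\Sigma_A$, one checks directly that $\psi' \in C^1(U)$ and $P(\psi') = 0$. Because $\psi$ and $\psi'$ differ by a coboundary plus an additive constant, the Gibbs property \eqref{eq:GibbsProperty} yields $\mu_{\psi'} = \mu_\psi$. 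In particular, $\mu_{\psi'}$ remains uniformly affinely non-concentrated.

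With this reduction in hand, the rest is immediate. The UNI condition is a property of the pair $(\Phi, \Sigma_A)$ alone and does not involve the potential, so it remains in force for the triple $(\Phi, \Sigma_A, \psi')$. Proposition \ref{prop:spectralgap} therefore yields a spectral gap for the twisted transfer operators $\mathcal{L}_{ib}$ associated to $\psi'$ on $C^{1,b}(U)$, and Theorem \ref{thm:mainNonlinear} applied to $\mu_{\psi'}$ delivers polynomial Fourier decay for $\mu_{\psi'} = \mu_\psi$, as required.

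The only step requiring actual work is the normalization, and its ingredients---existence of a strictly positive $C^1$ eigenfunction, the identity $P(\psi') = 0$, and invariance of the Gibbs measure under coboundaries---are entirely classical in the thermodynamic formalism of $C^1$ expanding conformal IFSs (cf.\ \cite{Bowen,ParryPollicott,Walters}). I therefore do not expect any genuine obstacle in executing this argument; essentially all of the mathematical content has already been packaged in Proposition \ref{prop:spectralgap} and Theorem \ref{thm:mainNonlinear}.
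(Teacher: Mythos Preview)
Your proposal is correct and matches the paper's own argument essentially verbatim: the paper states just before Theorem~\ref{thm:UNINonlinear} that one replaces $\psi$ by a $C^1$ potential $\psi'$ with $P(\psi')=0$ and the same Gibbs measure, then combines Proposition~\ref{prop:spectralgap} with Theorem~\ref{thm:mainNonlinear}. Your added detail on constructing $\psi'$ via the Ruelle--Perron--Frobenius eigenfunction is the standard way to carry out the normalization the paper leaves implicit.
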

In \cite{LiPan} Li and Pan introduced the following seemingly stronger uniform non-integrability condition: there exists $r>0$ and $\epsilon_{0}>0$ such that for any large $n\in \N$, any $x\in X_A$ and unit vector $e\in\mathbb{R}^{d}$, there exist $\a_1, \a_2\in \cW_{A}\cap \cA^n$ such that:
    \begin{itemize}
    \item For all $y\in B(x,r)$ we have $$\left|\partial_{e}\left(\log |\lambda_{\a_1}(y)|-\log |\lambda_{\a_2}(y)|\right)\right|\geq \epsilon_{0}.$$
        \item For all $y\in B(x,r)$ we have $\a_1\leadsto y$ and $\a_2\leadsto y$.
    \end{itemize} 
This condition and our UNI condition are in fact equivalent. We include a proof of this fact in Appendix \ref{appendix:Weak uni}. This proof follows an argument of Avila, Gou\"{e}zel and Yoccoz \cite{AGY}. It will be this latter condition that we will use in Appendix \ref{appendix:Spectral gap} during our proof of Proposition \ref{prop:spectralgap}.

	\begin{remark}
 \label{Remark}
		\begin{itemize}
	\item[(a)] Our proof of Theorem \ref{thm:mainNonlinear} relies on showing that the derivatives arising from our IFS satisfy a non-concentration property. We prove that this property holds using our spectral gap assumption. A similar non-concentration statement is needed in the proof of Theorem \ref{thm:PS}. However, in our proof of Theorem \ref{thm:PS} we do not use a spectral gap for the transfer operator and instead use explicit formulas for the derivatives directly.
    It would be interesting if the non-concentration of the derivatives in the setting of Theorem~\ref{thm:mainNonlinear} could be also deduced directly from a suitable UNI condition without using a spectral gap assumption. 
    We leave this as an interesting problem to pursue.
 
	\item[(b)] In dimension $d=1$, Theorem \ref{thm:mainNonlinear} was proved in the work of the third author and Stevens \cite{SahlstenStevens}, but as in Bourgain and Dyatlov \cite{BourgainDyatlov}, it relied on Bourgain's sum product theorem. Here we do not need it, so Theorem \ref{thm:mainNonlinear} also gives a new proof in the case $d = 1$.  
 
 \item[(c)]
 For dimension $d = 2$, Theorem \ref{thm:mainNonlinear} extends the recent work of Leclerc \cite{Leclerc-JuliaSets}, regarding Fourier dimension of Julia sets, which used a spectral gap result for twisted transfer operators provided by Oh-Winter \cite{OhWinter}. 
    More recently, in dimension $d=2$, the spectral gap theorem of Oh-Winter was generalized in~\cite{AlgomRHWang-SelfConformalDimensionTwo} to prove polynomial Fourier decay for stationary measures for nonlinear $C^\omega$ conformal IFSs without any separation assumptions for the underlying IFS. For comparison, Theorem~\ref{thm:mainNonlinear} only requires the IFS to be $C^2$ and it applies in arbitrary dimensions and to general Gibbs measures.

 \item[(d)] 
Theorem \ref{thm:UNINonlinear} is a statement that guarantees polynomial Fourier decay for Gibbs measures assuming the subshift satisfies a non-linearity assumption that is formulated in terms of the Jacobian matrices $\{D_{x}f_{a}=\lambda_{a}(x)O_{a}(x)\}_{a\in \cA}$. In particular, we use properties of the contraction ratios $\{\lambda_{a}(\cdot)\}_{a\in \cA}$ to prove our result.
In the spirit of Theorem~\ref{thm:selfsim rotations}, it would be interesting if an analogous result to Theorem~\ref{thm:mainNonlinear} could be proved using properties of the orthogonal matrices $\{O_{a}(\cdot )\}_{a\in \cA}$ instead.

	\end{itemize}
	\end{remark}

 \subsubsection{Examples}
    In this subsection, we discuss examples where the hypotheses of Theorems~\ref{thm:mainNonlinear} and~\ref{thm:UNINonlinear} are satisfied.

  Whether a Gibbs measure satisfies the uniformly affinely non-concentrated property is a well investigated problem. In the context of the full shift, the simplest example of a Gibbs measure satisfying this property is when $\psi$ is the geometric potential, $\Phi$ satisfies the strong separation condition and $\dim_{H}(X_{\Phi})>d-1$. Under these assumptions it is well known that $\mu_{\psi}(B(x,r))\asymp r^{\dim_{H}(X)}$ for any $x\in X_{\Phi}.$ It is straightforward to apply these properties to prove that $\mu_{\psi}$ satisfies the uniformly affinely non-concentrated property.
 In the case of arbitrary Gibbs measures we refer the reader to \cite{Dasetal,Urbanski} and the references therein. In particular, Theorem 1.5 from \cite{Urbanski} (see also Theorem 1.10 from \cite{Dasetal}) shows that under natural assumptions on the conformal IFS and the Gibbs measure $\mu_{\psi},$ if the attractor of the IFS is not contained in a proper real analytic submanifold of $\R^{d}$ then the Gibbs measure $\mu_{\psi}$ will be uniformly affinely non-concentrated.

Turning our attention to our UNI hypothesis, the condition in Definition~\ref{def:UNI} is inspired by the work of Li and Pan \cite{LiPan}, where a similar condition was introduced in the context of PS measures on limit sets of cusped, geometrically finite, groups of M\"obius transformations.
In light of the close relationship between such limit sets and attractors of conformal IFSs, it seems possible to extend the techniques of the proof of Theorem~\ref{thm:mainNonlinear} to recover the polynomial Fourier decay result of Theorem~\ref{thm:PS} as well as its generalizations to other Gibbs measures. To this end, the results of~\cite{LiPan} can be adapted to construct a coding of limit sets of convex cocompact groups using suitable Markov systems satisfying our UNI hypotheses. 
 As noted before, this method yields worse dependence of the rates of decay on geometric non-concentration properties of the measure due to our use of spectral gap results in the proof.
 
 Nonetheless, motivated by this connection, we provide explicit further examples of attractors of conformal IFSs in $\R^d$ satisfying the hypotheses of Theorem~\ref{thm:UNINonlinear}. The examples we give are based on certain involution M\"obius transformations on $\R^d$ (see \eqref{Eq:nonlinear conformal hd} below). We note that, in dimension $d \geq 3$, Liouville's Theorem (see \cite[Chapter 5]{IwaMar}) states that \textit{any} conformal mapping is either of this form or is a similarity map, so the class of maps in this example is not very restrictive. Our examples will be formulated in the special case when $\Sigma_{A}$ is the full shift $\cA^{\N}$, but they can easily be adapted to incorporate more general subshifts of finite type. 

\begin{example}
\label{Example:UNI example}
Let $\Phi=\{f_{a}:[0,1]^{d}\to [0,1]^{d}\}_{a\in \cA}$ be a conformal IFS. Assume that $\Phi$ contains $d+1$ contractions of the form
\begin{equation}
\label{Eq:nonlinear conformal hd}
f_{a_i}(x)=t_{a_i}+\frac{\lambda_{a_i}O_{a_i}(x-u_{a_i})}{|x-u_{a_{i}}|^{2}}
\end{equation} where $t_{a_i}\in \mathbb{R}^{d},$ $u_{a_i}\in \mathbb{R}^{d}\setminus [0,1]^{d},$ $\lambda_{a_i}\in (0,1),$ and $O_{a_i}\in O(d).$ Let us also suppose that the set of vectors $\{u_{a_i}\}_{i=1}^{d+1}$ do not belong to an affine subspace of $\mathbb{R}^d$. Maps satisfying \eqref{Eq:nonlinear conformal hd} have the following useful property: Let $x\in [0,1]^{d}$ and $e\in \mathbb{R}^{d}$ be a unit vector. Then for each $a_{i}$ we have 
\begin{equation}
\label{Eq:partial derivative for conformal maps}
\partial_{e}\log|\lambda_{a_{i}}(x)|=-\frac{2\langle x-u_{a_i},e\rangle}{|x-u_{a_i}|^{2}}.
\end{equation} This fact was proved in \cite[Lemma 6.75]{LiPan}. It can be shown to hold by a direct calculation. Let us also assume now that $\Phi$ contains a similarity given by:
$$f_{a_s}(x)=\lambda_{a_s}O_{a_{s}}x+t_{a_s}$$ for some $t_{a_s}\in \mathbb{R}^{d},$ $\lambda_{a_s}\in (0,1),$ and $O_{a_s}\in O(d).$

We now bring our attention to proving that the UNI condition is satisfied by any conformal IFS satisfying the above conditions. Let $n\in \N$ be arbitrary and $x\in X_{\Phi}.$ Then for any unit vector $e\in \mathbb{R}^{d}$ and $a_i$ we have 
\begin{align*}
\partial_{e}\left(\log|\lambda_{a_{s}^{n-1}a_{i}}(x)|-\log|\lambda_{a_{s}^{n}}(x)|\right)
=&\partial_{e}\left(\log \lambda_{a_{s}}^{n-1}|\lambda_{a_{i}}(x)|-\log \lambda_{a_{s}}^{n}\right)\\
=&\partial_{e}\log|\lambda_{a_{i}}(x)|\\
=& -\frac{2\langle x-u_{a_i},e\rangle}{|x-u_{a_i}|^{2}},
\end{align*} where the final line follows by \eqref{Eq:partial derivative for conformal maps}. In particular, the expression obtained for $\partial_{e}(\log|\lambda_{a_{s}^{n-1}a_{i}}(x)|-\log|\lambda_{a_{s}^{n}}(x)|)$ does not depend upon $n$. Thus, the UNI condition is satisfied if for any unit vector $e\in\mathbb{R}^{d}$ we can find $a_i$ such that $$\langle x-u_{a_i},e\rangle\neq 0.$$ Such an $a_i$ has to exist because of our assumption that the set of vectors $\{u_{a_i}\}_{i=1}^{d+1}$ do not belong to an affine subspace of $\mathbb{R}^d.$

We emphasise that it is possible to choose the contractions in this example in such a way that $\dim_{H}(X_{\Phi})>d-1$ and the strong separation condition is satisfied by $\Phi$. Thus by the discussion in above, the Gibbs measure corresponding to the geometric potential would in this case provide an explicit example where Theorem \ref{thm:UNINonlinear} guarantees polynomial Fourier decay.
\end{example}

We end this discussion with several remarks and questions. Firstly, it is easy to check that the set of $C^2$ conformal IFSs satisfying our UNI condition is open, while the above example shows that this set is non-empty. Moreover, close examination of Example~\ref{Example:UNI example} suggests that failure of the UNI condition places algebraic relations among the maps in the IFS. It is thus natural to expect that our UNI condition holds generically.

This discussion, along with Liouville's Theorem, raises the following question: Suppose $\Phi$ is a conformal IFS acting on $\R^{d}$ for $d\geq 3$. If $\Phi$ contains a map of the form described by \eqref{Eq:nonlinear conformal hd}, then must $\Phi$ satisfy the UNI condition? If this question could be answered in the affirmative, then Theorem \ref{thm:UNINonlinear} would go a long way towards classifying polynomial Fourier decay for Gibbs measures in dimension $d\geq 3.$

Finally, in view of the restriction placed by Liouville's Theorem on conformal IFSs in dimensions $d\geq 3$, it is natural to consider generalizations of Theorem~\ref{thm:UNINonlinear} by relaxing the conformality hypothesis to non-algebraic systems provided by \textit{Quasiconformal-} or \textit{Quasiregular Mappings}, with the price of lack of regularity of the distortion function $x \mapsto \log |f_a'(x)|$.
The study of such systems has received a lot of attention in recent years; cf. \cite{IwaniecMartin2,Meyer,OkuyamaPankka}.
In particular, it is of interest to extend the proofs of Theorems~\ref{thm:mainNonlinear} and~\ref{thm:UNINonlinear} to these closely related systems thus yielding a wider source of examples of polynomial Fourier decay; see Section \ref{sec:UQR} for further discussion of this direction.

\subsection{Non-conformal systems}\label{sec:nonconf}
Our techniques also allow us to prove polynomial Fourier decay for certain stationary measures arising from non-conformal iterated function systems. It is well-known that such measures are more difficult to analyze than self-similar and self-conformal measures. Indeed the question of whether such a measure is uniformly affinely non-concentrated is far more delicate than in the self-similar and self-conformal setting. 
As such our knowledge of the Fourier decay properties of stationary measures for non-conformal iterated function systems is far less extensive. 

A result of Solomyak \cite{Solomyak2} demonstrates that amongst parameterised families of self-affine measures one should expect a typical member to have polynomial Fourier decay. A result of Li and the third author \cite{LiSahlsten-SelfAffine} established polynomial Fourier decay for self-affine measures when the underlying IFS satisfies suitable algebraic assumptions. To the best of the authors' knowledge, our main result (Theorem \ref{Theorem:Non-conformal}) is the first general result in this setting that establishes polynomial Fourier decay using the non-linearity within the IFS.

The family of non-conformal IFSs we consider is defined as follows. Suppose that, for each $1\leq i\leq d$, we are given a $C^{2}$ IFS $\set{f_{a}^{(i)}:a\in \cA_i}$ acting on $[0,1].$ Given $\overline{a}=(a_1,\ldots,a_d)\in \cA_1\times \cdots \times\cA_{d}$ we define the map $$F_{\overline{a}}(x_1,\ldots,x_{d})=(f_{a_{1}}^{(1)}(x_1),\ldots,f_{a_{d}}^{(d)}(x_{d})).$$ Using this notation, given any $\cA\subset \cA_1\times \cdots \times\cA_{d}$ we can define a new IFS $\set{F_{\overline{a}}}_{\overline{a}\in \cA}$ acting on $[0,1]^{d}$. We will call such an IFS a \textit{restricted product IFS}.

Given a restricted product IFS $\set{F_{\overline{a}}}_{\overline{a}\in \cA}$ and a probability vector $\p=(p_{\overline{a}})_{\overline{a}\in \cA},$ we let $\mu_{\p}$ denote the unique Borel probability measure satisfying $$\mu_{\p}=\sum_{\oa\in \cA}p_{\oa}F_{\oa}\mu_{\p}.$$ We call $\mu_{\p}$ a stationary measure. 

In this setting, it is reasonable to expect that if the IFS is suitably non-linear then every stationary measure will have polynomial Fourier decay. In this section our notion of non-linearity comes from the following definition.

\begin{definition}
Let $\Phi=\{f_a\}$ be a $C^2$ IFS acting on $[0,1]$ with attractor $X_{\Phi}$. We say that $\Phi$ satisfies the Uniform Non-integrability (UNI) condition if there exists $\epsilon_{0}>0$ such that for all $n\in\N$ sufficiently large there exists $\a,\b\in \cA^{n}$ satisfying $$\epsilon_{0}<\left|(\log |f_{\a}'|- \log |f_{\b}'|)'(x)\right|\footnote{This definition often also includes the upper bound $\left|(\log |f_{\a}'|- \log |f_{\b}'|)'(x)\right|<\epsilon_{1}$ for all $x\in X_{\Phi}$ for some $\epsilon_{1}>0$. However the existence of such an $\epsilon_{1}>0$ follows automatically from the regularity of the underlying IFS. Hence we omit it.}$$ for all $x\in X_{\Phi}.$
\end{definition}

The Fourier decay properties of measures arising from iterated function systems satisfying this UNI condition have been well studied (see \cite{AlgomRHWang-Polynomial,BakerSahlsten,SahlstenStevens}). 
In the context of one dimensional $C^{2}$ iterated function systems, it is known (see for example \cite{AlgomRHWang-Polynomial}) that if an IFS $\Phi$ cannot be $C^{2}$ conjugated to a linear IFS, i.e. there exists no $C^2$ diffeomorphism $h:\R\to \R$ such that $\{g_{a}:=h\circ f_{a}\circ h^{-1}\}_{a\in \cA}$ satisfies $g_{a}''(x)=0$ for all $x\in X_{\Phi}$ and $a\in \cA$, then $\Phi$ satisfies the UNI condition. Using the argument given in the proof of Proposition \ref{prop:weaktostrongUNI} it can be shown that this UNI condition is equivalent to that given in Definition \ref{def:UNI} when $d=1.$ This justifies our terminology. We choose to use this definition as it coincides with the UNI condition used in \cite{BakerSahlsten}.

Our main result in this section is the following statement.

\begin{thm}
    	\label{Theorem:Non-conformal}
Let $\set{F_{\overline{a}}}_{\overline{a}\in \cA}$ be a restricted product IFS. Assume that the following properties are satisfied: \begin{enumerate}
	\item For each $1\leq i\leq d$, the one-dimensional IFS $\set{f_{a}^{(i)}}_{a\in \cA_i}$ satisfies the strong separation condition.
	\item For each $\overline{a}\in \cA$ and $1\leq i\leq d$, there exists $\oa'\in \cA$ such that $a_{j}=a_{j}'$ for all $j\neq i$ and $a_{i}\neq a_{i}'.$ 
	\item For each $1\leq i\leq d$, there exists $\oa\in \cA$ such that the IFS $$\set{f_{a'_{i}}^{(i)}: (a_1',\ldots,a_d')\in \cA,\, a'_{j}=a_{j}\, \forall j\neq i}$$ satisfies the UNI condition. 
\end{enumerate}
Then, every stationary measure has polynomial Fourier decay.
\end{thm}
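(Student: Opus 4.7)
The proof implements the three-step template of Section~\ref{sec:discussion of method}, with the product structure of the maps $F_{\oa}$ dictating how each step is carried out. A preparatory step is to establish that $\mu_{\p}$ is uniformly affinely non-concentrated, so that Theorem~\ref{thm:flattening} applies. Non-concentration along axis-aligned hyperplanes follows from hypotheses (1) and (2), since each coordinate marginal is a one-dimensional self-conformal measure with a power-law lower bound on ball measures (by strong separation), and hypothesis (2) rules out degeneracies in individual coordinates. For oblique hyperplanes, a Fubini slicing argument reduces the claim inductively to dimension $d-1$, since conditional measures on axis-parallel slices are themselves stationary measures of lower-dimensional restricted product IFSs satisfying the same hypotheses.

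For Step 1 (Averaging), fix $\xi \in \R^{d}$ with $\|\xi\|$ large and let $i_{0}$ satisfy $|\xi_{i_{0}}| \geq \|\xi\|/\sqrt{d}$. Applying the stationarity relation at an outer level $n$, decomposing further at an inner level $m$ with cylinders of diameter $\ll \|\xi\|^{-1/2}$, and linearizing each diagonal map $F_{\oa}$ on each inner cylinder yields
\[
\widehat{\mu_{\p}}(\xi) \approx \sum_{\oa \in \cA^{n},\, \b \in \cA^{m}} p_{\oa}p_{\b}\, e^{2\pi i \phi(\oa,\b)}\, \widehat{F_{\b}\mu_{\p}}\!\big(D_{\oa,\b}^{T}\xi\big),
\]
where $D_{\oa,\b}$ is the diagonal matrix whose $j$-th diagonal entry is the derivative of the composition of $j$-th-coordinate maps prescribed by $\oa$, evaluated at the center of $F_{\b}([0,1]^{d})$. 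Squaring and applying Cauchy--Schwarz in $\oa$ gives
\[
|\widehat{\mu_{\p}}(\xi)|^{2} \lesssim \sum_{\b} p_{\b}\, \E_{\oa}\Big[\big|\widehat{F_{\b}\mu_{\p}}(D_{\oa,\b}^{T}\xi)\big|^{2}\Big].
\]
Step 2 (Flattening) is then immediate from Theorem~\ref{thm:flattening}: at scale $T \asymp \|\xi\|^{\kappa}$ for small $\kappa > 0$, the exceptional set $E_{\b}(T)$ where $|\widehat{F_{\b}\mu_{\p}}|$ exceeds $T^{-\tau}$ is covered by $O(T^{\epsilon})$ unit balls.

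For Step 3 (Separation), we use hypothesis (3), which supplies UNI for the one-dimensional IFS in the $i_{0}$-th coordinate direction (for some freezing of the remaining coordinates). Applying Proposition~\ref{prop:spectralgap} to this one-dimensional conformal IFS provides a spectral gap for the associated twisted transfer operator, which in turn yields multiscale non-concentration for the distribution of the $i_{0}$-th diagonal entry of $D_{\oa,\b}$ as $\oa$ ranges over $\cA^{n}$ with weights $p_{\oa}$. Hypothesis (2) ensures that this one-dimensional non-concentration lifts to the ambient IFS, since arbitrary subwords in the $i_{0}$-coordinate can be realized within full words of $\cA^{n}$ with the other coordinates left free. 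Projecting $E_{\b}(T)$ onto the $i_{0}$-axis produces a union of unit intervals of total length $O(T^{\epsilon})$, so the $\oa$-weight of words for which the $i_{0}$-th component of $D_{\oa,\b}^{T}\xi$ lands in this projection is $O(T^{\epsilon - \tau'})$ for some $\tau' > 0$. Summing over $\b$ and optimizing $n$, $m$, and $\kappa$ in $\|\xi\|$ yields the desired polynomial Fourier decay.

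The main obstacle is establishing uniform affine non-concentration of $\mu_{\p}$ along oblique hyperplanes: the product structure is not directly useful there, and the inductive Fubini argument must carefully propagate the quantitative bounds of Definition~\ref{def:non-conc} through the recursion. A secondary challenge is converting the directional UNI hypothesis into multiscale non-concentration of the derivative distribution needed for Step 3; this passes through the one-dimensional spectral gap of Proposition~\ref{prop:spectralgap}, which is applicable because each coordinate IFS is one-dimensional, conformal, and strongly separated.
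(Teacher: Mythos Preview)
Your high-level three-step template is right, but two of the steps have genuine gaps, and the paper's proof is organized quite differently precisely to avoid them.

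\textbf{Affine non-concentration of $\mu_{\p}$.} You propose to show that the full $d$-dimensional stationary measure $\mu_{\p}$ is uniformly affinely non-concentrated and then apply Theorem~\ref{thm:flattening} to it (or to $F_{\b}\mu_{\p}$). Your inductive Fubini argument for oblique hyperplanes rests on the claim that ``conditional measures on axis-parallel slices are themselves stationary measures of lower-dimensional restricted product IFSs satisfying the same hypotheses.'' This is not correct: the conditional measures depend on the symbolic sequence in the conditioned coordinates and are \emph{random} measures, not stationary measures for any fixed IFS. The paper explicitly flags this difficulty in Section~\ref{sec:nonconf} (``the question of whether such a measure is uniformly affinely non-concentrated is far more delicate'') and sidesteps it entirely. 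Instead, the paper disintegrates $\mu_{\p}$ along the non-dominant directions as $\mu_{\p}=\int \mu_{\b}\times \delta_{x_{\b}}\,dQ(\b)$ (Proposition~\ref{Prop:Disintegration prop}), reducing to the Fourier transform of the one-dimensional random measures $\mu_{\b}$, and then proves the much easier one-dimensional non-concentration for (rescalings of) these (Proposition~\ref{Prop:Non-conformal affinely non-concentrated}). No non-concentration of the ambient $\mu_{\p}$ is ever needed.

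\textbf{Separation via a deterministic spectral gap.} You invoke Proposition~\ref{prop:spectralgap} for the one-dimensional sub-IFS supplied by hypothesis~(3). But that sub-IFS is obtained by freezing the other coordinates at one specific $\oa$, whereas the derivative distribution you need to control is the one induced by the full weighted sum over $\oa\in\cA^{n}$. Since $\cA\subsetneq \cA_1\times\cdots\times\cA_d$ in general, the admissible $i_0$-th letters at each step depend on the other coordinates, so the relevant transfer operator is a \emph{random} composition $\cL_{ib}^{(\sigma^{n-1}(\b))}\circ\cdots\circ\cL_{ib}^{(\b)}$, not a single fixed $\cL_{ib}^{n}$. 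Hypothesis~(2) does not say that arbitrary $i_0$-words can be realized with the other coordinates free; it only guarantees $\#\cA_{\ob}\geq 2$ for each $\ob$. The paper handles this via a spectral gap \emph{on average} for random compositions (Proposition~\ref{Prop:Spectral gap on average}), imported from~\cite{BakerSahlsten}, and this is where hypothesis~(3) actually enters: it guarantees that one of the randomly occurring operators satisfies UNI, and a large-deviations argument shows this operator appears often enough in a typical composition. Proposition~\ref{prop:spectralgap} alone does not suffice here.

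In short, the paper's route---disintegrate first, then run the three-step scheme on one-dimensional random measures with a random spectral gap---is not merely a stylistic choice; it is what makes both the flattening and the separation steps go through.
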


To help illuminate Theorem \ref{Theorem:Non-conformal} we include an example of an IFS satisfying its assumptions: Let
$$\Phi:=\left\{f_{(i,j)}(x,y)=\left(\frac{1}{x+i},\frac{1}{y+j}\right):(i,j)\in \{(1,2),(1,3),(2,1),(2,3),(3,1),(3,2)\}\right\}.$$ Theorem \ref{Theorem:Non-conformal} implies that all of the stationary measures for this IFS have polynomial Fourier decay.

\begin{remark}
The significance of defining our restricted product IFSs using products of one-dimensional IFSs, as opposed to products of $d$-dimensional IFSs, is that this property will allow us to disintegrate a stationary measure into an integral of random one-dimensional measures. These random measures were studied in \cite{BakerSahlsten}. In particular, our proof of Theorem \ref{Theorem:Non-conformal} will rely upon a result from \cite{BakerSahlsten} which establishes a spectral gap for typical random compositions of transfer operators (see Proposition \ref{Prop:Spectral gap on average}).
Hence, a higher dimensional generalization of the results in~\cite{BakerSahlsten} would lead to a version of Theorem~\ref{Theorem:Non-conformal} for products of higher dimensional IFSs.

\end{remark}


\subsection{Applications}\label{sec:applications}
We now discuss some of the consequences of the results presented here to equidistribution of vectors in fractal sets and quantum chaos.

\subsubsection{Equidistribution of vectors on fractals}
\label{sec:equidistribution}

The study of Diophantine properties of typical points in the supports of dynamically defined measures has witnessed considerable activity in recent years; cf.~\cite{DavenportErdosLeVeque,SW,DGW,Host,PVZZ} for instance and references therein.

In dimension one, an underlying principle behind many of the results in the subject can be summarized as follows: the continued fraction expansion as well as the digit expansions in different bases of a typical typical point should be independent of one another.
Higher dimensional results also echo analogous principles.

This principle is exemplified by the following results.
Host's theorem~\cite{Host}, and its generalization in~\cite{HochmanShmerkin-Host}, assert that if multiplicatively independent integers $p,q\in\N$ and a $\times p$-invariant and ergodic measure $\mu$ on $[0,1)$ of positive entropy are given, then the $\times q$-orbit of $\mu$-almost every point is equidistributed with respect to Lebesgue measure; see also~\cite{Hochman-Host} for a Fourier analytic proof of this result.

A second example is given by a result of Simmons and Weiss~\cite{SW} asserting that if $\mu$ is a non-trivial self-similar measure on $[0,1)$, then the orbit of $\mu$-almost every point under the Gauss map is equidistributed towards the unique absolutely continuous Gauss measure.

Much less is known in higher dimensions. In~\cite{SW}, the authors obtain higher dimensional analogues of their aforementioned results for the Gauss map.
In~\cite{DGW} Dayan, Ganguly and Weiss show almost sure $A$-normality for certain self-affine measures with linear parts given by negative powers of $A$ and satisfying additional Diophantine conditions on their translation parts.
In both cases, the results are based on measure rigidity results for certain random walks on homogeneous spaces.

In this vein, Fourier decay has provided an important avenue for obtaining new results for measures that are not amenable to analysis by other methods.
Most notably, in~\cite{DavenportErdosLeVeque}, Davenport, Erd\Horig{o}s, and LeVeque (DEL) showed that any\footnote{The reference \cite{DavenportErdosLeVeque} considered the case of dimension $1$, see Appendix \ref{appendix:DELhigh} for a proof in dimension $d \geq 2$.} probability measure $\mu$ on $\R^d$ with polylogarithmic Fourier decay has that $\mu$-almost every $x$ is $A$-normal with respect to any expanding integer matrix $A$, i.e., the orbit $(A^n x)_{n\in \N}$ is equidistributed in the torus $\R^d/\Z^d$ when reduced $\mrm{mod}\; 1$; cf.~\cite{PVZZ} for a recent extension of this result.
Here, a matrix is expanding if its determinant has modulus larger than one.
In light of this criterion, our results provide numerous new examples where the above principle holds.

\begin{cor}\label{cor:mainEquidistribution} Let $\mu$ be any of the measures in the Theorems \ref{thm:selfsim}, \ref{thm:selfsim rotations} , \ref{thm:PS}, \ref{thm:mainNonlinear} or \ref{Theorem:Non-conformal}. Then, we have that $\mu$-almost every $x$ is $A$-normal with respect to any expanding integer valued matrix $A$.
\end{cor}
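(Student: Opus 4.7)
The plan is to reduce the statement directly to the higher-dimensional version of the Davenport–Erd\Horig{o}s–LeVeque (DEL) criterion. Recall that the classical DEL theorem asserts that any Borel probability measure $\mu$ on $[0,1]$ with $|\widehat{\mu}(\xi)| = \cO((\log|\xi|)^{-\kappa})$ for some $\kappa>0$ has the property that $\mu$-almost every $x$ is normal to every integer base $b\geq 2$. The higher-dimensional analogue, established in Appendix \ref{appendix:DELhigh}, states that any Borel probability measure on $\R^d$ with polylogarithmic Fourier decay has $\mu$-almost every $x\in\R^d$ being $A$-normal with respect to every expanding integer matrix $A$.

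Granting this appendix result, the corollary is immediate. Each of Theorems \ref{thm:selfsim}, \ref{thm:selfsim rotations}, \ref{thm:PS}, \ref{thm:mainNonlinear}, and \ref{Theorem:Non-conformal} establishes at least polylogarithmic Fourier decay for the associated measure $\mu$ — in the latter three cases even polynomial decay, which trivially implies the polylogarithmic rate. Hence in every case the DEL hypothesis is satisfied, and the higher-dimensional criterion yields the desired almost sure $A$-normality for every expanding $A$ with integer entries.

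The entire weight of the argument therefore lies in Appendix \ref{appendix:DELhigh}, and I expect that to be the only substantive step. The natural strategy is to follow the classical DEL approach: Weyl's equidistribution criterion reduces $A$-normality to showing that, for each non-zero $\xi\in\Z^d$, the exponential averages $\frac{1}{N}\sum_{n=0}^{N-1} e^{2\pi i\langle\xi, A^n x\rangle}$ tend to zero as $N\to\infty$ for $\mu$-almost every $x$. Squaring these sums, integrating against $\mu$, and expanding produces an $L^2$ bound involving $\widehat{\mu}$ evaluated at frequencies of the form $(A^m-A^n)^T\xi$. Since $A$ is expanding, these frequencies grow exponentially in $\max(m,n)$, so the polylogarithmic hypothesis furnishes a bound summable along a sufficiently sparse subsequence $N_k$ of $N$'s. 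A Borel–Cantelli argument combined with the countability of $\Z^d\setminus\{0\}$ then delivers the almost sure conclusion, with the passage from the subsequence $N_k$ to all $N$ handled by the standard monotonicity/gap control used in the one-dimensional DEL argument.
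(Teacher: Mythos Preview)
Your proposal is correct and matches the paper's approach exactly: the corollary is deduced immediately by combining the polylogarithmic (or stronger) Fourier decay from each of the cited theorems with the higher-dimensional Davenport--Erd\Horig{o}s--LeVeque criterion proved in Appendix~\ref{appendix:DELhigh}. Your sketch of the appendix argument---Weyl's criterion, the $L^2$ expansion producing $\widehat{\mu}$ at exponentially growing frequencies $(A^*)^m\xi - (A^*)^n\xi$, summability along a subsequence via a Queff\'elec--Ramar\'e type lemma, and the gap-control passage to all $N$---is also precisely what the paper does there.
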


We also note that Corollary~\ref{cor:mainEquidistribution} opens the possibility of generalizing the results of~\cite{PVZZ} to higher dimensions.

\subsubsection{Fractal Uncertainty Principles, spectral gaps, and scattering resonances}
\label{sec:FUP}

Our next application concerns essential spectral gaps for Laplacians on convex cocompact hyperbolic manifolds. To properly place our results in context, we introduce the relevant notions and previous progress on this problem.

Let $\Gamma$ be a convex cocompact subgroup of $\mrm{Isom}^+(\H^{d+1})$, $d\geq 1$, and denote by $M$ the quotient manifold $\H^{d+1}/\G$.
As before, we let $\d_\G$ denote the Hausdorff dimension of the limit set $\L_\G$ of $\G$.
We let $\Delta_M$ denote the Laplace-Beltrami operator on $M$ and for $\l\in \C$ with sufficiently large imaginary part, we define the resolvent of $\Delta_M$ by $R(\l) := (-\Delta_M - \frac{d^2}{4} - \lambda^2)^{-1} $.

A topic of very active interest concerns meromorphic continuation of the analytic family of operators $\l\mapsto R(\l)$ as well as the existence and location of its poles.
Results of this type have important applications to rates of decay of solutions to wave equations in quantum chaos as well as to rates of decay of correlation in the field of hyperbolic dynamics.
We refer the reader to~\cite{DyatlovZworski} for an introduction to the modern aspects of the topic.

In this regard, Patterson-Sullivan theory based on studying fractal geometric properties of PS measures supported by $\L_\G$ provides a powerful method for establishing such results.
It follows from this theory that $R(\l)$ is well-defined and analytic on the domain $Im(\l)>-\beta_{PS}$, for $\beta_{PS} = \max\set{0,d/2-\d_\G}$.
Moreover, $\l=-\beta_{PS}$ is the only pole for $R(\l)$ on the line $\l = -\beta_{PS}$.
The constant $\beta_{PS}$ is referred to as the \textit{Patterson-Sullivan (PS) gap}.
We say that $R(\l)$ has an \textit{essential spectral gap} of size $\beta \geq \beta_{PS}$ if $R(\l)$ admits a meromorphic continuation with at most finitely many poles to the domain $Im(\l) \geq -\beta$.

The topic of producing essential spectral gaps and studying their dependence on geometric invariants of the underlying manifold has received considerable interest in recent years.
When $\d_\G \leq d/2$, Naud~\cite{Naud-Cantor} in the case $d=1$ and Stoyanov~\cite{Stoyanov} in higher dimensions proved the existence of an essential spectral gap using a refinement of the Dolgopyat method~\cite{Dolgopyat}.
In a major breakthrough, Bourgain and Dyatlov~\cite{BourgainDyatlov-Annals} produced an improvement over the PS gap in the regime $\d_\G >d/2$ and $d=1$ using tools from harmonic analysis by proving a Fractal Uncertainty Principle for Ahlfors-David regular sets (see the definition \eqref{eq:FUPdef} below). Extending the results of~\cite{BourgainDyatlov-Annals} to higher dimensions remains a major open problem, due to counterexamples arising from affinely concentrated sets \cite{Dyatlov-IntroFUP}. See the work by Cohen \cite{Cohen-FUP} for a recent breakthrough on this question and e.g. related works \cite{BackusLengTao,ADM,CladekTao,HS}.

In \cite{DyatlovZahl}, Dyatlov and Zahl introduced the notion of \textit{Fractal Uncertainty Principle} (FUP) that provides a way to produce essential spectral gaps with explicit dependence on the geometry of $M$.
We recall this notion here.
Two sets $X,Y \subset \R^d$ are said to satisfy the \textit{generalized FUP} with exponent $\beta > 0$ if for any open set $U \subset \R^d \times \R^d$, compact subset $V \subset U$, smooth functions $\Phi \in C^3(U,\R)$ and $G \in C^1(U,\C)$ satisfying $\mathrm{supp} (G) \subset V$ and $\|\Phi\|_{C^3} + \|G\|_{C^1} \leq C_{\Phi,G}, \quad \inf |\partial_{xy}^2 \Phi | \geq C_{\Phi,G}^{-1},$ for some constant $C_{\Phi,G} > 0$, then for any $0 < \rho < 1$ and $h \ll 1$, we have 
\begin{equation}\label{eq:FUPdef}
    \|\1_{X(h^\rho)} \cB(h) \1_{Y(h^{\rho})}\|_{L^2(\R^d) \to L^2(\R^d)} \leq C h^{\beta},    
\end{equation}
 where $\cB(h)$ is the Fourier integral operator defined by
 \begin{align*}
     \cB(h)u(x) = \frac{1}{(2\pi h)^{d/2}} \int_{\R^d} \exp\Big(\frac{i\Phi(x,y)}{h}\Big) G(x,y) u(y) \, dy,
 \end{align*}
  for all $u\in L^2(\R^d)$ and $x \in \R^d$. In \cite{DyatlovZahl}, it is shown that if the generalized FUP holds for $X = Y = \Lambda_\Gamma$  with some exponent $\beta > 0$, then there is an essential spectral gap of size $\beta > 0$.
  In particular, it follows from their results that if $\d_\G\leq d/2$ and $\mu_\G$ has polynomial Fourier decay with rate $\kappa>0$, then the generalized FUP holds with exponent $\beta =\beta_{PS}+\e$, for a constant $\e>0$ depending explicitly on $\kappa$.

When $d = 1$, using Dolgopyat's method \cite{Dolgopyat}, Dyatlov and Jin \cite{DyatlovJin} proved that $\L_\G$ satisfies a generalized FUP with exponent $\beta = \beta_{PS}+\e$  for a constant $\eps > 0$ depending on the Hausdorff dimension of the limit set $\delta_\Gamma$ and the \textit{Ahlfors-David regularity constant} $C_\Gamma$ of the PS measure $\mu_\Gamma$. Here, $C_\G\geq 1$ is a constant satisfying 
\begin{equation}\label{eq:AD reg}
    C_{\Gamma}^{-1} r^{\delta_\Gamma} \leq \mu_\Gamma(B(x,r)) \leq C_\Gamma r^{\delta_\Gamma}, \quad r>0, x \in \mathrm{supp}( \mu).    
\end{equation}
Dependence on such constants is a byproduct of the use of Dolgopyat's method.

To remove the $C_\Gamma$ dependency, Bourgain and Dyatlov \cite{BourgainDyatlov} proved that, in the case $d=1$, the PS measures have polynomial Fourier decay with a rate depending only on the Hausdorff dimension $\d_\G$.
When $\d_\G\leq 1/2$, this in particular produced an essential spectral gap of size independent of the constant $C_\G$.
Their proof used Bourgain's sum-product theorem as well as the non-linearity of the action of $\G$ on its limit set $\L_\G$ by M\"obius transformations.
See also~\cite{LiNaudPan} for an extension of this result to the case of Schottky Kleinian groups in the case $d=2$.

In higher dimensions, Backus, Leng and Tao \cite{BackusLengTao} generalized the work of Dyatlov and Jin \cite{DyatlovJin} using Dolgopyat's method \cite{Dolgopyat}.
They obtained an improvement over the PS gap that depends on the non-concentration properties of $\mu_\G$ as well as the \textit{upper doubling constant constant} $C_D$ of the PS measure. 
Here, $C_D\geq 1$ is a constant such that
$$\mu_\Gamma(B(x,2r)) \leq C_D \mu_\Gamma(B(x,r)), \quad r>0, x \in \mathrm{supp} (\mu).$$
Note that we can always choose $C_D \leq 2^{\delta_\Gamma} C_\Gamma^2$, where $C_\G$ is as in~\eqref{eq:AD reg}.
See also~\cite{CladekTao} for related results by different methods.
However, the question of producing an improvement that is independent of such constant \`a la Bourgain and Dyatlov remains open in higher dimensions.

Now, the advantage of Theorem \ref{thm:PS} is that it relies on the $L^2$-flattening theorem and non-concentration estimates for PS measures while avoiding Dolgopyat's method. 
In particular, we are able to produce an improvement over the PS gap that is independent of the doubling constant $C_D$.

	\begin{cor}
		\label{cor:gap}
  Let $\Gamma$ be a discrete, Zariski-dense, convex cocompact, group of isometries of real hyperbolic space $\H^{d+1}$, $d\geq 1$.
 Assume $\delta_\Gamma \leq d/2$. Then there exists $\eps > 0$ such that the limit set $\Lambda_\Gamma$ satisfies the generalized Fractal Uncertainty Principle with exponent $\beta = \tfrac{d}{2}-\delta_\Gamma + \eps$ and that there exists an essential spectral gap of size $\tfrac{d}{2}-\delta_\Gamma + \eps$. Here $\eps$ depends on the constants $C$ and $\alpha$ such that the Patterson-Sullivan measure $\mu_\Gamma$ is $(C,\alpha)$-uniformly affinely non-concentrated, but not on the doubling constant $C_D$.
	\end{cor}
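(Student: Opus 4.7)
The plan is to deduce the corollary by concatenating Theorem~\ref{thm:PS} with the framework developed by Dyatlov and Zahl~\cite{DyatlovZahl} that converts polynomial Fourier decay of PS measures into a generalized Fractal Uncertainty Principle, and then converts the FUP into an essential spectral gap for the resolvent $R(\l)$.

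First, I would invoke the results of~\cite{Dasetal} to record that the Patterson--Sullivan measure $\mu_\G$ on the limit set of a Zariski-dense convex cocompact $\G$ is $(C,\alpha)$-uniformly affinely non-concentrated in the sense of Definition~\ref{def:non-conc}, for some constants $C \geq 1$ and $\alpha > 0$ attached to $\G$. Next, I would apply Theorem~\ref{thm:PS} to obtain a polynomial Fourier decay bound $|\widehat{\mu_\G}(\xi)| \leq C_0 |\xi|^{-\kappa}$ (and more generally for oscillatory integrals against $C^2$ phases with non-vanishing gradient on $\L_\G$). Crucially, as emphasized in Remark~\ref{rem:covers}(1), the rate $\kappa$ furnished by Theorem~\ref{thm:PS} depends only on the non-concentration parameters $(C,\alpha)$ and not on any doubling or AD-regularity constant for $\mu_\G$.

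With polynomial Fourier decay of $\mu_\G$ in hand, I would feed the rate $\kappa$ into the implication established in~\cite{DyatlovZahl}: for an affinely non-concentrated measure on $\L_\G$ with polynomial Fourier decay at rate $\kappa$, provided $\d_\G \leq d/2$, the limit set $\L_\G$ satisfies the generalized FUP~\eqref{eq:FUPdef} with exponent $\beta = \tfrac{d}{2} - \d_\G + \eps$, where $\eps>0$ can be taken to depend explicitly only on $\kappa$ and the non-concentration data. The Fourier-integral operator in~\eqref{eq:FUPdef} is handled by a stationary-phase/$TT^*$ decomposition that reduces matters to controlling $|\widehat{\1_{\L_\G(h^\rho)}\,d\mu_\G}|$ on scales dictated by $h$; the only inputs are the Fourier decay rate and the affine non-concentration bounds, so no doubling constant enters. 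Finally, the main theorem of~\cite{DyatlovZahl} converts such a generalized FUP for $\L_\G$ into an essential spectral gap for $R(\l)$ of size $\beta$, completing the corollary.

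The main obstacle, and the only delicate point, is bookkeeping in the FUP-from-Fourier-decay step: one must verify that the~\cite{DyatlovZahl} argument, when fed with an affinely non-concentrated measure, produces an $\eps$ that truly depends only on $(C,\alpha)$ and on $\kappa$, and not on the doubling constant $C_D$ which underlies the previous~\cite{BackusLengTao,DyatlovJin} approaches via Dolgopyat's method. Since our input already comes from the additive-combinatorial $L^2$-flattening theorem rather than sum-product, the constant $C_D$ never appears in the chain of constants once $\kappa$ has been fixed, and the corollary follows.
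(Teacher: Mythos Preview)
Your proposal is correct and matches the paper's own argument, which is embedded in the discussion preceding the corollary rather than given as a formal proof: Theorem~\ref{thm:PS} supplies polynomial Fourier decay of $\mu_\G$ with a rate $\kappa$ depending only on the non-concentration parameters $(C,\alpha)$, and then the Dyatlov--Zahl framework~\cite{DyatlovZahl} converts this into the generalized FUP with exponent $\beta_{PS}+\eps$ (with $\eps$ depending explicitly on $\kappa$) and hence into an essential spectral gap. One minor simplification: the paper records that the Dyatlov--Zahl step requires only $\d_\G\leq d/2$ and the Fourier decay rate $\kappa$ as inputs, so the dependence of $\eps$ on $(C,\alpha)$ factors entirely through $\kappa$ and you need not invoke affine non-concentration a second time at that stage.
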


 \begin{remark}\label{rem:FUP}
     Corollary~\ref{cor:gap} is a partial generalization of the results of Bourgain and Dyatlov to higher dimensions.
     A full generalization requires the removal of the dependence on the constant $C$ coming from uniform non-concentration.
     This can be achieved by removing the dependence of the function $\tau(\epsilon)$ in the flattening Theorem~\ref{thm:flattening} on $C$.
      \end{remark}

\subsection{Generalizations and future directions}

We end the introduction with some discussions of possible generalizations and future directions this work could take. 

\subsubsection{Lower regularity} Firstly given the interest in quantum chaos problems in variable curvature \cite{DyatlovZworski}, it would be interesting to obtain generalizations of our Fourier decay and FUP results for Patterson-Sullivan measures in this context, where one has to contend with the additional difficulty arising from lower regularity of the dynamics. 
In a similar vein, our proof of Theorem~\ref{thm:mainNonlinear} is restricted to $C^2$ IFSs, so it would also be interesting to have polynomial decay for $C^{1+\alpha}$ self-conformal IFSs, possibly using methods of Tsujii-Zhang~\cite{TsujiiZhang}. The problem of studying lower regularity also relates deeply to the UNI condition we use and going beyond the conformal category, which we discuss below.

\subsubsection{UNI condition and Julia sets in quasiregular dynamics}\label{sec:UQR}  Given that Liouville's theorem on the rigidity of conformal maps in dimensions $d \geq 3$, in developing analogues of complex dynamics in higher dimensions, the study of analogues of this theory for wider classes of maps known as quasiconformal- and quasiregular maps has received considerable interest; see e.g.  \cite{IwaMar,Meyer,OkuyamaPankka,Kangasniemi,KOPS} and references therein.
 In dimension $d = 2$, by a result of Oh and Winter \cite{OhWinter}, if $f : \mathbb{S}^2 \to \mathbb{S}^2$ is a hyperbolic rational map, then $\tau = \log |f'|$ satisfies a \textit{Non-Local Integrability} property on the Julia set $J_f$ (which is related to UNI, see \cite[Proposition 5.5]{Naud-Cantor}) if and only if $f$ is not conjugated to the power map $z \mapsto z^d$ for any integer $d \neq 0$. This was adapted by Leclerc \cite{Leclerc-JuliaSets} to prove polynomial Fourier decay for equilibrium states on Julia sets for such rational hyperbolic maps not conjugated to $z\mapsto z^d$.

In dimensions $d \geq 3$, analogues of hyperbolic rational maps are given by \textit{Uniformly Quasiregular (UQR) Maps}  $f : M \to M$ on a Riemannian manifold $M$ \cite{OkuyamaPankka,IwaniecMartin2}. These maps are known to have a fractal Julia set only when $M$ is a rational homology sphere \cite{Kangasniemi}. In this situation in $d = 3$, the analogues for the power maps $z \mapsto z^d$ come from certain Latt\`{e}s map constructions \cite{Meyer}.
In particular, it is natural to expect that maps not conjugated to such Latt\`{e}s maps provide a potential source of Julia sets with equilibrium states having polynomial Fourier decay. However, the problem with UQR maps is that they no longer satisfy the smoothness properties, which would allow us to define e.g. the UNI condition using $\partial_e \log |f'|$ as $\log |f'|$ is not generally even H\"older. Similar complications also appear in the case of Thurston maps of $\mathbb{S}^2$, where Li and Zheng \cite{LiZheng} generalized the work of Oh and Winter with a notion of $\alpha$-strong non-integrability condition allowing them to prove a spectral gap theorem. It would be an interesting problem to pursue these notions in dimensions $d \geq 3$ for Julia sets of UQR maps in order to prove and classify Fourier decay for the equilibrium states for these systems.

\subsubsection{Non-conformal measures} In a different direction, it would be interesting to extend our methods to obtain rates of decay for self-affine measures recovering and generalizing~\cite{LiSahlsten-SelfAffine}. Similarly, it would be interesting to see if the $L^2$ flattening approach could improve the results on polynomial decay for Furstenberg measures on projective spaces arising as stationary measures for random matrix products in the cases that have not yet been studied. 

\subsubsection{Self-similar measures} Given that we are able to get \textit{polylogarithmic} Fourier decay for a wide class of self-similar measures, this raises the question of whether our method can be pushed to obtain \textit{polynomial} decay for self-similar measures beyond known cases, see \cite{Sahlsten-survey} for further discussion of the existing examples.

\subsubsection{Beyond stationary measures} Finally, going beyond the deterministic category, there has recently been a lot of activity in the study of Fourier decay properties of randomly defined measures such as spatially independent martingales \cite{SuomalaShmerkin}, Liouville Quantum Gravity \cite{FalconerJin2019}, random multiplicative cascades \cite{GaVargas} and random Cantor measures in the study of Fractal Uncertainty Principles \cite{HanP}. It would be interesting to explore whether the statistical multiscale structure of these measures can be be used as a substitute for \textbf{Step 1} in our strategy towards studying rates of Fourier decay.

\subsection{Organisation of the article} 

  For convenience of the reader, the proofs of the results for different classes of measures appearing in the introduction are presented in such a way that they can be read independently of one another.
  
After collecting general preliminaries and notations in Section~\ref{Sec:preliminaries}, we prove Theorems~\ref{thm:selfsim} and~\ref{thm:selfsim rotations} in Section~\ref{sec:selfsim}.
Section \ref{sec:proofps} is dedicated to the case of Patterson-Sullivan measures where we prove Theorem~\ref{thm:PS}. In Section \ref{sec:proofnon-linear}, the general non-linear self-conformal case is established and, finally, in Section \ref{sec:proofnonconf}, we address the non-conformal case.

The reader will find that the overall structure of the proof in the case of self-similar, self-conformal, and non-conformal systems is very similar due to the presence of a natural underlying symbolic coding of the dynamics.
In the case of PS measures, we do not rely on any coding and instead prove Theorem~\ref{thm:PS} directly using dynamics of the geodesic flow as a substitute for the shift.
However, the strategy remains the same in all cases.

The article has four appendices establishing for completeness some results not readily available in the literature.
In Appendix~\ref{appendix:DELhigh}, we prove a higher dimensional generalization of the Davenport-Erd\Horig{o}s-LeVeque criterion for equidistribution which yields Corollary~\ref{cor:mainEquidistribution} when combined with our results on Fourier decay.  Appendix~\ref{appendix:Multinomial} is dedicated to the proof of an auxiliary estimate on multinomial distributions required for the proof of the self-similar case.
In Appendix~\ref{appendix:Spectral gap}, we provide a proof of Proposition~\ref{prop:spectralgap} on the spectral gap of twisted transfer operators.
Finally, in Appendix~\ref{appendix:Weak uni} we prove Proposition \ref{prop:weaktostrongUNI} which establishes equivalence of the UNI criterion we use to the one introduced by Li and Pan \cite{LiPan} in their work on exponential mixing.

\begin{acknowledgement}
    S.B. is supported by an EPSRC New Investigator Award (EP/W003880/1). O.K. is partially supported under NSF grant DMS-2247713 and DMS-2337911. T.S. is supported by the Academy of Finland via the project \emph{Quantum chaos of large and many body systems}, grant Nos. 347365, 353738. The proof in Appendix~\ref{appendix:DELhigh} was done by the third author and Jonathan Fraser in an unpublished preprint, and we thank Jonathan for allowing us to include it in the appendix. We also thank Jialun Li for comments on an earlier draft.
\end{acknowledgement}

\section{Preliminaries}
\label{Sec:preliminaries}
In this section, we collect some notation and technical results that we use throughout the article.

\subsection{Preliminaries for iterated function systems and thermodynamic formalism.}

Given a finite set $\cA$ we let $\cA^{*}=\cup_{n=1}^{\infty}\cA^{n}$ denote the corresponding set of finite words. Given $\a=(a_1,\ldots,a_n)\in \cA^{*}$ we let $|\a|$ denote its length and let $\a^{-}=(a_1,\ldots,a_{n-1})$ denote $\a$ with the last digit removed. Given $\a,\b\in \cA^{*}$ we let $\a\wedge \b$ denote the maximal common prefix of $\a$ and $\b$. We define $\a\wedge \b$ analogously for $\a,\b\in \cA^{\N}.$

Suppose now that we are given an IFS $\set{f_{a}}_{a\in \cA}$ then given $\a=(a_1,\ldots,a_n)\in \cA^{*}$ we let $$f_{\a}=f_{a_1}\circ \cdots \circ f_{a_n}.$$ Suppose in addition that our IFS consisted of similarities so $f_{a}(x)=r_{a}O_{a}x+t_{a}$ for some $|r_{a}|\in (0,1),$ $O_{a}\in O(d)$ and $t_{a}\in \R^{d},$ then given $\a=(a_1,\ldots,a_n)\in \cA^{*}$ we let $$r_{\a}=\prod_{i=1}^{n}r_{a_{i}} \quad \textrm{ and }\quad O_{\a}=O_{a_1}\cdots O_{a_n}.$$ Similarly, suppose we are given a probability vector $\p=(p_{a})_{a\in \cA}$ then we for any $\a\in \cA^{*}$ we let $$p_{\a}=\prod_{i=1}^{n}p_{a_i}.$$

Suppose now that $\set{f_a}_{a\in \cA}$ is a $C^2$ conformal IFS acting on $[0,1]^{d}$. Then, there exists $C>0$ such that $$\|D_{x}f_{a}-D_{y}f_{a}\|\leq C\|x-y\|$$ for all $x,y\in [0,1]^{d}$ and $a\in \cA$. Moreover, we have the following stronger statement that follows from \cite[Lemma 7.3]{AngelevskaKaenmakiTroscheit}.
\begin{lem}
\label{Lemma:Bounded distortions}
   Let $\set{f_a}_{a\in \cA}$ be a $C^2$ self-conformal IFS acting on $[0,1]^{d}.$ Then there exists $C>0$ such that $$\|D_{x}f_{\a}-D_{y}f_{\a}\|\leq C\sup_{z\in [0,1]^{d}}\|D_{z}f_{\a}\|\|x-y\|$$ for all $x,y\in [0,1]^d$ and $\a\in \cA^{*}$.
\end{lem}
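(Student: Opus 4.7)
The plan is to reduce the estimate to two telescoping sums that converge geometrically in the length $|\a|$, using the chain rule together with the conformal splitting $D_z f_a = \lambda_a(z) O_a(z)$, where $\lambda_a(z)$ is a nonzero scalar and $O_a(z) \in O(d)$.

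Let $\a = (a_1,\ldots,a_n)$ and define $x_k := f_{a_{k+1}\cdots a_n}(x)$ (with $x_n = x$) and $y_k$ analogously. The chain rule combined with the multiplicativity of the orthogonal group gives
$$D_x f_\a = \lambda_\a(x)\, O_\a(x), \qquad O_\a(x) := \prod_{k=1}^n O_{a_k}(x_k) \in O(d),$$
so that
$$D_x f_\a - D_y f_\a = \bigl(\lambda_\a(x) - \lambda_\a(y)\bigr)\, O_\a(x) + \lambda_\a(y)\bigl(O_\a(x) - O_\a(y)\bigr).$$
Because $\|O_\a(x)\| = 1$ and $|\lambda_\a(y)| = \|D_y f_\a\| \leq \sup_z \|D_z f_\a\|$, it suffices to show that both $|\lambda_\a(x)/\lambda_\a(y) - 1|$ and $\|O_\a(x) - O_\a(y)\|$ are $\leq C\|x-y\|$ uniformly in $\a$.

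For the scalar part I would estimate
$$\bigl|\log|\lambda_\a(x)| - \log|\lambda_\a(y)|\bigr| \;\leq\; \sum_{k=1}^n \bigl|\log|\lambda_{a_k}(x_k)| - \log|\lambda_{a_k}(y_k)|\bigr|.$$
Each $\log|\lambda_a|$ is Lipschitz on $[0,1]^d$, since $\lambda_a$ is $C^1$ and bounded away from $0$ on this compact set (by non-vanishing plus continuity). Combined with the uniform contraction $|x_k - y_k| \leq \rho^{n-k}\|x-y\|$ for some $\rho \in (0,1)$ depending only on $\Phi$, this turns the right-hand side into a geometric series bounded by $\tfrac{L}{1-\rho}\|x-y\|$. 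Exponentiating (and using that $\|x-y\|$ is bounded on $[0,1]^d$) yields $|\lambda_\a(x) - \lambda_\a(y)| \leq C' |\lambda_\a(y)|\, \|x-y\|$.

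For the orthogonal part, I would telescope $O_\a(x) - O_\a(y)$ across its $n$ factors, inserting $O_{a_1}(x_1)\cdots O_{a_{k-1}}(x_{k-1})$ on the left and $O_{a_{k+1}}(y_{k+1})\cdots O_{a_n}(y_n)$ on the right of each difference $O_{a_k}(x_k) - O_{a_k}(y_k)$. Since products of orthogonal matrices have operator norm $1$, the bound reduces to $\sum_{k=1}^n \|O_{a_k}(x_k) - O_{a_k}(y_k)\|$. The map $z \mapsto O_a(z) = D_z f_a / \lambda_a(z)$ is Lipschitz (as a $C^1$ quotient of $C^1$ functions, with denominator bounded below), and the same geometric estimate $|x_k - y_k| \leq \rho^{n-k}\|x-y\|$ closes the sum. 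The principal technical point — and really the only place one uses more than the triangle inequality — is uniformity in $|\a|$: the chain-rule expansion produces $|\a|$ factors, and it is precisely the strict contraction of the $f_a$ that forces the telescoping sums to converge with a constant independent of $\a$. Without this geometric decay, a naive factor-by-factor estimate would produce a bound growing with $n$, whereas the conformal splitting channels the $\rho^{n-k}$-decay through both the scalar and orthogonal pieces simultaneously.
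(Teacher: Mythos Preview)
Your argument is correct. The paper does not actually prove this lemma but simply attributes it to Lemma~7.3 of Angelevska--K\"aenm\"aki--Troscheit; your self-contained proof via the conformal splitting $D_zf_a=\lambda_a(z)O_a(z)$, together with geometric telescoping in both the scalar and orthogonal factors, is the standard route and is more explicit than the paper's treatment.
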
Lemma \ref{Lemma:Bounded distortions} will play a crucial role in this paper when we want to linearize a function $f_{\a}.$ We also have the following well known result \cite[Lemma 2.2]{Patzschke}
\begin{lem}
\label{Lemma:Derivative and Diameter}
    Let $\set{f_a}_{a\in \cA}$ be a $C^2$ self-conformal IFS acting on $[0,1]^{d}.$ Then there exists $C>0$ such that for any $\a\in \cA^{*}$ we have $$C^{-1}\mrm{Diam}(f_{\a}([0,1]^{d})\leq \|D_{x}f_{\a}\|\leq C\mrm{Diam}(f_{\a}([0,1]^{d})$$ for all $x\in[0,1]^d.$
\end{lem}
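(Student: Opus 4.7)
The plan is to reduce both inequalities to the classical \emph{bounded distortion principle}: there exists $K \geq 1$, independent of $\a \in \cA^*$, such that $K^{-1} \leq \|D_x f_\a\|/\|D_y f_\a\| \leq K$ for all $x, y \in [0,1]^d$. I would establish this directly from the chain rule. Writing $\a = (a_1, \ldots, a_n)$ and $z_i(x) := f_{a_{i+1}} \circ \cdots \circ f_{a_n}(x)$, the telescoping identity
$$\log \|D_x f_\a\| = \sum_{i=1}^n \log \|D_{z_i(x)} f_{a_i}\|,$$
combined with the uniform Lipschitz regularity of $\log \|D_\cdot f_a\|$ on $[0,1]^d$ (which uses that each $|\lambda_a(\cdot)|$ is $C^1$ and, by conformality together with $f_a$ being a diffeomorphism, bounded below away from zero on the compact set $[0,1]^d$) and the uniform contractivity $\|z_i(x) - z_i(y)\| \leq \rho^{n-i} \|x-y\|$ for some $\rho \in (0,1)$ depending only on the IFS, yields via a geometric-series bound a uniform estimate $|\log \|D_x f_\a\| - \log \|D_y f_\a\|| \leq L\sqrt{d}/(1-\rho)$, where $L$ is the uniform Lipschitz constant. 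Exponentiating produces $K$.

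Granted bounded distortion, the upper bound is immediate from the mean value inequality applied along a straight line in $[0,1]^d$:
$$\diam{f_\a([0,1]^d)} \leq \sqrt{d} \sup_z \|D_z f_\a\| \leq K\sqrt{d}\, \|D_x f_\a\|$$
for any $x \in [0,1]^d$.

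For the lower bound I would use a volume comparison argument. Conformality gives the pointwise identity $|\det D_z f_\a| = \|D_z f_\a\|^d$ (since $D_z f_\a = \lambda_\a(z) O_\a(z)$ with $O_\a(z)$ orthogonal). By the change of variables formula and bounded distortion,
$$\mrm{vol}(f_\a([0,1]^d)) = \int_{[0,1]^d} \|D_z f_\a\|^d \, dz \geq K^{-d}\, \|D_x f_\a\|^d.$$
On the other hand, since $f_\a([0,1]^d)$ is contained in a ball of radius equal to its diameter (pick any basepoint in the image), its volume is at most $\omega_d\, \diam{f_\a([0,1]^d)}^d$, where $\omega_d$ denotes the volume of the Euclidean unit ball in $\R^d$. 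Combining the two estimates and taking $d$-th roots yields $\|D_x f_\a\| \lesssim \diam{f_\a([0,1]^d)}$.

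The only non-trivial step is the bounded distortion principle, and this is where I expect the technical work to lie; however, the chain rule argument sketched above is classical and self-contained once the conformality hypothesis is used to identify $\|D_x f_a\|$ with $|\lambda_a(x)|$ (ensuring smoothness of its logarithm). The remaining steps are entirely elementary and essentially use only the mean value inequality, the change of variables formula, and the determinant identity for conformal maps.
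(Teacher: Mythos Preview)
Your argument is correct and self-contained. The paper does not actually prove this lemma; it simply cites \cite[Lemma~2.2]{Patzschke} as a well-known result. Your route --- bounded distortion via the chain-rule telescoping, then mean value for one direction and a volume/Jacobian comparison (using $|\det D_z f_\a| = \|D_z f_\a\|^d$ from conformality) for the other --- is the standard one and works cleanly.

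One small presentational point: you have the labels ``upper'' and ``lower'' swapped relative to how the lemma is displayed. Your mean-value estimate $\mrm{Diam}(f_\a([0,1]^d)) \leq K\sqrt{d}\,\|D_x f_\a\|$ is the \emph{left} inequality $C^{-1}\mrm{Diam} \leq \|D_x f_\a\|$, and your volume argument yields $\|D_x f_\a\| \lesssim \mrm{Diam}$, which is the \emph{right} inequality. Both are established, so this is purely cosmetic. Also note that the change-of-variables step implicitly uses that $f_\a$ is injective on $[0,1]^d$; this holds because each $f_a$ (hence $f_\a$) is a contraction, and contractions are injective.
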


We also recall some useful properties of Gibbs measures. Suppose $\psi:U\to\R$ is a $C^{1}$ potential satisfying $P(\psi)=0$. Under this assumption it can be shown that a Gibbs measure $\mu_{\psi}$ satisfies the following invariance property:
\begin{equation}
\label{Eq:Gibbs measure invariance}
\int g\, d\mu_{\psi}=\int \sum_{a\leadsto x}w_{a}(x)g(f_{a}(x))\, d\mu_{\psi}
\end{equation}for all $g:X_{A}\to \C$ continuous. For a proof of \eqref{Eq:Gibbs measure invariance} in the symbolic setting we refer the reader to \cite{ParryPollicott}. Using the Gibbs property \eqref{eq:GibbsProperty}, it can be shown that Gibbs measures satisfy the following quasi-Bernoulli property. Let $\a,\b\in \cW_{A}$ be such that $\a\b\in \cW_{A}$. Then, there exists $C>1$ such that
\begin{align}
  \label{eq:quasibernoulli}
   C^{-1}\mu_{\psi}(X_\a)\mu_{\psi}(X_\b) \leq \mu_{\psi}(X_{\a\b}) \leq C \mu_{\psi}(X_\a)\mu_{\psi}(X_\b).
\end{align}

\subsection{Preliminaries on Patterson-Sullivan measures}
\label{sec:PS prelims}

We collect here some preliminary facts needed for the proof of Theorem~\ref{thm:PS}.

	\subsubsection{Convex cocompact manifolds}\label{sec:prelimscoco}

	The standard reference for the material in this section is~\cite{Bowditch1993}.
	Let $G $ denote the group of orientation preserving isometries of real hyperbolic space, denoted $\H^{d+1}$, of dimension $d\geq 1$.
    In particular, $G\cong \mrm{SO}(d+1,1)^0$.
	
    Fix a basepoint $o\in \H^{d+1}$. Then, $G$ acts transitively on $\H^{d+1}$ and the stabilizer $K$ of $o$ is a maximal compact subgroup of $G$.
    We shall identify $\H^{d+1}$ with $K\backslash G$.
    Denote by $A=\set{g_t:t\in\R}$ a one parameter subgroup of $G$ inducing the geodesic flow on the unit tangent bundle of $\H^{d+1}$.
    Let $M<K$ denote the centralizer of $A$ inside $K$.

    Let $\G<G$ be an infinite discrete subgroup of $G$.
    The limit set of $\G$, denoted $\L_\G$, is the set of limit points of the orbit $\G\cdot o$ on $\partial \H^{d+1}$.
    Note that the discreteness of $\G$ implies that all such limit points belong to the boundary.
    Moreover, this definition is independent of the choice of $o$ in view of the negative curvature of $\H^{d+1}$.
    We often use $\L$ to denote $\L_\G$ when $\G$ is understood from context.
    We say $\G$ is \textit{non-elementary} if $\L_\G$ is infinite.

    The \textit{non-wandering set} for the geodesic flow, denoted by $\Omega \subseteq G/\G$, is the closure of the set of periodic $A$-orbits.
	We say $\G$ is \textit{convex cocompact} if $\Omega$ is compact, cf.~\cite{Bowditch1993}.
    Denote by $N^+$ (resp. $N^-$) the expanding (resp. contracting) horospherical subgroup of $G$ associated to $g_t$, $t\geq 0$.

   Given $g\in G$, we denote by $g^+$ the coset of $P^-g$ in the quotient $P^-\backslash G$, where $P^-=N^-AM$ is the stable parabolic group associated to $\set{g_t:t\geq 0}$.
   Similarly, $g^-$ denotes the coset $P^+g$ in $P^+\backslash G$.
   Since $M$ is contained in $P^\pm$, such a definition makes sense for vectors in the unit tangent bundle $M\backslash G$.
   Geometrically, for $v\in M\backslash G$, $v^+$ (resp.~$v^-$) is the forward (resp.~backward) endpoint of the geodesic determined by $v$ on the boundary of $\H^{d+1}$.
   Given $x\in G/\G$, we say $x^{\pm}$ belongs to $\L$ if the same holds for any representative of $x$ in $G$; this notion being well-defined since $\L$ is $\G$ invariant.

\subsubsection{Patterson-Sullivan measures}\label{sec:prelimsps}

    The \textit{critical exponent}, denoted $\d_\G$, is defined to be the infimum over all real number $s\geq 0$ such that the Poincar\'e series
    \begin{align}\label{eq:Poincare}
        P_\G(s,o) := \sum_{\g\in\G} e^{-s d(o,\g\cdot o)}
    \end{align}
    converges.
    This exponent coincides with the Hausdorff dimension of the limit set as well as the topological entropy of the geodesic flow on the quotient orbifold $\H^{d+1}/\G$.
    We shall simply write $\d$ for $\d_\G$ when $\G$ is understood from context.
    
    The \textit{Busemann function} is defined as follows: given $x,y\in \H^{d+1}$ and $\xi\in \partial \H^{d+1}$, let $\g:[0,\infty)\to\H^{d+1}$ denote a geodesic ray terminating at $\xi$ and define
    \begin{equation*}
        \beta_\xi(x,y) = \lim_{t\to\infty}
        \dist (x,\g(t)) - \dist(y,\g(t)).
    \end{equation*}
    A $\G$-invariant conformal density of dimension $s$ is a collection of Radon measures $\set{\nu_x}$ on the boundary indexed by $ x\in \H^{d+1}$ which satisfy the following equivariance property:
    \begin{equation*}
        \g_\ast \nu_x = \nu_{\g x}, \qquad \text{and} \qquad
        \frac{d\nu_{y}}{d\nu_x}(\xi) = e^{-s\beta_{\xi}(x,y)}, \qquad
        \forall x,y\in\H^{d+1}, \xi\in \partial \H^{d+1}, \g\in\G.
    \end{equation*}

    Patterson~\cite{Patterson} and Sullivan~\cite{Sullivan} showed the existence of a unique (up to scaling) $\G$-invariant conformal density of dimension $\d_\G$, denoted $\set{\ps_x:x\in \H^{d+1}}$.
    These measures are known as the \textit{Patterson-Sullivan measures} (PS measures for short).
    We refer the reader to~\cite{Roblin} and~\cite{PaulinPollicottSchapira} and references therein for details of the construction in much greater generality.

\subsubsection{Stable and unstable foliations and leafwise measures}

    Recall that we fixed a basepoint $o\in \H^{d+1}$.
    In what follows, we use the following notation for pullbacks of the Patterson-Sullivan measures to orbits of $N^+$ under the visual map: for $x\in G/\G$,
    \begin{equation}\label{eq:unstable conditionals}
        d\mu_x^u(n) = e^{\d_\G \beta_{(nx)^+}(o,nx)}d\ps_o((nx)^+).
    \end{equation}
    These measures have simpler transformation formulas under the action of the geodesic flow and $N^+$ which makes them relatively easier to analyze than the Patterson-Sullivan measures directly.
    In particular, they satisfy the following equivariance property under the geodesic flow:
    \begin{equation}\label{eq:g_t equivariance}
        \mu_{g_tx}^u = e^{\d t} \mrm{Ad}(g_t)_\ast \mu_{x}^u.
    \end{equation}
    Moreover, it follows readily from the definitions that for all $n\in N^+$,
    \begin{align}\label{eq:N equivariance}
       (n)_\ast \mu_{nx}^u =  \mu_x^u,
    \end{align}
    where $(n)_\ast \mu_{nz}^u$ is the pushforward of $\mu_{nz}^u$ under the map $u\mapsto un$ from $N^+$ to itself.
    Finally, since $M$ normalizes $N^+$, these conditionals are $\Ad(M)$-invariant in the sense that for all $m\in M$,
    \begin{align}\label{eq:M equivariance}
        \mu^u_{mx}  = \Ad(m)_\ast\mu_x^u.
    \end{align}

\subsubsection{Norms, metrics, and Lie algebras}

    We denote by $\mf{n}^+$ and $\mf{n}^-$ the Lie algebras of $N^+$ and $N^-$ respectively. 
    We fix an isomorphism of $\mf{n}^+$ and $\mf{n}^-$ using a Cartan involution sending $g_{t}$ to $g_{-t}$. 
    Moreover, we fix an isomorphism of $\mf{n}^+$ (and hence of $\mf{n}^-$) with $\R^d$.
    Finally, we fix a Euclidean inner product on $\R^d\cong\mf{n}^+\cong \mf{n}^-$ denoted with $\langle \cdot,\cdot\rangle$ which is invariant by the Adjoint action of the group $M\cong \mrm{SO}_d(\R)$ and induces the respective metrics on the groups $N^+$ and $N^-$.
    Given $r>0$, we let $N^+_r$ (resp.~$P^-_r$) the neighborhood of identity of radius $r$ inside $N^+$ (resp.~$P^-:=MAN^-$).

\subsubsection{Local stable holonomy}\label{sec:holonomy}

    In this section, we recall the definition of (stable) holonomy maps which are essential for our arguments. We give a simplified discussion of this topic which is sufficient in our homogeneous setting.
    Let $x=u^-y$ for some $y\in \Omega$ and $u^-\in N^-_2$. Since the product map $N^-\times A \times M \times N^+\to G$ is a diffeomorphism near the identity, we can choose the norm on the Lie algebra so that the following holds. We can find maps $p^-:N_1^+\to P^-=N^-AM$ and $u^+:N_2^+\to N^+$ so that
\begin{align}\label{eq:switching order of N- and N+}
    nu^- = p^-(n)u^+(n), \qquad \forall n\in N_2^+.
\end{align}
Then, it follows by~\eqref{eq:unstable conditionals} that for all $n\in N_2^+$, we have
\begin{align*}
    d\mu_y^u(u^+(n)) = e^{\d \beta_{(nx)^+}(u^+(n)y,nx)}d\mu_x^u(n).
\end{align*}
Moreover, by further scaling the metrics if necessary, we can ensure that these maps are diffeomorphisms onto their images.
In particular, writing $\Phi(nx)=u^+(n)y$, we obtain the following change of variables formula: for all $f\in C(N_2^+)$,
\begin{align}\label{eq:stable equivariance}
    \int f(n) \;d\mu_x^u(n) = 
    \int f((u^+)^{-1}(n)) e^{-\d \beta_{\Phi^{-1}(ny)^+}(ny,\Phi^{-1}(ny))}\;d\mu_y^u(n).
\end{align}

\begin{remark}\label{rem:commutation of stable and unstable}
To avoid cluttering the notation with auxiliary constants, we shall assume that the $N^-$ component of $p^-(n)$ belongs to $N_2^-$ for all $n\in N_2^+$ whenever $u^-$ belongs to $N_1^-$.
\end{remark}


\subsection{Notational convention}

Throughout this article, given two quantities $A$ and $B$ we will write $A\ll B$ if there exists a constant $C>0$ such that $A\leq CB.$ When we want to emphasize that $C$ depends on another quantity, say $x$, we write $A\ll_{x} B.$ We also write $A=\cO(B)$ to mean $A\ll B$ and $A=\cO_{x}(B)$ to mean $A\ll_{x}B.$ When $A\ll B$ and $B\ll A$ then we write $A\asymp B.$


\section{Self-similar measures: Proof of Theorems \ref{thm:selfsim} and \ref{thm:selfsim rotations}}\label{sec:selfsim}

\subsection{Sketch of the proof}
Before giving the proof of Theorems \ref{thm:selfsim} and \ref{thm:selfsim rotations}, we provide a sketch of the argument. The ideas underpinning this argument are also used in our later proofs. Suppose that our IFS is acting on $\mathbb{R}$ and consists of two maps $\set{f_{a}(x)=r_{a}x+t_{a},f_{b}(x)=r_{b}x+t_{b}}.$ Moreover suppose that $C,l>0$ are such that 
$$\left|\frac{\log |r_{a}|}{\log |r_{b}|}-\frac{p}{q}\right|\geq \frac{C}{q^{l}}$$
for all $(p,q)\in \mathbb{Z}\times \mathbb{N}$. Let $\xi\in\mathbb{R}$ be given. We define the following cut-off set $$\cA_{\xi}:=\set{\a\in\cA^{*}:\prod_{j=1}^{|\a|}|r_{a_j}|< \frac{(\log |\xi|)^{3l}}{|\xi|}\textrm{ and }\prod_{j=1}^{|\a|-1}|r_{a_j}|\geq \frac{(\log \|\xi\|)^{3l}}{\|\xi\|}}.$$ Appealing to the self-similarity of $\mu$ it can be shown that
\begin{equation}
\label{Eq:sketchproofaveraging}
|\widehat{\mu}(\xi)|\leq \sum_{\a\in \cA_{\xi}}p_{\a}|\widehat{\mu}(r_{\a}\xi)|
\end{equation}
The significance of \eqref{Eq:sketchproofaveraging} is that it bounds $|\widehat{\mu}(\xi)|$ from above by an average of $|\widehat{\mu}(\cdot)|$ evaluated at different frequencies. 

At this point, we can employ our Diophantine assumption to show that $|r_{\a}\xi-r_{\b}\xi|\leq 1$ if and only if $r_{\a}=r_{\b}.$ This implies that the frequencies we are averaging over in \eqref{Eq:sketchproofaveraging} form a large well separated set. It is a consequence of this well-separated property and Theorem~\ref{thm:Banaji and Yu} below that the term $|\widehat{\mu}(r_{\a}\xi)|$ decays polylogarithmically in $|\xi|$. The contribution to $\sum_{\a\in \cA_{\xi}}p_{\a}|\widehat{\mu}(r_{\a}\xi)|$ coming from those remaining terms can be bounded by a probabilistic argument. 

What makes this argument more challenging in the full generality of Theorem \ref{thm:selfsim} is that to meaningfully apply the Diophantine assumption to bound the distance between  $r_{\a}\|\xi\|$ and $r_{\b}\|\xi\|$ for some $\xi\in\mathbb{R}^{d}$, we require the products determining $r_\a$ and $r_\b$ to have the same number of contractions coming from digits belonging to $\cA\setminus \set{a,b}$. This issue is overcome by a suitable conditioning argument.  

\subsection{Proof of Theorem \ref{thm:selfsim}}
	
We fix $\xi$ such that $\|\xi\|\gg 1$. Let $a_{1},a_{2}\in \cA$ be such that $\frac{\log |r_{a_{1}}|}{\log |r_{a_{2}}|}$ is Diophantine. Let $l\geq 2$ and $C>0 $ be the corresponding parameters such that $$\left|\frac{\log |r_{a_{1}}|}{\log |r_{a_{2}}|}-\frac{p}{q}\right|\geq \frac{C}{q^{l}}$$ for all $(p,q)\in \mathbb{Z}\times\mathbb{N}$. We define $$\tilde{\xi}=\frac{(\log \|\xi\|)^{3l}}{\|\xi\|}.$$ Given a word $\a\in \cA^{*}$ and $a\in \cA$ we let $$|\a|_{a}=\# \set{1\leq j\leq |\a|:a_{j}=a}.$$ At this point we fix $\delta,\epsilon>0$ to both be sufficiently small that 
\begin{equation}
	\label{Eq:Decay gap}
	\epsilon-1/2(\#\cA-1)+(1/2+\delta)(\#A-2)<0.
\end{equation} 
which allow us to define
$$n(\xi) := \left\lfloor\left(1-\frac{1}{(-\log \tilde{\xi})^{1/2-\delta}}\right)\frac{-\log \tilde{\xi}}{-\sum_{a\in \cA}p_{a}\log |r_{a}|}\right\rfloor \quad \text{and}\quad E_a(\xi) := \frac{(-\log \tilde{\xi})^{1/2+\delta}}{-\# \cA \log |r_a|}, \quad a \in \cA.$$
We say that a word $\a$ is \textit{good} if $\a\in \cA^{n(\xi)}$ and for all $a \in \cA$ we have the following strong upper and lower bounds for $|\a|_{a}$:
$$p_{a} n(\xi)-E_a(\xi)\leq |\a|_{a} \leq p_{a} n(\xi)+E_a(\xi).$$
Let $G_{\xi}$ be the set of good words.  The following lemma records some useful properties of good words.

\begin{lem}
\label{Lemma:Good words lemma} Let $\a\in G_{\xi}.$ Then 
$$\tilde{\xi} \leq |r_{\a}| \ll\tilde{\xi}e^{2(\log\|\xi\|)^{1/2+\delta}}$$
and there exists $C_1,C_2>0$ such that $$\sum_{\a\in \cA^{n(\xi) }:\a\notin G_{\xi}}p_{\a}\leq C_{1}e^{-C_{2}(\log\|\xi\|)^{2\delta}}$$
\end{lem}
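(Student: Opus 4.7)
The plan is to handle the two conclusions separately, with the first being an elementary bookkeeping exercise based on the definitions of $n(\xi)$ and $E_a(\xi)$, and the second a standard concentration inequality for the multinomial distribution that is deferred to Appendix~\ref{appendix:Multinomial}.

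For the bound on $|r_\a|$, I will start from the identity $\log|r_\a| = \sum_{a\in\cA} |\a|_a \log|r_a|$. Since $\log|r_a|<0$ for every $a$, the constraint $p_a n(\xi) - E_a(\xi) \leq |\a|_a \leq p_a n(\xi) + E_a(\xi)$ yields
\begin{equation*}
\sum_{a\in\cA} (p_a n(\xi) + E_a(\xi))\log|r_a| \;\leq\; \log|r_\a| \;\leq\; \sum_{a\in\cA} (p_a n(\xi) - E_a(\xi))\log|r_a|.
\end{equation*}
Writing $h := -\sum_a p_a\log|r_a|$, the definition of $n(\xi)$ gives $n(\xi) h = (-\log\tilde\xi) - (-\log\tilde\xi)^{1/2+\delta} + O(1)$, so that $\sum_a p_a n(\xi)\log|r_a| = \log\tilde\xi + (-\log\tilde\xi)^{1/2+\delta} + O(1)$. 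A direct computation shows that $E_a(\xi)\log|r_a| = -(-\log\tilde\xi)^{1/2+\delta}/\#\cA$ for each $a$, so summing yields $\sum_a E_a(\xi)\log|r_a| = -(-\log\tilde\xi)^{1/2+\delta}$. Substituting and using that $-\log\tilde\xi = \log\|\xi\| - 3l\log\log\|\xi\| \leq \log\|\xi\|$, the upper bound becomes $\log|r_\a| \leq \log\tilde\xi + 2(\log\|\xi\|)^{1/2+\delta} + O(1)$, and the lower bound becomes $\log|r_\a| \geq \log\tilde\xi + O(1)$. After absorbing the $O(1)$ error into the implicit constant in $\ll$, this gives exactly the claimed two-sided bound.

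For the probability estimate, the key observation is that under the measure $\a \mapsto p_\a$ on $\cA^{n(\xi)}$, the vector $(|\a|_a)_{a\in\cA}$ has a multinomial distribution with parameters $n(\xi)$ and $(p_a)_{a\in\cA}$, so each coordinate is $\mathrm{Bin}(n(\xi),p_a)$ with mean $p_a n(\xi)$ and standard deviation $O(\sqrt{n(\xi)}) = O(\sqrt{\log\|\xi\|})$. Since $E_a(\xi) \asymp (\log\|\xi\|)^{1/2+\delta}$ lies well above the standard deviation scale, a Hoeffding/Chernoff bound gives
\begin{equation*}
\mathbb{P}\bigl(\bigl||\a|_a - p_a n(\xi)\bigr| > E_a(\xi)\bigr) \;\ll\; \exp\!\bigl(-C \, E_a(\xi)^2 / n(\xi)\bigr) \;\ll\; \exp\!\bigl(-C'(\log\|\xi\|)^{2\delta}\bigr),
\end{equation*}
for some absolute constants $C,C'>0$. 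A union bound over the finitely many $a\in\cA$ and a change of constants absorbs the factor $\#\cA$, producing the desired exponential bound. I expect the detailed version of this Chernoff estimate to be precisely the content of Appendix~\ref{appendix:Multinomial}, so the proof here will simply cite it.

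The main (minor) obstacle is handling the rounding in the definition of $n(\xi)$ and the lower-order $O(1)$ terms so that the constants line up cleanly; otherwise both parts are routine. Neither part requires the Diophantine hypothesis on $\log|r_{a_1}|/\log|r_{a_2}|$, which is the essential input for the subsequent separation step rather than for this combinatorial lemma.
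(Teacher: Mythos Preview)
Your approach is essentially the same as the paper's: both compute $\log|r_\a|$ via $\sum_a |\a|_a \log|r_a|$ and plug in the bounds from the definition of $G_\xi$, and both invoke Hoeffding's inequality for the tail bound. Two small corrections are worth making.

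First, the lower bound $\tilde\xi \leq |r_\a|$ in the statement is sharp, not $\tilde\xi \ll |r_\a|$, and this sharpness is actually used in the next lemma (to guarantee that every descendant of a good word lying in $\cA_\xi$ belongs to $\tilde{\cA_\xi}$). Your write-up introduces an $O(1)$ on the lower side and then says it is absorbed into the implicit constant in $\ll$; that would only give $|r_\a|\gg\tilde\xi$. In fact no $O(1)$ appears: since $n(\xi)=\lfloor\cdot\rfloor$ and each $\log|r_a|<0$, the floor pushes $p_a n(\xi)\log|r_a|$ \emph{upward}, so the computation yields $\log|r_\a|\geq\log\tilde\xi$ exactly. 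The $O(1)$ only enters on the upper side, where the floor goes against you, and there it is legitimately absorbed into the $\ll$.

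Second, your expectation that Appendix~\ref{appendix:Multinomial} contains the Chernoff estimate is mistaken: that appendix proves the pointwise bound $\frac{n!}{\prod k_a!}\prod p_a^{k_a}\ll n^{-(\#\cA-1)/2}$ (Lemma~\ref{Lemma:Maximising multinomial}), which is used later in the proof of Theorem~\ref{thm:selfsim}. For the present tail bound the paper simply cites Hoeffding's inequality directly, exactly as you outline.
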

\begin{proof}
The proof is a direct calculation using the definitions.
Let $\a\in G_{\xi}$. By the definition of $G_{\xi}$ we have
\begin{align*}
\log |r_{\a}|
=\sum_{a\in \cA} \log|r_{a}|^{|\a|_{a}}
&\geq \sum_{a\in \cA} (p_{a}n(\xi)+E_a(\xi)) \log |r_{a}|
\\
	&\geq \sum_{a\in \cA}\log |r_{a}|
 \left( p_{a} \left(1-\frac{1}{(-\log \tilde{\xi})^{1/2-\delta}}\right) \frac{-\log \tilde{\xi}}{-\sum_{a\in \cA}p_{a}\log |r_{a}|}+E_a(\xi)\right)
 \\
	&=\log \tilde{\xi} \left(1-\frac{1}{(-\log \tilde{\xi})^{1/2-\delta}}\right) -(-\log \tilde{\xi})^{1/2+\delta}\\
	&=\log \tilde{\xi}.
\end{align*} 
This establishes the first property. For the second property, we similarly have
\begin{align*}|r_{\a}|=\prod_{a\in \cA}|r_{a}|^{|\a|_{a}}&\leq \prod_{a\in \cA}|r_{a}|^{p_{a}n(\xi)-E_a(\xi)}\\
	&\ll \exp \left({\sum_{a\in \cA}\log |r_{a}|\left(p_{a} \left(1-\frac{1}{(-\log \tilde{\xi})^{1/2-\delta}}\right) \frac{-\log \tilde{\xi}}{-\sum_{a\in \cA}p_{a}\log |r_{a}|}-E_a(\xi)\right)}\right)\\
	&=\exp\left({\log \tilde{\xi} \left(1-\frac{1}{(-\log \tilde{\xi})^{1/2-\delta}}\right)+(-\log \tilde{\xi})^{1/2+\delta}}\right)\\
	&=\exp\left({\log \tilde{\xi}+2(-\log \tilde{\xi})^{1/2+\delta}}\right).
\end{align*}
Recalling the definition of $\tilde{\xi}$, we have that $-\log \tilde{\xi}\leq \log \|\xi\|$. Combining this with the above implies $$|r_{\a}|\ll \tilde{\xi}e^{2(\log\|\xi\|)^{1/2+\delta}}.$$
The final statement in this lemma follows from an application of Hoeffding's inequality \cite{Hoe}.

\end{proof}
We consider the following cut off set 
\begin{align}\label{eq:selfsimilar - A_xi}
    \cA_{\xi}:=\set{\a\in \cA^{*}:|r_{\a}|< \tilde{\xi}\textrm{ and }|r_{\a^{-}}|\geq \tilde{\xi}},
\end{align}
and its subset 
\begin{align}\label{eq:self-similar tilde A_xi}
    \tilde{\cA_{\xi}}:=\set{\a\in \cA_{\xi}:a|_{1}^{n(\xi)}\in G_{\xi}}.    
\end{align}
 
\begin{lem}
	\label{Lemma:Good cutoff lemma}
The following properties hold:
\begin{itemize}
	\item If $\a\in \tilde{\cA_{\xi}}$ then for all $a\in \cA$ we have
	$$|\a|_{a}\geq p_{a}n(\xi)-E_a(\xi)$$ and there exists $C>0$ such that for all $a\in \cA$ we have
	$$|\a|_{a}\leq p_{a}n(\xi)+E_a(\xi)+C(\log \|\xi\|)^{1/2+\delta}.$$
	\item There exists $C_1,C_2>0$ such that $$\sum_{\a\in \cA_{\xi}:\a\notin \tilde{\cA_{\xi}}}p_{\a}\leq  C_{1}e^{-C_{2}(\log\|\xi\|)^{2\delta}}.$$
\end{itemize}
\end{lem}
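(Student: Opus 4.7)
The plan is to prove both items by direct computation using Lemma \ref{Lemma:Good words lemma}.

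For item (1), the lower bound is immediate: if $\a \in \tilde{\cA_\xi}$ then $\b := \a|_1^{n(\xi)} \in G_\xi$, and since appending symbols can only increase the count of any letter, $|\a|_a \geq |\b|_a \geq p_a n(\xi) - E_a(\xi)$ for every $a \in \cA$. For the upper bound I need to control $|\a| - n(\xi)$. Lemma \ref{Lemma:Good words lemma} applied to $\b$ gives $|r_\b| \ll \tilde\xi \exp(2(\log\|\xi\|)^{1/2+\delta})$, while the stopping condition $\a \in \cA_\xi$ forces $|r_{\a^-}| \geq \tilde\xi$ and hence $|r_\a| \geq r_{\min} \tilde\xi$, with $r_{\min} = \min_{a \in \cA} |r_a|$. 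Setting $r_{\max} = \max_{a \in \cA} |r_a|$ and comparing through the factorization $|r_\a| = |r_\b| \prod_{j=n(\xi)+1}^{|\a|} |r_{a_j}| \leq |r_\b| r_{\max}^{|\a|-n(\xi)}$, taking logarithms yields $|\a| - n(\xi) \ll (\log\|\xi\|)^{1/2+\delta}$. Combined with the $G_\xi$ upper bound $|\b|_a \leq p_a n(\xi) + E_a(\xi)$ and the trivial estimate $|\a|_a \leq |\b|_a + (|\a| - n(\xi))$, this gives the claimed upper bound.

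For item (2), I decompose $\cA_\xi \setminus \tilde{\cA_\xi}$ into two subsets: (a) those $\a$ with $|\a| < n(\xi)$, for which the prefix $\a|_1^{n(\xi)}$ does not exist and therefore cannot lie in $G_\xi$; and (b) those with $|\a| \geq n(\xi)$ but $\a|_1^{n(\xi)} \notin G_\xi$. The sum $\sum p_\a$ over subset (b) is dominated by the Bernoulli probability $\sum_{\b \in \cA^{n(\xi)}\setminus G_\xi} p_\b$, which is $\leq C_1 e^{-C_2 (\log\|\xi\|)^{2\delta}}$ by Lemma \ref{Lemma:Good words lemma}. For subset (a), setting $X_j := -\log|r_{a_j}|$ so that $(X_j)$ are i.i.d.\ bounded random variables under the Bernoulli measure induced by $\p$, the condition $|\a| < n(\xi)$ is equivalent to $X_1 + \cdots + X_{n(\xi)-1} > -\log\tilde\xi$. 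The mean of this partial sum is $(1 - (-\log\tilde\xi)^{-1/2+\delta} + O(1/n(\xi)))(-\log\tilde\xi)$, so the required deviation is of order $(-\log\tilde\xi)^{1/2+\delta} \asymp (\log\|\xi\|)^{1/2+\delta}$, which dominates the natural fluctuation scale $\sqrt{n(\xi)} \asymp \sqrt{\log\|\xi\|}$ in view of $\delta > 0$. Hoeffding's inequality then produces a bound of the form $\exp(-c(\log\|\xi\|)^{2\delta})$, as required.

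The main technical point -- really the only place where care is needed -- is the comparison argument at the end of item (1): it is crucial that the length overshoot $|\a|-n(\xi)$ occurs on exactly the same scale $(\log\|\xi\|)^{1/2+\delta}$ as the letter-count fluctuations controlled by the definition of $G_\xi$. If the overshoot were on a larger scale, the upper bound on $|\a|_a$ would be too loose to be compatible with $E_a(\xi)$ in the applications that follow. Once this matching of scales is verified, the rest of the argument is standard Hoeffding-type bookkeeping on the Bernoulli measure.
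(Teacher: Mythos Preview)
Your proof of item (1) is correct and matches the paper's argument. For item (2) your approach differs: you split $\cA_\xi \setminus \tilde{\cA_\xi}$ according to whether $|\a| < n(\xi)$ or $|\a| \geq n(\xi)$, handling the short words via a separate application of Hoeffding's inequality to the partial sums $\sum_j (-\log|r_{a_j}|)$. The paper instead argues as follows: since Lemma~\ref{Lemma:Good words lemma} gives $|r_\b| \geq \tilde\xi$ for every $\b \in G_\xi$, the $\cA_\xi$-cutoff of any infinite sequence whose first $n(\xi)$ letters lie in $G_\xi$ necessarily has length strictly greater than $n(\xi)$ and hence lies in $\tilde{\cA_\xi}$. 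This yields the exact identity
\[
\sum_{\a \in \cA_\xi \setminus \tilde{\cA_\xi}} p_\a \;=\; \sum_{\b \in \cA^{n(\xi)} \setminus G_\xi} p_\b,
\]
with the right-hand side already bounded by Lemma~\ref{Lemma:Good words lemma}. Your case (a) is automatically absorbed by this identity (any $\a \in \cA_\xi$ with $|\a| < n(\xi)$ has every length-$n(\xi)$ extension outside $G_\xi$, since extending only decreases $|r|$). Your extra Hoeffding step is therefore correct but unnecessary; the paper's route is shorter.
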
 
 \begin{proof}
Let $\a\in \tilde{\cA_{\xi}}.$ Then, the lower bound for $|\a|_{a}$ follows from the corresponding lower bound in the definition of a good word. The upper bound for $|\a|_{a}$ follows from the corresponding upper bound in the definition of a good word together with the fact that if $\a\in\tilde{\cA_{\xi}}$ then \begin{equation}
	\label{Eq:length bound}
	|\a|-|\a|_{1}^{n(\xi)}\ll (\log \|\xi\|)^{1/2+\delta}.
\end{equation} \eqref{Eq:length bound} follows since if $\b\in G_{\xi}$, then for $C$ sufficiently large we have 
$$|r_{\b}|\left(\max_{a\in \cA}\set{|r_{a}|}\right)^{C(\log \|\xi\|)^{1/2+\delta}}\ll \tilde{\xi}e^{2(\log\|\xi\|)^{1/2+\delta}}\left(\max_{a\in \cA}\set{|r_{a}|}\right)^{C(\log \|\xi\|)^{1/2+\delta}}<\tilde{\xi},$$
where in the first inequality we have used the second property in Lemma \ref{Lemma:Good words lemma}.
 
To prove the final part of this lemma we begin by remarking that by Lemma \ref{Lemma:Good words lemma} we know that if $\a\in G_{\xi}$ we have $r_{\a}\geq \tilde{\xi}$. Thus all of the descendants of $\a$ belonging to $\cA_{\xi}$ are elements of $\tilde{\cA_{\xi}}$. Therefore 

$$\sum_{\a\in \cA_{\xi}:\a\notin \tilde{\cA_{\xi}}}p_{\a}=\sum_{\a\in \cA^{n(\xi) }:\a\notin G_{\xi}}p_{\a}.$$ 
Our upper bound for $\sum_{\a\in \cA_{\xi}:\a\notin \tilde{\cA_{\xi}}}p_{\a}$ therefore follows from the corresponding bound in Lemma \ref{Lemma:Good words lemma}.
 \end{proof}

We are now at a point where we want to use our Diophantine assumption. To use this assumption effectively we need to condition on the number of occurrences of a digit $a\in \cA\setminus\set{a_{1},a_{2}}$. With this goal in mind, we let
\begin{align}\label{eq:selfsimilar K def}
  K=\set{(|\a|_{a})_{a\in \cA\setminus\set{a_{1},a_{2}}} :\a\in \tilde{\cA_{\xi}} } \subset \N^{\# \Acal -2}.  
\end{align}
It follows from Lemma \ref{Lemma:Good cutoff lemma} and the definition of $\tilde{\xi}$ that if $\a\in \tilde{\cA_{\xi}},$ then $|\a|_{a}$ belongs to an interval of length at most a constant multiple of $(\log \|\xi\|)^{1/2+\delta}$ for any $a\in \cA$. Therefore we have the important bound
\begin{equation}
	\label{eq:K cardinality}
\#K\ll (\log \|\xi\|)^{(\#A-2)(1/2+\delta)}.
\end{equation}
The following lemma shows that if we condition on an element of $K$ then the contraction ratios will be well separated. This is the only place where we use our Diophantine assumption.

\begin{lem}
	\label{Lemma:Diophantine separation}
	Let $\a,\a'\in \tilde{\cA_{\xi}}$ be such that $|\a|_{a}=|\a'|_{a}$ for all $a\in \cA\setminus \set{a_{1},a_{2}},$ and suppose that either $|\a|_{a_{1}}\neq|\a'|_{a_{1}}$ or $|\a|_{a_{2}}\neq|\a'|_{a_{2}},$ then $$\left||r_{\a}|\|\xi\|-|r_{\a'}|\|\xi\|\right|>1$$ for $\norm{\xi}$ sufficiently large.
\end{lem}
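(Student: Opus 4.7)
The plan is to analyze the ratio $|r_{\a'}|/|r_{\a}|$ directly, convert a lower bound on its distance from $1$ into the claimed separation, and extract that lower bound from the Diophantine hypothesis combined with the polylogarithmic control on digit counts afforded by Lemma~\ref{Lemma:Good cutoff lemma}.

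First, I would observe that since $|\a|_a = |\a'|_a$ for every $a \notin \{a_1,a_2\}$, all those contributions cancel in the ratio and one gets
$$\frac{|r_\a|}{|r_{\a'}|} = |r_{a_1}|^{p} |r_{a_2}|^{q}, \qquad p := |\a|_{a_1} - |\a'|_{a_1}, \qquad q := |\a|_{a_2} - |\a'|_{a_2},$$
with $(p,q) \neq (0,0)$ by hypothesis. Moreover, Lemma~\ref{Lemma:Good cutoff lemma} gives $|p|,|q| \ll (\log\|\xi\|)^{1/2+\delta}$. The entire task is now to lower bound $|p\log|r_{a_1}| + q\log|r_{a_2}||$.

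The key input is the Diophantine assumption. If $p=0$ or $q=0$ then the surviving term is a fixed nonzero constant and the bound is trivial. Otherwise, after swapping $\a$ and $\a'$ if necessary I may assume $q > 0$, factor out $q\log|r_{a_2}|$, and apply the Diophantine inequality with rational approximant $-q\,\mathrm{sign}(p)/|p|$ to obtain
$$\left|\frac{\log|r_{a_1}|}{\log|r_{a_2}|} - \frac{-q\,\mathrm{sign}(p)}{|p|}\right| \geq \frac{C}{|p|^{l}},$$
which rearranges to
$$\bigl|p\log|r_{a_1}| + q\log|r_{a_2}|\bigr| \geq \frac{C\,|\log|r_{a_2}||}{|p|^{l-1}} \gg (\log\|\xi\|)^{-(l-1)(1/2+\delta)}.$$

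Finally, I would convert this logarithmic bound into a bound on the actual difference. Since for any $t > 0$ either $t\in(1/2,2)$ so that $|1-t| \asymp |\log t|$, or else $|1-t| \geq 1/2$ automatically, the estimate above yields $|1 - |r_\a|/|r_{\a'}|| \gg (\log\|\xi\|)^{-(l-1)(1/2+\delta)}$. Multiplying by $|r_{\a'}| \asymp \tilde\xi = (\log\|\xi\|)^{3l}/\|\xi\|$ (which holds because $\a'\in \cA_\xi$, using the definition~\eqref{eq:selfsimilar - A_xi}) and then by $\|\xi\|$ produces
$$\|\xi\|\cdot\bigl||r_\a| - |r_{\a'}|\bigr| \gg (\log\|\xi\|)^{3l - (l-1)(1/2+\delta)}.$$
With $l\geq 2$ and $\delta$ chosen small, the exponent on the right is positive (roughly $(5l+1)/2$), so the quantity diverges with $\|\xi\|$ and in particular exceeds $1$ for $\|\xi\|$ sufficiently large. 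The calibration of $\tilde\xi$ with the power $(\log\|\xi\|)^{3l}$ in the numerator is precisely what absorbs the Diophantine loss. I do not anticipate any serious obstacle: the only care needed is with the signs of $p,q$ and the passage from a $|\log|$ bound to a $|1-t|$ bound, both of which are routine.
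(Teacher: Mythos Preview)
Your proof is correct and follows essentially the same route as the paper: factor the difference as $|r_{\a'}|\|\xi\|\cdot|1-|r_\a|/|r_{\a'}||$, use the Diophantine hypothesis to lower-bound the linear form $p\log|r_{a_1}|+q\log|r_{a_2}|$, and absorb the loss with $|r_{\a'}|\|\xi\|\gg(\log\|\xi\|)^{3l}$. Two minor remarks: the phrase ``factor out $q\log|r_{a_2}|$'' should read ``factor out $p\log|r_{a_2}|$'' to match your displayed bound with $|p|^{l-1}$; and while you invoke Lemma~\ref{Lemma:Good cutoff lemma} to get $|p|\ll(\log\|\xi\|)^{1/2+\delta}$, the paper simply uses the cruder $|p|\leq|\a|\ll\log\|\xi\|$, which already suffices since $3l-l>0$.
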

\begin{proof}
For convenience, we let
\begin{align*}
    \varrho = (|\a'|_{a_{1}}-|\a|_{a_{1}})\log |r_{1}|+(|\a'|_{a_{2}}-|\a|_{a_{2}})\log |r_{2}|.
\end{align*}
Then, we have the following 
\begin{equation}
	\label{Eq:First step}
	\left||r_{\a}|\|\xi\|-|r_{\a'}|\|\xi\|\right|=|r_{\a}|\|\xi\|\left|1-\frac{|r_{\a'}|}{|r_{\a}|}\right|=|r_{\a}|\|\xi\|\left|1-e^{\varrho}\right|.
\end{equation} 
It also follows from the definition of $\cA_{\xi}$ and $\tilde{\xi}$ that
\begin{equation}
	\label{Eq:First contraction bound}
	|r_{\a}|\|\xi\|\gg (\log \|\xi\|)^{3l}. 
\end{equation} Using our Diophantine assumption we have 
\begin{align}
	\label{Eq:Diophantine lower bound}
	\left|1-e^{\varrho}\right|
    \geq \left|\varrho\right| 
    & =(|\a'|_{a_{1}}-|\a|_{a_{1}})\log |r_{2}|\left|\frac{\log |r_{1}|}{\log |r_{2}|}-\frac{(|\a'|_{a_{2}}-|\a|_{a_{2}})}{(|\a'|_{a_{1}}-|\a|_{a_{1}})}\right|\nonumber \\
	&\gg \frac{1}{(|\a'|_{a_{1}}-|\a|_{a_{1}})^{l}}\nonumber\\
	&\gg \frac{1}{(\log \|\xi\|)^{l}}.
\end{align} In the final line we used that if $\a\in \cA_{\xi}$, then we must have $|\a|\ll \log\|\xi\|.$ Substituting \eqref{Eq:First contraction bound} and \eqref{Eq:Diophantine lower bound} into \eqref{Eq:First step}, we obtain 
 $$\left||r_{\a}|\|\xi\|-|r_{\a'}|\|\xi\|\right|\gg \frac{(\log \|\xi\|)^{3l}}{(\log \|\xi\|)^{l}}.$$ Our result follows. 
\end{proof}
Before proceeding with the proof of our theorem we need the following upper bound for probabilities coming from a multinomial distribution.

\begin{lem}
	\label{Lemma:Maximising multinomial}
	Let $(p_{a})_{a\in \cA}$ be a probability vector and $n\in \mathbb{N}$, then 
	$$\frac{n!}{\prod_{a\in \cA} k_{a}!}\prod_{a\in \cA}p_{a}^{k_{a}}\ll n^{-1/2(\#\cA-1)}$$ for any $(k_{a})_{a\in \cA}\in \mathbb{N}^{\#\cA}$ satisfying $\sum_{a\in \cA}k_{a}=n.$ Here the underlying constants only depend upon the probability vector.
\end{lem}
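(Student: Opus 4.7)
\noindent\textbf{Proof plan for Lemma~\ref{Lemma:Maximising multinomial}.} The strategy is to bound $P(k) := \frac{n!}{\prod_a k_a!}\prod_a p_a^{k_a}$ by the peak value of the multinomial distribution, which is known to be of order $n^{-(\#\cA-1)/2}$ by the local central limit theorem. Rather than invoke the LCLT directly, I will derive the bound from Stirling's formula combined with the non-negativity of relative entropy. Throughout I assume $p_a > 0$ for every $a \in \cA$; this is the interesting case, since otherwise the multinomial effectively has fewer categories and one reduces to a smaller index set, giving a strictly stronger bound.

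First I would dispose of the boundary case where $k_a = 0$ for some $a$. In that event $P(k)$ is bounded above by the probability that the symbol $a$ never appears in $n$ i.i.d.\ samples from $(p_b)_{b\in\cA}$, namely $(1-p_a)^n$, which decays exponentially in $n$ and is therefore much smaller than $n^{-(\#\cA-1)/2}$.

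Assuming now that all $k_a \geq 1$, I would apply the two-sided Stirling bound $m! \asymp \sqrt{m}(m/e)^m$ to obtain
\begin{equation*}
P(k) \ll \frac{\sqrt{n}}{\prod_a \sqrt{k_a}} \cdot \prod_a \left(\frac{np_a}{k_a}\right)^{k_a}.
\end{equation*}
The key identity is $\prod_a(np_a/k_a)^{k_a} = \exp(-n D(q\Vert p))$, where $q_a := k_a/n$ and $D(q\Vert p) := \sum_a q_a\log(q_a/p_a) \geq 0$ is the Kullback--Leibler divergence. In particular the second factor is always at most $1$.

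The argument then splits on whether $k$ is close to the mean. Setting $p_{\min} := \min_{a\in\cA} p_a > 0$, if every $k_a \geq (p_{\min}/2) n$, then $\prod_a \sqrt{k_a} \gg n^{\#\cA/2}$ and one reads off $P(k) \ll n^{-(\#\cA-1)/2}$ at once. Otherwise some $k_a < (p_{\min}/2)n \leq p_a n/2$, so Pinsker's inequality yields $D(q\Vert p) \geq \tfrac12 (q_a - p_a)^2 \geq p_{\min}^2/8$, and the factor $\exp(-nD(q\Vert p))$ becomes exponentially small in $n$. This decay dominates the $\sqrt{n}$ prefactor, so the claimed bound holds for large $n$ (and for any finite initial range is absorbed into the implied constant). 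The only mildly delicate point is verifying the case split glues together; once the Stirling-plus-KL estimate is in hand, the remainder is mechanical.
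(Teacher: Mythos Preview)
Your argument is correct and takes a genuinely different route from the paper's. The paper first localizes the maximizer: by a swap argument it shows that any $(k_a^*)$ achieving the maximum must satisfy $k_a^* \in \{\lfloor p_a n\rfloor - 10\#\cA,\ldots,\lfloor p_a n\rfloor + 10\#\cA\}$ for every $a$, then observes that all tuples in this narrow window give comparable probabilities, and finally evaluates one such probability directly via Stirling. Your approach bypasses the localization step entirely by encoding the deviation from the mean through the Kullback--Leibler divergence: Stirling gives $P(k)\ll \sqrt{n}\,(\prod_a k_a)^{-1/2}\exp(-nD(q\Vert p))$, and then either all $k_a$ are of order $n$ (so the prefactor already gives the bound) or Pinsker forces exponential decay. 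This is cleaner and more conceptual, at the cost of invoking Pinsker's inequality; the paper's version is more hands-on but entirely self-contained. Both arguments ultimately rest on the same Stirling computation at the center of the distribution; yours handles the tails more gracefully.
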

The proof of Lemma \ref{Lemma:Maximising multinomial} is straightforward but somewhat lengthy. Thus for the purpose of our exposition, we have deferred this argument to Appendix \ref{appendix:Multinomial}.

We also need the following result concerning polynomial decay on average. It is shown in~\cite{BaYu} that self-similar measures satisfy the non-concentration inequality~\eqref{eq:expanded ball affine non-conc}.
The authors also observe that the proof of Theorem~\ref{thm:flattening} goes through under this weaker hypothesis.
Hence, combined with Theorem~\ref{thm:flattening}, this implies the following higher dimensional generalization of a result of Tsujii \cite{Tsujii-selfsimilar}.

\begin{thm}
\label{thm:Banaji and Yu}
Let $\Phi$ be a self-similar IFS. Then every self-similar measure is polynomially decaying on average if and only if $\Phi$ is affinely irreducible.
\end{thm}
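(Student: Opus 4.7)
The statement has two directions. The forward implication (polynomial decay on average implies affine irreducibility) admits a direct proof by contrapositive, while the backward implication reduces, via the observations of Ba--Yu noted in the excerpt, to verifying that every self-similar measure of an affinely irreducible IFS satisfies the relaxed non-concentration condition~\eqref{eq:expanded ball affine non-conc}. The extension of Theorem~\ref{thm:flattening} under~\eqref{eq:expanded ball affine non-conc} then delivers the desired conclusion.

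For the forward direction, suppose $\Phi$ preserves a proper affine subspace $V = x_0 + V_0$ of dimension $k < d$, so that every self-similar measure $\mu$ is supported on $V$. A direct computation gives $|\widehat{\mu}(\xi)| = |\widehat{\mu}(\pi_{V_0}(\xi))|$ for all $\xi \in \R^d$, where $\pi_{V_0}$ denotes orthogonal projection onto $V_0$. In particular, $|\widehat{\mu}(\xi)| = 1$ whenever $\xi \in V_0^\perp$, so for any $\tau > 0$ and all large $T$ the set $\{\|\xi\| \leq T : |\widehat{\mu}(\xi)| > T^{-\tau}\}$ contains the radius-$T$ ball inside $V_0^\perp$, which cannot be covered by fewer than $\gtrsim T^{d-k} \geq T$ unit balls. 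This contradicts polynomial decay on average for any $\epsilon < 1$.

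For the backward direction, the main task is to establish~\eqref{eq:expanded ball affine non-conc}. I would proceed in two steps. First, at unit scale: by affine irreducibility, $\mu(W) = 0$ for every affine hyperplane $W$, and a compactness argument over the family of hyperplanes meeting $\mathrm{supp}(\mu)$, combined with the inductive bootstrap mentioned after Definition~\ref{def:non-conc}, yields constants $C_0, \alpha_0 > 0$ such that $\mu(W^{(\eta)}) \leq C_0 \eta^{\alpha_0}$ uniformly in $W$ for all $\eta \in (0,1]$. Second, to pass to arbitrary scales, fix $x \in \mathrm{supp}(\mu)$ and $r > 0$, and choose a stopping-time family $\cA_r$ of admissible words $\a$ with $|r_\a|$ comparable to $r$, giving $\mu = \sum_{\a \in \cA_r} p_\a (f_\a)_\ast \mu$. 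Only cylinders $f_\a(X_\Phi)$ meeting $B(x,r)$ contribute, and each such cylinder lies in $B(x, cr)$ for a fixed $c \geq 1$. Pulling $W^{(\e r)}$ back through the similarity $f_\a$ produces the $c_1 \e$-neighborhood of an affine hyperplane, to which the unit-scale bound applies, and summing yields
\begin{equation*}
    \mu(W^{(\e r)} \cap B(x,r)) \leq C_0' \e^{\alpha_0} \sum_{\substack{\a \in \cA_r \\ f_\a(X_\Phi) \cap B(x,r) \neq \emptyset}} p_\a \leq C_0' \e^{\alpha_0} \mu(B(x, cr)),
\end{equation*}
where the last inequality uses that every contributing cylinder sits inside $B(x, cr)$. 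The main obstacle is precisely this final step: without a separation hypothesis, distinct cylinders $f_\a(X_\Phi)$ may overlap substantially, and this overcounting is what forces the enlargement from $B(x,r)$ to $B(x, cr)$ on the right-hand side of~\eqref{eq:expanded ball affine non-conc} and rules out the stronger form~\eqref{eq:uniform affine non-conc} in general.
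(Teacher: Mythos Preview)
Your proposal is correct and follows exactly the route the paper indicates: the paper does not prove this theorem directly but cites \cite{BaYu} for the relaxed non-concentration inequality~\eqref{eq:expanded ball affine non-conc} and then invokes the extension of Theorem~\ref{thm:flattening} under that hypothesis, which is precisely your backward direction. Your stopping-time sketch for~\eqref{eq:expanded ball affine non-conc} and your treatment of the (easy) forward direction, which the paper does not spell out, fill in the details the paper delegates to the reference.
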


Equipped with Lemma \ref{Lemma:Maximising multinomial} and Theorem \ref{thm:Banaji and Yu} we can now complete our proof of Theorem \ref{thm:selfsim}.

\begin{proof}[Proof of Theorem \ref{thm:selfsim}]
Using the self-similarity of the measure and Lemma \ref{Lemma:Good cutoff lemma} we have
\begin{align*}
	|\widehat{\mu}(\xi)|=\left|\int e^{2\pi i \langle\xi,x\rangle}\, d\mu\right|&=\left|\sum_{\a\in \cA_{\xi}}p_{\a}\int e^{2\pi i\langle\xi, r_{\a}O_{\a}x+t_{a}\rangle}\, d\mu\right|\\
	&\leq \sum_{\a\in \tilde{\cA_{\xi}}}p_{\a}|\widehat{\mu}(r_{\a}O_{\a}^{T}\xi)|+C_{1}e^{-C_{2}(\log\|\xi\|)^{2\delta}}.
\end{align*}
Thus to prove our theorem it suffices to show that 
\begin{equation}
	\label{eq:selfsimilar WTS}
	\sum_{\a\in \tilde{\cA_{\xi}}}p_{\a}|\widehat{\mu}(r_{\a}O_{\a}^{T}\xi)|\ll (\log\|\xi\|)^{-\nu},
\end{equation}
for some $\nu>0$. By Theorem \ref{thm:Banaji and Yu} we know that our self-similar measure is polynomially decaying on average. We use this property for $T=(\log\|\xi\|)^{3l}.$ In particular, for our fixed value of $\epsilon$ satisfying \eqref{Eq:Decay gap}, there exists $\tau>0$ such that the set
$$\set{\zeta\in \mathbb{R}^{d}:\|\zeta\|\leq (\log\|\xi\|)^{3l} \textrm{ and }|\widehat{\mu}(\zeta)|\geq (\log\|\xi\|)^{-\tau}}$$ can be covered by $\cO_{\epsilon}((\log\|\xi\|)^{\epsilon})$ balls of length $1$. Taking the modulus of those frequencies $\zeta$ satisfying $|\widehat{\mu}(\zeta)|\geq (\log\|\xi\|)^{-\tau}$ gives us the following more useful bound. Let 
\begin{align}\label{eq:selfsimilar Bad}
    \mrm{Bad}:=\set{n\in\mathbb{N}:\exists \zeta\in\mathbb{R}^{d}\, s.t.\, \|\zeta\|\leq (\log\|\xi\|)^{3l},\,|\widehat{\mu}(\zeta)|\geq (\log\|\xi\|)^{-\tau}\, \textrm{ and }\|\zeta\|\in [n,n+1]}.
\end{align}
Then, $\#\mrm{Bad}=\cO_{\epsilon}((\log\|\xi\|)^{\epsilon})$. 

To proceed, we use a conditioning argument on elements of the set $K$ introduced in~\eqref{eq:selfsimilar K def}.
To this end, given $\overline{k}\in K$ and $a\in \Acal\setminus \set{a_1,a_2}$, we let $k_a$ denote the component of $\overline{k}$ corresponding to the letter $a$.
For simplicity, let us use $\mrm{Good}$ to denote the set $(\cup_{n\in \mrm{Bad}}[n,n+1])^{c}$.
Hence, using the definition of $\mrm{Bad}$, we obtain
\begin{align*}
	\sum_{\a\in \tilde{\cA_{\xi}}}p_{\a}|\widehat{\mu}(r_{\a}O_{\a}^{T}\xi)|&=\sum_{\overline{k}\in K}\sum_{\stackrel{\a\in \tilde{\cA_{\xi}}}{|\a|_{a}=k_{a} \forall a\in \cA\setminus\set{a_1,a_2}}}	p_{\a}|\widehat{\mu}(r_{\a}O_{\a}^{T}\xi)|\\
	&=\sum_{n\in \mrm{Bad}}\sum_{\overline{k}\in K}\sum_{\stackrel{\a\in \tilde{\cA_{\xi}}}{|\a|_{a}=k_{a} \forall a\in \cA\setminus\set{a_1,a_2}}}	p_{\a}\chi_{[n,n+1]}(|r_{\a}|\|\xi\|)|\widehat{\mu}(r_{\a}O_{\a}^{T}\xi)|\\
	&+\sum_{\overline{k}\in K}\sum_{\stackrel{\a\in \tilde{\cA_{\xi}}}{|\a|_{a}=k_{a} \forall a\in \cA\setminus\set{a_1,a_2}}}	p_{\a}\chi_{\mrm{Good}}(|r_{\a}|\|\xi\|)|\widehat{\mu}(r_{\a}O_{\a}^{T}\xi)|\\
		&\leq \sum_{n\in \mrm{Bad}}\sum_{\overline{k}\in K}\sum_{\stackrel{\a\in \tilde{\cA_{\xi}}}{|\a|_{a}=k_{a} \forall a\in \cA\setminus\set{a_1,a_2}}}	p_{\a}\chi_{[n,n+1]}(|r_{\a}|\|\xi\|)|\widehat{\mu}(r_{\a}O_{\a}^{T}\xi)|\\
		&+\sum_{\overline{k}\in K}\sum_{\stackrel{\a\in \tilde{\cA_{\xi}}}{|\a|_{a}=k_{a} \forall a\in \cA\setminus\set{a_1,a_2}}}	p_{\a}(\log\|\xi\|)^{-\tau}\\
		&\leq \sum_{n\in \mrm{Bad}}\sum_{\overline{k}\in K}\sum_{\stackrel{\a\in \tilde{\cA_{\xi}}}{|\a|_{a}=k_{a} \forall a\in \cA\setminus\set{a_1,a_2}}}	p_{\a}\chi_{[n,n+1]}(|r_{\a}|\|\xi\|)|\widehat{\mu}(r_{\a}O_{\a}^{T}\xi)|+(\log\|\xi\|)^{-\tau}.
\end{align*}
Thus, our proof is complete if we can show that 
\begin{equation}
	\label{Eq:Selfsimilar WTS2}
	\sum_{n\in \mrm{Bad}}\sum_{\overline{k}\in K}\sum_{\stackrel{\a\in \tilde{\cA_{\xi}}}{|\a|_{a}=k_{a} \forall a\in \cA\setminus\set{a_1,a_2}}}	p_{\a}\chi_{[n,n+1]}(|r_{\a}|\|\xi\|)|\widehat{\mu}(r_{\a}O_{\a}^{T}\xi)|\ll (\log(\|\xi\|)^{-\nu},
\end{equation}for some $\nu>0$. 

The crucial step towards verifying \eqref{Eq:Selfsimilar WTS2} is to note that by Lemma \ref{Lemma:Diophantine separation}, for any $n\in \N$ and $\overline{k}\in K,$ if $\a,\b\in \tilde{\cA_{\xi}}$ satisfy $|\a|_{a}=|\b|_{a}$ for all $a\in \cA\setminus\set{a_1,a_2}$ and $|r_{\a}|\|\xi\|,|r_{\b}|\|\xi\|\in [n,n+1],$ then $|\a|_{a_1}=|\b|_{a_1}$ and $|\a|_{a_2}=|\b|_{a_2}$. Assuming such words exist, we denote the unique values for $|\a|_{a_1}$ and $|\a|_{a_{2}}$ by $j_{1}(\overline{k},n)$ and $j_{2}(\overline{k},n)$ respectively. Using this observation, we have
\begin{align*}
		&\sum_{n\in \mrm{Bad}}\sum_{\overline{k}\in K}\sum_{\stackrel{\a\in \tilde{\cA_{\xi}}}{|\a|_{a}=k_{a} \forall a\in \cA\setminus\set{a_1,a_2}}}	p_{\a}\chi_{[n,n+1]}(|r_{\a}|\|\xi\|)|\widehat{\mu}(r_{\a}O_{\a}^{T}\xi)|\\
		\leq &\sum_{n\in \mrm{Bad}}\sum_{\overline{k}\in K}\sum_{\substack{\a\in \tilde{\cA_{\xi}}\\|\a|_{a}=k_{a} \forall a\in \cA\setminus\set{a_1,a_2}\\ |\a|_{a_{1}}=j_{1}(\overline{k},n),\, |\a|_{a_{2}}=j_{2}(\overline{k},n) }}	p_{\a}\\
		=&\sum_{n\in \mrm{Bad}}\sum_{\overline{k}\in K} \frac{\left(\sum_{a\in \cA\setminus\set{a_1,a_2}}k_{a}+j_{1}(\overline{k},n)+j_{2}(\overline{k},n)\right)!}{\prod_{a\in \cA\setminus\set{a_1,a_2}}k_{a}!j_{1}(\overline{k},n)!j_{2}(\overline{k},n)!}p_{a_1}^{j_{1}(\overline{k},n)}p_{a_2}^{j_{2}(\overline{k},n)}\prod_{a\in \cA\setminus\set{a_1,a_2}}p_{a}^{k_{a}}.
\end{align*} Since $\sum_{a\in \cA\setminus\set{a_1,a_2}}k_{a}+j_{1}(\overline{k},n)+j_{2}(\overline{k},n)$ is the length of a word in $\tilde{\cA_{\xi}},$ it follows that $$ \sum_{a\in \cA\setminus\set{a_1,a_2}}k_{a}+j_{1}(\overline{k},n)+j_{2}(\overline{k},n)\asymp \log\|\xi\|.$$ Using this fact together with \eqref{eq:K cardinality}, Lemma \ref{Lemma:Maximising multinomial}, and the fact $\#\mrm{Bad}=\cO_{\epsilon}((\log \|\xi\|)^{\epsilon})$ we have 
\begin{align*}
	&\sum_{n\in \mrm{Bad}}\sum_{\overline{k}\in K}\sum_{\stackrel{\a\in \tilde{\cA_{\xi}}}{|\a|_{a}=k_{a} \forall a\in \cA\setminus\set{a_1,a_2}}}	p_{\a}\chi_{[n,n+1]}(r_{a}\|\xi\|)|\widehat{\mu}(r_{\a}O_{\a}^{T}\xi)|\\
	\ll& \sum_{n\in \mrm{Bad}}\sum_{\overline{k}\in K}(\log\|\xi\|)^{-1/2(\#\cA-1)}\\
	\ll& \sum_{n\in \mrm{Bad}}(\log\|\xi\|)^{-1/2(\#\cA-1)+(1/2+\delta)(\#A-2)}\\
	\ll& (\log\|\xi\|)^{\epsilon-1/2(\#\cA-1)+(1/2+\delta)(\#A-2)}
\end{align*} Recall now that by \eqref{Eq:Decay gap} we fixed $\epsilon,\delta>0$ to be sufficiently small so that the exponent appearing in the last line in the above is negative. Thus \eqref{Eq:Selfsimilar WTS2} holds and our proof is complete.
\end{proof}

\subsection{Proof of Theorem \ref{thm:selfsim rotations}}
The proof of Theorem \ref{thm:selfsim rotations} is almost identical to the proof of Theorem \ref{thm:selfsim} so we only indicate the small change required.

Let $a_{1},a_{2}\in \cA$ be such that $(\theta_{a_1},\theta_{a_2})$ is Diophantine. We then fix $\xi\in \mathbb{R}^{d}$. We define $\tilde{\xi}$, $G_{\xi}$, $\cA_{\xi}$ and $\tilde{\cA_{\xi}}$ as above. The proof of Theorem \ref{thm:selfsim rotations} proceeds in an identical way to the proof of Theorem \ref{thm:selfsim} until we get to Lemma \ref{Lemma:Diophantine separation}. We have the following analogue of this lemma.

\begin{lem}
	\label{Lemma:Rotation separation}
Let $\set{f_{a}(x)=r_{a}O_{a}x+t_{a}}_{a\in \cA}$ be an IFS acting on $\mathbb{R}^{2}$ satisfying the assumptions of Theorem \ref{thm:selfsim rotations}. Let $\a,\a'\in \tilde{\cA_{\xi}}$ be such that $|\a|_{a}=|\a'|_{a}$ for all $a\in \cA\setminus \set{a_{1},a_{2}}$ and suppose that either $|\a|_{a_{1}}\neq|\a'|_{a_{1}}$ or $|\a|_{a_{2}}\neq|\a'|_{a_{2}}$ then $$\|r_{\a}O_{\a}^{T}\xi-r_{\a'}O_{\a'}^{T}\xi\|>1$$ for $\|\xi\|$ sufficiently large.
\end{lem}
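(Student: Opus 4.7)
The plan is to exploit the fact that $SO(2)$ is abelian: writing each $O_a$ as rotation by $2\pi \theta_a$, the composition $O_\a$ is rotation by $2\pi \Theta_\a$ where $\Theta_\a := \sum_{a \in \cA} |\a|_a \theta_a$, regardless of the order of the letters in $\a$. First I will reduce the norm $\|r_\a O_\a^T \xi - r_{\a'} O_{\a'}^T \xi\|$ to a statement about $\Theta_\a - \Theta_{\a'}$. Using the cosine rule for rotated vectors in the plane,
\begin{align*}
\|r_\a O_\a^T \xi - r_{\a'} O_{\a'}^T \xi\|^2
= \|\xi\|^2 \bigl((r_\a - r_{\a'})^2 + 2 r_\a r_{\a'}\bigl(1 - \cos(2\pi(\Theta_\a - \Theta_{\a'}))\bigr)\bigr),
\end{align*}
and the elementary bound $1 - \cos(2\pi\psi) \gg \|\psi\|_{\R/\Z}^2$, this is at least a constant multiple of $\|\xi\|^2 \, r_\a r_{\a'} \, \|\Theta_\a - \Theta_{\a'}\|_{\R/\Z}^2$.

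Next, since $|\a|_a = |\a'|_a$ for all $a \notin \{a_1, a_2\}$, we have
\[
\Theta_\a - \Theta_{\a'} = (|\a|_{a_1} - |\a'|_{a_1})\theta_{a_1} + (|\a|_{a_2} - |\a'|_{a_2})\theta_{a_2} =: m_1 \theta_{a_1} + m_2 \theta_{a_2},
\]
where $(m_1, m_2) \neq (0,0)$ by hypothesis. Since $\a, \a' \in \tilde{\cA_\xi}$ have length $\ll \log\|\xi\|$, we have $\max(|m_1|, |m_2|) \ll \log\|\xi\|$. Applying the Diophantine condition on $(\theta_{a_1}, \theta_{a_2})$ with $p = m_1$, $q = m_2$, and $r$ the nearest integer to $-(m_1\theta_{a_1} + m_2\theta_{a_2})$, I obtain
\[
\|\Theta_\a - \Theta_{\a'}\|_{\R/\Z} \gg \frac{1}{\max(|m_1|,|m_2|)^l} \gg (\log\|\xi\|)^{-l}.
\]

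Finally, combining with the fact that $\a, \a' \in \cA_\xi$ forces $|r_\a|, |r_{\a'}| \asymp \tilde{\xi} = (\log\|\xi\|)^{3l}/\|\xi\|$, I conclude
\[
\|r_\a O_\a^T \xi - r_{\a'} O_{\a'}^T \xi\|^2 \gg \|\xi\|^2 \cdot \tilde{\xi}^2 \cdot (\log\|\xi\|)^{-2l} = (\log\|\xi\|)^{4l},
\]
which is $>1$ for $\|\xi\|$ large enough, as required.

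The only subtlety is handling the sign of $r_a$: when $r_a < 0$, the factor $r_a O_a$ equals $|r_a|$ times rotation by $2\pi(\theta_a + 1/2)$, so one must work with shifted angles $\tilde{\theta}_a = \theta_a + \tfrac12 \mathbbm{1}[r_a<0]$. I expect this to be the only mild obstacle, and it is resolved by the observation that Diophantinity of $(\theta_{a_1}, \theta_{a_2})$ implies Diophantinity of $(\tilde\theta_{a_1}, \tilde\theta_{a_2})$ (with a worse constant): any would-be approximation $p\tilde\theta_{a_1} + q\tilde\theta_{a_2} + r$ can be doubled to yield an integer linear combination $(2p)\theta_{a_1} + (2q)\theta_{a_2} + s$ with $s \in \Z$, to which the original Diophantine inequality applies. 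Once this reduction is made, the preceding three steps go through verbatim, completing the proof of the lemma. The remainder of the proof of Theorem~\ref{thm:selfsim rotations} then proceeds exactly as in Theorem~\ref{thm:selfsim}, with Lemma~\ref{Lemma:Rotation separation} substituted for Lemma~\ref{Lemma:Diophantine separation}.
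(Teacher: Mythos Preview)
Your proof is correct and follows essentially the same approach as the paper's: both reduce $\|r_\a O_\a^T\xi - r_{\a'}O_{\a'}^T\xi\|$ to the angular separation of two rotated unit vectors, bound that separation below by $\gg(\log\|\xi\|)^{-l}$ via the Diophantine hypothesis (using $|\a|,|\a'|\ll\log\|\xi\|$), and combine with $|r_\a|\|\xi\|\gg(\log\|\xi\|)^{3l}$. The only cosmetic differences are that the paper uses the geometric inequality $\|r_1u-r_1v\|\le\|r_1u-r_2v\|$ (for unit $u,v$ and $0\le r_1\le r_2$) in place of your explicit cosine-rule expansion, and it disposes of negative contraction ratios by iterating the IFS rather than by your angle-shift-plus-doubling trick; both devices serve the same purpose and your handling is equally valid.
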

\begin{proof}
	Let $\a,\a'$ satisfy the assumptions of the lemma. Iterating our self-similar IFS if necessary, we can assume without loss of generality that $r_{a_1}>0$ and $r_{a_{2}}>0$. Using the fact that elements of $SO(2)$ commute and are distance preserving we have 
$$\left\|O_{\a}^{T}\frac{\xi}{\|\xi\|}-O_{\a'}^{T}\frac{\xi}{\|\xi\|}\right\|=\left\|(O_{a_1}^{T})^{|\a|_{a_1}}(O_{a_2}^{T})^{|\a|_{a_2}}e-(O_{a_1}^{T})^{|\a'|_{a_1}}(O_{a_2}^{T})^{|\a'|_{a_2}}e\right\|$$ where $e=(1,0)$. Focusing on this latter expression, it follows from our Diophantine assumption and the fact $|\a|,|\a'|\ll \log\|\xi\|$ that
\begin{align*}
	\left\|(O_{a_1}^{T})^{|\a|_{a_1}}(O_{a_2}^{T})^{|\a|_{a_2}}e-(O_{a_1}^{T})^{|\a'|_{a_1}}(O_{a_2}^{T})^{|\a'|_{a_2}}e\right\|&\gg d((|\a|_{a_1}-|\a'|_{a_1})\theta_{1}+(|\a|_{a_2}-|\a'|_{a_2})\theta_{2},\mathbb{Z})\\
	&\gg \frac{1}{(\log \|\xi\|)^{l}}.
	\end{align*} Combining the bound above with the bounds $|r_{\a}|\|\xi\|\gg  (\log\|\xi\|)^{3l}$ and  $|r_{\a'}|\|\xi\|\gg  (\log\|\xi\|)^{3l}$ we have
\begin{align*}
	\|r_{\a}O_{\a}^{T}\xi-r_{\a'}O_{\a'}^{T}\xi\|&=\left\|r_{\a}\|\xi\|O_{\a}^{T}\frac{\xi}{\|\xi\|}-r_{\a'}\|\xi\|O_{\a'}^{T}\frac{\xi}{\|\xi\|}\right\|\\
	&\geq \|\xi\|\prod_{a\in \cA\setminus\set{a_1,a_2}}|r_{a}|^{|\a|_{a}}\left\|r_{a_1}^{|\a|_{a_{1}}}r_{a_2}^{|\a|_{a_{2}}}O_{\a}^{T}\frac{\xi}{\|\xi\|}-r_{a_1}^{|\a'|_{a_{1}}}r_{a_2}^{|\a'|_{a_{2}}}O_{\a'}^{T}\frac{\xi}{\|\xi\|}\right\|\\
	&\geq \min\set{|r_{\a}|\|\xi\|,|r_{\a'}|\|\xi\|}\left\|\left(O_{\a}^{T}\frac{\xi}{\|\xi\|}-O_{\a'}^{T}\frac{\xi}{\|\xi\|}\right)\right\|\\
	&\gg\frac{(\log\|\xi\|)^{3l}}{(\log \|\xi\|)^{l}}.
\end{align*}In the penultimate line we have used the fact that if $u,v\in\mathbb{R}^{2}\setminus\set{0}$ have unit length and $r_{1},r_{2}\geq 0$ satisfy $r_{1}\leq r_{2}$ then $\|r_{1}u-r_{1}v\|\leq \|r_{1}u-r_{2}v\|.$ The final bound obtained above certainly exceeds $1$ for $\|\xi\|$ sufficiently large. Thus our result follows.
\end{proof}
The Diophantine assumptions on our IFS imply that it is affinely irreducible and Theorem \ref{thm:Banaji and Yu} can be applied. What remains of our proof is analogous to the proof of Theorem \ref{thm:selfsim}, the only difference being that the role played by Lemma \ref{Lemma:Diophantine separation} is now played by Lemma \ref{Lemma:Rotation separation}.

\section{Patterson-Sullivan measures: Proof of Theorem \ref{thm:PS}}\label{sec:proofps}

The goal of this section is to provide the proof of Theorem~\ref{thm:PS}.
	Throughout this section, we fix a discrete, Zariski-dense, convex cocompact group $\G$ of isometries of $\H^{d+1}$, $d\geq 1$.

\subsection{Reduction to linear phases}
Since PS measures are absolutely continuous with respect to the conditional measures $\mu_x^u$ with smooth Radon-Nikodym derivatives, cf.~\eqref{eq:unstable conditionals}, it suffices to prove the conclusion of Theorem~\ref{thm:PS} for $\mu_x^u$ for some $x\in G/\G$ in place of the PS measure $\mu$.
In fact, we prove a stronger statement which establishes uniform bounds for the measures $\mu_x^u$ as $x$ varies in the non-wandering set $\Omega$.

We begin with the following elementary lemma which reduces the proof to the study of linear phase functions. 

\begin{lem}\label{lem:reduce to linear}
To prove Theorem~\ref{thm:PS}, it suffices to show that there exists $\kappa>0$ so that for all $0\neq \xi\in  \R^{d}$, $x\in \Omega$, and $\psi\in C_c^1(N_1^+)$, we have
\begin{align}\label{eq:linear decay}
    \int_{N_1^+} e^{ i \langle \xi, n\rangle} \psi(n)\;d\mu^u_x(n)
    \ll_\G \norm{\psi}_{C^1} \norm{\xi}^{-\kappa},
\end{align}
where, by abuse of notation, if $n=\exp(v)$ for some $v\in \mf{n}^+\cong \R^d$, we let $\langle \xi,n\rangle := \langle \xi, v\rangle$.
\end{lem}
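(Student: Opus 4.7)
The reduction proceeds in two standard stages: (i) passing from the PS measure $\mu$ on $\partial\H^{d+1}$ to its unstable leafwise conditionals $\mu_x^u$ via the visual map; and (ii) reducing nonlinear $C^2$ phases to linear ones via a partition-of-unity argument at a small scale.

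For (i), using compactness of $\Omega\subset G/\G$ (convex cocompactness), we cover $\L_\G\subset\partial\H^{d+1}$ by finitely many visual images $v_i(N_{1/2}^+)$, where $v_i(n):=(ny_i)^+$ for chosen points $y_1,\ldots,y_N\in\Omega$, and take a $C^\infty$ partition of unity $\{\rho_i\}$ on $\L_\G$ subordinate to this cover. Formula~\eqref{eq:unstable conditionals} gives, on each chart, $d\ps_o=(v_i)_\ast\bigl(e^{-\d\beta_{(ny_i)^+}(o,ny_i)}\,d\mu_{y_i}^u\bigr)$ restricted to $v_i(N_{1/2}^+)$, with a $C^\infty$ density (by smoothness of the Busemann function and the visual map, inherited uniformly over $\Omega$ from convex cocompactness). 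Substituting, each piece of the Fourier integral in Theorem~\ref{thm:PS} becomes
\[
\int_{N_{1/2}^+}e^{2\pi i\l\tilde\vp_i(n)}\tilde\psi_i(n)\,d\mu_{y_i}^u(n),
\]
where $\tilde\vp_i:=\vp\circ v_i$ and $\tilde\psi_i$ absorbs $(\psi\rho_i)\circ v_i$ together with the Busemann density. The chain rule and the uniform bounds on $v_i$ and $\beta$ over $\Omega$ give $\|\tilde\vp_i\|_{C^2}\ll A$, $\inf_{\supp\tilde\psi_i}\|\nabla\tilde\vp_i\|\gg a$, and $\|\tilde\psi_i\|_{C^1}\ll A$, with constants depending only on $\G$ and $A,a$.

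For (ii), fix a scale $r=|\l|^{-\alpha}$ with $\alpha\in(0,1/2)$ to be optimized, and choose a smooth partition of unity $\{\chi_j\}$ on $\supp\tilde\psi_i$ with $\chi_j$ supported in balls $B(n_j,r)$ of bounded overlap and $\|\chi_j\|_{C^1}\ll r^{-1}$. Taylor expanding $\tilde\vp_i(n)=\tilde\vp_i(n_j)+\langle\nabla\tilde\vp_i(n_j),n-n_j\rangle+E_j(n)$ on $\supp\chi_j$, with $|E_j|\ll r^2\|\tilde\vp_i\|_{C^2}$, and writing $e^{2\pi i\l E_j}=1+O(|\l|r^2)$, the main term on each ball is a pure linear phase integral to which~\eqref{eq:linear decay} applies with frequency $\xi_j=2\pi\l\nabla\tilde\vp_i(n_j)$ of norm $\gg|\l|a$. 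This bounds each main term by $\ll\|\chi_j\tilde\psi_i\|_{C^1}|\xi_j|^{-\kappa}\ll r^{-1}|\l|^{-\kappa}$. Summing over the $O(r^{-d})$ balls hitting $\supp\tilde\psi_i$, together with the total quadratic error $\ll|\l|r^2\sum_j\mu_{y_i}^u(B(n_j,r))\ll|\l|r^2$, yields a total bound of the form $r^{-d-1}|\l|^{-\kappa}+|\l|r^2$; optimizing $\alpha$ produces polynomial decay in $|\l|$, with a possibly worse exponent than $\kappa$.

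The hard part is not this reduction, but rather establishing the linear bound~\eqref{eq:linear decay} itself. Indeed, the above steps use only the smooth geometry of the horospherical foliation, compactness of $\Omega$, and a Taylor expansion; the real difficulty—exploiting the non-joint integrability of the stable and unstable foliations together with the $L^2$-flattening theorem (Theorem~\ref{thm:flattening}) and non-concentration of $\mu_x^u$—is the substance of the remainder of Section~\ref{sec:proofps}.
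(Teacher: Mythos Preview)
Your step (i) is fine and matches the paper's first remark. The gap is in step (ii): the optimization does \emph{not} close. With your bookkeeping, the main term is $r^{-d-1}|\l|^{-\kappa}$ (or $r^{-\d-1}|\l|^{-\kappa}$ if you only count balls meeting $\supp\mu_{y_i}^u$) and the Taylor error is $|\l|r^2$. Writing $r=|\l|^{-\alpha}$, the error forces $\alpha>1/2$, while the main term requires $\alpha(d+1)<\kappa$ (resp.\ $\alpha(\d+1)<\kappa$). Hence you need $\kappa>(d+1)/2$ (resp.\ $\kappa>(\d+1)/2$), which you have no right to assume; the $\kappa$ produced later in the section is small.

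The paper avoids this loss by \emph{rescaling each ball back to unit size via the geodesic flow} before applying~\eqref{eq:linear decay}. With $t=-\log r$, the map $n\mapsto \Ad(g_t)(nn_j^{-1})$ sends $B(n_j,r)$ to $N_1^+$ and, by~\eqref{eq:g_t equivariance}--\eqref{eq:N equivariance}, turns the $j$th integral into $r^\d\int_{N_1^+}e^{i\langle\xi_j,n\rangle}\psi_j\,d\mu_{x_j}^u$ with $x_j=g_tn_jx\in\Omega$, $\xi_j=r\l\nabla\vp(n_j)$, and crucially $\norm{\psi_j}_{C^1}=\cO(\norm{\psi}_{C^1})$: the $r^{-1}$ from $\chi_j$ is cancelled by the $e^{-t}=r$ contraction of derivatives under $\Ad(g_t)$. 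Now~\eqref{eq:linear decay} gives each piece $\ll r^\d(r|\l|)^{-\kappa}$, and the shadow lemma plus bounded multiplicity give $\sum_j r^\d\ll 1$, so the total main term is $(r|\l|)^{-\kappa}$. Balancing against $|\l|r^2$ yields $r=|\l|^{-(1+\kappa)/(2+\kappa)}$ and final decay $|\l|^{-\kappa/(2+\kappa)}$, valid for \emph{every} $\kappa>0$. The missing idea in your argument is precisely this use of the uniformity of~\eqref{eq:linear decay} over basepoints $x\in\Omega$ together with the equivariance of $\mu_x^u$ under $g_t$.
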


\begin{remark}
    In the remainder of this section, we fix $\xi\in \R^d$ and $\psi\in C_c^1(N_1^+)$. Our goal is to prove the estimate~\eqref{eq:linear decay}.

\end{remark}

\begin{proof}[Proof of Lemma~\ref{lem:reduce to linear}]
    The proof is based on the uniformity of the estimate~\eqref{eq:linear decay} as the basepoint $x$ varies in $\Omega$ which roughly translates to uniform Fourier decay (with linear phases) over pieces of the measure of size $|\l|^{-1/2-\e}$.
    We include a sketch of the argument for completeness.
    
    Recall the notation of Theorem~\ref{thm:PS}.
    Let $\set{\rho_j}$ be a partition of unity of $\mrm{supp}(\mu_x^u\left|_{N^+}\right.)$ with bounded multiplicity and such that each $\rho_j$ is supported in a ball $B_j$ of radius
    \begin{align*}
        r = |\l|^{-(1+\kappa)/(2+\kappa)}
    \end{align*}
    around a $n_j\in N_1^+$.
    Here, $\kappa$ is the exponent in~\eqref{eq:linear decay}.
    In view of~\cite[Prop.~9.9]{Khalil-Mixing}, we can choose such partition of unity so that each $\rho_j$ has first derivatives with norm $\cO(r^{-1})$ and
    \begin{align}\label{eq:bdd multiplicity}
        \sum_j \mu_x^u(B_j) \ll \mu_x^u(N_1^+).
    \end{align}

    Then, Taylor expanding $\vp$ to the second order around each $n_j$, we obtain
    \begin{align*}
        \int_{N_1^+} e^{ i \l \vp(n) } \psi(n)\;d\mu_x^u
        &\leq \sum_j \left|\int_{B_j} \exp(i \langle \l \nabla \vp(n_j), nn_j^{-1} \rangle)
        (\psi \rho_j)(n)\;d\mu_x^u \right|
        \nonumber\\
        &+ \cO(\norm{\psi}_{C^0} \norm{\vp}_{C^2} |\l| r^2 \mu^u_x(N_1^+)).
    \end{align*}
    Next, we use a change of variables sending $B_j$ to $N_1^+$ and apply~\eqref{eq:g_t equivariance} and~\eqref{eq:N equivariance}.
    More precisely, let $t = -\log r, x_j=g_t n_j x$, $\xi_j=r\l \nabla\vp(n_j)$, and $\psi_j(n):= (\psi\rho_j)(\Ad(g_{-t})(n)n_j)$.
    Then, the $j^{th}$ term in the above sum can be rewritten as follows:
    \begin{align*}
        \int_{B_j} \exp(i \langle \l \nabla \vp(n_j), nn_j^{-1} \rangle)
        (\psi \rho_j)(n)\;d\mu_x^u
        = r^\d \int_{N_1^+} \exp(i \langle \xi_j, n \rangle) 
        \psi_j(n) \;d\mu^u_{x_j}.
    \end{align*}
    Note that, since the geodesic flow scales the first derivatives of $\rho_j$ by a factor of $e^{-t}$, each $\psi_j$ has $C^1$-norm $\cO(\norm{\psi}_{C^1})$.
    Hence, since each $x_j$ belongs to $\Omega$,~\eqref{eq:linear decay} implies that
    \begin{align*}
        \int_{N_1^+} e^{ i \l \vp(n) } \psi(n)\;d\mu_x^u
        \ll_{A}  |r\l|^{-\kappa} a^{-\kappa}  \sum_j r^\d + |\l| r^2.
    \end{align*}
    
    Finally, in view of Sullivan's shadow lemma (cf.~\cite[Theorem 3.4]{Khalil-Mixing}), we have that $\mu_x^{u}(B_j) \asymp r^\d$.
    This concludes the proof in light of~\eqref{eq:bdd multiplicity}.
    The decay exponent obtained in this manner is $\kappa/(2+\kappa)$, with $\kappa$ as in~\eqref{eq:linear decay}.
\end{proof}

\subsection{Polynomial non-concentration estimates}
We recall here well-known non-concentration estimates for PS measures.
The first estimate is a direct consequence of Sullivan's shadow lemma.

\begin{prop}[{Sullivan's Shadow Lemma, cf.\cite{Sullivan}}]
\label{prop:shadow lem}
    For all $y\in \Omega$ and all $r>0$, we have
    \begin{align*}
        \mu^u_y(N_r^+) \asymp_\G r^\d,
    \end{align*}
    where $\d$ is the critical exponent of $\G$.
\end{prop}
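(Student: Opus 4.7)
The plan is to use the equivariance of the measures $\mu^u_x$ under the geodesic flow to reduce the estimate at scale $r$ to a uniform estimate at unit scale. Fix $y\in\Omega$ and $r>0$, and set $t=-\log r$. Since $\Ad(g_t)$ acts on $\mf{n}^+$ by the scalar $e^t$, we have $\Ad(g_t)(N^+_r) = N^+_{re^t}=N^+_1$. The equivariance formula~\eqref{eq:g_t equivariance} then gives
\begin{align*}
    \mu^u_y(N^+_r) = e^{-\d t}\,\mu^u_{g_t y}\!\left(\Ad(g_t) N^+_r\right) = r^\d\,\mu^u_{g_t y}(N^+_1).
\end{align*}
Because $\Omega$ is $g_t$-invariant, $g_t y \in \Omega$, so the proposition reduces to the uniform claim $\mu^u_x(N^+_1) \asymp_\G 1$ for all $x\in \Omega$.

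For the upper bound, I would use~\eqref{eq:unstable conditionals} to rewrite $\mu^u_x(N^+_1)$ as the integral of the Radon-Nikodym density $n\mapsto e^{\d \beta_{(nx)^+}(o,nx)}$ against $\ps_o$ over the image of $N^+_1$ under the visual map $n \mapsto (nx)^+$. Since $\Omega$ is compact and the Busemann function is continuous, this density is uniformly bounded in $x\in \Omega$ and $n\in N^+_1$. The image of $N^+_1$ under the visual map is contained in the shadow from $o$ of a ball of uniformly bounded radius around a projection of $x$ to $\H^{d+1}$, so the classical Sullivan shadow lemma for $\ps_o$ on the boundary yields the desired upper bound, uniformly in $x\in \Omega$.

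The main obstacle lies in the lower bound, where uniformity in $x\in\Omega$ has to be extracted. Here I would exploit compactness of $\Omega$ together with weak-$\ast$ continuity of the assignment $x\mapsto \mu^u_x$, which follows from continuity of the Busemann function combined with~\eqref{eq:unstable conditionals}. If the lower bound failed, one could extract a sequence $x_n\in\Omega$ with $\mu^u_{x_n}(N^+_1)\to 0$ and, passing to a subsequential limit $x_\infty\in\Omega$, conclude that $\mu^u_{x_\infty}$ assigns no mass to a neighborhood of the identity in $N^+$. However, for $x_\infty\in\Omega$ the forward endpoint $x_\infty^+$ lies in $\L_\G$, and since $\ps_o$ has full support on $\L_\G$, transferring positive PS mass near $x_\infty^+$ back through the visual map via~\eqref{eq:unstable conditionals} would yield a contradiction. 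This compactness-and-continuity step is where convex cocompactness enters crucially, since it guarantees that limits of basepoints in $\Omega$ remain in $\Omega$ and thus have endpoints in the limit set.
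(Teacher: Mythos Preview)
Your argument is correct. The paper does not give its own proof of this proposition; it simply records it as Sullivan's shadow lemma and points to~\cite{Sullivan} (and elsewhere to~\cite[Theorem 3.4]{Khalil-Mixing}), so there is nothing to compare against beyond noting that your derivation is a standard and clean way to obtain the statement from the classical boundary version.

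Your reduction via~\eqref{eq:g_t equivariance} to the unit-scale estimate $\mu^u_x(N^+_1)\asymp_\G 1$ for $x\in\Omega$ is exactly the right move, and the computation is correct since $\Ad(g_t)$ acts on $N^+$ by the scalar $e^t$ in the real hyperbolic setting. For the upper bound your sketch is fine once one notes that a compact lift of $\Omega$ to $G$ keeps $n\tilde x$, for $n\in N^+_1$, at bounded distance from $o$ in $\H^{d+1}$, so the Busemann density in~\eqref{eq:unstable conditionals} is uniformly bounded. For the lower bound your compactness/contradiction argument is valid: weak-$\ast$ continuity of $x\mapsto \mu^u_x$ on $\Omega$ follows by lifting to $G$ and using uniform convergence on compacta of both the visual map $n\mapsto (nx)^+$ and the Busemann density, and the Portmanteau inequality on the open unit ball then forces $\mu^u_{x_\infty}$ to vanish near the identity, contradicting $x_\infty^+\in\L_\G=\supp(\ps_o)$. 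Convex cocompactness is used exactly where you say, to keep limit points inside $\Omega$.
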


We also recall the following quantitative decay property of the measure of hyperplane neighborhoods with respect to PS measures from~\cite{Dasetal}.
Recall that $N^+$ is an abelian group which we identify with its Lie algebra $\mf{n}^+\cong \R^d$ via the exponential map.
In light of this identification, the following result shows that PS measures (or, more precisely, their shadows $\mu_x^u$) are non-concentrated in the sense of Definition~\ref{def:non-conc}.
This will allow us to apply Theorem~\ref{thm:flattening} in the proof of Theorem~\ref{thm:PS} as well as provide quantitative estimates on separation of frequencies arising over the course of implementing the strategy discussed in the introduction.

\begin{thm}[{\cite[Theorem 12.1]{Khalil-Mixing}}]
\label{thm:friendly}
    There exist constants $C\geq 1$ and $\alpha>0$ such that for all $\e,r>0$, $x\in\Omega$, and all affine hyperplanes $L<N^+$, we have that
    \begin{align*}
        \mu_x^u(N_r^+\cap L^{(\e r)}) \leq C\e^\alpha \mu_x^u(N_r^+),
    \end{align*}
    where $L^{(\e r)}$ denotes the $\e r$-neighborhood of $L$ in $N^+$.
\end{thm}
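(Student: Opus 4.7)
The plan is to prove the theorem in three steps: reduce to unit scale via geodesic-flow equivariance, establish a qualitative non-concentration estimate at scale one using Zariski density, and then iterate to obtain polynomial decay.

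First, given $r\in(0,1]$, set $t=\log(1/r)$ and $y=g_tx$; since $\Omega$ is $g_t$-invariant, $y\in\Omega$. The adjoint action $\Ad(g_t)$ on $\mf{n}^+\cong\R^d$ is scalar multiplication by $1/r$, so it sends $N_r^+$ bijectively onto $N_1^+$ and any affine hyperplane $L$ to an affine hyperplane $\tilde L$. By~\eqref{eq:g_t equivariance},
\begin{align*}
    \mu_x^u(N_r^+\cap L^{(\e r)}) = e^{-\d t}\,\mu_y^u(N_1^+\cap \tilde L^{(\e)}), \qquad \mu_x^u(N_r^+) = e^{-\d t}\,\mu_y^u(N_1^+).
\end{align*}
Hence it suffices to prove the bound at $r=1$ uniformly in $y\in\Omega$ and in affine hyperplanes of $N^+$.

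Second, I would establish a qualitative non-concentration: there exist $c_0,\eta_0>0$ depending only on $\G$ such that for every $y\in\Omega$ and every affine hyperplane $L'$ meeting $\overline{N_1^+}$,
\begin{align*}
    \mu_y^u(N_1^+\cap L'^{(\eta_0)}) \leq (1-c_0)\,\mu_y^u(N_1^+).
\end{align*}
If this failed, sequences $y_n\in\Omega$, $L_n$, and $\eta_n\to 0^+$ would violate the bound. Compactness of $\Omega$ (from convex cocompactness) and of the parameter space of hyperplanes meeting $\overline{N_1^+}$ extracts limits $y_\infty\in\Omega$ and $L_\infty$. Since $y\mapsto \mu_y^u$ is continuous in the weak-$\ast$ topology, the Portmanteau theorem then forces $\mu_{y_\infty}^u$ to be supported in $L_\infty$. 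Via the visual map $n\mapsto (ny_\infty)^+$, this places a nontrivial open piece of $\L_\G$ inside a proper real algebraic subvariety of $\partial\H^{d+1}$, contradicting the Zariski density of $\G$.

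Third, I would run an induction on $k\geq 1$ to show
\begin{align*}
    \mu_y^u(N_1^+\cap L^{(\eta_0^k)}) \leq C(1-c_0)^k \mu_y^u(N_1^+)
\end{align*}
for all $y\in\Omega$ and all affine hyperplanes $L$, with $C$ depending only on $\G$. The base case follows from Step 2 after possibly enlarging $C$. For the inductive step, cover $L^{(\eta_0^{k+1})}\cap N_1^+$ by a bounded-multiplicity family of balls $B(z_i,\eta_0)$ with $z_i\in L^{(2\eta_0)}\cap \supp(\mu_y^u)$; rescale each such ball onto $N_1^+$ by applying $g_{t_0}$ with $t_0=\log(1/\eta_0)$ together with the translation equivariance~\eqref{eq:N equivariance}, which recenters the measure at a new basepoint in $\Omega$; and apply the inductive hypothesis at level $k$ to the rescaled hyperplane. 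Summing via Sullivan's shadow lemma (Proposition~\ref{prop:shadow lem}) to convert fractional bounds into mass estimates for the covering balls, and using Step 2 at the outer scale to control $\mu_y^u(L^{(2\eta_0)}\cap N_1^+)$, closes the induction. Choosing $k$ with $\eta_0^{k+1}\leq \e<\eta_0^k$ then yields the polynomial bound with $\alpha=\log(1/(1-c_0))/\log(1/\eta_0)$. The main obstacle is executing the inductive covering in Step 3 carefully so that the bounded-multiplicity and shadow-lemma constants do not deteriorate across scales, and so that the rescaled hyperplanes remain admissible inputs at each stage. Step 2, while conceptually delicate due to its reliance on Zariski density at the boundary, is robust once compactness of $\Omega$ and continuity of the leafwise measures are invoked.
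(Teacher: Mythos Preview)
The paper does not give its own proof of this theorem; it is quoted from \cite[Theorem 12.1]{Khalil-Mixing}, and the remark following the statement explains that in the convex cocompact case it can be deduced from the fact that PS measures give zero mass to proper subvarieties of the boundary (\cite[Corollary 9.4]{EdwardsLeeOh-Torus}) via the iteration argument of \cite[Section 8]{KleinbockLindenstraussWeiss}. Your three-step proposal is exactly this argument: Step 1 is the standard scale reduction, Step 2 is the zero-mass-on-subvarieties input obtained by compactness and Zariski density, and Step 3 is the Kleinbock--Lindenstrauss--Weiss iteration from qualitative decay to polynomial decay. So your approach coincides with the one the paper indicates, and the outline is correct; the only caveat is that the inductive covering in Step 3 must be set up so the constants are stable, which you already flag and which is handled in the cited references.
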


\begin{remark}
    The reference \cite[Theorem 12.1]{Khalil-Mixing} shows that, in the more general setting of geometrically finite groups, one has $\mu_x^u(N_r^+\cap L^{(\e r)}) \leq t(\e) V(x) \mu_x^u(N_r^+)$, for a function $t(\e) \to 0$ as $\e \to 0$ and for a function $V(x)$ that is uniformly bounded above and below on compact sets.
    The proof is much simpler in the case $\G$ is convex cocompact and in fact yields the apriori stronger bound $t(\e)\leq C\e^\alpha$.
    In fact, when $\G$ is convex cocompact, Theorem~\ref{thm:friendly} can be deduced directly from the fact that PS measures give $0$ mass to proper subvarieties of the boundary (\cite[Corollary 9.4]{EdwardsLeeOh-Torus}) using the argument in~\cite[Section 8]{KleinbockLindenstraussWeiss}.

\end{remark}

\subsection{Proof of Theorem~\ref{thm:PS}}
The remainder of the section is dedicated to the proof of the estimate~\eqref{eq:linear decay}.
We fix $\xi\in\R^d$, $x\in\Omega$, and $\psi\in C_c^1(N_1^+)$.
Our argument is dynamical in nature using the self-similar structure of the measures $\mu_x^u$ under the geodesic flow, cf.~\eqref{eq:g_t equivariance} and~\eqref{eq:N equivariance}, to implement the strategy described in the introduction.

\subsection*{Partitions of unity and flow prisms}

As a first step, we find convenient partitions of the space by flow boxes.
Namely, we refer to sets of the form $P^-_r N^+_s\cdot x$ for $r,s>0$ and $x\in G/\G$ as \textit{flow boxes}.
We say that a collection of sets $\set{S_i}$ has multiplicity bounded by a constant $C\geq 1$ if for all $x$:
        $\sum_{i} \mathbbm{1}_{S_i}(x) \leq C \mathbbm{1}_{\cup_i S_i}(x).$ Let $\iota$ denote the smaller of $1/2$ and the injectivity radius of $G/\G$ and set
\begin{align}\label{eq:iota_xi}
    \iota_\xi := \iota/\norm{\xi}^{1/3}.
\end{align}
The following lemma provides an efficient cover of $\Omega$ by ``thin flow boxes" in the unstable direction.
\begin{lem}\label{lem:count flow boxes}
    The collection $\set{P^-_\iota N^+_{\iota_\xi}\cdot x:x\in \Omega}$ admits a finite subcover $\Bcal_\xi$ such that
      $  \# \Bcal_\xi \ll_\G \norm{\xi}^{\d/3},$
    where $\d$ is the critical exponent of $\G$.
    Moreover, $\Bcal_\xi$ has uniformly bounded multiplicity on $\Omega$; i.e.~for all $x\in \Omega$, $\sum_{B\in\Bcal_\xi} \mathbbm{1}_B(x)\ll_\G 1$.
\end{lem}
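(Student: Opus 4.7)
The plan is to produce $\Bcal_\xi$ by a Vitali-type covering argument and to count it using the local product structure of the Bowen-Margulis-Sullivan (BMS) measure together with Sullivan's shadow lemma (Proposition~\ref{prop:shadow lem}).

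First I would choose a maximal subset $\{x_i\}_{i\in I}\subset \Omega$ such that the shrunken prisms $P^-_{\iota/5} N^+_{\iota_\xi/5}\cdot x_i$ are pairwise disjoint. Since $\iota$ is at most the injectivity radius of $G/\G$, the product map $P^-\times N^+\to G$ is a diffeomorphism onto its image at the relevant scales, and a standard Vitali argument yields that the enlarged collection $\Bcal_\xi:=\{P^-_\iota N^+_{\iota_\xi}\cdot x_i\}_{i\in I}$ covers $\Omega$.

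To bound $\#\Bcal_\xi$, I would use the local product decomposition of the BMS measure, which takes the form $d\bms \propto d\mu_x^u \, d\mu_x^{s} \, dt\, dm$, where $\mu_x^{s}$ is the stable PS conditional, $dt$ is Lebesgue on the flow direction $A$, and $dm$ is Haar on $M$. Applying Proposition~\ref{prop:shadow lem} to both the unstable direction directly and to the stable direction via the time-reversal symmetry of $\G$, each small prism $P^-_{\iota/5} N^+_{\iota_\xi/5}\cdot x_i$ carries BMS mass $\asymp_{\G,\iota} \iota_\xi^{\d}$, where the $\iota$-dependent factors are absorbed into the implicit constant since $\iota$ is fixed. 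Convex cocompactness forces the BMS mass of $\Omega$ to be finite, and disjointness then gives $|I|\cdot \iota_\xi^{\d} \ll_\G 1$, which by the definition~\eqref{eq:iota_xi} of $\iota_\xi$ yields $|I|\ll_\G \iota_\xi^{-\d}\asymp_\G \norm{\xi}^{\d/3}$.

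For bounded multiplicity, fix $y\in\Omega$ and suppose $y=p_i n_i x_i$ with $p_i\in P^-_\iota$ and $n_i\in N^+_{\iota_\xi}$. Then $x_i\in N^+_{\iota_\xi} P^-_\iota\cdot y$, and by the holonomy/switching formula~\eqref{eq:switching order of N- and N+} of Section~\ref{sec:holonomy}, this set is contained in an enlarged prism $P^-_{C\iota} N^+_{C\iota_\xi}\cdot y$ for a uniform constant $C$. Hence all the corresponding shrunken prisms sit inside a single prism around $y$ of BMS mass $\asymp_\G \iota_\xi^{\d}$, whereas each of them occupies BMS mass $\asymp_\G \iota_\xi^{\d}$; disjointness bounds the number of such $i$ by a constant depending only on $\G$. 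The main obstacle will be the careful bookkeeping required to justify the BMS local product decomposition and to apply Sullivan's shadow lemma along the stable leaves, but both are standard in the convex cocompact setting.
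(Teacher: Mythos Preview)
Your argument is correct; it proceeds by a direct Vitali covering at the anisotropic scale and counts using the full BMS measure. The paper takes a different route: it first fixes a coarse cover $\Qcal^0$ of $\Omega$ by finitely many flow boxes $P^-_\iota N^+_\iota\cdot x$ of uniform size in both directions (this number depends only on $\G$ by convex cocompactness), and then, inside each coarse box, refines \emph{only along $N^+$} into pieces of width $\iota_\xi$ using a Vitali argument on $N^+$ alone. The count for each coarse box is then carried out purely with the unstable conditional $\mu^u$ and Sullivan's shadow lemma, with no reference to the BMS measure or its product structure; the multiplicity bound becomes essentially trivial, being at most a constant times $\#\Qcal^0$. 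Your approach is more symmetric but requires the BMS product decomposition and the stable shadow lemma as extra ingredients; the paper's two-step construction sidesteps these by never having to measure anything in the $P^-$ direction.
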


\begin{proof}
    
    Let $\Qcal$ denote a cover of $G/\G$ by flow boxes of the form $P^-_\iota N^+_\iota \cdot x$, where $\iota$ is a fixed lower bound on the injectivity radius of $G/\G$ as above.
    With the help of the Vitali covering lemma, such cover can be chosen to have multiplicity $C_G\geq 1$ depending only on the dimension of $G$.
    We will build our collection of boxes $\Bcal_\xi$ by refining this cover as follows.
    
    Let $\Qcal^0$ denote the collection of boxes $Q\in\Qcal$ such that $Q\cap  \Omega\neq \emptyset$.
    By convex cocompactness, we have that $\# \Qcal^0 \asymp_\G 1 $.
    For each $Q\in \Qcal^0$, we fix some $x_Q\in Q\cap \Omega$.
    Then, we can find a finite set of points $\set{u_i:i\in I_Q}\subset N^+_{2\iota}$ such that the points $x_i:=u_i x_Q$ belong to $\Omega$.
    Moreover, these points can be chosen so that the balls $N^+_{\iota_\xi}\cdot x_i$ provide a cover of $\Omega\cap N^+_\iota \cdot x_Q $
    with uniformly bounded multiplicity thanks to the Vitali covering lemma applied to $N^+$.
    
    With this notation, we define $\Bcal_\xi$ as follows:
    \begin{align*}
        \Bcal_\xi :=\set{ P^-_\iota N^+_{\iota_\xi} \cdot u_i x_Q: i\in I_Q, Q \in \Qcal^0 }.
    \end{align*}
    This bounded multiplicity in particular implies that
    \begin{align*}
        \iota_\xi^\d \times \#  I_Q \asymp \sum_{i\in I_Q} \mu^u_{x_i}(N^+_{\iota_\xi}) \asymp \mu^u_{x_Q}(N^+_\iota) \asymp 1.
    \end{align*}
    This estimate implies the desired the bound on the cardinality of $\Bcal_\xi$ in light of~\eqref{eq:iota_xi}.
    To bound the multiplicity of $\Bcal_\xi$, let $x\in \Omega$ be arbitrary, and note that for $A_Q:=\cup_{i\in I_Q} P^-_\iota N^+_{\iota_\xi} \cdot u_i x_Q$, we have
    \begin{align*}
        \sum_{B\in \Bcal_\xi} \mathbbm{1}_B(x) =
        \sum_{Q\in \Qcal^0} \sum_{i\in I_Q} \mathbbm{1}_{P^-_\iota N^+_{\iota_\xi} \cdot u_i x_Q} (x)
        \ll \sum_{Q\in \Qcal^0} \mathbbm{1}_{A_Q} (x)
        \ll \# \Qcal^0 \ll_\G 1.
    \end{align*}
    \qedhere
\end{proof}

Let $\Bcal_\xi$ be the finite cover provided by Lemma~\ref{lem:count flow boxes} and let $\Pcal_\xi$ denote a partition of unity 
subordinate to it. 
For each $\rho\in \Pcal_\xi$, we denote by $B_\rho $ the element of $\Bcal_\xi$ containing the support of $\rho$.
In particular, such partition of unity can be chosen so that for all $\rho\in \Pcal_\xi$, we have
\begin{equation}\label{eq:norm of rho}
    \norm{\rho}_{C^1} \ll \norm{\xi}^{1/3}.
\end{equation}
Moreover, by Lemma~\ref{lem:count flow boxes}, we have 
\begin{align}\label{eq:count Pcal_xi}
    \# \Pcal_\xi \leq \# \Bcal_\xi \ll_\G \norm{\xi}^{\d/3}.
\end{align}

 \subsection*{Transversals}
We fix a system of transversals $\set{T_\rho}$ to the strong unstable foliation inside the boxes $B_\rho$.
Since $B_\rho$ meets $\Omega$ for all $\rho\in \Pcal_\xi$, we fix some $y_\rho$ in the intersection $B_\rho\cap \Omega$. 
In this notation, we write 
\begin{equation}\label{eq:box notation}
    B_\rho =  P^-_\iota N^+_{\iota_\xi}\cdot y_\rho, \qquad T_\rho = P^-_\iota \cdot y_\rho.
\end{equation}
We also let $M_\rho, A_\rho$, and $N^-_\rho$ be neighborhoods of identity in $M, A$ and $N^-$ respectively so that $P^-_\iota=M_\rho A_\rho N^-_\rho$.

\subsection*{Saturation}
Fix $t>0$ to be chosen so that $e^t$ is a small positive power of $\norm{\xi}$; cf.~\eqref{eq:eta and t}.
Using our partition of unity, we can write 
\begin{align}\label{eq:initial partition}
    \int_{N_1^+} e^{ i \langle \xi, n\rangle} \psi(n)\;d\mu^u_x(n)
    =\sum_{\rho\in \Pcal_\xi} \int_{N_1^+}e^{i\langle \xi,n\rangle} \psi(n) \rho(g_{t} nx) \;d\mu_x^u(n).
\end{align}
Here, we are using the fact that, since $x\in \Omega$, then the restriction of the support of $\mu_{x}^u$ to $N_1^+$ consists of points $n\in N_1^+$ with $nx \in \Omega$ (or equivalently, that $g_tnx\in \Omega$) and that $\sum_\rho \rho(y)=1$ for all $y\in\Omega$.

Our first step is to partition the integrals on the right side of~\eqref{eq:initial partition} over $N_1^+$ into pieces according to the flow box they land in under flowing by $g_t$.
To simplify notation, we write
\begin{equation}\label{eq:y_t}
      x_t:= g_t x.
\end{equation}
We denote by $N_1^+(t)$ a neighborhood of $N_1^+$ defined by the property that the intersection 
$$ B_\rho\cap (\mrm{Ad}(g_t)(N_1^+(t))\cdot x_t)$$
consists entirely of full local strong unstable leaves in $B_\rho$.
We note that since $\mrm{Ad}(g_t)$ expands $N^+$ and $B_\rho$ has radius $<1$, $N_1^+(t)$ is contained inside $N_2^+$.
Since $\psi$ is compactly supported inside $N_1^+$, we have
\begin{equation}\label{eq:N_1^+(t)}
    \chi_{N_1^+}(n)\psi(n)  = \chi_{N_1^+(t)}(n)\psi(n), \qquad \forall n \in N^+.
\end{equation}

For simplicity, we set
\begin{equation*}
    \xi_t := e^{-t}\xi, \qquad 
    \psi_t(n) := \psi(\Ad(g_t)^{-1} n ), \qquad \Acal_t :=  \mrm{Ad}(g_t)(N_1^+(t)).
\end{equation*}
For $\rho\in \Pcal_\xi$, we let $\Wcal_{\rho,t}$ denote the collection of connected components of the set
\begin{equation*}
    \set{n\in\Acal_t: nx_t\in B_\rho}.
\end{equation*}
In view of~\eqref{eq:N_1^+(t)}, changing variables using~\eqref{eq:g_t equivariance} yields
\begin{align}\label{eq:localize space}
  \sum_{\rho\in \Pcal_\xi} \int_{N_1^+}e^{i\langle \xi,n\rangle} \psi(n) \rho(g_t nx) \;d\mu_x^u
  =e^{-\d t } 
    \sum_{\rho\in \Pcal_\xi, W\in \Wcal_{\rho,t}}
    \int_{n\in W}  e^{i\langle \xi_t,n\rangle} \psi_t(n) 
        \rho(nx_t)  \; d\mu_{x_t}^u.
\end{align}

\subsection*{Centering the integrals}
It will be convenient to center all the integrals in~\eqref{eq:localize space} so that their basepoints belong to the transversals $T_\rho$ of the respective flow box $B_\rho$; cf.~\eqref{eq:box notation}.

Let $I_{\rho,t}$ denote an index set for $\Wcal_{\rho,t}$.
For $W\in \Wcal_{\rho,t}$ with index $\ell\in I_{\rho,t}$, let $n_{\rho,\ell}\in W$, $m_{\rho,\ell}\in M_\rho$, $n_{\rho,\ell}^-\in N^-_\rho$, and $t_{\rho,\ell}$ with $|t_{\rho,\ell}|\ll \iota$ be such that
\begin{equation}\label{eq:centers}
    x_{\rho,\ell} := n_{\rho,\ell}\cdot x_t = n_{\rho,\ell}^- m_{\rho,\ell}g_{t_{\rho,\ell}}  \cdot y_\rho \in T_\rho.
\end{equation}
Note that since $x$ and $y_\rho$ both belong to $\Omega$, we have that
\begin{equation}\label{eq:x_rho,ell in omega}
    x_{\rho,\ell}\in \Omega, \qquad n_{\rho,\ell}^- y_\rho \in \Omega.
\end{equation}
For each such $\ell$ and $W$, let us denote $W_\ell=Wn_{\rho,\ell}^{-1}$ and set
\begin{equation}\label{eq:phi tilde}
    \tilde{\chi}_{\rho,\ell}(t,n) := 
          \exp(i \langle\xi_t, nn_{\rho,\ell} \rangle).
\end{equation} 
Changing variables using~\eqref{eq:g_t equivariance} and~\eqref{eq:N equivariance}, we can rewrite the right side of~\eqref{eq:localize space} as follows: 
\begin{align}\label{eq:center integrals on transversal}
         e^{-\d t} 
         \sum_{\rho\in \Pcal_\xi, W\in \Wcal_{\rho,t}}
          &\int_{n\in W}  e^{i\langle \xi_t,n\rangle}
          \psi_t( n  )
         \rho(nx_t)  \;d\mu_{x_t}^u(n) 
         \nonumber\\
         &=e^{-\d t}
         \sum_{\rho\in \Pcal_\xi} 
         \sum_{\ell\in I_{\rho,t}} 
           \int_{n\in W_\ell}  
           \widetilde{\chi}_{\rho,\ell}(t,n)
          \psi_t( n n_{\rho,\ell}) \rho(nx_{\rho,\ell})
            \;d\mu_{x_{\rho,\ell}}^u(n) .
\end{align}  

\subsection*{Mass estimates}
We record here certain counting estimates which will allow us to sum error terms in later estimates over $\Pcal_\xi$.
Note that by definition of $N_1^+(j)$, we have $\bigcup_{\rho\in\Pcal_\xi,W\in \Wcal_{\rho,t}}W\subseteq \Acal_t$. 
Thus, it follows that
\begin{align}\label{eq:total mass}
    \sum_{\rho\in\Pcal_\xi,\ell\in I_{\rho,t}}
    \mu_{ x_{\rho,\ell}}^u(W_\ell)
     \ll \mu_{x_t}^u(\Acal_t)
    =
    e^{\d t} \mu_{ x}^u(N_1^+(t))
    \ll  e^{\d t} \mu_{ x}^u(N_2^+) \ll e^{\d t} \mu_x^u(N^+_1), 
\end{align}
where the last inequality follows since $N_1^+(j)\subseteq N_2^+$ using the doubling property of PS measures~\cite[Proposition 3.1]{Khalil-Mixing}.
We also used the fact that the partition of unity $\Pcal_\xi$ has uniformly bounded multiplicity.

\subsection*{Weak-stable holonomy} Fix some $\rho\in \Pcal_\xi$.
Recall the points $y_{\rho}\in T_\rho$ and $n^-_{\rho,\ell}\in N^-_\rho$ satisfying~\eqref{eq:centers}.
Let
\begin{align}\label{eq:p_rho,ell}
    p^-_{\rho,\ell} := 
    n_{\rho,\ell}^- m_{\rho,\ell}g_{t_{\rho,\ell}} .
\end{align}
The product map $ N^-\times A \times M\times N^+\to G$ is a diffeomorphism on a ball of radius $1$ around the identity; cf.~Section~\ref{sec:holonomy}. 
Hence, given $\ell\in I_{\rho,t}$, we can define maps $\phi_\ell$ and $\tilde{p}^-_\ell$ from $ W_\ell$ to $N^+$ and $P^-$ respectively by the following formula
\begin{equation}\label{eq:stable hol map}
    n p_{\rho,\ell}^- 
    =\tilde{p}^-_{\ell}(n)\phi_\ell(n).
\end{equation}
We suppress the dependence on $\rho$ and $t$ to ease notation.
Then, $\phi_\ell$ induces a map between the strong unstable manifolds of $x_{\rho,\ell}$ and $y_\rho$, also denoted $\phi_\ell$, and defined by
\begin{equation*}
    \phi_\ell( nx_{\rho,\ell}) = \phi_\ell(n)y_\rho.
\end{equation*}
In particular, this induced map coincides with the local weak stable holonomy map inside $B_\rho$.

Note that we can find a neighborhood $W_\rho\subset N^+$ of identity of radius $\asymp \iota_\xi$ such that
\begin{equation}\label{eq:W_rho}
    \phi_\ell(W_\ell) \subseteq W_\rho,
\end{equation}
for all $\ell\in I_{\rho,t}$.
Moreover, by shrinking the radius $\iota_\xi$ of the flow boxes by an absolute amount (depending only on the metric on $G$) if necessary, we may assume that all the maps $\phi_\ell$ are invertible on $ W_{\rho}$.
Hence, we can define the following:
\begin{align}\label{eq:def of tau_ell}
     p^-_\ell(n) &:= \tilde{p}_\ell^-(\phi_\ell^{-1}(n)) \in P^-,
     \qquad
     \widetilde{\psi}_{\rho,\ell}(t,n) := J\phi_\ell(n)\times  
     \psi_t( \phi_\ell^{-1}(n) n_{\rho,\ell}),
    \nonumber\\
    \chi_{\rho,\ell}(t,n) &:= \widetilde{\chi}_{\rho,\ell}(t,\phi_\ell^{-1}(n)), \qquad
    \rho_\ell(n) := \rho(p^-_\ell(n)ny_\rho),
\end{align}
where $J\phi_\ell$ denotes the Jacobian of the change of variable $\phi_\ell$; cf.~\eqref{eq:stable equivariance}.

Changing variables in the right side of~\eqref{eq:center integrals on transversal}, we obtain
\begin{align}\label{eq:stable hol}
    \sum_{\ell\in I_{\rho,t}} 
           \int_{n\in W_\ell}  
           \widetilde{\chi}_{\rho,\ell}(t,n)
          \widetilde{\psi}_{\rho,\ell}(t,n) \rho(nx_{\rho,\ell})
            \;d\mu_{x_{\rho,\ell}}^u 
          =  \sum_{\ell\in I_{\rho,t}} \int_{W_\rho}
         \chi_{\rho,\ell}(t,n)
         \widetilde{\psi}_{\rho,\ell}(t,n)
    \rho_\ell(n)    \;d\mu_{y_{\rho}}^u.
\end{align}

\subsection*{Phase formula}
The following lemma provides a formula for the stable holonomy maps $\phi_\ell$ defined above \eqref{eq:W_rho} which are responsible for the oscillation of $\chi_{\rho,\ell}$ along $N^+$.
The elementary proof of this lemma is given in Section~\ref{sec:temporal function}.

\begin{lem}
    \label{lem:phi_ell formula}
    Let $p^-_{\rho,\ell}$ be as in~\eqref{eq:p_rho,ell} and let $w_{\rho,\ell}\in \mf{n}^-$ be such that $n^-_{\rho,\ell}=\exp(w_{\rho,\ell})$. 
    Define vectors $z_{\rho,\ell}\in \mf{n}^-$ by
    \begin{align}\label{eq:z_rho,ell}
        z_{\rho,\ell} := -e^{t_{\rho,\ell}} m_{\rho,\ell}^{-1} \cdot w_{\rho,\ell}.
    \end{align}
    Then, for every $n=\exp(v)\in N_{1/2}^+$, we have
    \begin{align*}
        \log \phi^{-1}_\ell(n) = e^{t_{\rho,\ell} - \tilde{\t}_{\ell}(v)}
        m_{\rho,\ell} \cdot \left(v + \frac{\norm{v}^2}{2} z_{\rho,\ell}\right),
    \end{align*}
    where $\log \phi^{-1}_\ell(n)$ is viewed as an element of $\mf{n}^+$ and $\tilde{\t}_{\ell}:N_{1/2}^+\to \R_+$ is given by
    \begin{align*}
        \tilde{\t}_{\ell}(v) =\log\left( 1+\langle v, z_{\rho,\ell}\rangle + \frac{\norm{v}^2 \norm{z_{\rho,\ell}}^2}{4}\right).
    \end{align*}
    \end{lem}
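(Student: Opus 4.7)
The plan is to derive the formula by an explicit Bruhat-type factorization of $n \cdot p^-_{\rho,\ell}$ through the decomposition $G = P^- \cdot N^+$, carried out inside a concrete matrix realization of $G = \mathrm{SO}^0(d+1, 1)$. The key idea is to first move the $MA$ factor of $p^-_{\rho,\ell}$ past the $N^+$ component using the adjoint action, reducing the problem to the case when $p^-_{\rho,\ell}$ lies purely in $N^-$.

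Concretely, writing $p^-_{\rho,\ell} = n^-_{\rho,\ell} \cdot m_{\rho,\ell} g_{t_{\rho,\ell}}$ and supposing the factorization $n \cdot n^-_{\rho,\ell} = p' \cdot n^+(\hat u)$ for some $p' \in P^-$ and $\hat u \in \mf{n}^+$, I would use $\mathrm{Ad}((m_{\rho,\ell} g_{t_{\rho,\ell}})^{-1})$ acting on $\mf{n}^+$ as $e^{-t_{\rho,\ell}} m_{\rho,\ell}^{-1}$ to get
\[
    n \cdot p^-_{\rho,\ell} = (p' \cdot m_{\rho,\ell} g_{t_{\rho,\ell}}) \cdot n^+(e^{-t_{\rho,\ell}} m_{\rho,\ell}^{-1} \hat u).
\]
This identifies $\log \phi_\ell(n) = e^{-t_{\rho,\ell}} m_{\rho,\ell}^{-1} \hat u$, so computing $\log \phi_\ell^{-1}(n^+(v))$ reduces to finding the unique $u \in \mf{n}^+$ such that the $N^+$-factor in the factorization of $\exp(u) \cdot \exp(w_{\rho,\ell})$ equals $n^+(e^{t_{\rho,\ell}} m_{\rho,\ell} v)$.

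For this last computation I would realize $G$ as the identity component of the orthogonal group of the Lorentzian form $Q(x) = 2 x_+ x_- + |x|^2$ on $\R^{d+2}$ in light-cone coordinates $(x_+, x_1, \ldots, x_d, x_-)$. The Lie algebras $\mf{n}^\pm$ are then spanned by nilpotent generators $N_i^\pm$ satisfying $(N_i^\pm)^3 = 0$, so $\exp(X) = I + X + X^2/2$ for $X \in \mf{n}^\pm$, and the matrix product $\exp(\sum u_i N_i^+) \cdot \exp(\sum w_i N_i^-)$ can be computed directly. Evaluating this product on the light-like basis vector $e_+$ and isolating its $N^+$-factor yields a M\"obius-type relation expressing $\hat u$ as a quadratic-rational function of $u$ and $w_{\rho,\ell}$; the inverse relation follows from the elementary identity $\alpha \cdot \tilde\alpha = 1$ between the two Bruhat denominators (itself a consequence of the norm identity $|u|^2 = \alpha |\hat u|^2$ obtained by taking norms in the forward formula). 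Substituting $\hat u = e^{t_{\rho,\ell}} m_{\rho,\ell} v$, factoring $e^{t_{\rho,\ell}} m_{\rho,\ell}$ from the numerator, and using the definition of $z_{\rho,\ell}$ to consolidate the auxiliary parameters yields the stated formula, with $\tilde\t_{\ell}(v)$ emerging as the logarithm of the resulting denominator. The main technical subtlety is carefully tracking the sign conventions from the Cartan involution identification $\mf{n}^+ \cong \mf{n}^- \cong \R^d$; once these are fixed, the remaining algebra is routine.
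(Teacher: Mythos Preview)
Your proposal is correct and takes essentially the same approach as the paper: an explicit Bruhat factorization computed inside the matrix realization of $\mathrm{SO}(d+1,1)$, reading off the $N^+$-component from the top row. The paper's version is marginally more direct in that it observes $\phi_\ell^{-1}(n)$ is itself the $N^+$-factor of $n\,(p^-_{\rho,\ell})^{-1}\in N^-AM\cdot N^+$, so one can compute the inverse map in a single factorization rather than first computing $\phi_\ell$ and then inverting the resulting M\"obius formula.
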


It will be convenient for our estimates to simplify the expression for $\tilde{\t}_\ell$ by removing the quadratic term.
This is the reason for our choice of flow boxes of width $\asymp\norm{\xi}^{-1/3}$ along the strong unstable manifold.
In what follows, to simplify notation, we set
\begin{align}\label{eq:tau_ell and Gamma_ell}
        \t_\ell(v) 
        =-t_{\rho,\ell}+\log\left( 1+\langle v, z_{\rho,\ell}\rangle \right),
        \qquad
        \G_\ell(v) := e^{-\t_\ell(v)} m_{\rho,\ell} \cdot \left(v + \frac{\norm{v}^2}{2} z_{\rho,\ell}\right).
\end{align}
Recall the centering points $n_{\rho,\ell}$ defined in~\eqref{eq:centers}.
The following corollary provides a first step towards linearizing the phase in the oscillatory functions $\chi_{\rho,\ell}$ by replacing $\tilde{\t}_\ell$ in Lemma~\ref{lem:phi_ell formula} with $\t_\ell$ in~\eqref{eq:tau_ell and Gamma_ell}.
\begin{cor}
\label{cor:linearize phase}
    
    With the same notation as in Lemma~\ref{lem:phi_ell formula}, we have for all $n=\exp(v)\in W_\rho$ that
    \begin{align*}
        \chi_{\rho,\ell}(t,n) = 
        \a_{\rho,\ell}(t,n) + \cO(e^{-t}),
    \end{align*}
    where
    \begin{align}\label{eq:alpha_rho,ell}
        \a_{\rho,\ell}(t,n):= \exp(i \langle\xi_{t}, n_{\rho,\ell}  \rangle)
        \times 
         \exp\left (i \langle \xi_{t}, \G_\ell(v)\rangle \right).
    \end{align}
\end{cor}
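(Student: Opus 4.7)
The plan is to unpack $\chi_{\rho,\ell}(t,n)$ using the abelian structure of $N^+$, apply the formula from Lemma \ref{lem:phi_ell formula}, and then quantify the error in replacing the exponent $\tilde\tau_\ell(v)$ by $\tau_\ell(v)$, exploiting the fact that our flow boxes have width $\asymp \iota_\xi=\iota\norm{\xi}^{-1/3}$ in the strong unstable direction.

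First, since $N^+$ is abelian and $\phi_\ell^{-1}(n)n_{\rho,\ell}$ corresponds under $\exp^{-1}$ to $\log\phi_\ell^{-1}(n)+\log n_{\rho,\ell}$, definitions \eqref{eq:phi tilde} and \eqref{eq:def of tau_ell} give
\begin{align*}
\chi_{\rho,\ell}(t,n)=\exp(i\langle \xi_t,n_{\rho,\ell}\rangle)\cdot \exp\bigl(i\langle \xi_t,\log\phi_\ell^{-1}(n)\rangle\bigr).
\end{align*}
By Lemma \ref{lem:phi_ell formula}, for $n=\exp(v)\in W_\rho$,
\begin{align*}
\log\phi_\ell^{-1}(n)-\Gamma_\ell(v)=\bigl(e^{t_{\rho,\ell}-\tilde\tau_\ell(v)}-e^{-\tau_\ell(v)}\bigr)\,m_{\rho,\ell}\cdot\Bigl(v+\tfrac{\norm{v}^2}{2}z_{\rho,\ell}\Bigr).
\end{align*}
So, using $|e^{ia}-e^{ib}|\leq |a-b|$, the proof reduces to bounding this difference paired against $\xi_t$.

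Second, I would estimate each factor. On the one hand, $\norm{\xi_t}=e^{-t}\norm{\xi}$. On the other hand, because $v\in\log W_\rho$ has norm $\ll \iota_\xi\ll\norm{\xi}^{-1/3}$, and because $z_{\rho,\ell}$ is bounded uniformly in $\rho,\ell$ (since $n_{\rho,\ell}^-\in N_\rho^-$ has norm $\ll\iota$ and $|t_{\rho,\ell}|\ll\iota$ by \eqref{eq:z_rho,ell} and \eqref{eq:centers}), we have $\norm{m_{\rho,\ell}\cdot(v+\tfrac{\norm{v}^2}{2}z_{\rho,\ell})}\ll \norm{v}\ll \norm{\xi}^{-1/3}$. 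Finally, writing $e^{-\tilde\tau_\ell(v)}$ and $e^{-\tau_\ell(v)+t_{\rho,\ell}}$ as the reciprocals in the formulas for $\tilde\tau_\ell$ in Lemma \ref{lem:phi_ell formula} and $\tau_\ell$ in \eqref{eq:tau_ell and Gamma_ell}, a direct subtraction gives
\begin{align*}
\bigl|e^{t_{\rho,\ell}-\tilde\tau_\ell(v)}-e^{-\tau_\ell(v)}\bigr|\ll \norm{v}^2\norm{z_{\rho,\ell}}^2\ll \norm{v}^2\ll \norm{\xi}^{-2/3},
\end{align*}
where the denominators from the two fractions are bounded below away from zero since $\norm{v}\cdot\norm{z_{\rho,\ell}}$ is small.

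Combining these three bounds by Cauchy-Schwarz on the inner product yields
\begin{align*}
\bigl|\langle\xi_t,\log\phi_\ell^{-1}(n)-\Gamma_\ell(v)\rangle\bigr|\ll e^{-t}\norm{\xi}\cdot \norm{\xi}^{-2/3}\cdot \norm{\xi}^{-1/3}=e^{-t},
\end{align*}
and the corollary follows at once. The entire argument is a short calculation; there is no serious obstacle, and the main point is that the choice of radius $\iota_\xi=\iota\norm{\xi}^{-1/3}$ for the flow boxes was engineered precisely so that the discrepancy between the exact holonomy phase and its $\tau_\ell$-linearization is absorbed into an $\cO(e^{-t})$ error.
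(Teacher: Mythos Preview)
Your proof is correct and follows essentially the same approach as the paper's own proof. The paper states the key bound more tersely as $|\chi_{\rho,\ell}(t,n)-\a_{\rho,\ell}(t,n)|\ll \norm{v}^2\norm{\xi_t}\norm{\Gamma_\ell(v)}$, while you unpack this explicitly by subtracting the reciprocals defining $e^{t_{\rho,\ell}-\tilde\tau_\ell(v)}$ and $e^{-\tau_\ell(v)}$; the arithmetic and the final $e^{-t}\norm{\xi}\cdot\norm{\xi}^{-2/3}\cdot\norm{\xi}^{-1/3}=e^{-t}$ conclusion are the same.
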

\begin{proof}
    
    Recall from~\eqref{eq:def of tau_ell} and~\eqref{eq:phi tilde} that $\chi_{\rho,\ell}(t,n) = \exp(i\langle \xi_t,\phi_\ell^{-1}(n)n_{\rho,\ell}\rangle)$.
    We also recall that $\xi_t = e^{-t}\xi$.
    Then, for all $n=\exp(v)\in W_\rho$, we have that
    \begin{align*}
        |\chi_{\rho,\ell}(t,n)
        - \a_{\rho,\ell}(t,n)|
        \ll \norm{v}^2 \norm{\xi_t} \norm{\G_\ell(v)}
    \end{align*}
    Since $W_\rho$ has radius $\asymp \iota_\xi \asymp \norm{\xi}^{-1/3}$ (cf.~\eqref{eq:iota_xi}), we have that both $v$ and $\Gamma_\ell(v)$ have norm $\ll \norm{\xi}^{-1/3}$.
    In particular, the upper bound above is $\cO(e^{-t})$ as desired.
    \qedhere
\end{proof}

Let us summarize our progress so far.
To simplify notation, set
\begin{align}\label{eq:psi_rho,ell}
    \psi_{\rho,\ell}(t,n) :=
    \widetilde{\psi}_{\rho,\ell}(t ,n)
    \times \rho_\ell(n).
\end{align}
Then, in light of~\eqref{eq:initial partition},~\eqref{eq:localize space},~\eqref{eq:center integrals on transversal},~\eqref{eq:stable hol}, and Corollary~\ref{cor:linearize phase}, we find that
\begin{align}\label{eq:pre-Cauchy-Shwarz}
    \int_{N_1^+} e^{ i \langle \xi, n\rangle} \psi(n)\;d\mu^u_x
    = e^{-\d t}  \sum_{\rho\in \Pcal_\xi}
    \int_{W_\rho}
    \sum_{\ell\in I_{\rho,t}} 
         \a_{\rho,\ell}(t ,n)
         \psi_{\rho,\ell}(t,n)
     \;d\mu_{y_{\rho}}^u 
    + \cO\left(e^{-t}\right).
\end{align}

\subsection*{Cauchy-Schwarz}
\label{sec:CauchySchwarz}
We are left with estimating integrals of the form:
\begin{align}\label{eq:Psi_rho}
       \int_{ W_\rho} \Psi_{\rho}(t,n)
      \;d\mu^u_{y_\rho} ,
    \qquad
    \Psi_{\rho}(t,n):=\sum_{\ell\in I_{\rho,t}} 
    \a_{\rho,\ell}(t,n)
    \psi_{\rho,\ell}(t,n) . 
\end{align}
 By Cauchy-Schwarz, we get
\begin{align}\label{eq:CauchySchwarz}
    \left|\int_{W_\rho}\Psi_\rho(t,n)\;d\mu^u_{y_\rho}\right|^2
    \leq  \mu^u_{y_\rho}(W_\rho) \int_{W_\rho}\left|\Psi_\rho(t,n)\right|^2\;d\mu^u_{y_\rho}
\end{align}

We begin by noting the following apriori bounds on $\Psi_\rho$:
 \begin{align}\label{eq:bound psi without tilde}
     \norm{\psi_{\rho,\ell}}_{L^\infty( W_\rho)}
     \ll  1, \qquad \norm{\Psi_\rho}_{L^\infty(W_\rho)} 
     \ll \# I_{\rho,t}.
 \end{align}

\subsection*{Partitioning the support}

Using~\cite[Proposition 9.9]{Khalil-Mixing}, we can find a cover $\set{A_j}$ of $W_\rho$ with balls of radius
\begin{align}\label{eq:r}
    r= \norm{\xi}^{-1/2}
\end{align}
centered around $u_j\in W_\rho\cap \supp(\mu^u_{y_\rho})$ and satisfying $\sum_j\mu^u_{y_\rho}(A_j)\ll \mu_{y_\rho}^u(W_\rho)$.
By the triangle inequality\footnote{Cauchy-Schwarz allows us to have a non-negative integrand which in turn enables this step.} we have
\begin{align}\label{eq:partition}
\int_{ W_\rho}\left|\Psi_\rho(t,n)\right|^2\;d\mu^u_{y_\rho} 
\leq \sum_j \int_{A_j}\left|\Psi_\rho(t,n)\right|^2\;d\mu^u_{y_\rho}.
\end{align}

For $k,\ell\in I_{\rho,t}$, we let
\begin{equation*}
    \psi_{k,\ell}(t,n) := \psi_{\rho,k}(t,n)
    \overline{\psi_{\rho,\ell}(t,n) }, 
    \qquad
    \a_{k,\ell}(t,n) := \a_{\rho,k}(t,n) \overline{\a_{\rho,\ell}(t,n)}.
\end{equation*}
Expanding the square, we get
\begin{align*}
    \sum_j \int_{ A_j} |\Psi_\rho(t,n)|^2\;d\mu_{y_\rho}^u
    \leq 
     \sum_j \sum_{k,\ell\in I_{\rho,t}}
     \left|\int_{A_j} \a_{k,\ell}(t,n) \psi_{k,\ell}(t,n)  \;d\mu^u_{y_{\rho}}\right| .
\end{align*}
Using~\eqref{eq:g_t equivariance} and~\eqref{eq:N equivariance}, we change variables in the integrals using the maps taking each $A_j$ onto $N_{1}^+$.
More precisely, recall that $A_j$ is a ball of radius $r$ around $u_j$ such that $u_j \yrho \in \Omega$.
Letting  
\begin{align}\label{eq:after moving to cpt}
    \t = -\log r, \qquad
    \yrho^j &= g_{\t}u_j \yrho, 
    \qquad
    \a^j_{k,\ell}(t,n) = \a_{k,\ell}(t,\Ad(g_{-\t})(n)u_j), 
    \nonumber\\
    \psi^j_{k,\ell}(t,n) &= \psi_{k,\ell} (t,\Ad(g_{-\t})(n)u_j),
\end{align}
we can rewrite the above sum as
\begin{align}\label{eq:from A_i to N_1}
    \sum_j \sum_{k,\ell\in I_{\rho,t}} \left|
     \int_{A_j} \a_{k,\ell}(t,n) \psi_{k,\ell}(t,n)  \;d\mu^u_{y_{\rho}}\right|
     \leq r^\d \sum_{j} 
    \sum_{k,\ell\in I_{\rho,t}}
       \left| \int_{N_{1}^+} 
       \a^j_{k,\ell}(t,n) \psi^j_{k,\ell}(t,n) 
       d\mu^u_{y^j_\rho} 
       \right|.
\end{align}

One advantage of flowing forward by $g_\t$ is that it provides smoothing of the amplitude functions $\psi_{k,\ell}$.
In particular, it follows by~\eqref{eq:norm of rho} that
\begin{align*}
    \norm{\psi_{k,\ell}^j}_{C^1}  \ll \norm{\psi}_{C^1} \times r\times \norm{\xi}^{1/3} \ll \norm{\psi}_{C^1} \norm{\xi}^{-1/6}.
\end{align*}
Applied to the right side of~\eqref{eq:from A_i to N_1}, we obtain
\begin{align}\label{eq:remove amp}
    \int_{W_\rho}\left|\Psi_\rho(t,n)\right|^2\;d\mu^u_{y_\rho}
    =r^\d \sum_j \sum_{k,\ell\in I_{\rho,t}}
       \left| \int_{N_{1}^+} 
       \a^j_{k,\ell}(t,n)
       d\mu^u_{y^j_\rho} 
       \right|
       + \cO(\norm{\psi}_{C^1} \norm{\xi}^{-1/6} \# I_{\rho,t}^2 \mu^u_{y_\rho}(W_\rho)).
\end{align}

\subsection*{Linearizing the phase}

We now turn to estimating the sum of oscillatory integrals in~\eqref{eq:remove amp}.
Recall that $u_j$ denotes the center of the ball $A_j$ for each $j$
and let $v_j\in \mf{n}^+$ be such that
\begin{align*}
    u_j = \exp(v_j).
\end{align*}
Then, given $n=\exp(v)\in A_j$, and recalling the maps $\G_\ell$ in~\eqref{eq:tau_ell and Gamma_ell}, we get
\begin{align*}
    \G_\ell(v) = \G_\ell(v_j) 
    + D(\G_\ell(v_j))(v-v_j) + \cO(r^2),
\end{align*}
where $D(\G_\ell)$ denotes the derivative of $\G_\ell$.

The following elementary lemma uses the explicit expression for $\G_\ell$ in~\eqref{eq:tau_ell and Gamma_ell} to simplify the form of $D\G_\ell(v_j)$.

\begin{lem}
    \label{lem:simplify Gamma_ell}
    For all $\ell$ and $j$, we have
    \begin{align*}
        D\G_\ell(v_j) = e^{-\t_\ell(v_j)} m_{\rho,\ell} + \cO(\norm{\xi}^{-2/3}).
    \end{align*}
\end{lem}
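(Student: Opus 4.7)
The plan is to prove this by direct differentiation of the explicit formula in~\eqref{eq:tau_ell and Gamma_ell}, followed by size estimates derived from the geometry of $W_\rho$. Writing $F(v) := v + \tfrac{\|v\|^2}{2} z_{\rho,\ell}$ so that $\Gamma_\ell(v) = e^{-\tau_\ell(v)} m_{\rho,\ell} F(v)$, the product rule yields
\[
D\Gamma_\ell(v) = e^{-\tau_\ell(v)} m_{\rho,\ell}\, DF(v) - e^{-\tau_\ell(v)} \bigl(m_{\rho,\ell} F(v)\bigr)\otimes D\tau_\ell(v),
\]
where $DF(v)[w] = w + \langle v, w\rangle z_{\rho,\ell}$ and $D\tau_\ell(v)[w] = \langle z_{\rho,\ell}, w\rangle / (1+\langle v, z_{\rho,\ell}\rangle)$. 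Observe that at $v = 0$ one has $F(0) = 0$ and $DF(0) = I$, so the formula collapses to the identity $D\Gamma_\ell(0) = e^{-\tau_\ell(0)} m_{\rho,\ell}$; the lemma is thus the quantitative stability of this identity under a small perturbation $v_j$ of the basepoint.

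Next, I would invoke two size inputs. First, since $v_j \in W_\rho$ and $W_\rho$ has radius $\asymp \iota_\xi = \iota\|\xi\|^{-1/3}$ by~\eqref{eq:W_rho}, we have $\|v_j\| = O(\|\xi\|^{-1/3})$. Second, using~\eqref{eq:z_rho,ell} together with $\|w_{\rho,\ell}\| \ll 1$ and $|t_{\rho,\ell}| \ll 1$ — which come from $n_{\rho,\ell}^- \in N_\iota^-$ and $|t_{\rho,\ell}| \ll \iota$ in~\eqref{eq:centers} — we obtain $\|z_{\rho,\ell}\| = O(1)$. Consequently $\|F(v_j)\| = O(\|v_j\|)$, $\|DF(v_j) - I\|_{\mathrm{op}} = O(\|v_j\|)$, and $1+\langle v_j, z_{\rho,\ell}\rangle = 1 + O(\|v_j\|)$.

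Substituting into the derivative formula, the leading contribution at $v=v_j$ is $e^{-\tau_\ell(v_j)} m_{\rho,\ell}$, while the two correction pieces are rank-one operators of the form $e^{-\tau_\ell(v_j)}m_{\rho,\ell}z_{\rho,\ell}v_j^T$ and $-e^{-\tau_\ell(v_j)} m_{\rho,\ell}F(v_j)z_{\rho,\ell}^T/(1+\langle v_j,z_{\rho,\ell}\rangle)$. Each is bounded in operator norm by the product of two small factors — $\|v_j\|\cdot\|z_{\rho,\ell}\|$ from the first, and $\|F(v_j)\|\cdot\|z_{\rho,\ell}\|$ from the second — which together yield the stated error. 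The main (and only) subtlety is careful bookkeeping of these products to match the asymptotic rate in the lemma; no ideas beyond the explicit product rule and the size of $W_\rho$ are required.
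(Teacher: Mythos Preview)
Your approach is identical to the paper's: differentiate the explicit formula in~\eqref{eq:tau_ell and Gamma_ell} via the product rule, isolate the identity piece $e^{-\tau_\ell(v_j)}m_{\rho,\ell}$, and bound the rank-one corrections using $\|v_j\|\ll\|\xi\|^{-1/3}$.

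There is, however, a gap in your final bookkeeping. You correctly record that $\|z_{\rho,\ell}\|=O(1)$, and you correctly identify the two correction pieces; but you then assert that the products $\|v_j\|\cdot\|z_{\rho,\ell}\|$ and $\|F(v_j)\|\cdot\|z_{\rho,\ell}\|$ ``yield the stated error'' $O(\|\xi\|^{-2/3})$. They do not: each is $O(\|\xi\|^{-1/3})\cdot O(1)=O(\|\xi\|^{-1/3})$, a full factor of $\|\xi\|^{1/3}$ too large. Concretely, after your computation the residual operator is
\[
e^{-\tau_\ell(v_j)}\,m_{\rho,\ell}\bigl(z_{\rho,\ell}\,v_j^{t}-v_j\,z_{\rho,\ell}^{t}\bigr)+O(\|v_j\|^{2}),
\]
and the antisymmetric piece $z_{\rho,\ell}v_j^{t}-v_j z_{\rho,\ell}^{t}$ has operator norm $\asymp\|v_j\|\,\|z_{\rho,\ell}\|$, not $\asymp\|v_j\|^{2}$. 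Calling $\|z_{\rho,\ell}\|$ a ``small factor'' is the slip: it is bounded, not decaying in $\|\xi\|$.

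For context, the paper's own proof follows the identical route and stops at the same place: it displays the bracket $[-\lambda_\ell(v_j)(v_j z^{t}+\tfrac{\|v_j\|^{2}}{2}zz^{t})+\mathrm{Id}+zv_j^{t}]$, records the refinement $\lambda_\ell(v_j)v_j=v_j+O(\|v_j\|^{2})$, and then simply invokes $\|v_j\|\ll\|\xi\|^{-1/3}$. So the apparent shortfall between $O(\|\xi\|^{-1/3})$ and the stated $O(\|\xi\|^{-2/3})$ is shared; no additional cancellation mechanism is supplied in either argument.
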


Let
\begin{align}\label{eq:beta_k,ell}
    \b^j_{k,\ell} := r  \xi_t \cdot \left(e^{-\t_k(v_j)}m_{\rho,k}- e^{-\t_\ell(v_j)}m_{\rho,\ell} \right).
\end{align}
Recall that $\xi_t = e^{-t}\xi$ so that $\norm{\xi_t}r^2 = e^{-t}$.
Hence, by absorbing the constant terms into the absolute value, we obtain from~\eqref{eq:remove amp} and Lemma~\ref{lem:simplify Gamma_ell} that
\begin{align}\label{eq:linearize phase}
     &\int_{W_\rho}\left|\Psi_\rho(t,n)\right|^2 \;d\mu^u_{y_\rho}
    \nonumber\\
    &=r^\d \sum_j \sum_{k,\ell\in I_{\rho,t}}
       \left| \int_{N_{1}^+} 
       \exp(i\langle \b^j_{k,\ell}, v\rangle)
       d\mu^u_{y^j_\rho} 
       \right|
       +  \cO((e^{-t}
         + e^{-t}\norm{\xi}^{-1/6}
       +\norm{\psi}_{C^1} \norm{\xi}^{-1/6} )\# I_{\rho,t}^2 \mu^u_{y_\rho}(W_\rho)).
\end{align}

\begin{proof}
    [Proof of Lemma~\ref{lem:simplify Gamma_ell}]
    Recall the definition of the vectors $z_{\rho,\ell}\in \mf{n}^-$.
    To simplify notation, set
    \begin{align*}
        \l_\ell(v_j) = \frac{1}{1+\langle v_j,z_{\rho,\ell}\rangle}.
    \end{align*}
    In particular, $e^{-\tau_\ell(v_j)}= e^{t_{\rho,\ell}}\l_\ell(v_j)$.
    Then, using the formula for $\G_\ell$ in~\eqref{eq:tau_ell and Gamma_ell}, we obtain
    \begin{align*}
        D\G_\ell(v_j) 
        =e^{t_{\rho,\ell}}\l_\ell(v_j) m_{\rho,\ell} \left[ -\l_\ell(v_j)
        \left(
        v_j \cdot  z_{\rho,\ell}^t 
        + \frac{\norm{v_j}^2}{2} z_{\rho,\ell}\cdot  z_{\rho,\ell}^t
        \right)
        + \mrm{Id} + z_{\rho,\ell} \cdot v_j^t
        \right] .
    \end{align*}
Here, we are viewing $v_j$ and $z_{\rho,\ell}$ as $(d\times 1)$-column vectors and use $v_j^t$ and $z_{\rho,\ell}^t$ to denote the transpose of $v_j$ and $z_{\rho,\ell}$ respectively.
Now, observe that
\begin{align*}
    \l_\ell(v_j) v_j = v_j -\frac{\langle v_j,z_{\rho,\ell}\rangle}{1+\langle v_j,z_{\rho,\ell}\rangle} v_j = v_j + \cO(\norm{v_j}^2).
\end{align*}
The lemma now follows upon recalling that $\exp(v_j)$ belongs to $W_\rho$ so that $\norm{v_j} \ll \norm{\xi}^{-1/3}$ in view of our choice of flow boxes; cf.~\eqref{eq:iota_xi} and the discussion around it.
\end{proof}

\subsection*{Separation of frequencies}
To apply Theorem \ref{thm:flattening}, it will be important to understand the distribution of the frequencies $\b^j_{k,\ell}$.
To this end, we have the following lemma which allows us to avoid studying the separation of the holonomy matrices $m_{\rho,\ell}$.
\begin{lem}\label{lem:tau and freq}
   For all $j,k,\ell$, we have
   \begin{align*}
       \norm{\b^j_{k,\ell}} \gg \norm{r\xi_{t}} | e^{-\t_\ell(v_j)} - e^{-\t_k(v_j)}|,
   \end{align*}
   where $\t_\ell(v_j)$ and $\t_k(v_j)$ are defined in~\eqref{eq:tau_ell and Gamma_ell}.
\end{lem}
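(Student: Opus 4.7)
The key observation is that both $m_{\rho,k}$ and $m_{\rho,\ell}$ belong to the compact subgroup $M$, and the Euclidean inner product on $\mathfrak{n}^+ \cong \R^d$ was chosen in Section~\ref{sec:PS prelims} precisely so that it is $\Ad(M)$-invariant. Consequently, for any vector $w \in \mathfrak{n}^+$ and any $m \in M$, one has $\|m \cdot w\| = \|w\|$. Applied to $w = r\xi_t$, this yields
$$\|(r\xi_t) \cdot m_{\rho,k}\| = \|(r\xi_t) \cdot m_{\rho,\ell}\| = \|r\xi_t\|.$$

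With this in hand, the proof of the lemma reduces to a one-line calculation. Setting $a_k := e^{-\t_k(v_j)}>0$ and $a_\ell := e^{-\t_\ell(v_j)}>0$, the definition \eqref{eq:beta_k,ell} of $\b^j_{k,\ell}$ reads
$$\b^j_{k,\ell} = a_k \, (r\xi_t)\, m_{\rho,k} \;-\; a_\ell \, (r\xi_t)\, m_{\rho,\ell},$$
so that the reverse triangle inequality together with the isometry property above gives
$$\|\b^j_{k,\ell}\| \;\geq\; \bigl| \, \|a_k (r\xi_t) m_{\rho,k}\| - \|a_\ell (r\xi_t) m_{\rho,\ell}\| \, \bigr| \;=\; |a_k - a_\ell| \cdot \|r\xi_t\|,$$
which is exactly the claimed estimate (with absolute implicit constant $1$).

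There is no real obstacle to overcome: the statement is essentially a tautology once $\Ad(M)$-invariance of the norm is invoked. The structural role of the lemma in the larger proof of Theorem~\ref{thm:PS} is what matters: it completely decouples the subsequent separation-of-frequencies analysis from the orthogonal holonomy factors $m_{\rho,k}$, allowing the next stage of the argument to reduce to establishing quantitative separation between the scalar holonomy cocycles $\t_\ell(v_j)$ coming from the local weak stable holonomy formula in Lemma~\ref{lem:phi_ell formula}.
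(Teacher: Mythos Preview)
Your proof is correct and in fact more elementary than the paper's own argument. Both proofs hinge on the $\Ad(M)$-invariance of the norm, but you use it directly via the reverse triangle inequality to obtain
\[
\|\b^j_{k,\ell}\|\;\geq\;\bigl|\,c_k\|r\xi_t\|-c_\ell\|r\xi_t\|\,\bigr|\;=\;|c_k-c_\ell|\,\|r\xi_t\|,
\]
whereas the paper proceeds by bounding $\|\b^j_{k,\ell}\|\geq r\|\xi_t\|/\|(Q_k-Q_\ell)^{-1}\|$ with $Q_i=c_i m_{\rho,i}$, and then estimates $\|(Q_k-Q_\ell)^{-1}\|$ via a Neumann series expansion of $(\mathrm{Id}-\tfrac{c_\ell}{c_k}m_{\rho,\ell}m_{\rho,k}^{-1})^{-1}$, using that the rotation $m_{\rho,\ell}m_{\rho,k}^{-1}$ has operator norm $1$. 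The paper's route is strictly more work for the same conclusion; your reverse triangle inequality argument exploits precisely the same orthogonality fact (that $\|m\cdot w\|=\|w\|$ for $m\in M$) but in one line rather than through an operator-norm computation. Your closing remark on the structural role of the lemma is also accurate.
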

\begin{proof}
    In what follows, to simplify notation, we let
    \begin{align*}
        m_k:= m_{\rho,k},\qquad  c_k:=e^{-\t_k(v_j)},
        \qquad Q_k:= c_k m_{k},
    \end{align*}
    with the similar notation for the index $\ell$ in place of $k$ defined analogously.
    The lemma is evident when $c_k=c_\ell$.
    Hence, we may assume without loss of generality that $c_k>c_\ell$, and recall that these functions are non-negative by definition; cf.~\eqref{eq:alpha_rho,ell}.

    Recall the elementary estimate $\norm{g\cdot v} \geq \norm{v}/\norm{g^{-1}}$ for any invertible linear map $g$ and any vector $v\in \R^d$. This estimate implies the following lower bound:
    \begin{align*}
        \norm{\b^j_{k,\ell}} \geq 
        \frac{r \norm{\xi_{t}}}{\norm{(Q_k-Q_\ell)^{-1}}} = \frac{r\norm{\xi_{t}} c_k}{ \norm{(\id - \frac{c_\ell}{c_k}m_{\ell} m_{k}^{-1})^{-1}}}.
    \end{align*}
    
    That $\id - \frac{c_\ell}{c_k}m_\ell m_k^{-1}$ (and hence $Q_k-Q_\ell$) is invertible follows at once from the following estimate on the norm of its inverse.
    Recall that the rotation matrices $m_k$ and $m_\ell$ have spectral radius $1$. In particular, since $c_\ell<c_k$, we may use the power series expansion of $\id - Q$, for matrices $Q$ with spectral radius $<1$, to get that
    \begin{align*}
        \norm{(\id - \frac{c_\ell}{c_k}m_\ell m_k^{-1})^{-1}}
        \ll \sum_{n\geq 0} \left(\frac{c_\ell}{c_k}\right)^n 
        = \frac{c_k}{c_k-c_\ell}.
    \end{align*}
    The lemma follows by combining the above two estimates.\qedhere
\end{proof}

To proceed, we recall that $t_{\rho,\ell}, m_{\rho,\ell},$ and $n^-_{\rho,\ell}=\exp(w_{\rho,\ell})$ parametrize respectively the geodesic flow, $M$, and strong stable coordinates of the transverse intersections of the expanded horospherical disk $g_t N_1^+x$ with a fixed transversal $T_\rho$ of the flow box $B_\rho$.
We also recall that $z_{\rho,\ell} = e^{t_{\rho,\ell}}m_{\rho,\ell}^{-1} w_{\rho,\ell}$ and $\t_\ell(v_j) = e^{t_{\rho,\ell}}/(1+\langle v_j,z_{\rho,\ell}\rangle$.

Lemma~\ref{lem:tau and freq} motivates the definition of the following subset of $I_{\rho,t}^2$ parametrizing pairs $(k,\ell)$ for which the frequencies $\b^j_{k,\ell}$ are too small. 
Namely, we set
\begin{align}\label{eq:close freqs}
    \mrm{Small}
    := \set{(k,\ell,j): 
    \norm{\b^j_{k,\ell}} < 1 }.
\end{align}
Roughly speaking, elements of $C_{k,\ell}$ correspond to points $v_j$ lying in a small neighborhood of a hyperplane orthogonal to $m_{\rho,\ell}^{-1}w_{\rho,\ell} - m_{\rho,k}^{-1}w_{\rho,k}$.
Theorem~\ref{thm:friendly} will then provide us with a counting estimate on $C_{k,\ell}$.
This is done in the following lemma.
\begin{lem}
\label{lem:count close freqs}
    Let $\alpha>0$ be the exponent provided by Theorem~\ref{thm:friendly}.
    Then, for every fixed $k,\ell\in I_{\rho,t}$, we have
    \begin{align*}
        \sum_{j:(k,\ell,j)\in \mrm{Small}} r^\d \ll 
        \left(
        \frac{\norm{\xi}^{-1/6}e^t}{\norm{m_{\rho,\ell}^{-1}w_{\rho,\ell} - m_{\rho,k}^{-1}w_{\rho,k} }}
        \right)^\alpha
        \mu^u_{y_\rho}(W_\rho).
    \end{align*}
\end{lem}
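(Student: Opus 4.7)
The plan is to show that, after Taylor expanding the nonlinear functions $e^{-\t_\ell(v_j)}$ in the variable $v_j$, the condition $(k,\ell,j)\in\mrm{Small}$ confines $v_j$ to a thin tubular neighborhood of an affine hyperplane in $\mf{n}^+$ whose normal direction is essentially $m_{\rho,\ell}^{-1}w_{\rho,\ell}-m_{\rho,k}^{-1}w_{\rho,k}$. The non-concentration estimate Theorem~\ref{thm:friendly} applied to $\mu_{y_\rho}^u$ then controls the measure of this neighborhood, and the shadow lemma together with the bounded multiplicity of the Vitali cover $\{A_j\}$ converts this measure bound into the desired count bound.

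First I would use Lemma~\ref{lem:tau and freq} to reduce $\|\b^j_{k,\ell}\|<1$ to the scalar inequality $|e^{-\t_k(v_j)}-e^{-\t_\ell(v_j)}|\ll 1/\|r\xi_t\|=e^t/\|\xi\|^{1/2}$, using $r=\|\xi\|^{-1/2}$ and $\xi_t=e^{-t}\xi$. Next, invoking the formula $e^{-\t_\ell(v)}=e^{t_{\rho,\ell}}/(1+\langle v,z_{\rho,\ell}\rangle)$ from \eqref{eq:tau_ell and Gamma_ell} and $z_{\rho,\ell}=-e^{t_{\rho,\ell}}m_{\rho,\ell}^{-1}w_{\rho,\ell}$ from \eqref{eq:z_rho,ell}, a first order Taylor expansion in $v_j$ (valid because $\|v_j\|\ll\iota_\xi\asymp\|\xi\|^{-1/3}$, so the quadratic remainder is $O(\|\xi\|^{-2/3})$ and hence negligible compared to the target width since $e^t\geq 1$) recasts the condition as
\[
\bigl|C_{k,\ell}+\langle v_j,\mathbf{a}_{k,\ell}\rangle\bigr|\ll e^t/\|\xi\|^{1/2},
\]
with $C_{k,\ell}:=e^{t_{\rho,\ell}}-e^{t_{\rho,k}}$ and $\mathbf{a}_{k,\ell}:=e^{2t_{\rho,\ell}}m_{\rho,\ell}^{-1}w_{\rho,\ell}-e^{2t_{\rho,k}}m_{\rho,k}^{-1}w_{\rho,k}$.

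The decomposition $\mathbf{a}_{k,\ell}=e^{2t_{\rho,\ell}}\bigl(m_{\rho,\ell}^{-1}w_{\rho,\ell}-m_{\rho,k}^{-1}w_{\rho,k}\bigr)+(e^{2t_{\rho,\ell}}-e^{2t_{\rho,k}})m_{\rho,k}^{-1}w_{\rho,k}$, combined with the bounds $|t_{\rho,\cdot}|,\|w_{\rho,\cdot}\|\ll\iota$, shows that $\|\mathbf{a}_{k,\ell}\|$ is comparable to $\|m_{\rho,\ell}^{-1}w_{\rho,\ell}-m_{\rho,k}^{-1}w_{\rho,k}\|$ in the regime where the latter dominates the lower-order perturbation; the complementary regime renders the target right-hand side larger than $\mu^u_{y_\rho}(W_\rho)$, in which case the estimate follows trivially from $\sum_j r^\d\ll\mu^u_{y_\rho}(W_\rho)$ (shadow lemma plus bounded multiplicity). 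In the nontrivial regime, $v_j$ lies in an $(\epsilon\iota_\xi)$-neighborhood of an affine hyperplane with $\epsilon\asymp\|\xi\|^{-1/6}e^t/\|m_{\rho,\ell}^{-1}w_{\rho,\ell}-m_{\rho,k}^{-1}w_{\rho,k}\|$, and Theorem~\ref{thm:friendly} applied at basepoint $y_\rho$ with radius $\iota_\xi$ bounds the $\mu_{y_\rho}^u$-measure of this set by $\ll\epsilon^\alpha\mu_{y_\rho}^u(W_\rho)$.

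Finally, the shadow lemma (Proposition~\ref{prop:shadow lem}) gives $\mu_{y_\rho}^u(A_j)\asymp r^\d$ for each $j$ since $u_j\in W_\rho\cap\mrm{supp}(\mu_{y_\rho}^u)$, and bounded multiplicity of $\{A_j\}$ yields
\[
\sum_{j:(k,\ell,j)\in\mrm{Small}}r^\d\ll\mu_{y_\rho}^u\Bigl(\bigcup_{j:(k,\ell,j)\in\mrm{Small}}A_j\Bigr).
\]
Since $r=\|\xi\|^{-1/2}\ll\epsilon\iota_\xi$ in the nontrivial regime, the union on the right is contained in a slightly enlarged tubular neighborhood of the same hyperplane, and the measure bound from the previous step delivers the claimed estimate. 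I expect the main obstacle to be controlling the replacement of $\|\mathbf{a}_{k,\ell}\|$ by $\|m_{\rho,\ell}^{-1}w_{\rho,\ell}-m_{\rho,k}^{-1}w_{\rho,k}\|$: keeping track of the corrections coming from the $e^{2t_{\rho,\cdot}}$ factors and cleanly separating the trivial from the non-trivial regimes is the delicate part of the argument.
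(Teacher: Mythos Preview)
Your overall plan matches the paper's: reduce via Lemma~\ref{lem:tau and freq} to a scalar inequality, interpret it as confining $v_j$ to a thin slab around an affine hyperplane, and apply Theorem~\ref{thm:friendly} together with the shadow lemma and bounded multiplicity. The final counting steps are correct.

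There is, however, a gap in your linearization step. You Taylor-expand $e^{-\t_\ell(v)}$ and $e^{-\t_k(v)}$ separately, obtaining the coefficient vector $\mathbf{a}_{k,\ell}=e^{2t_{\rho,\ell}}m_{\rho,\ell}^{-1}w_{\rho,\ell}-e^{2t_{\rho,k}}m_{\rho,k}^{-1}w_{\rho,k}$, and then try to replace $\norm{\mathbf{a}_{k,\ell}}$ by $\norm{u_{k,\ell}}:=\norm{m_{\rho,\ell}^{-1}w_{\rho,\ell}-m_{\rho,k}^{-1}w_{\rho,k}}$. Your dichotomy argument does not close: the perturbation $(e^{2t_{\rho,\ell}}-e^{2t_{\rho,k}})m_{\rho,k}^{-1}w_{\rho,k}$ has size $O(\iota^2)$, a \emph{fixed} constant, whereas your ``trivial regime'' $\norm{u_{k,\ell}}\leq\norm{\xi}^{-1/6}e^t$ only covers values that shrink with $\norm{\xi}$. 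For $\norm{u_{k,\ell}}$ in the intermediate range $(\norm{\xi}^{-1/6}e^t,\,c\iota^2)$ the perturbation can dominate and you have no lower bound on $\norm{\mathbf{a}_{k,\ell}}$ in terms of $\norm{u_{k,\ell}}$; the slab width you obtain is then useless for the desired estimate.

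The paper sidesteps this entirely by computing $e^{-\t_\ell(v)}-e^{-\t_k(v)}$ \emph{exactly} as a rational function rather than Taylor-expanding. Bringing to a common denominator gives numerator
\[
e^{t_{\rho,\ell}}-e^{t_{\rho,k}}+e^{t_{\rho,\ell}+t_{\rho,k}}\langle v_j,\,u_{k,\ell}\rangle,
\]
which is already exactly linear in $v_j$ with coefficient vector $e^{t_{\rho,\ell}+t_{\rho,k}}u_{k,\ell}$; since $|t_{\rho,\cdot}|\ll\iota$ this is $\asymp\norm{u_{k,\ell}}$ unconditionally. The denominator is $1+O(\norm{\xi}^{-1/3})\asymp 1$ and is absorbed into the implicit constant. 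No case analysis is needed, and the rest of your argument (hyperplane neighborhood, Theorem~\ref{thm:friendly}, shadow lemma, bounded multiplicity) then goes through verbatim.
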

\begin{proof}
Let $j$ be such that $(k,\ell,j)\in\mrm{Small}$ and recall that $\xi_t=e^{-t}\xi$ and $r=\norm{\xi}^{-1/2}$.
To simplify notation, we also let
\begin{align*}
    u_{k,\ell}:= m_{\rho,\ell}^{-1}w_{\rho,\ell} - m_{\rho,k}^{-1}w_{\rho,k}.
\end{align*}
Then, Lemma~\ref{lem:tau and freq} and a direct calculation show that
\begin{align*}
    \left| e^{t_{\rho,\ell}} - e^{t_{\rho,k}} + e^{t_{\rho,\ell}+t_{\rho,k}}\langle v_j,  u_{k,\ell}\rangle \right| \ll \norm{\xi}^{ -1/2} e^t.
\end{align*}
Let $\epsilon_1 = \norm{\xi}^{-1/2} e^t/\norm{u_{k,\ell}}$.
It follows that $v_j$ belongs to a neighborhood of radius $\cO(\epsilon_1)$ around an affine hyperplane $L$ parallel to the kernel of the linear functional $v\mapsto \langle v,u_{k,\ell}\rangle$.

Recall that $A_j$ denotes the ball of radius $r$ around $\exp(v_j)\in W_\rho$ and that $W_\rho$ has radius $ \asymp \norm{\xi}^{-1/3}$.
It follows we can find a radius $\epsilon_2 \asymp \norm{\xi}^{-1/3}$ such that
\begin{align*}
    \bigcup_{j:(k,\ell,j)\in \mrm{Small}} A_j \subseteq L^{(\epsilon_1 + r)} \cap N^+_{\epsilon_2},
\end{align*}
where $L^{(\epsilon_1 +r)}$ denotes the $(\epsilon_1+r)$-neighborhood of $L$.
Furthermore, by the bounded multiplicity of the cover $\set{A_j}$ of $W_\rho$ and the fact that each $A_j$ has measure $\asymp r^\d$ (cf.~Proposition~\ref{prop:shadow lem}), we get that
\begin{align*}
    \sum_{j:(k,\ell,j)\in \mrm{Small}} r^\d \ll 
    \mu^u_{y_\rho}\left(\bigcup_{j:(k,\ell,j)\in \mrm{Small}} A_j \right).
\end{align*}
Hence, Theorem~\ref{thm:friendly} implies that the above sum is $\cO(\mu^u_{y_\rho}(W_\rho)(\epsilon_1+r)^\alpha/\epsilon_2^\alpha)$, which concludes the proof since $r\ll \epsilon_1$.
\end{proof}

To apply Lemma~\ref{lem:count close freqs}, we need the following counting estimate on close by vectors of the form $m_{\rho,\ell}^{-1}w_{\rho,\ell}$.
It is a consequence of Theorem~\ref{thm:friendly}.
\begin{lem}\label{lem:stable separation}
    For every $k\in I_{\rho,t}$ and $\eta>0$, we have
    \begin{align}\label{eq:sphere around k}
        \#\set{\ell\in I_{\rho,t}: \norm{m_{\rho,\ell}^{-1}w_{\rho,\ell} - m_{\rho,k}^{-1}w_{\rho,k}} <\norm{\xi}^{-\eta}}
        \ll \left(e^{-t}+ \norm{\xi}^{-\eta}\right)^\alpha e^{\d t},
    \end{align}
    where $\alpha>0$ is the exponent provided by Theorem~\ref{thm:friendly}.
\end{lem}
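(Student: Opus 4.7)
The plan is to apply the non-concentration estimate of Theorem~\ref{thm:friendly} to the Patterson-Sullivan measure on the strong stable manifold of $y_\rho$, after using a Fubini-type argument to absorb the rotational twist present in the $M$-normalized stable coordinates $m^{-1}_{\rho,\ell} w_{\rho,\ell}$. The main technical input will be a stable-side counting inequality
\begin{align}\label{eq:stable-count-plan}
    \#\set{\ell \in I_{\rho,t} : w_{\rho,\ell} \in B} \ll e^{\delta t}\, \mu^s_{y_\rho}\!\left( B^{(C e^{-t})} \right),
    \qquad B\subseteq N^-_\iota \text{ Borel},
\end{align}
where $\mu^s_{y_\rho}$ is the strong stable Patterson-Sullivan measure on $N^-_\iota\cdot y_\rho$. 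By time-reversal symmetry of the PS construction, $\mu^s$ enjoys the same uniform affine non-concentration estimate, with the same exponent $\alpha$, as in Theorem~\ref{thm:friendly}. I expect~\eqref{eq:stable-count-plan} to follow from a symmetric adaptation of the proof of Lemma~\ref{lem:count flow boxes}: conjugating by $g_{-t}$ sends each $x_{\rho,\ell} = n_{\rho,\ell} x_t$ to a point $n^0_\ell \cdot x \in \Omega$ whose stable coordinate relative to $g_{-t} y_\rho$ is $e^t w_{\rho,\ell}$, and these points admit disjoint $\cO(1)$-scale stable shadows on $N^-_{e^t \iota}\cdot g_{-t} y_\rho$ via the stable-side Sullivan shadow lemma; using the $g_t$-equivariance of $\mu^s$ analogous to~\eqref{eq:g_t equivariance} then yields~\eqref{eq:stable-count-plan}.

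Granting~\eqref{eq:stable-count-plan}, the next step will be to translate the closeness assumption on $M$-normalized coordinates into a geometric condition on $w_{\rho,\ell}$ alone. The inequality $\norm{m^{-1}_{\rho,\ell} w_{\rho,\ell} - m^{-1}_{\rho,k} w_{\rho,k}} < \norm{\xi}^{-\eta}$ is equivalent to saying that $w_{\rho,\ell}$ lies in the $\norm{\xi}^{-\eta}$-ball around $m_{\rho,\ell} m^{-1}_{\rho,k} w_{\rho,k}$. As $m_{\rho,\ell}$ ranges over the compact group $M$, this center sweeps the $M$-orbit of $m^{-1}_{\rho,k} w_{\rho,k}$, which is the sphere $S_k \subset \mf{n}^-$ of radius $\norm{w_{\rho,k}}$ centered at the origin. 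Consequently, every $\ell$ satisfying the closeness condition has $w_{\rho,\ell} \in S_k^{(\norm{\xi}^{-\eta})}$, and~\eqref{eq:stable-count-plan} then bounds the target count by $\ll e^{\delta t}\, \mu^s_{y_\rho}\!\left( S_k^{(\norm{\xi}^{-\eta} + C e^{-t})} \cap N^-_\iota \right)$.

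Finally, since $S_k \cap N^-_\iota$ is a smooth codimension-one submanifold, I will cover it by $\cO(1)$ balls on which $S_k$ is uniformly well-approximated by an affine hyperplane. Applying the non-concentration property of $\mu^s_{y_\rho}$ on each such ball then gives
\begin{align*}
    \mu^s_{y_\rho}\!\left( S_k^{(\norm{\xi}^{-\eta} + C e^{-t})} \cap N^-_\iota \right) \ll (\norm{\xi}^{-\eta} + e^{-t})^\alpha,
\end{align*}
which combined with the previous step completes the proof. I expect the main obstacle to be establishing~\eqref{eq:stable-count-plan}, which requires developing a stable-side analog of Sullivan's shadow lemma compatible with the flow-box constructions used in Lemma~\ref{lem:count flow boxes}; the sphere-to-hyperplane reduction in the final step should be routine given this ingredient.
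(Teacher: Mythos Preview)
Your proposal is correct and follows essentially the same route as the paper. Both arguments reduce the closeness condition to $w_{\rho,\ell}$ lying in a thin annulus around the sphere of radius $\norm{w_{\rho,k}}$ (you via the $M$-orbit of the center, the paper via $M$-invariance of the norm giving $|\,\norm{w_{\rho,\ell}}-\norm{w_{\rho,k}}\,|<\norm{\xi}^{-\eta}$), both exploit the $e^{-t}$-separation of the points $w_{\rho,\ell}$ together with the stable shadow lemma to obtain your counting inequality~\eqref{eq:stable-count-plan}, and both finish by applying non-concentration of $\mu^s_{y_\rho}$ near the sphere. The only notable difference is that the paper invokes a ready-made sphere non-concentration estimate (\cite[Lemma~3.8]{Dasetal}) rather than your local hyperplane approximation of $S_k$; either works.
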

\begin{proof}
    Let $B_k$ denote the set on the left side of~\eqref{eq:sphere around k} and let $\ell$ be some element of $B_k$.
    Then, by $M$-invariance of the norm, we have\footnote{This estimate is again done to bypass studying the separation of the rotation matrices $w_{\rho,\ell}$.}
    \begin{align*}
        |\norm{w_{\rho,\ell}} - \norm{w_{\rho,k}}| \ll \norm{\xi}^{-\eta}.
    \end{align*}
    In particular, the vectors $w_{\rho,\ell}$ with $\ell\in B_k$ all belong to a neighborhood of width $\ll \norm{\xi}^{-\eta}$ of the sphere $S$ of radius $\norm{w_{\rho,k}}_\infty$ around the origin in the norm metric.

    The next ingredient is to note that the points $w_{\rho,\ell}$ are separated by an amount $\gg e^{-t}$.
    This follows by a similar argument to the proof of~\cite[Proposition 9.13]{Khalil-Mixing}\footnote{This proof is based on injectivity radius considerations along with the fact that $g_t$ expands the stable manifold by $e^t$ in backward time.}.
    In particular, there is $\epsilon_1 \asymp (\norm{\xi}^{-\eta} + e^{-t})$ and $\epsilon_2 \asymp e^{-t}$ such that
    \begin{align*}
        \bigsqcup_{\ell \in B_k} N^-_{\epsilon_2}\cdot \exp(w_{\rho,\ell}) y_\rho  \subseteq N^-_{\epsilon_1}\cdot S,
    \end{align*}
    where $N^-_{\epsilon_1}\cdot S$ is the $\epsilon_1$-neighborhood of $S$.
    
    To conclude the proof, let $\mu^s_{y_\rho}$ denote the shadow of the PS measure on $N^-\cdot y_\rho$ defined analogously to the measures $\mu^u_{y_\rho}$ in~\eqref{eq:unstable conditionals}.
    The above discussion implies that
    \begin{align*}
        \# B_k
        \ll \frac{ \mu^s_{y_\rho}\left(N^-_{\epsilon_1}\cdot S \right)  }{ \min \mu^s_{y_\rho}(N^-_{\epsilon_2}\cdot \exp(w_{\rho,\ell}) y_\rho )}.
    \end{align*}
    By~\eqref{eq:x_rho,ell in omega}, the points $\exp(w_{\rho,\ell})\cdot y_\rho$ all belong to $ \Omega $.
    In particular, by Proposition~\ref{prop:shadow lem}, we have
    \begin{align*}
        \mu^s_{y_\rho}\left(N^-_{e^{-t}} \cdot \exp(w_{\rho,\ell}) y_\rho \right) \asymp e^{-\d t}.
    \end{align*}
    On the other hand, by~\cite[Lemma 3.8]{Dasetal}, we have that $N^-_{\epsilon_1}\cdot S$ has measure $\cO(\epsilon_1^\alpha)$\footnote{Note that, similarly to the case of affine subspaces in Theorem~\ref{thm:friendly}, this estimate can be deduced from the fact that PS measures give $0$ mass to proper subvarieties of the boundary using the argument in~\cite[Section 8]{KleinbockLindenstraussWeiss}.}.
    The lemma now follows.
\end{proof}

\subsection*{Reduction to $L^2$-flattening}
To simplify our error terms, we make the following choices:
\begin{align}\label{eq:eta and t}
    \eta = 1/12, \qquad e^t = \norm{\xi}^{1/24}.
\end{align}
In view of Lemmas~\ref{lem:count close freqs} and~\ref{lem:stable separation}, we introduce the following notation:
\begin{align}\label{eq:C_rho,t}
    \mrm{Close}_\eta := \set{(k,\ell)\in I_{\rho,t}^2: \norm{m_{\rho,\ell}^{-1}w_{\rho,\ell} - m_{\rho,k}^{-1}w_{\rho,k}} <\norm{\xi}^{-\eta}} .
\end{align}
We also define the following set of indices parametrizing measures $\mu^u_{y_\rho^j}$ for which many of the frequencies $\b^j_{k,\ell}$ are close together. 
Let $\Jcal$ denote the index set for the indices $j$ of the measures $\mu^u_{y^j_\rho}$ and set
\begin{align}
    \mrm{Bad}_\eta := \set{j\in \Jcal: \#\set{(k,\ell) \in I_{\rho,t}^2: (k,\ell,j)\in \mrm{Small} }> \norm{\xi}^{-\alpha/48} \times  (e^{\d t}+\#I_{\rho,t})  \# I_{\rho,t} } .
\end{align}
The following corollary allows us to estimate estimate the part of the sum corresponding to $\mrm{Bad}_\eta$.
\begin{cor}\label{cor:Markov}
We have the following counting estimate on $\mrm{Bad}_\eta$:
   \begin{align*}
       \sum_{j\in \mrm{Bad}_\eta} r^\d
       \ll \norm{\xi}^{-\alpha/48}  \mu^u_{y_\rho}(W_\rho).
   \end{align*}
\end{cor}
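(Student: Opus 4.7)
The plan is to apply a Markov/Chebyshev-type argument: bound the total sum $\sum_{k,\ell} \sum_{j:(k,\ell,j)\in\mrm{Small}} r^\d$ by splitting the pairs $(k,\ell)$ according to whether they lie in $\mrm{Close}_\eta$ or not, and then divide by the threshold appearing in the definition of $\mrm{Bad}_\eta$.

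First I would observe that, by the bounded multiplicity of the cover $\{A_j\}$ of $W_\rho$ and Proposition~\ref{prop:shadow lem}, one has the trivial bound $\sum_{j:(k,\ell,j)\in\mrm{Small}} r^\d \ll \mu^u_{y_\rho}(W_\rho)$ for any fixed pair $(k,\ell)$. For pairs in $\mrm{Close}_\eta$ this is the only bound available, but Lemma~\ref{lem:stable separation} (applied with $\eta=1/12$ and $e^t=\norm{\xi}^{1/24}$, noting $e^{-t}+\norm{\xi}^{-\eta}\asymp\norm{\xi}^{-1/24}$) gives
\begin{align*}
    \#\mrm{Close}_\eta\ll \#I_{\rho,t}\cdot\norm{\xi}^{-\alpha/24}e^{\d t}.
\end{align*}
Hence the contribution from $\mrm{Close}_\eta$ to the double sum is at most $\#I_{\rho,t}\cdot e^{\d t}\cdot\norm{\xi}^{-\alpha/24}\mu^u_{y_\rho}(W_\rho)$.

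Next, for pairs $(k,\ell)\notin \mrm{Close}_\eta$, the separation $\norm{m_{\rho,\ell}^{-1}w_{\rho,\ell}-m_{\rho,k}^{-1}w_{\rho,k}}\geq \norm{\xi}^{-1/12}$ combined with Lemma~\ref{lem:count close freqs} yields
\begin{align*}
    \sum_{j:(k,\ell,j)\in\mrm{Small}} r^\d \ll \bigl(\norm{\xi}^{-1/6+1/12}\cdot \norm{\xi}^{1/24}\bigr)^{\alpha}\mu^u_{y_\rho}(W_\rho) = \norm{\xi}^{-\alpha/24}\mu^u_{y_\rho}(W_\rho),
\end{align*}
so the contribution of far pairs is bounded by $\#I_{\rho,t}^2\cdot \norm{\xi}^{-\alpha/24}\mu^u_{y_\rho}(W_\rho)$. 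Combining the two contributions gives the total bound
\begin{align*}
    \sum_{(k,\ell)}\sum_{j:(k,\ell,j)\in\mrm{Small}} r^\d \ll \#I_{\rho,t}\bigl(e^{\d t}+\#I_{\rho,t}\bigr)\norm{\xi}^{-\alpha/24}\mu^u_{y_\rho}(W_\rho).
\end{align*}

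Finally I would apply Markov's inequality: by the definition of $\mrm{Bad}_\eta$, every $j\in\mrm{Bad}_\eta$ contributes more than $\norm{\xi}^{-\alpha/48}(e^{\d t}+\#I_{\rho,t})\#I_{\rho,t}$ pairs to the double sum, so
\begin{align*}
    \sum_{j\in\mrm{Bad}_\eta}r^\d \leq \frac{\#I_{\rho,t}(e^{\d t}+\#I_{\rho,t})\norm{\xi}^{-\alpha/24}\mu^u_{y_\rho}(W_\rho)}{\norm{\xi}^{-\alpha/48}(e^{\d t}+\#I_{\rho,t})\#I_{\rho,t}} = \norm{\xi}^{-\alpha/48}\mu^u_{y_\rho}(W_\rho),
\end{align*}
which is the desired estimate. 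The whole argument is essentially bookkeeping, the only substantive input being the two counting lemmas; the only mildly subtle step is checking that the choice $\eta=1/12$ together with $e^t=\norm{\xi}^{1/24}$ is calibrated so that the $\mrm{Close}_\eta$ bound (governed by $\eta$ and $t$) and the far-pair bound (governed by $1/6-\eta$ and $t$) both yield the same exponent $\alpha/24$, which after division by the Markov threshold $\norm{\xi}^{-\alpha/48}$ produces the claimed $\norm{\xi}^{-\alpha/48}$ savings.
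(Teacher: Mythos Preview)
Your proof is correct and follows essentially the same approach as the paper: split the total sum $\sum_j\sum_{k,\ell} r^\d\mathbbm{1}_{\mrm{Small}}(k,\ell,j)$ according to whether $(k,\ell)\in\mrm{Close}_\eta$, apply Lemma~\ref{lem:stable separation} to the close part and Lemma~\ref{lem:count close freqs} to the far part (both yielding the exponent $-\alpha/24$ with the choices $\eta=1/12$, $e^t=\norm{\xi}^{1/24}$), and then divide by the threshold in the definition of $\mrm{Bad}_\eta$ via Markov's inequality. The only cosmetic difference is that you state the trivial bound $\sum_{j:(k,\ell,j)\in\mrm{Small}} r^\d \ll \mu^u_{y_\rho}(W_\rho)$ explicitly for the close pairs, whereas the paper folds this directly into the estimate for (I).
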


\begin{proof}

The corollary will follow from an application of Markov's inequality to the estimates in Lemmas~\ref{lem:count close freqs} and~\ref{lem:stable separation} as follows.
First, we note that
    \begin{align*}
        &\sum_{j\in \Jcal} \sum_{(k,\ell)\in I_{\rho,t}^2}
        r^\d \mathbbm{1}_{\mrm{Small}}(k,\ell,j)
        \nonumber\\
        &= \underbrace{\sum_{(k,\ell)\in \mrm{Close}_\eta} 
        \sum_{j\in \Jcal} r^\d \mathbbm{1}_{\mrm{Small}}(k,\ell,j)}_{(\mrm{I})}
        +\underbrace{ \sum_{(k,\ell)\notin \mrm{Close}_\eta} 
        \sum_{j\in \Jcal} r^\d \mathbbm{1}_{\mrm{Small}}(k,\ell,j)}_{(\mrm{II})}.
    \end{align*}
    Then, by Lemma~\ref{lem:stable separation} and our choices in~\eqref{eq:eta and t}, the first sum is estimated as follows:
    \begin{align*}
        (\mrm{I}) \ll \left(e^{-t} + \norm{\xi}^{-\eta}\right)^\alpha  \times e^{\d t}  \# I_{\rho,t} \mu^u_{y_\rho}(W_\rho)
        \ll \norm{\xi}^{-\alpha/24} \times e^{\d t}  \# I_{\rho,t} \mu^u_{y_\rho}(W_\rho).
    \end{align*}
    For the second sum, we use Lemma~\ref{lem:count close freqs} and the definition of $\mrm{Close}_\eta$ to get
    \begin{align*}
        (\mrm{II}) \ll \norm{\xi}^{(\eta -1/6)\alpha } e^{\alpha t} \times \# I_{\rho,t}^2 \mu^u_{y_\rho}(W_\rho) 
        \ll \norm{\xi}^{-\alpha/24} \times  \# I_{\rho,t}^2 \mu^u_{y_\rho}(W_\rho).
    \end{align*}
    Hence, the corollary follows by Markov's inequality.
\end{proof}

To simplify notation, we set
\begin{align}
    E_1:= \max\set{\norm{\xi}^{-1/24}, \norm{\xi}^{-\alpha/48}}.
\end{align}
For $w\in \R^d$, we let
\begin{align}\label{eq:nu_j}
    \nu_j :=  \mu^u_{y_\rho^j}\left|_{N_1^+} \right. 
    , \qquad
    \widehat{\nu}_j(w):= 
    \int_{N^+} e^{i \langle w, n\rangle} \;d\nu_j(n).
\end{align}
Then, by~\eqref{eq:remove amp} and Corollary~\ref{cor:Markov}, we obtain
\begin{align}\label{eq:before flattening}
    \int_{W_\rho} |\Psi_\rho(t,n)|^2\;d\mu^u_{y_\rho}
     = r^\d \sum_{j\notin \mrm{Bad}_\eta} \sum_{k,\ell\in I_{\rho,t}} 
    |\widehat{\nu}_j(\b^j_{k,\ell})|
     + O \left( ( (\norm{\psi}_{C^1}+1) \times E_1 \times \# I_{\rho,t}^2 
     \mu^u_{y_\rho}(W_\rho) \right) .
\end{align}

For each $j$, the sum on the right side of the above estimate can be viewed as an average, when properly normalized, over Fourier coefficients of the measure $\nu_j$.
Moreover, Corollary~\ref{cor:Markov} guarantees that the frequencies $\b^j_{k,\ell}$ are sampled from a well-separated set.
Hence, this average can be estimated using the $L^2$-Flattening Theorem, Theorem~\ref{thm:flattening}.

\subsection*{The role of $L^2$-flattening}

Let $\eta_2>0$ be a small parameter to be chosen using Lemma~\ref{lem:apply flattening} below.
Note that the total mass of $\nu_j$, denoted $|\nu_j|$, is $\murhoj(N_1^+)$.
For each $k\in I_{\rho,t}$, define the following set, which roughly speaking, consists of frequencies where $\widehat{\nu}_j$ is large:
\begin{align}
    B(j,k,\eta_2):= \set{\ell\in I_{\rho,t}:  |\widehat{\nu}_j(\b_{k,\ell}^j)| > \norm{\xi}^{-\eta_2} |\nu_j|}.
\end{align}
Then, splitting the sum over frequencies according to the size of the Fourier transform $\widehat{\nu}_j$ and reversing our change variables to go back to integrating over $A_j$, we obtain
\begin{align}\label{eq:bound by count}
    r^\d
    \sum_{j\notin \mrm{Bad}_\eta} \sum_{k,\ell\in I_{\rho,t}}
  |\widehat{\nu}_j(\b^j_{k,\ell})|
    \ll
     \left( 
     \max_{j\notin \mrm{Bad}_\eta, k\in I_{\rho,t} } \# B(j,  k,\eta_2) 
    + \norm{\xi}^{-\eta_2} \#I_{\rho,t} 
    \right)  \# I_{\rho,t}\mu^u_{y_\rho}(W_\rho),
 \end{align}

The following key counting estimate for $B(j,k,\eta_2)$ is a consequence of the $L^2$-flattening theorem, Theorem~\ref{thm:flattening}.
\begin{lem}
\label{lem:apply flattening}
    For every $\e>0$, there is $\eta_2>0$ such that for all $j\notin \mrm{Bad}_\eta$ and $k\in I_{\rho,t}$, we have
    \begin{align*}
        \# B(j,k,\eta_2) \ll_\e \norm{\xi}^{\e -\alpha/96}  \times \sqrt{ (e^{\d t} + \# I_{\rho,t})\#I_{\rho,t}},
    \end{align*}
    where $\alpha>0$ is the exponent provided by Theorem~\ref{thm:friendly}.
    Here, $\eta_2$ is the constant provided by Theorem~\ref{thm:flattening} (denoted by $\d$ in the notation of the theorem).
\end{lem}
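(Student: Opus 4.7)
The plan is to apply Theorem~\ref{thm:flattening} to the probability measure $\tilde{\nu}_j := \nu_j/|\nu_j|$ and then convert the large-value level set into a counting constraint via pigeonhole, exploiting the definition of $\mrm{Bad}_\eta$. First I would check that the measures $\tilde{\nu}_j$ are uniformly affinely non-concentrated with parameters depending only on $\G$: this follows from Theorem~\ref{thm:friendly} combined with Proposition~\ref{prop:shadow lem}, which guarantees $|\nu_j|\asymp_\G 1$ (note $y_\rho^j \in \Omega$ by the construction in~\eqref{eq:after moving to cpt}), so normalization does not affect non-concentration. I also need a ceiling on the frequencies involved: from~\eqref{eq:beta_k,ell},~\eqref{eq:r}, and~\eqref{eq:eta and t}, the factor $\norm{r\xi_t} = \norm{\xi}^{1/2}e^{-t} = \norm{\xi}^{11/24}$ combined with the boundedness of $e^{-\t_\ell(v_j)}m_{\rho,\ell}$ gives $\norm{\b^j_{k,\ell}} \ll \norm{\xi}^{1/2}$ for all $j,k,\ell$. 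Setting $T := \norm{\xi}^{1/2}$, Theorem~\ref{thm:flattening} then provides, for any given $\e > 0$, a threshold $\eta_2 := \tau(\e)/2$ such that the superlevel set
\begin{align*}
    E_j := \set{w \in \R^d : \norm{w} \leq T,\ |\widehat{\nu}_j(w)| > \norm{\xi}^{-\eta_2}|\nu_j|}
\end{align*}
is covered by $N \ll_\e \norm{\xi}^{\e/2}$ balls of radius $1/2$ (halving the radius from the statement of Theorem~\ref{thm:flattening} only changes the implicit constant by a dimension-dependent factor).

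Next, fix $j \notin \mrm{Bad}_\eta$ and $k \in I_{\rho,t}$, and set $M := \#B(j,k,\eta_2)$. By definition of $B(j,k,\eta_2)$, every $\ell$ in this set satisfies $\b^j_{k,\ell} \in E_j$, so distributing these $M$ frequencies among the $N$ half-unit balls and applying Cauchy--Schwarz to the bin sizes produces at least $M^2/N$ ordered pairs $(\ell_1,\ell_2) \in B(j,k,\eta_2)^2$ whose frequencies share a common half-unit ball. For any such pair, $\b^j_{\ell_2,\ell_1} = \b^j_{k,\ell_1} - \b^j_{k,\ell_2}$ has norm strictly less than $1$, which means $(\ell_1,\ell_2,j) \in \mrm{Small}$ by~\eqref{eq:close freqs}.

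Finally, the assumption $j \notin \mrm{Bad}_\eta$ bounds the total number of such pairs by $\norm{\xi}^{-\alpha/48}(e^{\d t}+\#I_{\rho,t})\#I_{\rho,t}$. Comparing this upper bound with the lower bound $M^2/N$ derived above and taking square roots yields $M \ll_\e \norm{\xi}^{\e/4 - \alpha/96}\sqrt{(e^{\d t}+\#I_{\rho,t})\#I_{\rho,t}}$, which is the desired estimate after renaming $\e$. I do not foresee a serious obstacle: the one minor technicality --- matching the ball radius coming from the flattening cover to the unit-ball threshold used in the definition~\eqref{eq:close freqs} of $\mrm{Small}$ --- is handled by using radius-$1/2$ balls as above, which leaves the covering count of the same order.
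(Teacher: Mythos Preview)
Your proposal is correct and follows essentially the same route as the paper: apply Theorem~\ref{thm:flattening} to cover the large-value set by $\cO_\e(\norm{\xi}^\e)$ balls of radius $1/2$, observe that two indices $\ell_1,\ell_2$ landing in the same ball force $(\ell_1,\ell_2,j)\in\mrm{Small}$, and then invoke the definition of $\mrm{Bad}_\eta$ to bound $\#\mrm{Small}_j$. The only cosmetic difference is that the paper bounds each bin $\tilde B_i$ separately by $\sqrt{\#\mrm{Small}_j}$ and sums over the $\cO_\e(\norm{\xi}^\e)$ bins, whereas you use Cauchy--Schwarz on the bin sizes to get $M^2/N\leq \#\mrm{Small}_j$; your version yields $\sqrt{N}$ instead of $N$ in the final bound, but this is absorbed when renaming~$\e$.
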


\begin{proof}
    
    Recall the definition of the frequencies $\b^j_{k,\ell}$ in~\eqref{eq:beta_k,ell}.
    The rough idea behind the proof is that. since $j\notin \mrm{Bad}_\eta$, the frequencies $\b^j_{k,\ell}$ are well-separated. 
    This allows us to apply Theorem~\ref{thm:flattening} to the set of frequencies where the Fourier transform is large to conclude that the sets $B(j,k,\eta_2)$ are relatively small in size.

    More precisely, 
    Theorem~\ref{thm:friendly}
    and Theorem~\ref{thm:flattening} imply that there exists $\eta_2>0$, depending on $\e$ (but not on the index $j$), such that the set 
    \begin{align*}
       Q:= \set{\b^j_{k,\ell}: \ell \in B(j,k,\eta_2)}
    \end{align*}
     can be covered by $\cO_\e(\norm{\xi}^\e)$ balls $B_i$ of radius $1/2$.
     
     Let $\tilde{B}_i$ denote the set of indices $\ell\in B(j,k,\eta_2)$ such that $\b^j_{k,\ell}\in B_i$.
     In particular, we have
     \begin{align}\label{eq:bound by balls}
         \# B(j,k,\eta_2) \leq \sum_i \#\tilde{B}_i.
     \end{align}
    Moreover, we note that for $\ell_1,\ell_2\in \tilde{B}_i$, we have that $\b^j_{k,\ell_1}-\b^j_{k,\ell_2} = \b^j_{\ell_2,\ell_1}$.
    Since $B_i$ has radius $1/2$, we get that $\norm{\b^j_{\ell_2,\ell_1}}<1$.
    Hence, recalling the definition of the sets $\mrm{Small}$ in~\eqref{eq:close freqs}, and letting $\mrm{Small}_j$ denote the set of pairs $(p,q)\in I_{\rho,t}^2$ with $(p,q,j)\in \mrm{Small}$, we obtain
    \begin{align*}
        \# \tilde{B}_i^2 \leq \# \mrm{Small}_j.
    \end{align*}
    On the other hand, since $j\notin \mrm{Bad}_\eta$, then by definition, we have that 
    \begin{align*}
        \# \mrm{Small}_j \leq \norm{\xi}^{-\alpha/48} \times (e^{\d t} + \# I_{\rho,t})\#I_{\rho,t}.
    \end{align*}
    Since the sum in~\eqref{eq:bound by balls} has at most $\cO_\e(\norm{\xi}^\e)$ terms, this estimate completes the proof.
    \qedhere

\end{proof}

\subsection*{Combining estimates and concluding the proof}

Recall that $\alpha$ is the exponent provided by Theorem~\ref{thm:friendly}.
Let $\eta_2>0$ be the exponent provided by Lemma~\ref{lem:apply flattening} when applied with $\e= \alpha/200$ and let $\kappa$ be defined as follows:
\begin{align}\label{eq:kappa}
    \kappa = \min \set{1/24, \alpha/200, \eta_2}.
\end{align}
Then, by combining~\eqref{eq:pre-Cauchy-Shwarz},~\eqref{eq:CauchySchwarz},~\eqref{eq:before flattening},~\eqref{eq:bound by count}, and Lemma~\ref{lem:apply flattening}, we obtain the following bound:
\begin{align*}
    \int_{N_1^+} e^{i\langle \xi, n\rangle} \psi(n)\;d\mu^u_x(n)
    \ll_\G & \;(\norm{\psi}_{C^1} +1) \times \norm{\xi}^{-\kappa}
    \nonumber\\
    &\times
    \left( e^{-\d t} \sum_\rho \mu^u_{y_\rho}(W_\rho) \# I_{\rho,t}
    + e^{-3\d t/4} \sum_\rho \mu^u_{y_\rho}(W_\rho) (\# I_{\rho,t})^{3/4}
    \right).
\end{align*}
The first sum on the right side is $\cO_\G(1)$ in light of~\eqref{eq:total mass} and the fact that $\mu^u_{y_\rho}(W_\rho)\asymp \mu^u_{x_{\rho,\ell}}(W_\ell)$ for all $\ell\in I_{\rho,t}$.
That the second sum is also $\cO_\G(1)$ is proved in the following lemma.
This concludes the proof of Theorem~\ref{thm:PS} apart from Lemma~\ref{lem:phi_ell formula} which is proved in the next section.
\begin{lem}\label{lem:weighted box counting}
    For every $p\in (1,\infty)$, we have that
    \begin{align*}
        e^{-\d t/p} \sum_{\rho\in\Pcal_\xi} \mu^u_{y_\rho}(W_\rho) 
        \left(\# I_{\rho,t} \right)^{1/p} \ll_\G 1.
    \end{align*}
\end{lem}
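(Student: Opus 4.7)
The plan is to reduce the estimate to two key mass bounds and then interpolate between them using Hölder's inequality. Concretely, I would aim to establish:
\begin{align*}
    (\mathrm{I}): \quad \sum_{\rho\in\Pcal_\xi}\mu^u_{y_\rho}(W_\rho) \ll_\G 1,
    \qquad
    (\mathrm{II}): \quad \sum_{\rho\in\Pcal_\xi}\mu^u_{y_\rho}(W_\rho)\, \# I_{\rho,t} \ll_\G e^{\d t}.
\end{align*}
Once these are in hand, Hölder's inequality with conjugate exponents $p$ and $q$ (so $1/q = 1 - 1/p$) yields
\begin{align*}
    \sum_\rho \mu^u_{y_\rho}(W_\rho)(\# I_{\rho,t})^{1/p}
    = \sum_\rho \mu^u_{y_\rho}(W_\rho)^{1/q}\left(\mu^u_{y_\rho}(W_\rho)\# I_{\rho,t}\right)^{1/p}
    \leq (\mathrm{I})^{1/q}\,(\mathrm{II})^{1/p} \ll_\G e^{\d t/p},
\end{align*}
which is exactly the claimed bound after multiplying by $e^{-\d t/p}$.

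For (I), I would use that $y_\rho\in \Omega$ so that Sullivan's shadow lemma (Proposition~\ref{prop:shadow lem}) gives $\mu^u_{y_\rho}(W_\rho)\asymp_\G \iota_\xi^\d \asymp \norm{\xi}^{-\d/3}$, while Lemma~\ref{lem:count flow boxes} bounds $\#\Pcal_\xi \ll_\G \norm{\xi}^{\d/3}$; multiplying these gives $\cO_\G(1)$. For (II), I would compare $\mu^u_{y_\rho}(W_\rho)$ with $\mu^u_{x_{\rho,\ell}}(W_\ell)$: the stable holonomy change-of-variables formula~\eqref{eq:stable equivariance} has Radon–Nikodym factor $e^{-\d\beta_\bullet(\cdot,\cdot)}$ which is uniformly bounded above and below on the compact set $\Omega$ since the transverse parameter $p^-_{\rho,\ell}\in P^-_\iota$ is uniformly bounded. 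This yields $\mu^u_{y_\rho}(W_\rho)\asymp_\G \mu^u_{x_{\rho,\ell}}(W_\ell)$ uniformly in $\ell\in I_{\rho,t}$, so
\begin{align*}
    \sum_{\rho}\mu^u_{y_\rho}(W_\rho)\, \# I_{\rho,t}
    \asymp_\G \sum_{\rho,\,\ell\in I_{\rho,t}} \mu^u_{x_{\rho,\ell}}(W_\ell) \ll_\G e^{\d t},
\end{align*}
where the last inequality is precisely~\eqref{eq:total mass}.

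No step here is genuinely delicate; the only subtle point is the uniform comparability $\mu^u_{y_\rho}(W_\rho)\asymp \mu^u_{x_{\rho,\ell}}(W_\ell)$ in (II), which ultimately rests on the convex cocompactness of $\G$ (giving compactness of $\Omega$ and hence uniform bounds on the Busemann cocycle along stable holonomies inside the fixed-size flow boxes $B_\rho$) together with the fact that all relevant basepoints lie in $\Omega$. The two mass bounds (I) and (II) represent the two endpoints $p=\infty$ and $p=1$ of the desired family of inequalities, and the Hölder interpolation gives all intermediate $p\in(1,\infty)$ for free.
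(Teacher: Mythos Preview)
Your proposal is correct and follows essentially the same approach as the paper: apply H\"older's inequality with exponents $p$ and $q=p/(p-1)$ to split into the two endpoint mass estimates (I) and (II), then verify (I) via the shadow lemma combined with the count $\#\Pcal_\xi\ll_\G\norm{\xi}^{\d/3}$, and (II) via the comparability $\mu^u_{y_\rho}(W_\rho)\asymp\mu^u_{x_{\rho,\ell}}(W_\ell)$ together with~\eqref{eq:total mass}. Your justification of the comparability through the bounded Busemann cocycle on $\Omega$ is slightly more explicit than the paper's, which simply asserts it.
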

\begin{proof}
    Indeed, letting $q$ be such that $1/p + 1/q=1$, we obtain by H\"older's inequality that
    \begin{align*}
         e^{-\d t/p} \sum_{\rho\in\Pcal_\xi} \mu^u_{y_\rho}(W_\rho) 
        \left(\# I_{\rho,t} \right)^{1/p}
        \leq  e^{-\d t/p} \left(\sum_{\rho\in\Pcal_\xi} \mu^u_{y_\rho}(W_\rho) \right)^{1/q} \times
        \left(\sum_{\rho\in\Pcal_\xi}\mu^u_{y_\rho}(W_\rho)\# I_{\rho,t} \right)^{1/p}.
    \end{align*}
    Since $\mu^u_{y_\rho}(W_\rho)\asymp \mu^u_{x_{\rho,\ell}}(W_\ell)$ for all $\ell\in I_{\rho,t}$, it follows by~\eqref{eq:total mass} that
    \begin{align*}
        e^{-\d t}\sum_{\rho\in\Pcal_\xi}\mu^u_{y_\rho}(W_\rho)\# I_{\rho,t} \ll_\G 1.
    \end{align*}
    Moreover, Proposition~\ref{prop:shadow lem} implies that $\mu^u_{y_\rho}(W_\rho) \asymp \norm{\xi}^{-\d/3}$.
    Hence, the lemma follows in light of \eqref{eq:count Pcal_xi}.
\end{proof}

\subsection{Explicit formula for stable holonomy maps and Proof of Lemma~\ref{lem:phi_ell formula}}
\label{sec:temporal function}

In this section, we give explicit formulas for the commutation relations between stable and unstable subgroups which we need for the proof of Lemma~\ref{lem:phi_ell formula}.

Consider the following quadratic form on $\R^{d+2}$: for $x=(x_i)\in \R^{d+2}$, 
\begin{align*}
    Q(x) = 2x_0x_{d+1}- |x_1|^2 - \cdots - |x_{d}|^2.
\end{align*}
Let $\mrm{SO}_{\R}(Q)\cong \mrm{SO}(d+1,1)$ be the orthogonal group of $Q$; i.e.~the subgroup of $ \mrm{SL}_{d+2}(\R)$
preserving $Q$.
Then, we have a surjective homomorphism  $\mrm{SO}_\R(Q) \to G=\mrm{Isom}^+(\H^{d+1})$ with finite kernel.
The geodesic flow is induced by the diagonal group
$$A=\set{g_t= \mrm{diag}(e^t,\mrm{I}_{d},e^{-t}):t\in\R},$$
where $\mrm{I}_{d}$ denotes the identity matrix in dimension $d$.

For $x\in \R^{d}$, viewed as a row vector, we write $x^{t}$ for its transpose.
We let $\norm{x}^2 :=  x \cdot x$, and $x\cdot x$ denotes the sum of coordinate-wise products in the standard basis on $\R^d$.
Hence, $N^+$ can be parametrized as follows:
\begin{align}\label{eq:parametrizing N+}
    N^+ = \set{n^+(x):= \begin{pmatrix}
    1 & x & \frac{\norm{x}^2}{2}\\
    \mathbf{0} &\mrm{I}_{d} & x^t\\ 
    0 & \mathbf{0} & 1
    \end{pmatrix}
    : x \in \R^{d}  }.
\end{align}
The group $N^-$ is parametrized by the transpose of the elements of $N^+$.
Recall that $M=\mrm{SO}_{d}(\R)$ denotes the centralizer of $A$ inside the standard maximal compact subgroup $K\cong \mrm{SO}_{d+1}(\R)$ of $G$.
In particular, $M$ is given by
\begin{align*}
    M = \set{ m(O):= \mrm{diag}( 1, O, 1): O \in \mrm{SO}_{d}(\R) }.
\end{align*}
Finally, we recall that the product map $ N^-\times A \times M\times N^+\to G$ is a diffeomorphism near to the identity.

We are now ready for the proof. 
Recall from~\eqref{eq:stable hol map} that $\phi_\ell(n)$ is defined to be the element of $N^+$ satisfying 
$ n p_{\rho,\ell}^- \in N^- AM\phi_\ell(n)$. In particular, $\phi_\ell^{-1}(n)$ is the unique element of $N^+$ satisfying
\begin{align*}
    n (p_{\rho,\ell}^-)^{-1} \in N^-AM \phi_\ell^{-1}(n).
\end{align*}
Hence, in view of the explicit parametrization above, in order to compute $\phi_\ell^{-1}(n)$, it suffices to compute the top row of the matrix $n (p_{\rho,\ell}^-)^{-1}$ and to note that
\begin{align*}
    g_s n^+(x) = \begin{pmatrix}
    e^s & e^s x & \frac{e^s \norm{x}^2}{2}\\
    \mathbf{0} &\mrm{I}_{d} & x^t\\ 
    0 & \mathbf{0} & e^{-s}
    \end{pmatrix},
\end{align*}
for all $s\in\R$ and $x\in \R^d$. This allows us to extract the $N^+$ component from the top row of $n (p_{\rho,\ell}^-)^{-1}$ by scaling it suitably so that the top left entry is $1$.
In particular, the claimed formula follows by a direct calculation upon  recalling from~\eqref{eq:p_rho,ell} that $p_{\rho,\ell}^- = \exp(w_{\rho,\ell}) m_{\rho,\ell}g_{t_{\rho,\ell}}$.

 \section{Self-conformal measures: Proof of Theorem \ref{thm:mainNonlinear}}\label{sec:proofnon-linear}

Throughout this section we fix $\Phi,$ $\Sigma_{A},$ and $\psi$ satisfying the assumptions of Theorem \ref{thm:mainNonlinear}. To simplify our notation we will denote the Gibbs measure $\mu_{\psi}$ by $\mu$.

 \subsection{The Gibbs property and bounding the Fourier transform by an average}
	Let $\xi\in \mathbb{R}^{d}\setminus\set{0}$. The first step in our proof is to bound $|\widehat{\mu}(\xi)|$ from above by an average of the Fourier transform over multiple frequencies using \eqref{Eq:Gibbs measure invariance}. 
 
Let us define the following partition of $\Sigma_{A}$ depending upon $\xi$:
 $$\cA_{\xi}:=\set{\b\in \cW_{A}:\sup_{x\in [0,1]^{d}}\|Df_{\b}(x)\|\leq |\xi|^{-2/3}\textrm{ and }\sup_{x\in [0,1]^{d}}\|Df_{\b^{-}}(x)\|>  |\xi|^{-2/3}}.$$
For a word $\b\in \cW_{A}$ we let $\mu_{\b}$ denote the normalised restriction of $\mu_{\b}$ to the cylinder $X_{\b}$, i.e. $$\mu_{\b}:=\frac{\mu|_{X_{\b}}}{\mu(X_{\b})}.$$
For each $\b \in \cW_{A}$ we also fix a point $x_\b \in X_{\b}$ arbitrarily. Given $\xi\in\mathbb{R}^{d}\setminus \set{0}$ and a word $\b\in \cW_{A}$ we let $$\mu_{\b,\xi}:=S_{\|\xi\|^{2/3}}^{*}\mu_{\b},$$ 
 where  $S_{\|\xi\|^{2/3}}:\mathbb{R}^{d}\to\mathbb{R}^{d}$ is given by $S_{\|\xi\|^{2/3}}(x):=\|\xi\|^{2/3}\cdot x.$ We then have the following key decomposition of the Fourier transform using the Fourier transforms of $\mu_{\b,\xi}$ at random frequencies sampled using the derivative matrices:

	\begin{prop}
		\label{Prop:Averaging prop}
	If $\xi\in\mathbb{R}^{d}\setminus \set{0}$ and $n(\xi) = \lfloor c\log \|\xi\|\rfloor$ for some constant $c > 0$, we have
	\begin{align*}
		|\widehat{\mu}(\xi)|\leq \sum_{\b\in \cA_{\xi}}\mu(X_{\b}) \sum_{\substack{\a\in \cA^{n(\xi)} \\ \a \leadsto \b}} w_{\a}(x_\b) \left|\widehat{\mu_{\b,\xi}}\left(( D_{x_{\b}}f_{\a})^{T}\frac{\xi}{\|\xi\|^{2/3}}\right)\right|+\cO(\|\xi\|^{-1/3}).
		\end{align*}
	\end{prop}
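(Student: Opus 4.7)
The proposition implements Step 1 (Averaging) from the strategy in the introduction. The plan is to iterate the Gibbs invariance identity \eqref{Eq:Gibbs measure invariance} a total of $n(\xi)$ times with test function $g(y)=e^{2\pi i\langle \xi,y\rangle}$, then split the resulting outer integral over the stopping-time partition $\{X_\b:\b\in\cA_\xi\}$ of $X_A$, and finally linearize the phase on each piece using bounded distortion. Iterating \eqref{Eq:Gibbs measure invariance} gives
\begin{align*}
\widehat{\mu}(\xi) = \int \sum_{\substack{\a\in\cA^{n(\xi)}\\\a\leadsto x}} w_\a(x)\, e^{2\pi i\langle\xi,f_\a(x)\rangle}\,d\mu(x).
\end{align*}
Since $\sup_z\|D_zf_\b\|$ decays geometrically in $|\b|$, every infinite admissible sequence has a unique prefix in $\cA_\xi$, so $\{X_\b:\b\in\cA_\xi\}$ is a genuine partition of $X_A$. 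Decomposing the outer integral over these cylinders, and noting that for $x\in X_\b$ the condition $\a\leadsto x$ is equivalent to $\a\leadsto \b$, one obtains a double sum indexed by $\b\in\cA_\xi$ and $\a\in\cA^{n(\xi)}$ with $\a\leadsto\b$, with integrand $\int_{X_\b} w_\a(x) e^{2\pi i\langle\xi,f_\a(x)\rangle}d\mu$.

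Next I would linearize on each cylinder. Lemma~\ref{Lemma:Derivative and Diameter} gives $\diam{X_\b}\asymp \sup_z\|D_zf_\b\|\leq \|\xi\|^{-2/3}$, and Lemma~\ref{Lemma:Bounded distortions} applied to $f_\a$ yields
\begin{align*}
f_\a(x) = f_\a(x_\b) + D_{x_\b}f_\a\,(x-x_\b) + E_{\a,\b}(x), \qquad \|E_{\a,\b}(x)\|\ll \sup_z\|D_zf_\a\|\cdot\|x-x_\b\|^2.
\end{align*}
Because each $f_a$ is a contraction, $\sup_z\|D_zf_\a\|\leq 1$, and the induced phase error is bounded by $\|\xi\|\cdot\|\xi\|^{-4/3}=\|\xi\|^{-1/3}$. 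A parallel calculation, using that $\psi$ is Lipschitz and that $\|Df_{a_{j+1}\cdots a_{n(\xi)}}\|$ decays geometrically in $n(\xi)-j$, shows $w_\a(x)=w_\a(x_\b)(1+\cO(\|\xi\|^{-2/3}))$ uniformly on $X_\b$. Absorbing both approximations into a multiplicative factor $1+\cO(\|\xi\|^{-1/3})$ and taking absolute values,
\begin{align*}
\left|\int_{X_\b} w_\a(x) e^{2\pi i\langle\xi,f_\a(x)\rangle} d\mu\right| \leq w_\a(x_\b)\,\mu(X_\b)\,\left|\widehat{\mu_{\b,\xi}}\!\left((D_{x_\b}f_\a)^T\tfrac{\xi}{\|\xi\|^{2/3}}\right)\right| + \cO\!\left(\|\xi\|^{-1/3} w_\a(x_\b)\mu(X_\b)\right),
\end{align*}
where I used the change of variables $\int_{X_\b} e^{2\pi i\langle\eta, x-x_\b\rangle} d\mu = \mu(X_\b)\, e^{-2\pi i\langle\eta,x_\b\rangle}\, \widehat{\mu_{\b,\xi}}(\eta/\|\xi\|^{2/3})$ coming from $\mu_{\b,\xi}=S^*_{\|\xi\|^{2/3}}\mu_\b$; the translation by $x_\b$ contributes only a unimodular factor, which disappears under $|\cdot|$.

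Summing over $\a$ and $\b$ and applying the triangle inequality produces the main term of the claim. The accumulated error is at most $\|\xi\|^{-1/3}\sum_{\b\in\cA_\xi}\mu(X_\b)\sum_{\a\leadsto\b,\,|\a|=n(\xi)} w_\a(x_\b)$; the Gibbs property \eqref{eq:GibbsProperty} with $P(\psi)=0$ gives $w_\a(x_\b)\asymp\mu(X_\a)$ whenever $\a\leadsto\b$, so the inner sum is $\ll\sum_{\a\in\cA^{n(\xi)}\cap\cW_A}\mu(X_\a)\leq 1$, while the outer sum is $\leq 1$ because $\cA_\xi$ partitions $X_A$. I expect the most delicate step to be this uniform accounting of the multiplicative and additive errors: the argument depends both on $\sup_z\|D_zf_\a\|\leq 1$ holding for all lengths $n(\xi)$ and on the Gibbs constant in \eqref{eq:GibbsProperty} being independent of $|\a|$, so that the aggregate error absorbs into a single $\cO(\|\xi\|^{-1/3})$ independent of $\xi$.
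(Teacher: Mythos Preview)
Your proposal is correct and follows essentially the same route as the paper: iterate the Gibbs invariance relation $n(\xi)$ times, partition over the stopping-time cylinders $X_\b$, freeze the weights $w_\a$ and linearize the phase via Lemma~\ref{Lemma:Bounded distortions}, and then rescale to produce $\mu_{\b,\xi}$. The only cosmetic differences are that the paper writes the linearization via the integral mean-value identity $f_\a(x)=f_\a(x_\b)+\big(\int_0^1 D_{x_t}f_\a\,dt\big)(x-x_\b)$ rather than a Taylor remainder, and your final paragraph makes the summation of the error terms over $\a,\b$ (via $\sum_{\a\leadsto\b} w_\a(x_\b)\ll 1$) more explicit than the paper does.
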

\begin{proof}
Applying \eqref{Eq:Gibbs measure invariance} and then splitting our integral over cylinder sets $X_\b$ we have 
\begin{align*}
\int e^{2\pi i\langle\xi,x\rangle}\, d\mu(x)&=\int \sum_{\substack{\a \in \cA^{n(\xi)} \\ \a \leadsto x}}w_{\a}(x)e^{2\pi i\langle\xi,f_{\a}(x)\rangle}\, d\mu(x)\\
&=\sum_{\b\in \cA_{\xi}}\sum_{\substack{\a \in \cA^{n(\xi)} \\ \a \leadsto \b}}\int_{X_{\b}} w_{\a}(x)e^{2\pi i\langle\xi,f_{\a}(x)\rangle}\, d\mu(x)\\
&=\sum_{\b\in \cA_{\xi}}\mu(X_\b)\sum_{\substack{\a \in \cA^{n(\xi)} \\ \a \leadsto \b}} \int  w_{\a}(x)e^{2\pi i\langle\xi,f_{\a}(x)\rangle}\, d\mu_\b(x)
	\end{align*}
 In the penultimate line we used that for a point $x\in X_{\b}$ we have $\a \leadsto x$ if and only if $\a \leadsto \b$. Using our assumption $P(\psi)=0,$ we see that the Gibbs property \eqref{eq:GibbsProperty} gives a constant $C>0$ such that $ |w_{\a}(x)|\leq C$ for all $x\in X.$ Using this inequality together with the fact our potential $\psi$ is $C^{1}$ we can show that for any $x,y\in X_{A}$ we have  
 \begin{equation}
		\label{eq:Gibbs weights bound}|w_{\a}(x)-w_{\a}(y)|= |w_{\a}(x)||1-e^{S_{n}\psi(f_{\a}(y))-S_{n}\psi(f_{\a}(x))}|\ll |x-y|.
	\end{equation} 
Thus for any $\b \in \cA_\xi$ and $x\in X_{\b}$   we have
$$|w_{\a}(x)-w_{\a}(x_{\b})| \ll \|\xi\|^{-2/3}$$
since $|x-x_\b| \ll |\xi|^{-2/3}$ by Lemma \ref{Lemma:Derivative and Diameter}. Thus 
\begin{align*}
& \sum_{\b\in \cA_{\xi}}\mu(X_\b)\sum_{\substack{\a \in \cA^{n(\xi)} \\ \a \leadsto \b}} \int  w_{\a}(x)e^{2\pi i\langle\xi,f_{\a}(x)\rangle}\, d\mu_\b(x)\\
 =&\sum_{\b\in \cA_{\xi}}\mu(X_\b)\sum_{\substack{\a \in \cA^{n(\xi)} \\ \a \leadsto \b}} w_{\a}(x_\b) \int e^{2\pi i\langle\xi,f_{\a}(x)\rangle}\, d\mu_\b(x) +\cO(\|\xi\|^{-2/3}).
\end{align*}

Now, towards the claim, we need to linearize $f_\a(x)$ for $x \in X_\b$ using the value of the derivative at $x_\b \in X_\b$. For this purpose, given $t\in[0,1]$ let $x_{t}=x_{\b}+t(x-x_{\b})$. Using this notation we have
\begin{equation}
	\label{eq:linearising}
	f_{\a}(x)=f_{\a}(x_{\b})+\Big(\int_{0}^{1} D_{x_{t}}f_{\a}\, dt\Big) (x-x_{\b}).
	\end{equation}
By Lemma \ref{Lemma:Bounded distortions} we have 
$$\|D_{x_{t}}f_{\a}-D_{x_{\b}}f_{\a}\|\ll \sup_{x\in [0,1]^{d}}\|Df_{\a}(x)\|\cdot \|x_{t}-x_{\b}\|=\mathcal{O}(\|\xi\|^{-2/3})$$
because $\|x_{t}-x_{\b}\| \ll \|\xi\|^{-2/3}$ for all $t\in [0,1]$. This in turn implies that for $x \in X_\b$ we have
$$\Big\|\Big(\int_0^1 D_{x_{t}}f_{\a}-D_{x_{\b}}f_{\a} \, dt \Big)(x-x_\b)\Big\|\ll\|\xi\|^{-2/3} \|x-x_{\b}\| \ll \|\xi\|^{-4/3}.$$
Thus by \eqref{eq:linearising} we have
	$$f_{\a}(x)=f_{\a}(x_{\b})+ D_{x_{\b}}f_{\a}(x-x_{\b})+\cO(\|\xi\|^{-4/3}).$$
By the $1$-Lipschitz property of $x \mapsto e^{ix}$ and Cauchy-Schwartz inequality, we have for the matrix $A_\b = D_{x_{\b}}f_{\a}$ and a vector $v_\b = f_{\a}(x_{\b}) - D_{x_{\b}}f_{\a}x_\b$ that
$$\Big|\int e^{2\pi i\langle \xi , f_\a(x)\rangle } - e^{2\pi i\langle \xi , A_\b x + v_\b\rangle } \, d\mu_\b(x)\Big| \ll \|\xi\| \sup_{x \in X_\b}\| f_\a(x) - A_\b x - v_\b \| \ll \|\xi\|^{-1/3}.$$
Thus we have proved:
	\begin{align*}
	\int e^{2\pi i\langle\xi,x\rangle}\, d\mu(x)=\sum_{\b\in \cA_{\xi}}\mu(X_\b)\sum_{\substack{\a \in \cA^{n(\xi)} \\ \a \leadsto \b}} w_{\a}(x_\b) \int e^{2\pi i\langle\xi,f_{\a}(x_{\b})+ D_{x_{\b}}f_{\a}(x-x_{\b})\rangle}\, d\mu_\b(x)+\cO(\|\xi\|^{-1/3}).
	\end{align*}
Taking absolute values and applying the triangle inequality this implies:
	\begin{align*}
	\int e^{2\pi i\langle\xi,x\rangle}\, d\mu(x)\leq \sum_{\b\in \cA_{\xi}}\mu(X_\b)\sum_{\substack{\a \in \cA^{n(\xi)} \\ \a \leadsto \b}} w_{\a}(x_\b) \left|\int e^{2\pi i\langle\xi, D_{x_{\b}}f_{\a}x\rangle}\, d\mu_\b(x)\right|+\cO(\|\xi\|^{-1/3}).
	\end{align*}	
	 Recalling the definition of $\mu_{\b,\xi}=S_{\|\xi\|^{2/3}}^{*}\mu_{\b},$ 
 where $S_{\|\xi\|^{2/3}}(x)=\|\xi\|^{2/3}\cdot x$, and taking the transpose of the matrices appearing in our inner product yields:
	\begin{align*}
		\int e^{2\pi i\langle\xi,x\rangle}\, d\mu(x)
\leq & \sum_{\b\in \cA_{\xi}}\mu(X_\b)\sum_{\substack{\a \in \cA^{n(\xi)} \\ \a \leadsto \b}} w_{\a}(x_\b)\left|\int e^{2\pi i\langle( D_{x_{\b}}f_{\a})^{T}\frac{\xi}{\|\xi\|^{2/3}},x\rangle}\, d\mu_{\b,\xi}(x)\right|+\cO(\|\xi\|^{-1/3})\\
=&\sum_{\b\in \cA_{\xi}}\mu(X_\b)\sum_{\substack{\a \in \cA^{n(\xi)} \\ \a \leadsto \b}} w_{\a}(x_\b)\left|\widehat{\mu_{\b,\xi}}\left(( D_{x_{\b}}f_{\a})^{T}\frac{\xi}{\|\xi\|^{2/3}}\right)\right|+\cO(\|\xi\|^{-1/3}).
		\end{align*} 
This completes the proof.  
\end{proof}

\subsection{Affine non-concentration of the averaged measures}

To successfully apply Theorem \ref{thm:flattening} to to the measures $\mu_{\b,\xi}$ appearing in Proposition \ref{Prop:Averaging prop}, we need to show that they are uniformly affinely non-concentrated and that the non-concentration parameters can be taken to be independent of $\b$ and $\xi$. This property is guaranteed by the following proposition. 

\begin{prop}
	\label{Prop:affine non-concentration prop}
	Assume that $\mu$ is uniformly affinely non-concentrated. Then there exists $\delta':(0,\infty)\to(0,\infty)$ satisfying the following:
     \begin{enumerate}
         \item $\delta'(\epsilon)\to 0$ as $\epsilon\to 0.$
         \item For every $\xi\neq 0,$  $\b \in A_{\xi},$ $x\in \supp(\mu_{\b,\xi}),$ $0<r\leq 1,$ $\eps>0$ and affine hyperplane $W<\R^d$ we have $$\mu_{\b,\xi}(W^{(\epsilon r)}\cap B(x,r))\leq \delta'(\epsilon)\mu(B(x,r))$$
     \end{enumerate} 
\end{prop}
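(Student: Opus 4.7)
The strategy is to realize $\mu_{\b,\xi}$, up to uniformly bounded multiplicative constants, as the pushforward $T_*(\mu|_{X_A})$ under a $C^2$ diffeomorphism $T = S_{\|\xi\|^{2/3}} \circ f_\b : X_A \to \supp(\mu_{\b,\xi})$ whose $C^2$-norm and whose inverse are bounded uniformly in $\b \in \cA_\xi$ and $\xi$. The uniform affine non-concentration of $\mu$ will then transfer to $\mu_{\b,\xi}$ via $T$.

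For the pushforward identification, iterating \eqref{Eq:Gibbs measure invariance} $|\b|$ times and using strong separation leaves only the word $\b$ contributing for sets $A \subset X_\b$, yielding $\mu(A) = \int_{f_\b^{-1}(A)} w_\b(x)\, d\mu(x)$. The Gibbs property \eqref{eq:GibbsProperty} gives $\mu(X_\b) \asymp w_\b(x_0)$, and the estimate \eqref{eq:Gibbs weights bound} shows that $w_\b$ is approximately constant on $X_A$, so $\mu_\b \asymp (f_\b)_*(\mu|_{X_A})$ and hence $\mu_{\b,\xi} \asymp T_*(\mu|_{X_A})$ with constants independent of $\b$ and $\xi$. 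For the uniformity of $T$, the definition of $\cA_\xi$ together with the chain rule and the bounded upper derivative of each single $f_a$ forces $\|D_p f_\b\| \asymp \|\xi\|^{-2/3}$ uniformly in $p \in X_A$, so that $\|D_p T\| \asymp 1$. Conformality gives $D_p T = \lambda_\b(p)\|\xi\|^{2/3} O_\b(p)$ with $|\lambda_\b(p)|\|\xi\|^{2/3} \asymp 1$ and $O_\b(p) \in O(d)$, while Lemma~\ref{Lemma:Bounded distortions} yields $\|DT\|_{\Lip} = O(1)$, all uniformly in $\b$ and $\xi$.

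For the transfer, fix $y \in \supp(\mu_{\b,\xi})$, $r \in (0,1]$, an affine hyperplane $V$, and set $y^* = T^{-1}(y) \in X_A$. Taylor expansion of $T^{-1}$ yields $T^{-1}(V^{(\epsilon r)} \cap B(y,r)) \subset V'^{(C\epsilon r + C r^2)} \cap B(y^*, Cr)$ for an affine hyperplane $V'$. In the regime $r \leq \epsilon$, the error $r^2$ is absorbed into $\epsilon r$, and the uniform affine non-concentration of $\mu$ at scale $Cr$ around $y^*$ yields a bound $\delta(C\epsilon)\mu(B(y^*, Cr))$. The main obstacle is the regime $r > \epsilon$, where the curvature of $T^{-1}(V)$ over $B(y^*, Cr)$ exceeds the hyperplane width; we resolve this by covering $B(y^*, Cr)$ with balls of radius $\sqrt{\epsilon r}$. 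Within each such ball the curved surface $T^{-1}(V)$ lies within $O(\epsilon r)$ of its tangent affine hyperplane, so $T^{-1}(V^{(\epsilon r)})$ fits into a flat hyperplane neighborhood of relative thickness $\asymp \sqrt{\epsilon/r} \leq \sqrt{\epsilon}$, a function of $\epsilon$ alone. Summing over the cover using bounded multiplicity and the doubling property of $\mu$ (which follows from the Gibbs property together with uniform affine non-concentration on self-conformal attractors under strong separation), and then pushing back through $T$, produces the claim with $\delta'(\epsilon) = C\,\delta(C\sqrt{\epsilon})$, which tends to $0$ as $\epsilon \to 0$.
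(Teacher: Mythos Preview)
Your approach is essentially correct but takes a considerably more circuitous route than the paper's. The key observation you are missing is that $\mu_{\b,\xi}$ is obtained from $\mu_{\b}$ by the \emph{linear} map $S_{\|\xi\|^{2/3}}$, which sends affine hyperplanes to affine hyperplanes and balls to balls while preserving the ratio $\epsilon$. Hence the paper simply unwinds the scaling to write
\[
\mu_{\b,\xi}\bigl(W^{(\epsilon r)}\cap B(x,r)\bigr)
=\frac{\mu|_{X_{\b}}\bigl(V^{(\epsilon r\|\xi\|^{-2/3})}\cap B(y,r\|\xi\|^{-2/3})\bigr)}{\mu(X_{\b})}
\le \frac{\delta(\epsilon)\,\mu\bigl(B(y,r\|\xi\|^{-2/3})\bigr)}{\mu(X_{\b})},
\]
invoking non-concentration of $\mu$ directly (no curvature, no covering at scale $\sqrt{\epsilon r}$). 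The only nontrivial remaining step is the comparison $\mu(B(y,r\|\xi\|^{-2/3}))\ll \mu|_{X_{\b}}(B(y,r\|\xi\|^{-2/3}))$, which the paper obtains from strong separation and the quasi-Bernoulli property. This gives $\delta'(\epsilon)=C\delta(\epsilon)$ rather than your $\delta'(\epsilon)=C\delta(C\sqrt{\epsilon})$.

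By contrast, you compose with the nonlinear map $f_{\b}$ to write $\mu_{\b,\xi}\asymp T_*(\mu|_{X_A})$, which forces you to straighten the curved hypersurface $T^{-1}(V)$ via a Besicovitch-type cover at scale $\sqrt{\epsilon r}$ and then invoke doubling. This works, but two points in your write-up need care. First, in the subshift setting the correct identification is $\mu_{\b}\asymp (f_{\b})_*(\mu|_{Y_{\b}})$ where $Y_{\b}=\{x\in X_A:\b\leadsto x\}$, not $\mu|_{X_A}$; since $Y_{\b}$ depends only on the last letter of $\b$ there are finitely many cases and the required uniform non-concentration of $\mu|_{Y_{\b}}$ follows, but this should be said. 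Second, in the summation step you need the centers of the small balls to lie in $\supp(\mu)$ for the non-concentration hypothesis to apply, and the comparison $\sum_i\mu(B_i)\ll\mu(B(y^*,Cr))\ll\mu_{\b,\xi}(B(y,r))$ requires both doubling of $\mu$ and a further comparison of $\mu$ with $\mu|_{Y_{\b}}$ on balls centered in $Y_{\b}$; both are routine under strong separation and quasi-Bernoulli, but they are the same ingredients the paper uses directly, only now buried deeper in the argument.
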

\begin{proof}
Fix $\xi\neq 0$ and $\b \in \cA_{\xi}.$ Let $r\in(0,1]$ and $\epsilon>0$. Then for any $x\in \supp(\mu_{\b,\xi})$ and affine hyperplane $W<\R^{d}$ there exists $V< \mathbb{R}^{d}$ and $y\in X_{\b}$ such that
\begin{align*}
	\mu_{\b,\xi}(W^{(\epsilon r)}\cap B(x,r))&=\mu_{\b}(V^{(\epsilon\cdot \|\xi\|^{-2/3} r)}\cap B(y,r\cdot \|\xi\|^{-2/3}))\\
	&=\frac{\mu|_{X_{\b}}(V^{(\epsilon\cdot \|\xi\|^{-2/3} r)}\cap B(y,r\cdot \|\xi\|^{-2/3}))}{\mu(X_{\b})}\\
	&\leq \frac{\mu(V^{(\epsilon\cdot \|\xi\|^{-2/3} r)}\cap B(y,r\cdot \|\xi\|^{-2/3}))}{\mu(X_{\b})}\\
	&\leq \frac{\delta(\epsilon)\mu(B(y,r\cdot \|\xi\|^{-2/3}))}{\mu(X_{\b})}.
\end{align*} Where in the final line we let $\delta:(0,\infty)\to(0,\infty)$ be as in the definition of uniformly affinely non-concentrated. Therefore to complete our proof it remains to show that 
\begin{equation*}
\frac{\mu(B(y,r\cdot \|\xi\|^{-2/3}))}{\mu(X_{\b})}\ll \mu_{\b,\xi}(B(x,r)),
\end{equation*} which by the definition of $\mu_{\b,\xi}$ is equivalent to 
\begin{equation}
	\label{eq:ball measure}
	\mu(B(y,r\cdot \|\xi\|^{-2/3}))\ll \mu(X_{\b} \cap B(y,r\cdot \|\xi\|^{-2/3})).
\end{equation} Recall that $\mrm{Diam}(X_{\b})\asymp \|\xi\|^{-2/3}$ since $\b \in \cA_\xi$. It follows from this observation and the Strong Separation Condition that that there exists $k\in\mathbb{N}$ depending only upon the underlying IFS such that one of two cases must occur:
\begin{enumerate}
	\item $X_{\b\a}\subset B(y,r\|\xi\|^{-2/3})\cap X_{A}\subset X_{b_{1},\ldots, b_{|\b|-k}}$ for some $\a\in \cA^{k}.$
	\item There exists $\a\in \cA^{*}$ and $\a^{*} \in \cA^{k}$ such that $X_{\b\a\a^{*}}\subset B(y,r\|\xi\|^{-2/3})\cap X_{A}\subset X_{\b\a}.$
\end{enumerate} In the first case, it follows from the quasi-Bernoulli property of $\mu$ \eqref{eq:quasibernoulli} that
$$\mu(X_{\b} \cap B(y,r\cdot \|\xi\|^{-2/3}))\geq \mu(X_{\b\a})\gg \mu(X_{b_{1},\ldots b_{|\b|-k}})\geq \mu(B(y,r\|\xi\|^{-2/3}).$$
Again appealing to the quasi-Bernoulli property of $\mu$, in the second case we have 
$$\mu(X_{\b} \cap B(y,r\cdot \|\xi\|^{-2/3}))\geq  \mu(X_{\b\a\a^{*}})\gg \mu(X_{\b\a})\geq \mu(B(y,r\|\xi\|^{-2/3}).$$ Thus \eqref{eq:ball measure} holds and our proof is complete.
\end{proof} 

\subsection{Spectral gap and non-concentration of frequencies}
We now fix the parameter $c$ appearing in Proposition \ref{Prop:Averaging prop}. We assume that $c$ is sufficiently small that 
\begin{equation}
\label{Eq:Choosing c appropriately}
\Big\|(Df_{\a}(x_{\b}))^{T}\frac{\xi}{|\xi|^{2/3}}\Big\|\geq \|\xi\|^{1/6}
\end{equation}for any $\a\in \cA^{n(\xi)}$ and $\b \in \cW_{A}$ with $\a \leadsto \b$. The final part we need for our proof of Theorem \ref{thm:mainNonlinear} is the following proposition. It is in the proof of this proposition that we will use our spectral gap assumption.

\begin{prop}
\label{Prop:Self-conformal non-concentration}
There exists $\gamma>0$ such that for any $\b\in \cA_{\xi}$ and integer $n\in \mathbb{N}$ satisfying $\|\xi\|^{1/6}\leq n\leq \|\xi\|^{1/3}$ we have 
	$$\sum_{\substack{\a\in \cA^{n(\xi)} \\ \a \leadsto \b}} w_{\a}(x_\b)\chi_{[n,n+1]}\left( \Big\|( D_{x_{\b}}f_{\a})^{T}\frac{\xi}{\|\xi\|^{2/3}} \Big\|\right)=\mathcal{O}(\|\xi\|^{-\gamma}).$$
\end{prop}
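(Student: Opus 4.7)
The plan is to recast the statement as a concentration inequality for the Birkhoff-type quantity $\log|\lambda_{\a}(x_{\b})|$ under the Gibbs weighting $w_{\a}(x_{\b})$, and then obtain it from the spectral gap hypothesis via Fourier inversion. Since $\Phi$ is conformal, $D_{x_{\b}}f_{\a} = \lambda_{\a}(x_{\b})O_{\a}(x_{\b})$ is a scalar times an orthogonal map, so $\|(D_{x_{\b}}f_{\a})^{T}\xi/\|\xi\|^{2/3}\| = |\lambda_{\a}(x_{\b})|\|\xi\|^{1/3}$. The condition that this quantity lies in $[n,n+1]$ is equivalent to $\log|\lambda_{\a}(x_{\b})| \in I$ where $I$ is an interval of length $\ell \asymp 1/n \leq \|\xi\|^{-1/6}$, so it suffices to bound
$$S_{I} := \sum_{\substack{\a \in \cA^{n(\xi)}\\ \a \leadsto \b}}w_{\a}(x_{\b})\chi_{I}(\log|\lambda_{\a}(x_{\b})|)$$
by $\mathcal{O}(\|\xi\|^{-\gamma})$.

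First I would replace $\chi_{I}$ by a Beurling--Selberg majorant $\phi \geq \chi_{I}$ whose Fourier transform is supported in $[-T,T]$ with $\|\phi\|_{1} \leq \ell + 1/T$, so that $\|\hat{\phi}\|_{\infty} \leq \|\phi\|_{1} \ll \ell$ as soon as $T \gtrsim 1/\ell$. Under the strong separation condition the condition $\a \leadsto \b$ coincides with $\a \leadsto x_{\b}$, giving the key identity
$$\sum_{\a \leadsto \b}w_{\a}(x_{\b})|\lambda_{\a}(x_{\b})|^{ib} = \mathcal{L}_{ib}^{n(\xi)}\1(x_{\b}).$$
Fourier inversion then turns the majorized sum into an integral against the twisted transfer operator:
$$S_{I} \leq \sum_{\a \leadsto \b}w_{\a}(x_{\b})\phi(\log|\lambda_{\a}(x_{\b})|) = \frac{1}{2\pi}\int_{-T}^{T}\hat{\phi}(b)\,\mathcal{L}_{ib}^{n(\xi)}\1(x_{\b})\,db.$$

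Next I would split the integral at the threshold $B_{0}$ beyond which the spectral gap hypothesis applies. On $|b| \leq B_{0}$, I use the trivial bound $|\mathcal{L}_{ib}^{n(\xi)}\1(x_{\b})| \leq \mathcal{L}_{0}^{n(\xi)}\1(x_{\b}) \leq M$, uniformly bounded by Ruelle--Perron--Frobenius since $P(\psi)=0$; this contributes at most $CB_{0}M\ell$. On $B_{0} < |b| \leq T$, the spectral gap gives $|\mathcal{L}_{ib}^{n(\xi)}\1(x_{\b})| \leq \|\mathcal{L}_{ib}^{n(\xi)}\1\|_{b} \ll \rho^{n(\xi)}|b|^{\upsilon}\|\1\|_{b} = \rho^{n(\xi)}|b|^{\upsilon}$, contributing at most $C\ell\rho^{n(\xi)}T^{1+\upsilon}$. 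Choosing $T \asymp 1/\ell$ (just large enough to keep the Selberg excess $\leq \ell$) yields
$$S_{I} \ll \ell + \ell^{1-\upsilon}\rho^{n(\xi)} \ll \|\xi\|^{-1/6} + \|\xi\|^{\upsilon/3 - c|\log \rho|}.$$

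The hard part will be arranging the exponents so that both terms above are negative powers of $\|\xi\|$: the upper bound on $c$ forced by \eqref{Eq:Choosing c appropriately} (roughly $c \leq 1/(6\log(1/\lambda_{\min}))$) must coexist with the lower bound $c|\log \rho| > \upsilon/3$ needed to render the spectral-gap term polynomially small. This requires the gap to be sufficiently strong relative to $\lambda_{\min}$, which can be ensured by iterating the transfer operator so that $\rho$ is replaced by a smaller effective decay constant, or, if a finer estimate is required, by replacing the Beurling--Selberg majorant with a smoother cutoff so that the Taylor expansion of $b \mapsto \mathcal{L}_{ib}^{n(\xi)}\1(x_{\b})$ around $b = 0$ further tempers the low-frequency contribution. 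Once the constraints are reconciled, the resulting $\gamma > 0$ is uniform in $\b$ and in $n \in [\|\xi\|^{1/6}, \|\xi\|^{1/3}]$, as required.
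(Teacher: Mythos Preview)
Your overall strategy---reduce to concentration of $\log|\lambda_{\a}(x_{\b})|$, majorize the indicator, Fourier-invert, and split the frequency integral into a low-frequency part controlled trivially and a high-frequency part controlled by the spectral gap---is exactly the paper's approach. The difference lies in the choice of majorant, and that difference is where your argument has a genuine gap.

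With a Beurling--Selberg majorant at the native scale $\ell\asymp 1/n$ and $T\asymp 1/\ell$, the high-frequency contribution is $\ll\ell^{-\upsilon}\rho^{n(\xi)}\ll\|\xi\|^{\upsilon/3+c\log\rho}$, and you correctly flag that making this a negative power requires $c|\log\rho|>\upsilon/3$, which need not be compatible with the upper bound on $c$ imposed by~\eqref{Eq:Choosing c appropriately}. However, neither of your proposed remedies works. Iterating the transfer operator does not help: replacing $\mathcal{L}_{ib}$ by $\mathcal{L}_{ib}^{k}$ gives decay $\rho^{k}$ per step but only $n(\xi)/k$ steps, so the product $\rho^{n(\xi)}$ is unchanged. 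And refining the low-frequency behaviour via a Taylor expansion near $b=0$ is irrelevant, since the obstruction is the \emph{high}-frequency tail, not the low-frequency bulk.

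The paper's fix is to decouple the interval width from the native scale $\ell$: one first enlarges $I$ to an interval $\tilde I$ of length $\|\xi\|^{-\delta}$ with $\delta<\min\{1/6,\,c|\log\rho|\}$ (always possible, whatever $c$ may be), and then takes a smooth---not band-limited---majorant $h$ of $\chi_{\tilde I}$ with $\|h\|_{1}\ll\|\xi\|^{-\delta}$ and $\|h''\|_{1}\ll\|\xi\|^{\delta}$. The low-frequency contribution is then $\ll\|\hat h\|_{\infty}\ll\|\xi\|^{-\delta}$. For the high-frequency part one integrates over all of $\R$: since $|\hat h(\eta)|\ll(1+\eta^{2})^{-1}(\|h\|_{1}+\|h''\|_{1})$ and $\upsilon<1$ makes $\int|\eta|^{\upsilon}/(1+\eta^{2})\,d\eta<\infty$, one gets
\[
\int_{|\eta|>\Theta}|\hat h(\eta)|\,\|\mathcal{L}_{i\eta}^{n(\xi)}\1\|_{\infty}\,d\eta
\;\ll\;\rho^{n(\xi)}(\|h\|_{1}+\|h''\|_{1})\;\ll\;\|\xi\|^{\delta+c\log\rho},
\]
which is a negative power precisely by the choice of $\delta$. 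The point is that widening the interval trades a small loss in the low-frequency bound for complete removal of the $T^{1+\upsilon}$ factor, and this trade is always favourable because $\delta$ can be taken as small as one likes.
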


\begin{proof}
Let $n\in \N$ satisfy $\|\xi\|^{1/6}\leq n\leq \|\xi\|^{1/3}.$ Observe that 
\begin{align*}
    \Big\|( D_{x_{\b}}f_{\a})^{T}\frac{\xi}{\|\xi\|^{2/3}} \Big\|\in [n,n+1]
    \Leftrightarrow &\Big|\prod_{i=1}^{n(\xi)}\lambda_{a_{i}}(f_{a_{i+1}\ldots a_{n}}(x_{\b}))\Big|\cdot \Big\|\frac{\xi}{|\xi|^{2/3}}\Big\|\in[n,n+1]\\
    \Leftrightarrow &\log \Big|\prod_{i=1}^{n(\xi)}\lambda_{a_{i}}(f_{a_{i+1}\ldots a_{n}}(x_{\b}))\Big|\in I_{n,\xi}
\end{align*}
for $$I_{n,\xi} = [\log(\|\xi\|^{-1/3} n),\log(\|\xi\|^{-1/3} (n+1))].$$ Thus we have
\begin{equation}
\sum_{\substack{\a\in \cA^{n(\xi)} \\ \a \leadsto \b}} w_{\a}(x_\b)\chi_{[n,n+1]}\left( \Big\|( D_{x_{\b}}f_{\a})^{T}\frac{\xi}{\|\xi\|^{2/3}} \Big\|\right)=\sum_{\substack{\a\in \cA^{n(\xi)} \\ \a \leadsto \b}} w_{\a}(x_\b)\chi_{I_{n,\xi}}\left(\log \left|\prod_{i=1}^{n(\xi)}\lambda_{a_{i}}(f_{a_{i+1}\ldots a_{n}}(x_{\b}))\right|\right)
\end{equation}
By the mean value theorem $$|I_{n,\xi}| \asymp \frac{1}{n} \leq |\xi|^{-1/6}.$$  We replace $I_{n,\xi}$ with an interval $\tilde{I}_{n,\xi}$ that contains $I_{n,\xi}$ and is of length $|\xi|^{-\delta}$ where $$\delta<\min\set{1/6,-c\log \rho}.$$ Here $\rho\in(0,1)$ is as in the definition of spectral gap and $c$ is as in \eqref{Eq:Choosing c appropriately}. We let $h_{n,\xi}$ be a smooth mollifier approximating $\chi_{\tilde{I}_{n,\xi}}$ such that $h_{n,\xi}\geq \chi_{\tilde{I}_{n,\xi}},$ $\|h_{n,\xi}\|_{1}\ll |\xi|^{-\delta}$ and $\|h_{n,\xi}''\|_{1}\ll |\xi|^{\delta}$ as in \cite{BakerSahlsten,SahlstenStevens}. Using Fourier inversion we have the following 
\begin{align*}
	&\sum_{\substack{\a\in \cA^{n(\xi)} \\ \a \leadsto \b}} w_{\a}(x_\b)\chi_{I_{n,\xi}}\left(\log \left|\prod_{i=1}^{n(\xi)}\lambda_{a_{i}}(f_{a_{i+1}\ldots a_{n}}(x_{\b}))\right|\right)\\
	 \leq&\sum_{\substack{\a\in \cA^{n(\xi)} \\ \a \leadsto \b}} w_{\a}(x_\b)h_{n,\xi}\left(\log\left |\prod_{i=1}^{n(\xi)}\lambda_{a_{i}}(f_{a_{i+1}\ldots a_{n}}(x_{\b}))\right|\right)\\
	=&\sum_{\substack{\a\in \cA^{n(\xi)} \\ \a \leadsto \b}} w_{\a}(x_\b)\int_{-\infty}^{\infty}\widehat{h_{n,\xi}}(\eta)e^{-2\pi i \eta\log |\prod_{i=1}^{n(\xi)|}\lambda_{a_{i}}(f_{a_{i+1}\ldots a_{n}}(x_{\b}))|}\, d\eta\\
	=&\sum_{\substack{\a\in \cA^{n(\xi)} \\ \a \leadsto \b}} w_{\a}(x_\b)\int_{-\infty}^{\infty}\widehat{h_{n,\xi}}(\eta)\prod_{i=1}^{n(\xi)}|\lambda_{a_{i}}(f_{a_{i+1}\ldots a_{n}}(x_{\b}))|^{2\pi i \eta}\, d\eta\\
 =&\int_{-\infty}^{\infty}\widehat{h_{n,\xi}}(\eta)\mathcal{L}_{2\pi i\eta}^{n(\xi)}(1)(x_{\b}) d\eta.\\
 \end{align*}
Let $\Theta>0$ be some parameter such that our spectral gap bound can be applied for $|\eta|\geq \Theta.$ Splitting the integral above we have:
 \begin{align*}
	\int_{-\infty}^{\infty}\widehat{h_{n,\xi}}(\eta)\mathcal{L}_{2\pi i\eta}^{n(\xi)}(1)(x_{\b}) d\eta=&\int_{|\eta|<\Theta}\widehat{h_{n,\xi}}(\eta)\mathcal{L}_{2\pi i\eta}^{n(\xi)}(1)(x_{\b}) d\eta+\int_{|\eta|\geq \Theta}\widehat{h_{n,\xi}}(\eta)\mathcal{L}_{2\pi i\eta}^{n(\xi)}(1)(x_{\b}) d\eta\\
	\ll& |\widehat{h_{n,\xi}}(0)|+\int_{|\eta|\geq \Theta}\widehat{h_{n,\xi}}(\eta)\mathcal{L}_{2\pi i\eta}^{n(\xi)}(1)(x_{\b}) d\eta\\
	 \ll&|\xi|^{-\delta}+\int_{|\eta|\geq \Theta}\widehat{h_{n,\xi}}(\eta)\mathcal{L}_{2\pi i\eta}^{n(\xi)}(1)(x_{\b}) d\eta\\
\end{align*}
Applying our spectral gap bound we have
$$\|\mathcal{L}_{2\pi i\eta}^{n(\xi)} 1\|_\infty \ll \rho^{n(\xi)}|\eta|^{\upsilon}$$
for some $\upsilon\in (0,1)$. Since $n(\xi) = \lfloor c \log \|\xi\|\rfloor$, this gives us:
\begin{align*}
	\left|\int_{|\eta|\geq \Theta}\widehat{h_{n,\xi}}(\eta)\mathcal{L}_{2\pi i\eta}^{n(\xi)}(1)(x_{\b}) d\eta\right|
	\leq &\int_{|\eta|\geq \Theta}|\widehat{h_{n,\xi}}(\eta)|\rho^{n(\xi)}|\eta|^{\upsilon}\, d\eta\\
	\ll &\|\xi\|^{c\log \rho}\int_{|\eta|\geq \Theta}|\widehat{h_{n,\xi}}(\eta)||\eta|^{\upsilon}\, d\eta.
 \end{align*}
Using integration by parts it can be shown that 
$$|\widehat{h_{n,\xi}}(\eta)|\ll \frac{1}{1+|\eta|^2}(\|h_{n,\xi}\|_{1}+\|h_{n,\xi}''\|_{1}).$$ Substituting this bound into the above yields:
 \begin{align*}
 \|\xi\|^{c\log \rho}\int_{|\eta|\geq \Theta}|\widehat{h_{n,\xi}}(\eta)||\eta|^{\upsilon}\, d\eta 	\leq& \|\xi\|^{c\log \rho}\int_{|\eta|\geq \Theta}\frac{|\eta|^{\upsilon}}{1+|\eta|^2}(\|h_{n,\xi}\|_{1}+\|h_{n,\xi}''\|_{1})\, d\eta\\
	\ll& \|\xi\|^{c\log \rho}(\|h_{n,\xi}\|_{1}+\|h_{n,\xi}''\|_{1})\\
	\ll& \|\xi\|^{\delta+c\log\rho }.
	\end{align*}
In the penultimate line we used that $\upsilon\in(0,1)$ thus $\int_{|\eta|\geq \Theta}\frac{|\eta|^{\upsilon}}{1+|\eta|^2}\, d\eta\ll 1$. In the final line we used that $\|h_{n,\xi}\|_{1}\leq \|\xi\|^{-\delta}$ and $\|h_{n,\xi}''\|_{1}\leq \|\xi\|^{\delta}$. By the definition of $\delta$ we know that $\delta+c\log\rho <0$. This completes the proof of this proposition.
\end{proof}

\subsection{Proof of Theorem~\ref{thm:mainNonlinear}}

Equipped with the propositions above we can now prove Theorem \ref{thm:mainNonlinear}.
By Proposition \ref{Prop:Averaging prop}, we have
	\begin{align*}
		|\widehat{\mu}(\xi)|\leq \sum_{\b\in \cA_{\xi}}\mu(X_{\b}) \sum_{\substack{\a\in \cA^{n(\xi)} \\ \a \leadsto \b}} w_{\a}(x_\b) \left|\widehat{\mu_{\b,\xi}}\left(( D_{x_{\b}}f_{\a})^{T}\frac{\xi}{\|\xi\|^{2/3}}\right)\right|+\cO(\|\xi\|^{-1/3}).
		\end{align*}
Thus to prove our result it suffices to show that 
\begin{equation}
    \label{Eq:WTS non-linear}
    \sum_{\b\in \cA_{\xi}}\mu(X_{\b}) \sum_{\substack{\a\in \cA^{n(\xi)} \\ \a \leadsto \b}} w_{\a}(x_\b) \left|\widehat{\mu_{\b,\xi}}\left(( D_{x_{\b}}f_{\a})^{T}\frac{\xi}{\|\xi\|^{2/3}}\right)\right|=\cO(\|\xi\|^{-\eta}),
\end{equation} for some $\eta>0.$ 

Let $\gamma$ be as in Proposition \ref{Prop:Self-conformal non-concentration}. Applying Theorem \ref{thm:flattening}, we can assert that there exists $\tau>0$ such that for each $\b\in \cA_{\xi},$ if we let $$\mathrm{Bad}_{\xi,\b} := \set{
n\in \mathbb{Z}: |n|\leq |\xi|^{1/3}, \exists \eta \in \R^d \textrm{ such that }\norm{\eta}\in [n,n+1] \textrm{ and }|\widehat{\mu_{\b,\xi}}(\eta)|\geq \|\xi\|^{-\tau}}$$ then 
\begin{equation}
    \label{Eq:Counting bad intervals}
    \#\mathrm{Bad}_{\xi,\b}=\cO_{\gamma}(\|\xi\|^{\gamma/2}).
\end{equation} We emphasise that the underyling constants in \eqref{Eq:Counting bad intervals} do not depend upon $\b$ or $\xi$. This follows from Proposition \ref{Prop:affine non-concentration prop} which asserts that the non-concentration parameters can be taken to be independent of $\b$ and $\xi.$ By our choice of $c$ we know that 
$$\left\|( D_{x_{\b}}f_{\a})^{T}\frac{\xi}{\|\xi\|^{2/3}}\right\|\geq \|\xi\|^{1/6}$$ for all $\b\in \cA_{\xi}$ and $\a\in \cA^{n(\xi)}$ satisfying $\a\leadsto\b.$ Therefore we can apply Proposition \ref{Prop:Self-conformal non-concentration} to the frequencies appearing in \eqref{Eq:WTS non-linear}. In particular, using this proposition and \eqref{Eq:Counting bad intervals} we have 

\begin{align*}
	& \sum_{\b\in \cA_{\xi}}\mu(X_{\b}) \sum_{\substack{\a\in \cA^{n(\xi)} \\ \a \leadsto \b}} w_{\a}(x_\b) \left|\widehat{\mu_{\b,\xi}}\left(( D_{x_{\b}}f_{\a})^{T}\frac{\xi}{\|\xi\|^{2/3}}\right)\right|  \\
	\leq &\sum_{\b\in A_{\xi}}\mu(X_{\b})\sum_{n\in \mathrm{Bad}_{\xi,\b}}\sum_{\substack{\a\in \cA^{n(\xi)} \\ \a \leadsto \b}}w_{\a}(x_{\b})\chi_{[n,n+1]}\left(\left\|( D_{x_{\b}}f_{\a})^{T}\frac{\xi}{\|\xi\|^{2/3}}\right\|\right)\\
	+ &\sum_{\b\in A_{\xi}}\mu(X_{\b})\sum_{\substack{\a\in \cA^{n(\xi)} \\ \a \leadsto \b}} \quad w_{\a}(x_{\b})\left|\widehat{\mu_{\b,\xi}}\left(( D_{x_{\b}}f_{\a})^{T}\frac{\xi}{\|\xi\|^{2/3}}\right)\right|\chi_{(\cup_{n\in \mrm{Bad}_{\xi,\b}}[n,n+1])^{c}}\left(\left\|( D_{x_{\b}}f_{\a})^{T}\frac{\xi}{\|\xi\|^{2/3}}\right\|\right)\\
	\ll&\sum_{\b\in A_{\xi}}\mu(X_{\b})\sum_{n\in \mathrm{Bad}_{\xi,\b}}\|\xi\|^{-\gamma}+\sum_{\b\in A_{\xi}}\mu(X_{\b})\sum_{\substack{\a\in \cA^{n(\xi)} \\ \a \leadsto \b}}w_{\a}(x_{\b})\|\xi\|^{-\tau}\\
	\ll & \left(\|\xi\|^{-\gamma/2}+\|\xi\|^{-\tau}\right).
\end{align*}  In the final line we used the trivial estimates: $$\sum_{\b\in A_{\xi}}\mu(X_{\b})=1\quad \textrm{and} \sum_{\substack{\a\in \cA^{n(\xi)} \\ \a \leadsto \b}}w_{\a}(x_{\b})\ll 1.$$ 
Therefore, \eqref{Eq:WTS non-linear} holds and our proof is complete.


\section{Non-conformal systems: Proof of Theorem \ref{Theorem:Non-conformal}}\label{sec:proofnonconf}

\subsection{Outline of the argument}

We begin this section by outlining our proof of Theorem \ref{Theorem:Non-conformal}. Given a frequency $\xi\in\mathbb{R}^{d},$ we will begin by picking a dominant direction $1\leq i^*\leq d$ such that $\max_{1\leq i\leq d}|\xi_i|=|\xi_{i^{*}}|$. Without loss of generality we can take $i^*=1$. We will then disintegrate our stationary measure in this direction to express it as an integral of random measures. Each of these random measures will be of the form $\mu_{\b}\times \delta_{x_{\b}}$, where $\mu_{\b}$ is a probability measure on $[0,1]$ and $\delta_{x_{\b}}$ is simply a Dirac mass supported on a point $x_{\b}\in[0,1]^{d-1}$.

The fact that our random measures are of this form and $\max_{1\leq i\leq d}|\xi_i|=|\xi_{1}|$ will mean that to prove Theorem \ref{Theorem:Non-conformal}, it is sufficient to show that a typical $\mu_{\b}$ will have polynomial Fourier decay. This will be done by using the strategy employed throughout this paper. 

To this end, we will bound $\widehat{\mu_{\b}}(\xi_1)$ from above by an average of Fourier transforms over a set of measures and a set of frequencies. We will show that the measures appearing in this average are all uniformly affinely non-concentrated and the underlying non-concentration parameters can be taken to be independent of the measure (Proposition \ref{Prop:Non-conformal affinely non-concentrated}). We will also show that the set of frequencies appearing in this average typically satisfy a non-concentration property (Proposition \ref{Prop:Applying the random spectral gap}). We will then apply Theorem \ref{thm:flattening} to prove the desired Fourier decay result. 

The major difference in our proof when compared to Theorem \ref{thm:mainNonlinear} is that the measures $\mu_{\b}$ are no longer self-conformal measures and only satisfy a weaker form of dynamical self-conformality (see Lemma \ref{Lemma:Dynamical self-conformality}). It is a consequence of this weaker form of self-conformality that we do not consider a single transfer operator in our analysis, but instead must consider a family of transfer operators and how they behave under random compositions. Here we will rely heavily on work on the first and third authors \cite{BakerSahlsten}, which establishes that  a spectral gap typically exists for these random compositions under the third assumption of Theorem \ref{Theorem:Non-conformal}. 

The aforementioned technique of disintegrating a stationary measure in terms of simpler, albeit random, measures has its origins in a paper of Galicer et al \cite{GSSY}. It has subsequently been applied to problems relating to absolute continuity of self-similar measures \cite{KO,SSS}, normal numbers in fractal sets \cite{ABS}, and Fourier transforms of fractal measures \cite{BakerBanaji}.

\subsection{Beginning our proof and disintegrating $\mu$.}
Let us begin our proof of Theorem \ref{Theorem:Non-conformal} by fixing a restricted product IFS $\set{F_{\oa}}_{\oa\in \cA}$ satisfying the assumptions of this theorem and a probability vector $\p=(p_{\oa})_{\oa\in \cA}.$ To ease our notation we will denote the corresponding stationary measure by $\mu$ instead of $\mu_{\p}$. Let us also fix $\xi\in \mathbb{R}^{d}$. Without loss of generality we will assume that $\max_{1\leq i\leq d}|\xi|=|\xi_1|$. Thus to prove our result it suffices to show that 
\begin{equation}
	\label{Eq:xi_1 decay}
|\widehat{\mu}(\xi)|\ll |\xi_{1}|^{-\eta}
\end{equation}
for some $\eta>0$. Our first step towards proving \eqref{Eq:xi_1 decay} is to disintegrate our measure $\mu$. 
This is done in Proposition~\ref{Prop:Disintegration prop} after introducing the necessary notation.

Let $\pi:\cA\to  \cA_{2}\times \cdots \times \cA_{d}$ be the map given by $\pi(a_1,\ldots,a_{d})=(a_2,\ldots,a_d)$. We define $\cB=\pi(\cA)$ and define a new probability vector $\q=(q_{\overline{b}})_{\overline{b}\in \cB}$ by the formula $$q_{\overline{b}}=\sum_{\overline{a}:\pi(\overline{a})=\overline{b}}p_{\overline{a}}.$$ We let $Q$ be the probability measure on $\cB^{\N}$ corresponding to $\q$. Given $\overline{b}=(b_1,\ldots,b_{d-1})\in \cB$ we define $\tilde{F_{\overline{b}}}:[0,1]^{d-1}\to [0,1]^{d-1}$ to be the contraction given by $$\tilde{F_{\overline{b}}}(x_{1},\ldots, x_{d-1})=(f_{b_{1}}^{(2)}(x_{1}),\ldots, f_{b_{d-1}}^{(d)}(x_{d-1})).$$ 
Given $\overline{b}\in \cB$, we let $\cA_{\overline{b}}=\set{a\in \cA_{1}:(a,\overline{b})\in \cA}.$ 
For $\b=(\overline{b}_i)\in \cB^{\N}$, we also define $\Sigma_{\b}=\prod_{i=1}^{\infty}\cA_{\overline{b}_i}$ and let $x_{\b}\in [0,1]^{d-1}$ be the point given by the formula $$x_{\b}:=\lim_{n\to\infty}(\tilde{F_{\overline{b}_1}}\circ \cdots \circ \tilde{F_{\overline{b}_n}})(0)$$ We define a Bernoulli measure on $\Sigma_{\b}$ according to the rule $$m_{\b}([a_1,\ldots,a_n])=\prod_{i=1}^{n}\frac{p_{a\overline{b_{i}}}}{q_{\overline{b_{i}}}}$$ and $\pi_{\b}:\Sigma_{\b}\to [0,1]$ according to the rule $$\pi_{\b}(\a)=\lim_{n\to\infty}(f_{a_1}^{(1)}\circ \cdots \circ f_{a_n}^{(1)})(0).$$ We let $$X_{\b}:=\pi_{\b}(\Sigma_{\b}) \textrm{ and }\mu_{\b}:=\pi_{\b}m_{\b}.$$ We remark that it follows from the second assumption in Theorem \ref{Theorem:Non-conformal} that $\# \cA_{\overline{b}}\geq 2$ for all  $\overline{b}\in \cB.$ Consequently, there exists $\gamma\in(0,1)$ such that 
\begin{equation}
	\label{Eq:Exponential decay for cylinders}
\max_{\overline{b}\in \cB}\max_{a\in \cA_{\overline{b}}}\frac{p_{a\overline{b}}}{q_{\overline{b}}}\leq \gamma.
 \end{equation}
We let $\sigma:\cB^{\N}\to \cB^{\N}$ be the left shift map given by $\sigma((\overline{b}_i)_{i=1}^{\infty})=(\overline{b}_{i+1})_{i=1}^{\infty}$. The following lemma shows that the measures $\mu_{\b}$ exhibit a form of dynamical self-conformality.

\begin{lem}
	\label{Lemma:Dynamical self-conformality}
	Let $\b\in \cB^{\N}$. Then for all $n\in \N$ we have $$\mu_{\b}=\sum_{\a\in \cA_{\overline{b}_1}\times \cdots \times\cA_{\overline{b}_n}}m_{\b}([\a])f_{\a}\mu_{\sigma^{n}(\b)}.$$
\end{lem}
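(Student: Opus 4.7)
The plan is to unwind the definitions of $\mu_\b$ and $m_\b$ and reduce the claim to a Bernoulli shift identity together with an elementary factorization of the coding map $\pi_\b$. There is no substantive obstacle; the content of the lemma is entirely built into the definitions, and the statement will serve subsequently as the dynamical analogue of the self-similarity relation. I would carry out the argument in three short steps and then assemble them.

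First, I would decompose the Bernoulli measure according to the initial $n$ coordinates, writing
\[
m_\b = \sum_{\a} m_\b|_{[\a]},
\]
with $\a$ ranging over $\cA_{\overline{b}_1}\times\cdots\times \cA_{\overline{b}_n}$ and $m_\b([\a]) = \prod_{i=1}^n p_{a_i\overline{b}_i}/q_{\overline{b}_i}$ by the very definition of $m_\b$.

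Next, I would verify a factorization of $\pi_\b$. For $\a\in \cA_{\overline{b}_1}\times\cdots\times \cA_{\overline{b}_n}$ and $\c\in \Sigma_{\sigma^n(\b)}$, the concatenation $\a\c$ lies in $\Sigma_\b$, and unfolding the defining limit of $\pi_\b$ using continuity of each $f^{(1)}_{a_i}$ yields
\[
\pi_\b(\a\c) = f_\a\bigl(\pi_{\sigma^n(\b)}(\c)\bigr), \qquad f_\a := f_{a_1}^{(1)}\circ\cdots\circ f_{a_n}^{(1)}.
\]
Third, I would identify the renormalized restriction $m_\b|_{[\a]}/m_\b([\a])$, viewed under the shift $\sigma^n$ as a measure on $\Sigma_{\sigma^n(\b)}$, with the Bernoulli measure $m_{\sigma^n(\b)}$. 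This is immediate from the product structure, since for every cylinder $[\c]$ in $\Sigma_{\sigma^n(\b)}$ one has
\[
\frac{m_\b([\a\c])}{m_\b([\a])} = \prod_{j=1}^{|\c|} \frac{p_{c_j \overline{b}_{n+j}}}{q_{\overline{b}_{n+j}}} = m_{\sigma^n(\b)}([\c]).
\]

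Combining these three ingredients, the pushforward $\pi_\b(m_\b|_{[\a]})$ equals $m_\b([\a])\cdot f_\a \mu_{\sigma^n(\b)}$. Summing over $\a\in \cA_{\overline{b}_1}\times\cdots\times \cA_{\overline{b}_n}$ and using $\mu_\b = \pi_\b m_\b$ then gives the claimed identity.
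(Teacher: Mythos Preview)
Your proposal is correct and follows essentially the same approach as the paper: both arguments rest on the product structure of $m_\b$ over the first $n$ coordinates together with the factorization $\pi_\b(\a\c)=f_\a(\pi_{\sigma^n(\b)}(\c))$. The paper packages this via weak-star limits of weighted Dirac masses $\sum m_\b([\a])\delta_{f_\a(0)}$, whereas you work directly with the pushforward $\mu_\b=\pi_\b m_\b$ and the conditional identification $\sigma^n_\ast(m_\b|_{[\a]}/m_\b([\a]))=m_{\sigma^n(\b)}$; these are two presentations of the same decomposition.
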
  
\begin{proof}
Let $\b\in \cB^{\N}$ and $n\in \N$. By considering $\mu_{\b}$ and $\mu_{\sigma^{n}(\b)}$ as weak star limits of weighted Dirac masses we have
\begin{align*}
\mu_{\b}=&\lim_{M\to\infty}\sum_{\a\in \cA_{\overline{b}_1}\times \cdots \cA_{\overline{b}_M}}m_{\b}([\a])\delta_{f_{\a}(0)}\\
=&\lim_{M\to\infty}\sum_{\a\in \cA_{\overline{b}_1}\times \cdots \cA_{\overline{b}_n}}m_{\b}([\a]) \sum_{\a'\in \cA_{\overline{b}_{n+1}}\times \cdots \cA_{\overline{b}_M}}m_{\sigma^{n}(\b)}([\a'])\delta_{f_{\a\a'}(0)}\\
=&\lim_{M\to\infty}\sum_{\a\in \cA_{\overline{b}_1}\times \cdots \cA_{\overline{b}_n}}m_{\b}([\a]) f_{\a}\left(\sum_{\a'\in \cA_{\overline{b}_{n+1}}\times \cdots \cA_{\overline{b}_M}}m_{\sigma^{n}(\b)}([\a'])\delta_{f_{\a'}(0)}\right)\\
=&\sum_{\a\in \cA_{\overline{b}_1}\times \cdots \times\cA_{\overline{b}_n}}m_{\b}([\a])f_{\a}\mu_{\sigma^{n}(\b)}.
\end{align*}
\end{proof}

 The following proposition is our aforementioned disintegration result.

\begin{prop}We have
	\label{Prop:Disintegration prop}
	$$\mu=\int_{\cB^{\mathbb{N}}} \mu_{\b}\times \delta_{x_{\b}} \, dQ.$$
\end{prop}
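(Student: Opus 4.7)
The strategy is to verify that the measure
\begin{align*}
    \nu := \int_{\cB^{\N}} \mu_{\b} \times \delta_{x_{\b}} \, dQ(\b)
\end{align*}
satisfies the stationarity equation $\nu = \sum_{\oa \in \cA} p_{\oa} F_{\oa} \nu$, and then to invoke the uniqueness of the stationary measure for the restricted product IFS, which forces $\nu = \mu$.

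First I would record three elementary identities. For any $\oa = (a_1, \ldots, a_d) \in \cA$ with $\pi(\oa) = \overline{b}$, the product structure of $F_{\oa}$ yields
\begin{align*}
    F_{\oa}(\mu_{\b} \times \delta_{x_{\b}}) = (f_{a_1}^{(1)} \mu_{\b}) \times \delta_{\tilde{F}_{\overline{b}}(x_{\b})}.
\end{align*}
Commuting $\tilde{F}_{\overline{b}}$ past the limit defining $x_{\b}$ yields $\tilde{F}_{\overline{b}}(x_{\b}) = x_{(\overline{b}, \b)}$, where $(\overline{b}, \b)$ denotes the concatenated sequence $(\overline{b}, \overline{b}_1, \overline{b}_2, \ldots)$ for $\b = (\overline{b}_i)_{i \geq 1}$. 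Finally, applying Lemma~\ref{Lemma:Dynamical self-conformality} with $n=1$ to $(\overline{b}, \b)$ (so that $\sigma((\overline{b}, \b)) = \b$ and the first symbol equals $\overline{b}$) gives
\begin{align*}
    \mu_{(\overline{b}, \b)} = \sum_{a \in \cA_{\overline{b}}} \frac{p_{a\overline{b}}}{q_{\overline{b}}} \, f_a^{(1)} \mu_{\b},
\end{align*}
equivalently $\sum_{a \in \cA_{\overline{b}}} p_{a\overline{b}} \, f_a^{(1)} \mu_{\b} = q_{\overline{b}} \, \mu_{(\overline{b}, \b)}$.

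Combining these ingredients with the partition $\cA = \bigsqcup_{\overline{b} \in \cB} \set{(a, \overline{b}) : a \in \cA_{\overline{b}}}$, I would compute
\begin{align*}
    \sum_{\oa \in \cA} p_{\oa} F_{\oa} \nu
    &= \sum_{\overline{b} \in \cB} \int \left( \sum_{a \in \cA_{\overline{b}}} p_{a\overline{b}} f_a^{(1)} \mu_{\b} \right) \times \delta_{\tilde{F}_{\overline{b}}(x_{\b})} \, dQ(\b) \\
    &= \sum_{\overline{b} \in \cB} q_{\overline{b}} \int \mu_{(\overline{b}, \b)} \times \delta_{x_{(\overline{b}, \b)}} \, dQ(\b) \\
    &= \int \mu_{\b'} \times \delta_{x_{\b'}} \, dQ(\b') \; = \; \nu,
\end{align*}
where the penultimate equality is the Bernoulli disintegration $Q = \sum_{\overline{b}} q_{\overline{b}} \, (\overline{b}, \cdot)_\ast Q$ of the product measure on $\cB^{\N}$. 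Uniqueness of the stationary measure then gives $\nu = \mu$.

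The only nontrivial bookkeeping is to verify that $\b \mapsto \mu_{\b}$ and $\b \mapsto x_{\b}$ are Borel measurable, so that $\nu$ is well-defined as a Borel probability measure on $[0,1]^d$, and to justify Fubini when pairing both sides with continuous test functions in the display above. Both measurability statements follow from continuity of these assignments in the product topology on $\cB^{\N}$, which in turn follows from the uniform contraction of the one-dimensional maps $f_a^{(i)}$. No genuine obstacle is expected; the argument is essentially a verification that $\nu$ is the random-coding fixed point of the Markov operator associated with $(F_{\oa}, p_{\oa})$.
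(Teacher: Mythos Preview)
Your proposal is correct and takes essentially the same approach as the paper: define the candidate measure, verify it satisfies the stationarity equation $\nu = \sum_{\oa} p_{\oa} F_{\oa}\nu$, and invoke uniqueness. The only cosmetic difference is direction: the paper starts from $\nu$ and expands via Lemma~\ref{Lemma:Dynamical self-conformality} and $\sigma$-invariance of $Q$ to reach the right-hand side, whereas you start from $\sum_{\oa} p_{\oa} F_{\oa}\nu$ and collapse it to $\nu$ via the concatenation map and the Bernoulli disintegration $Q = \sum_{\overline{b}} q_{\overline{b}}\,(\overline{b},\cdot)_\ast Q$; these are the same identity read two ways.
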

\begin{proof}
Consider the Borel probability measure $\tilde{\mu}$ given by 
$$\tilde{\mu}=\int_{\cB^{\mathbb{N}}} \mu_{\b}\times \delta_{x_{\b}}\, dQ.$$
We will show that $\tilde{\mu}$ satisfies 
\begin{equation}
	\label{Eq:Self-conformal equation}
	\tilde{\mu}=\sum_{\overline{a}\in \cA}p_{\overline{a}}F_{\overline{a}}\tilde{\mu}.
\end{equation}
Since the stationary measure $\mu$ is the unique Borel probability measure satisfying this equation, our result will follow. 

Using Lemma \ref{Lemma:Dynamical self-conformality}, we have 
\begin{align*}
\tilde{\mu}=\int_{\cB^{\mathbb{N}}} \mu_{\b}\times \delta_{x_{\b}}\, dQ=&\int_{\cB^{\mathbb{N}}} \sum_{a\in \cA_{\overline{b}_1}}m_{\b}([a])f_{a}\mu_{\sigma(\b)}\times \delta_{x_{\b}}\, dQ\\
=&\int_{\cB^{\mathbb{N}}} \sum_{a\in \cA_{\overline{b}_1}}m_{\b}([a])f_{a}\mu_{\sigma(\b)}\times \tilde{F}_{\overline{b}_1}\delta_{x_{\sigma(\b)}}\, dQ\\
=&\int_{\cB^{\mathbb{N}}} \sum_{a\in \cA_{\overline{b}_1}}m_{\b}([a])F_{a\overline{b}_1}\left(\mu_{\sigma(\b)}\times \delta_{x_{\sigma(\b)}}\right)\, dQ\\
=&\sum_{\ob\in \cB}\sum_{a\in \cA_{\overline{b}}}\frac{p_{a\overline{b}}}{q_{\overline{b}}}\int_{\b:\overline{b}_1=\overline{b}} F_{a\overline{b}}\left(\mu_{\sigma(\b)}\times \delta_{x_{\sigma(\b)}}\right)\, dQ.
\end{align*}
In the final line we used that $$m_{\b}([a])=\frac{p_{a\overline{b}}}{q_{\overline{b}}}$$ if $\ob_{1}=\ob$. 
Now using the fact that $Q$ is a $\sigma$-invariant product measure and $F_{a\overline{b}}\left(\mu_{\sigma(\b)}\times \delta_{x_{\sigma(\b)}}\right)$ does not depend on the first digit of $\b,$ we have
\begin{align*}
\sum_{\ob\in \cB}\sum_{a\in \cA_{\overline{b}}}\frac{p_{a\overline{b}}}{q_{\overline{b}}}\int_{\b:\overline{b}_1=\overline{b}} F_{a\overline{b}}\left(\mu_{\sigma(\b)}\times \delta_{x_{\sigma(\b)}}\right)\, dQ
=&\sum_{\ob\in \cB}\sum_{a\in \cA_{\overline{b}}}\frac{p_{a\overline{b}}}{q_{\overline{b}}}\times q_{\overline{b}}\int_{\cB^{\N}} F_{a\overline{b}}\left(\mu_{\b}\times \delta_{x_{\b}}\right)\, dQ\\
=&\sum_{\ob\in \cB}\sum_{a\in \cA_{\overline{b}}}p_{a\overline{b}}\int_{\cB^{\N}} F_{a\overline{b}}\left(\mu_{\b}\times \delta_{x_{\b}}\right)\, dQ\\
=&\sum_{\overline{a}\in \cA}p_{\overline{a}}\int_{\cB^{\N}} F_{\overline{a}}\left(\mu_{\b}\times \delta_{x_{\b}}\right)\, dQ\\
=&\sum_{\overline{a}\in \cA}p_{\overline{a}}F_{\overline{a}}\tilde{\mu}.
\end{align*}
Thus, $\tilde{\mu}$ satisfies \eqref{Eq:Self-conformal equation} and our proof is complete.
\end{proof}
We finish this subsection by recording one consequence of Proposition \ref{Prop:Disintegration prop} with regards to the Fourier transform of $\mu$:
\begin{align}
	\label{Eq:Disintegrating the Fourier transform}
|\widehat{\mu}(\xi)|=\left|\int e^{2\pi i\langle\xi,x\rangle}\, d\mu\right|&=\left|\int_{B^{\N}} \int e^{2\pi i\langle\xi,x\rangle}d(\mu_{\b}\times \delta_{x_\b})\, dQ(\b)\right|\nonumber\\
	&=\left|\int_{\cB^{\mathbb{N}}}e^{2\pi i\langle(\xi_i)_{i=2}^{d},x_{\b}\rangle}\int e^{2\pi i \xi_{1}x}\, d\mu_{\b}\, dQ(\b)\right|\nonumber\\
	&\leq \int_{\cB^{\mathbb{N}}}|\widehat{\mu_{\b}}(\xi_1)|\, dQ(\b).
\end{align}
The significance of \eqref{Eq:Disintegrating the Fourier transform} is that it tells us that to prove \eqref{Eq:Non-conformal WTS} it is sufficient to show that outside of a set of small $Q$ measure, each $\mu_{\b}$ has polynomial Fourier decay.

\subsection{Bounding the Fourier transform by an average}

Let us now focus on $\widehat{\mu_{\b}}(\xi_1)$.
We show in Proposition~\ref{Prop:Non-conformal averaging} that $\widehat{\mu_{\b}}(\xi_1)$ is bounded above by an expression involving an average of Fourier transforms. Importantly Theorem \ref{thm:flattening} can be meaningfully applied to this average.

To this end, we let 
\begin{equation}
	\label{Eq:Define n}
	n(\xi_1)=\lfloor c\log |\xi_1|\rfloor
\end{equation}
 for some $c>0$ chosen to be sufficiently small that 
 \begin{equation}
 	\label{Eq: a words contraction}
\min_{\a\in \cA_{1}^{n(\xi_1)}}\min_{x\in [0,1]} |f_{\a}^{(1)}(x)|\geq \xi_{1}^{-1/3}.
 \end{equation}To each $\b\in \cB^{\N}$ we associate the set 
 $$\cA_{\xi_{1},\b}:=\set{\c\in \bigcup_{j=1}^{\infty}\prod_{l=1}^{j}\cA_{\overline{b}_{n(\xi_1)+l}}:\mrm{Diam}(f_{\c}([0,1]))<\xi_1^{-2/3},\, \mrm{Diam}(f_{\c^{-}}([0,1]))\geq\xi_1^{-2/3} }.$$ For each $\c\in \cA_{\xi_{1},\b}$ we pick $x_{\c}\in \pi_{\sigma^{n(\xi_1)}(\b)}([\c]).$ Given $\b\in \cB^{\N}$ and $\c\in \cA_{\xi_{1},\b}$ we let 
 $\mu_{\c,\sigma^{n(\xi_1)}(\b),\xi_{1}}$ be the pushforward of 
 $$\frac{\mu_{\sigma^{n(\xi_1)}(\b)}|_{\pi_{\sigma^{n(\xi_1)}(\b)}([\c])}}{\mu_{\sigma^{n(\xi_1)}(\b)}(\pi_{\sigma^{n(\xi_1)}(\b)}([\c]))}$$  under $x\to \xi_{1}^{2/3}x.$ The following statement bounds $|\widehat{\mu_{\b}}(\xi_1)|$ by an average of $\widehat{\mu_{\c,\sigma^{n(\xi_1)}(\b),\xi_{1}}}$ taken over all $\c\in \cA_{\xi_{1},\b}$ and a range of frequencies plus a polynomially small error.
 \begin{prop}
 	\label{Prop:Non-conformal averaging}
 	For any $\b\in \cB^{\N}$ we have
 	$$|\widehat{\mu_{\b}}(\xi_1)|\leq \sum_{\a\in \cA_{\overline{b}_1}\times \cdots \times  \cA_{\overline{b}_{n(\xi_1)}}}m_{\b}([\a])\sum_{\c\in \cA_{\xi_1,\b}}m_{\sigma^{n(\xi_1)}(\b)}([\c])|\widehat{\mu_{\c,\sigma^{n(\xi_1)}(\b),\xi_1}}(\xi_{1}^{1/3} (f_{\a}^{(1)})'(x_{\c}))|+\cO(|\xi_1|^{-1/3})$$
 \end{prop}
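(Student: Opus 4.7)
The plan is to follow the same strategy as in the proof of Proposition~\ref{Prop:Averaging prop} in the self-conformal case, replacing the Gibbs invariance identity \eqref{Eq:Gibbs measure invariance} with the dynamical self-conformality relation supplied by Lemma~\ref{Lemma:Dynamical self-conformality}. The output is an expression for $\widehat{\mu_{\b}}(\xi_1)$ as a double average: first over $n(\xi_1)$-letter words $\a$ that record the initial dynamics, and then over the stopping-time family $\cA_{\xi_1,\b}$ that partitions the shifted measure $\mu_{\sigma^{n(\xi_1)}(\b)}$ into pieces of diameter $\asymp |\xi_1|^{-2/3}$.

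First I would apply Lemma~\ref{Lemma:Dynamical self-conformality} with $n=n(\xi_1)$ to write
\[
\widehat{\mu_{\b}}(\xi_1)=\sum_{\a\in \cA_{\overline{b}_1}\times\cdots\times\cA_{\overline{b}_{n(\xi_1)}}} m_{\b}([\a])\int e^{2\pi i \xi_1 f_{\a}^{(1)}(x)}\,d\mu_{\sigma^{n(\xi_1)}(\b)}(x).
\]
Next, since the one-dimensional IFSs $\{f_a^{(1)}\}_{a\in \cA_{\overline{b}_{n(\xi_1)+l}}}$ all satisfy the strong separation condition (assumption (1) of Theorem~\ref{Theorem:Non-conformal}), the stopping-time cylinders $\{\pi_{\sigma^{n(\xi_1)}(\b)}([\c])\}_{\c\in \cA_{\xi_1,\b}}$ form a partition of $X_{\sigma^{n(\xi_1)}(\b)}$. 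Splitting the inner integral along this partition allows me to localize each contribution to a piece on which the linearization of $f_{\a}^{(1)}$ can be carried out.

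Inside the cylinder associated to $\c$, I would Taylor expand $f_\a^{(1)}$ around $x_\c$: since $|x-x_\c|\ll |\xi_1|^{-2/3}$ and $\|(f_\a^{(1)})'\|_\infty \leq 1$, Lemma~\ref{Lemma:Bounded distortions} applied to the one-dimensional IFS yields
\[
\bigl|f_\a^{(1)}(x)-f_\a^{(1)}(x_\c)-(f_\a^{(1)})'(x_\c)(x-x_\c)\bigr|\ll \|(f_\a^{(1)})'\|_\infty |x-x_\c|^2 \ll |\xi_1|^{-4/3}.
\]
Multiplying by $\xi_1$ and using the $1$-Lipschitz property of $t\mapsto e^{it}$ produces an error of $\mathcal{O}(|\xi_1|^{-1/3})$ per cylinder. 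Collecting the linear phases, pulling out the constant-in-$x$ factor $e^{2\pi i \xi_1 f_{\a}^{(1)}(x_\c)}$, and performing the rescaling $x\mapsto \xi_1^{2/3}x$ that defines $\mu_{\c,\sigma^{n(\xi_1)}(\b),\xi_1}$ turns the integrand into $\exp\bigl(2\pi i\, \xi_1^{1/3}(f_{\a}^{(1)})'(x_\c)\, y\bigr)$, up to a unimodular prefactor. Using that $m_{\sigma^{n(\xi_1)}(\b)}([\c])=\mu_{\sigma^{n(\xi_1)}(\b)}(\pi_{\sigma^{n(\xi_1)}(\b)}([\c]))$, each localized integral becomes $m_{\sigma^{n(\xi_1)}(\b)}([\c])\,\widehat{\mu_{\c,\sigma^{n(\xi_1)}(\b),\xi_1}}(\xi_1^{1/3}(f_{\a}^{(1)})'(x_\c))$ up to the linearization error.

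Finally I would take absolute values and apply the triangle inequality: the unimodular phases disappear, and the accumulated error is bounded by $\mathcal{O}(|\xi_1|^{-1/3})$ because the errors per cylinder sum, across $\c\in\cA_{\xi_1,\b}$, to $\mathcal{O}(|\xi_1|^{-1/3})\cdot \mu_{\sigma^{n(\xi_1)}(\b)}(X_{\sigma^{n(\xi_1)}(\b)})=\mathcal{O}(|\xi_1|^{-1/3})$, and then across $\a$ the weights $m_{\b}([\a])$ sum to $1$. There is no serious obstacle here beyond bookkeeping; the only point that requires care is verifying that the second-derivative bound from Lemma~\ref{Lemma:Bounded distortions} is uniform over all admissible words $\a$ (which is precisely its content), and that $\cA_{\xi_1,\b}$ does partition the support, which follows from assumption (1). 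The choice \eqref{Eq:Define n} of $n(\xi_1)$ is irrelevant to this statement and will only be used downstream for the non-concentration of the frequencies $\xi_1^{1/3}(f_\a^{(1)})'(x_\c)$.
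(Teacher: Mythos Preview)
Your proposal is correct and follows essentially the same approach as the paper's proof: apply Lemma~\ref{Lemma:Dynamical self-conformality}, split over the stopping-time partition $\cA_{\xi_1,\b}$, linearize $f_\a^{(1)}$ around $x_\c$ via Lemma~\ref{Lemma:Bounded distortions} to get an $\cO(|\xi_1|^{-4/3})$ remainder, absorb the resulting $\cO(|\xi_1|^{-1/3})$ phase error, rescale to introduce $\mu_{\c,\sigma^{n(\xi_1)}(\b),\xi_1}$, and take absolute values. The only cosmetic difference is that the paper takes absolute values before introducing the rescaled measure, whereas you do it after; the bookkeeping is otherwise identical.
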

 \begin{proof}
Fix $\b\in \cB^{\N}$. Using Lemma \ref{Lemma:Dynamical self-conformality} we have
$$|\widehat{\mu_{\b}}(\xi_1)|=\left|\int e^{2\pi i\xi_1x}\, d\mu_{\b}\right|=\left| \sum_{\a\in \cA_{\overline{b}_1}\times \cdots \times \cA_{\overline{b}_{n(\xi_1)}}}m_{\b}([\a])\int e^{2\pi i\xi_{1}f_{\a}^{(1)}(x)}\, d\mu_{\sigma^{n(\xi_1)}(\b)}\right|$$
We now split the latter integrals over cylinder sets determined by element of $\cA_{\xi_1,\b}$:
\begin{align*}
	&\left| \sum_{\a\in \cA_{\overline{b}_1}\times \cdots \times \cA_{\overline{b}_{n(\xi_1)}}}m_{\b}([\a])\int e^{2\pi i\xi_{1}f_{\a}^{(1)}(x)}\, d\mu_{\sigma^{n(\xi_1)}(\b)}\right|\\
	=&\left| \sum_{\a\in \cA_{\overline{b}_1}\times \cdots \times \cA_{\overline{b}_{n(\xi_1)}}}m_{\b}([\a])\sum_{\c\in \cA_{\xi_1,\b}}\int_{\pi_{\sigma^{n(\xi_1)}(\b)}([\c])} e^{2\pi i\xi_{1}f_{\a}^{(1)}(x)}\, d\mu_{\sigma^{n(\xi_1)}(\b)}\right|
\end{align*} 
For each $\a\in \cA_{\overline{b}_1}\times \cdots \times \cA_{\overline{b}_{n(\xi_1)}}$ we can linearize $f_{\a}^{(1)}(x)$ around $x_{\c}$ and use Lemma \ref{Lemma:Bounded distortions} to derive the bound
\begin{equation}
	\label{Eq:Linearising error}
f_{\a}^{(1)}(x)=f_{\a}^{(1)}(x_{\c})+(f_{\a}^{(1)})'(x_{\c})(x-x_{\c})+\cO(|\xi_{1}|^{-4/3})
\end{equation}
for all $x\in \pi_{\sigma^{n(\xi_1)}(\b)}([\c]).$ Here we have used the definition of $\cA_{\xi_1,\b}.$
Applying \eqref{Eq:Linearising error} in the above we have
\begin{align*}
&\left| \sum_{\a\in \cA_{\overline{b}_1}\times \cdots \times \cA_{\overline{b}_{n(\xi_1)}}}m_{\b}([\a])\sum_{\c\in \cA_{\xi_1,\b}}\int_{\pi_{\sigma^{n(\xi_1)}(\b)}([\c])} e^{2\pi i\xi_{1}f_{\a}^{(1)}(x)}\, d\mu_{\sigma^{n(\xi_1)}(\b)}\right|\\
\leq &\sum_{\a\in \cA_{\overline{b}_1}\times \cdots \times \cA_{\overline{b}_{n(\xi_1)}}}m_{\b}([\a])\sum_{\c\in \cA_{\xi_1,\b}}\left|\int_{\pi_{\sigma^{n(\xi_1)}(\b)}([\c])} e^{2\pi i\xi_{1}(f_{\a}^{(1)})'(x_{\c})x}\, d\mu_{\sigma^{n(\xi_1)}(\b)}\right|+\cO(|\xi_1|^{-1/3})\\
=&\sum_{\a\in \cA_{\overline{b}_1}\times \cdots \times  \cA_{\overline{b}_{n(\xi_1)}}}m_{\b}([\a])\sum_{\c\in \cA_{\xi_1,\b}}m_{\sigma^{n(\xi_1)}(\b)}([\c])\left|\int e^{2\pi i\xi_{1}^{1/3}(f_{\a}^{(1)})'(x_{\c})x}\, d\mu_{\c,\sigma^{n(\xi_1)}(\b),\xi}\right|+\cO(|\xi_1|^{-1/3})\\
=&\sum_{\a\in \cA_{\overline{b}_1}\times \cdots \times  \cA_{\overline{b}_{n(\xi_1)}}}m_{\b}([\a])\sum_{\c\in \cA_{\xi_1,\b}}m_{\sigma^{n(\xi_1)}(\b)}([\c])\left|\widehat{\mu_{\c,\sigma^{n(\xi_1)}(\b),\xi_1}}(\xi_1 ^{1/3}(f_{\a}^{(1)})'(x_{\c}))\right|+\cO(|\xi_1|^{-1/3}).
\end{align*}
This completes our proof.
\end{proof}

\subsection{Affine non-concentration of the averaged measures}
In light of the above proposition,
to successfully apply Theorem~\ref{thm:flattening} to prove Theorem \ref{Theorem:Non-conformal}, we need to show that each $\mu_{\c,\sigma^{n(\xi_1)}(\b),\xi}$ is uniformly affinely non-concentrated and the non-concentration parameters do not depend upon our choice of measure. 
This fact is established in the following proposition.
\begin{prop}
	\label{Prop:Non-conformal affinely non-concentrated}
There exists $C,\alpha>0$ such that for any $\b\in \cB^{\N},$ $\xi_{1}\in\mathbb{R}$, $\c\in A_{\xi_1,\b},$ $x\in \supp(\mu_{\c,\sigma^{n(\xi_1)}(\b),\xi_1}),$ $y\in \R,$ $r\in (0,1]$ and $\epsilon>0$ we have 
$$\mu_{\c,\sigma^{n(\xi_1)}(\b),\xi_1}( B(x,r)\cap B(y,\epsilon r))\leq C\epsilon^{\alpha}\mu_{\c,\sigma^{n(\xi_1)}(\b),\xi_1}(B(x,r)).$$
\end{prop}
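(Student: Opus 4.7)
The plan is to reduce the claimed uniform affine non-concentration of $\mu_{\c,\sigma^{n(\xi_1)}(\b),\xi_1}$ to a single intrinsic non-concentration estimate for the random measures $\mu_{\b'}$ themselves: namely, there exist absolute constants $C_0,\alpha>0$, depending only on the IFS and the probability vector, such that
\begin{align*}
\mu_{\b'}\bigl(B(x,r)\cap B(y,\epsilon r)\bigr) \leq C_0\, \epsilon^{\alpha}\, \mu_{\b'}(B(x,r))
\end{align*}
for every $\b'\in \cB^{\mathbb{N}}$, $x\in X_{\b'}$, $y\in\R$, $r\in(0,1]$, $\epsilon\in(0,1)$. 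The reduction proceeds by writing $\mu_{\c,\sigma^{n(\xi_1)}(\b),\xi_1}$ as a pushforward: Lemma \ref{Lemma:Dynamical self-conformality}, combined with the strong separation condition, yields $\mu_{\c,\sigma^{n(\xi_1)}(\b),\xi_1}=(S_{\xi_1^{2/3}}\circ f_\c)_\ast\mu_{\sigma^{n(\xi_1)+|\c|}(\b)}$, where $f_\c=f_{c_1}^{(1)}\circ\cdots\circ f_{c_{|\c|}}^{(1)}$. Since $\c\in\cA_{\xi_1,\b}$, the defining property of $\cA_{\xi_1,\b}$ together with Lemma \ref{Lemma:Bounded distortions} imply that $S_{\xi_1^{2/3}}\circ f_\c$ is a bi-Lipschitz diffeomorphism onto its image, with bi-Lipschitz constants depending only on the underlying IFS. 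Thus the desired estimate for $\mu_{\c,\sigma^{n(\xi_1)}(\b),\xi_1}$ follows from the intrinsic one after adjusting constants.

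The intrinsic estimate will be proved in two stages. First, I will establish a \emph{uniform Frostman upper bound}
\begin{align*}
\mu_{\b}(B(z,\rho))\leq C\rho^{\alpha_0},\qquad \alpha_0 := \frac{\log(1/\gamma)}{\log(1/\lambda_{\min})},
\end{align*}
for every $\b\in\cB^{\mathbb{N}}$, $z\in\R$, $\rho>0$, where $\gamma$ is as in \eqref{Eq:Exponential decay for cylinders} and $\lambda_{\min}=\min_{a\in\cA_1,\,x\in[0,1]}|(f_a^{(1)})'(x)|$. The argument iterates Lemma \ref{Lemma:Dynamical self-conformality}: whenever the current radius is smaller than the SSC separation constant $\delta_0=\min_{a\neq a'}\mathrm{dist}(f_a^{(1)}([0,1]),f_{a'}^{(1)}([0,1]))$, the ball meets at most one first-level cylinder, so one gains a multiplicative factor of $\gamma$ while inflating the radius by at most $\lambda_{\min}^{-1}$. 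Iterating this step $\asymp \log(1/\rho)/\log(1/\lambda_{\min})$ times (until the rescaled radius first exceeds $\delta_0/2$) yields the stated polynomial bound.

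Second, I will prove the intrinsic non-concentration estimate by matching $B(x,r)$ to a cylinder of commensurate scale. Choosing $\c\in\Sigma_{\b'}$ with $\pi_{\b'}(\c)=x$ and taking $m$ to be the smallest integer with $\mathrm{diam}(\pi_{\b'}([\c|_m]))\leq r$, Lemma \ref{Lemma:Bounded distortions} gives $\mathrm{diam}(\pi_{\b'}([\c|_m]))\asymp r$, so $\pi_{\b'}([\c|_m])\subseteq B(x,r)$ and $\mu_{\b'}(B(x,r))\geq m_{\b'}([\c|_m])$. The SSC together with bounded distortion make the level-$m$ sibling cylinders separated by $\gtrsim r$. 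For $\epsilon$ smaller than an absolute threshold $\epsilon_\ast$ (the complementary range being trivial), $B(y,\epsilon r)\cap X_{\b'}$ is then contained in a single sibling $\pi_{\b'}([\c|_{m-1}\cdot a])$. Applying Lemma \ref{Lemma:Dynamical self-conformality} to extract the factor $m_{\b'}([\c|_{m-1}\cdot a])$ and pulling $B(y,\epsilon r)$ back by $f_{\c|_{m-1}\cdot a}$ rescales it to a ball of radius $\asymp \epsilon$, to which the Frostman upper bound applies to give $\mu_{\sigma^m(\b')}(\cdot)\leq C\epsilon^{\alpha_0}$. Combined with the elementary inequality $m_{\b'}([\c|_{m-1}\cdot a])\leq (\gamma/\gamma_{\min})\, m_{\b'}([\c|_m])$, where $\gamma_{\min}:=\min_{\overline{b}\in\cB,\,a\in\cA_{\overline{b}}}p_{a\overline{b}}/q_{\overline{b}}>0$, this yields the estimate with $\alpha=\alpha_0$.

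The main obstacle is that the measures $\mu_{\b'}$ are generically not Ahlfors regular with a uniform dimension, so a direct Frostman-style comparison of $\mu_{\b'}(B(y,\epsilon r))$ and $\mu_{\b'}(B(x,r))$ by matching upper and lower bounds is doomed: the natural lower bound exponent exceeds the natural upper bound exponent, leaving a residual power of $r$ that blows up as $r\to 0$. The essential workaround is to use the \emph{cylinder} at scale $r$ matched to $B(x,r)$ as a common yardstick: it provides both the lower bound on $\mu_{\b'}(B(x,r))$ and the natural zooming frame that converts $B(y,\epsilon r)$ into a scale-$\epsilon$ ball suitable for the Frostman upper bound. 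This mechanism is uniform in $\b'$ because every quantitative input (SSC separation, distortion, $\lambda_{\min}$, $\gamma$, $\gamma_{\min}$) depends only on the deterministic family $\{f_a^{(1)}\}_{a\in\cA_1}$ and the probability vector $\p$.
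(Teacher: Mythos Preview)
Your reduction is cleaner than the paper's: recognizing via Lemma~\ref{Lemma:Dynamical self-conformality} and the strong separation condition that the normalized restriction of $\mu_{\sigma^{n(\xi_1)}(\b)}$ to $\pi([\c])$ is \emph{exactly} $(f_{\c})_\ast\mu_{\sigma^{n(\xi_1)+|\c|}(\b)}$, you reduce to an intrinsic estimate for $\mu_{\b'}$ with no restriction to track. The paper instead keeps the restriction and must verify at the end (their~\eqref{Eq:Final part 2}) that the full measure and the restricted measure of $B(x',r\xi_1^{-2/3})$ are comparable. Your Frostman-bound-plus-pullback in place of the paper's direct depth comparison $m'-m\gtrsim\log(1/\epsilon)$ is an equally valid packaging of the same exponential decay~\eqref{Eq:Exponential decay for cylinders}. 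One small omission in the reduction: pulling balls back through a uniformly bi-Lipschitz map changes radii by a bounded factor, so to compare $\mu_{\b''}(B(x',Lr))$ with $\mu_{\b''}(B(x',r/L))$ you need uniform doubling for $\mu_{\b'}$; this is not implied by your intrinsic estimate but follows at once from $\mu_{\b'}(B(x,s))\asymp m_{\b'}([\c|_{m(s)}])$ with $m(L^2r)-m(r)$ bounded.

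There is, however, a genuine gap in your second stage. You assert that for small $\epsilon$ the set $B(y,\epsilon r)\cap X_{\b'}$ lies in a single \emph{sibling} $\pi_{\b'}([\c|_{m-1}\cdot a])$, and then compare $m_{\b'}([\c|_{m-1}\cdot a])$ to $m_{\b'}([\c|_m])$. But nothing places $B(y,\epsilon r)$ near $\pi_{\b'}([\c|_{m-1}])$: for arbitrary $y$ the ball may sit in a level-$m$ cylinder $\pi_{\b'}([\be|_m])$ whose mass ratio to $m_{\b'}([\c|_m])$ is unbounded in $m$. The fix is exactly what the paper does. First reduce to $y\in B(x,r)\cap X_{\b'}$ (if $B(x,r)\cap B(y,\epsilon r)\cap X_{\b'}=\emptyset$ the bound is trivial; otherwise replace $y$ by any point of this triple intersection at the cost of doubling $\epsilon$). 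Then, via the separation estimate~\eqref{eq:separated cylinders}, every level-$m$ cylinder meeting $B(x,r)$ shares the prefix $\c|_{m-l}$ for a bounded $l$, so the level-$m$ cylinder containing $y$ is a bounded-depth \emph{cousin} of $\c|_m$, with mass comparable to $m_{\b'}([\c|_m])$ up to the absolute factor $(\gamma/\gamma_{\min})^{l}$. With ``sibling'' replaced by ``cousin at bounded depth'' your argument goes through unchanged.
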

\begin{proof}
Let us fix $\b\in \cB^{\N},$ $\xi_{1}\in\mathbb{R}$ and $\c\in A_{\xi_1,\b}.$ Let $x\in \supp(\mu_{\c,\sigma^{n(\xi_1)}(\b),\xi_1})$, $r\in(0,1]$ and $\epsilon>0$. Then, by definition, for any $y\in \R$ we have 
\begin{align}
\label{eq:First step in measure bound}
	&\mu_{\c,\sigma^{n(\xi_1)}(\b),\xi_1}(B(x,r)\cap B(y,\epsilon r))
    \nonumber\\
 =&\frac{\mu_{\sigma^{n(\xi_1)}(\b)}|_{\pi_{\sigma^{n(\xi_1)}(\b)}([\c])}(B(x\cdot \xi_1^{-2/3},r\cdot \xi_1^{-2/3})\cap B(y\cdot \xi_1^{-2/3},\epsilon r\cdot \xi_1^{-2/3}))}{\mu_{\sigma^{n(\xi_1)}(\b)}(\pi_{\sigma^{n(\xi_1)}(b)}([\c]))}\nonumber\\
	\leq& \frac{\mu_{\sigma^{n(\xi_1)}(\b)}(B(x\cdot \xi_1^{-2/3},r\cdot \xi^{-2/3})\cap B(y\cdot \xi_1^{-2/3},\epsilon r\cdot \xi_1^{-2/3}))}{\mu_{\sigma^{n(\xi_1)}(\b)}(\pi_{\sigma^{n(\xi_1)}(b)}([\c]))}\nonumber\\
	=&\frac{\mu_{\sigma^{n(\xi_1)}(\b)}(B(x',r\cdot \xi_{1}^{-2/3})\cap B(y',\epsilon r\cdot \xi_1^{-2/3}))}{\mu_{\sigma^{n(\xi_1)}(\b)}(\pi_{\sigma^{n(\xi_1)}(b)}([\c]))}.
\end{align} Where in the last line we have adopted the notation: $x'=x\cdot \xi_{1}^{-2/3}\in \pi_{\sigma^{n(\xi_1)}(\b)}([\c])$  and $y'=y\cdot \xi_{1}^{-2/3}.$ We begin our proof by analysing the sets appearing in the numerator in \eqref{eq:First step in measure bound}. Without loss of generality we can assume that $y'\in X_{\sigma^{n(\xi_1)}(\b)}.$

It is useful at this point to recall that the IFS $\set{f_{a}^{(1)}}_{a\in \cA}$ satisfies the strong separation condition. Therefore, if $X_{1}\subset \R$ denotes the attractor for this IFS, we have $f_{a}^{(1)}(X_1)\cap f_{a'}^{(1)}(X_1)=\emptyset$ for distinct $a,a'\in \cA_{1}$. Therefore $$\min_{a\neq a'}d\left(f_{a}^{(1)}(X_1),f_{a'}^{(1)}(X_1)\right)>0.$$ It is a consequence of Lemma \ref{Lemma:Derivative and Diameter} and this property that for $\a,\a'\in \cA_{1}^{*}$ we have 
\begin{equation}
	\label{eq:IFS separated cylinders}
	d\left(f_{\a}^{(1)}(X_{1}),f_{\a'}^{(1)}(X_1)\right)\asymp Diam\left(f_{|\a\wedge\a'|}([0,1])\right).
\end{equation}  Since $\pi_{\sigma^{n(\xi_1)}(\b)}([\bd])\subset f_{\bd}^{(1)}(X_{1})$ for all $\bd\in \cup_{j=1}^{\infty}\prod_{i=1}^{j}\cA_{\overline{b}_{n(\xi_1)+i}},$ \eqref{eq:IFS separated cylinders} implies 
\begin{equation}
	\label{eq:separated cylinders}
	d(\pi_{\sigma^{n(\xi_1)}(\b)}([\bd]),\pi_{\sigma^{n(\xi_1)}(\b)}([\bd']))\asymp \mrm{Diam}(f_{|\bd\wedge\bd'|}([0,1]))
\end{equation} for distinct $\bd,\bd'\in \cup_{j=1}^{\infty}\prod_{i=1}^{j}\cA_{\overline{b}_{n(\xi_1)+i}}.$

We let $\bd=(d_i)\in \prod_{i=1}^{\infty}\cA_{\overline{b}_{n(\xi_1)+i}}$ be the unique sequence satisfying $\pi_{\sigma^{n(\xi_1)}(\b)}(\bd)=x'.$ Since $x'\in \pi_{\sigma^{n(\xi_1)}(\b)}([\c])$ the sequence $\bd$ must begin with $\c$. Let $m\in \N$ be minimal such that $$\pi_{\sigma^{n(\xi_1)}(\b)}([d_1,\ldots,d_m])\subset B(x',r\cdot\xi_{1}^{-2/3}).$$ It follows from \eqref{eq:separated cylinders} that a bounded number of $(d_1',\ldots,d_m')\in \prod_{i=1}^{m}\cA_{\overline{b}_{n(\xi_1)+i}}$ satisfy 
\begin{equation}
	\label{eq:Intersecting words}
	\pi_{\sigma^{n(\xi_1)}(\b)}([d_1',\ldots,d_m'])\cap B(x',r\cdot\xi_{1}^{-2/3})\neq\emptyset.
\end{equation} It also follows from \eqref{eq:separated cylinders} that there exists $l\in\N$ depending only on our IFS, such that if $\bd'$ satisfies \eqref{eq:Intersecting words} then $d_i=d_i'$ for all $1\leq i\leq m-l$. Therefore for such a $(d_1',\ldots,d_m')$ we have $$m_{\sigma^{n(\xi_1)}(\b)}([d_1,\ldots,d_m])\asymp m_{\sigma^{n(\xi_1)}(\b)}([d_1',\ldots,d_m']).
$$ Combining these observations we have
\begin{equation}
    \label{Eq:R ball measure}
\mu_{\sigma^{n(\xi_1)}(\b)}(B(x',r\cdot \xi_{1}^{-2/3})\asymp m_{\sigma^{n(\xi_1)}(\b)}([d_1,\ldots,d_m]).
\end{equation}

Let us now consider $ B(y',\epsilon r\cdot \xi_{1}^{-2/3}).$ Let $\be=(e_i)\in \Sigma_{\sigma^{n(\xi_1)}(\b)}$ be the unique sequence satisfying $\pi_{\sigma^{n(\xi_1)}(\b)}(\be)=y'.$ Let $m'\geq m$ be minimal such that $$\pi_{\sigma^{n(\xi_1)}(\b)}([e_1,\ldots,e_{m'}])\subset B(y',\epsilon r\xi_{1}^{-2/3}).$$ By an analogous argument to that given above, we must have 
\begin{equation}
	\label{Eq:epsilon r bound}
	\mu_{\sigma^{n(\xi_1)}(\b)}(B(y',\epsilon r\xi_{1}^{-2/3}))\asymp m_{\sigma^{n(\xi_1)}(\b)}([e_1,\ldots,e_{m'}]).
\end{equation}
Using the fact that our IFS is uniformly contracting, it can be shown that $m'-m\geq \lfloor d\log \epsilon^{-1}\rfloor$ for some $d>0$ depending only on our IFS. Using this inequality and recalling the definition of $\gamma$ given by \eqref{Eq:Exponential decay for cylinders}, we have 
\begin{align}
	\label{Eq:epsilon decay}
	\mu_{\sigma^{n(\xi_1)}(\b)}(B(y',\epsilon r\xi_{1}^{-2/3}))&\ll m_{\sigma^{n(\xi_1)}(\b)}([e_1,\ldots,e_{m'}])\nonumber\\
	&\ll m_{\sigma^{n(\xi_1)}(\b)}([e_1,\ldots,e_{m}])\epsilon^{-d\log \gamma}\nonumber\\
	&\ll m_{\sigma^{n(\xi_1)}(\b)}([d_1,\ldots,d_{m}])\epsilon^{-d\log \gamma}.
	\end{align}
In the last line we used that $(e_1,\ldots,e_m)$ must satisfy \eqref{eq:Intersecting words} therefore $e_i=d_i$ for $1\leq i\leq m-l$ and so $m_{\sigma^{n(\xi_1)}(\b)}([e_1,\ldots,e_{m}])\asymp m_{\sigma^{n(\xi_1)}(\b)}([d_1,\ldots,d_{m}])$.
Summarizing, it follows from \eqref{eq:First step in measure bound}, \eqref{Eq:R ball measure}, \eqref{Eq:epsilon r bound} and \eqref{Eq:epsilon decay} that
\begin{align*}
		\mu_{\c,\sigma^{n(\xi_1)}(\b),\xi}(B(x,r)\cap B(y,\epsilon r))&\leq  \frac{\mu_{\sigma^{n(\xi_1)}(\b)}(B(x',r\cdot \xi_{1}^{-2/3})\cap B(y',\epsilon r\cdot \xi_1^{-2/3}))}{\mu_{\sigma^{n(\xi_1)}(\b)}(\pi_{\sigma^{n(\xi_1)}(b)}([\c]))}\\
		&\ll \frac{\mu_{\sigma^{n(\xi_1)}(\b)}( B(y',\epsilon r\cdot \xi_{1}^{-2/3}))}{\mu_{\sigma^{n(\xi_1)}(\b)}(\pi_{\sigma^{n(\xi_1)}(b)}([\c]))}\\
		&\ll \epsilon^{-d\log \gamma}\frac{\mu_{\sigma^{n(\xi_1)}(\b)}(B(x',r\cdot \xi_{1}^{-2/3}))}{\mu_{\sigma^{n(\xi_1)}(\b)}(\pi_{\sigma^{n(\xi_1)}(b)}([\c]))}.
\end{align*}
Taking $\alpha=-d\log \g,$ we see that to complete our proof it remains to show that 
\begin{equation*}
	\label{Eq:Final part}
	\frac{\mu_{\sigma^{n(\xi_1)}(\b)}(B(x',r\cdot \xi_1^{-2/3}))}{\mu_{\sigma^{n(\xi_1)}(\b)}(\pi_{\sigma^{n(\xi_1)}(b)}([\c]))}\ll \mu_{\c,\sigma^{n(\xi_1)}(\b),\xi}(B(x,r)).
\end{equation*} This is equivalent to  
\begin{equation}
	\label{Eq:Final part 2}
	\mu_{\sigma^{n(\xi_1)}(\b)}(B(x',r\cdot \xi_1^{-2/3}))\ll \mu_{\sigma^{n(\xi_1)}(\b)}|_{\pi_{\sigma^{n(\xi_1)}(b)}([\c])}(B(x',r\cdot\xi_1^{-2/3})).
\end{equation} It is useful to recall at this point that by the definition of $A_{\xi_1,\b}$ we know that 
\begin{equation}
\label{Eq:Random diameter bound}
\mrm{Diam}(\pi_{\sigma^{n(\xi_1)}(b)}([\c]))\asymp \xi_{1}^{-2/3}.
\end{equation}Therefore, it follows from \eqref{eq:separated cylinders} that there exists $r_{0}>0$ depending only on our IFS such that if $r<r_{0}$ then $$\mu_{\sigma^{n(\xi_1)}(\b)}|_{\pi_{\sigma^{n(\xi_1)}(b)}([\c])}(B(x',r\cdot\xi_1^{-2/3}))=\mu_{\sigma^{n(\xi_1)}(\b)}(B(x',r\cdot\xi_1^{-2/3})).$$ Therefore \eqref{Eq:Final part 2} holds for $r\leq r_0$. For $r\geq r_{0},$ one can appeal to $\eqref{Eq:Random diameter bound}$ and the fact $\bd$ begins with $\c$ to show that  
$$m_{\sigma^{n(\xi_1)}(\b)}([d_1,\ldots,d_m])\asymp m_{\sigma^{n(\xi_1)}(\b)}([\c]).$$ It is also straightforward to show that $$\mu_{\sigma^{n(\xi_1)}(\b)}|_{\pi_{\sigma^{n(\xi_1)}(b)}([\c])}(B(x',r\cdot\xi_1^{-2/3}))\asymp  m_{\sigma^{n(\xi_1)}(\b)}([\c]).$$ Thus \eqref{Eq:Final part 2} follows for $r\geq r_{0}$ by these observations and \eqref{Eq:R ball measure}. This completes our proof.
\end{proof}

\subsection{Spectral gap and non-concentration of frequencies}
The final step in the proof of Theorem~\ref{Theorem:Non-conformal} is to show that the set of frequencies $$\set{\xi_{1}^{1/3}(f_{\a}^{(1)})'(x_{\c}): \a\in \cA_{\overline{b}_1}\times \cdots \times  \cA_{\overline{b}_{n(\xi_1)}}}$$
appearing in Proposition~\ref{Prop:Non-conformal averaging} satisfy a non-concentration property. 
This is established in Proposition~\ref{Prop:Applying the random spectral gap} below.
The key ingredient is a spectral gap for certain random transfer operators (Proposition~\ref{Prop:Spectral gap on average}).

More precisely, to each $\b\in \cB^{\N}$ and $b\in\mathbb{R}$, we associate a transfer operator that acts on $C^{1}(U)$:
$$\cL_{ib}^{(\b)}h(x)=\sum_{a\in \cA_{\overline{b}_1}}m_{\b}([a])|(f_{a}^{(1)})'(x)|^{ib}h(f_{a}(x)).$$ Iterating this formula we have  
\begin{equation}
	\label{Eq:Iterated random transfer operators}(\cL_{ib}^{(\sigma^{n-1}(\b))}\circ \cdots \circ \cL_{ib}^{(\b)})h(x)=\sum_{\a\in \cA_{\overline{b}_{1}}\times \cdots \times \cA_{\overline{b}_{n} }}m_{\b}([\a])|(f_{\a}^{(1)})'(x)|^{ib}h(f_{\a}^{(1)}(x))
\end{equation}
 for any $\b\in \cB^{\N},$ $h\in C^{1}(U)$, $n\in\mathbb{N}$ and $x\in U$.

The following statement asserts that a random operator $\cL_{ib}^{(\sigma^{n-1}(\b))}\circ \cdots \circ \cL_{ib}^{(\b)}$ has a spectral gap for all $\b\in \cB^{\N}$ outside of a small measure set.
It is in the proof of this proposition where we use the third assumption in Theorem \ref{Theorem:Non-conformal}.

\begin{prop}
	\label{Prop:Spectral gap on average}
 There exists $\rho_{1}\in(0,1)$ and $\Theta>0$ such that the following statements are true: 
\begin{itemize}
	\item For all $n\in\mathbb{N}$ there exists $\Omega_{n}\in \cB^{\N}$ such that $$\|\cL_{ib}^{(\sigma^{n-1}(\b))}\circ \cdots \circ \cL_{ib}^{(\b)}\|_{\infty}\ll \rho_{1}^{n}|b|^{1/2}\|h\|_{b}$$ for all $\overline{\b}\in \Omega_{n},$ $h\in C^{1}(U)$ and $|b|>\Theta$.
	\item There exists $\delta>0$ such that $\Omega_n$ satisfies $Q(\Omega_{n}^{c})\ll e^{-\delta n}$ for all $n\in \N$. 
\end{itemize}  
\end{prop}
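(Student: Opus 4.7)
The plan is to deduce the proposition from a random Dolgopyat-type spectral gap result established by the first and third authors in \cite{BakerSahlsten}, by identifying a UNI substructure inside the random dynamics coming from assumption (3) of Theorem \ref{Theorem:Non-conformal}.

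First, I would unpack what the operator $\cL_{ib}^{(\b)}$ looks like: it depends only on the first letter $\ob_1$ of $\b$, and acts as the (weighted) transfer operator of the one-dimensional IFS $\set{f^{(1)}_a}_{a\in \cA_{\ob_1}}$, with weights $m_\b([a]) = p_{a\ob_1}/q_{\ob_1}$. Applying assumption (3) of Theorem \ref{Theorem:Non-conformal} in the first coordinate, there exists $\oa^\ast = (a_1^\ast,\ldots,a_d^\ast) \in \cA$ such that, setting $\ob^\ast := (a_2^\ast,\ldots,a_d^\ast)\in \cB$, the IFS $\set{f^{(1)}_a}_{a\in \cA_{\ob^\ast}}$ satisfies the one-dimensional UNI condition. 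Thus at every index $i$ with $\ob_i = \ob^\ast$, the factor $\cL_{ib}^{(\sigma^{i-1}(\b))}$ is a transfer operator coming from a UNI system, while at other indices it is a generic one-dimensional transfer operator still satisfying Lasota-Yorke type bounds uniformly in the choice of $\ob_i$.

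Next, I would define the favorable set
\begin{align*}
\Omega_n := \set{\b \in \cB^\N : \#\{1\leq i \leq n : \ob_i = \ob^\ast\} \geq \tfrac{1}{2} q_{\ob^\ast} n}.
\end{align*}
Under $Q$, the indicators $\1_{\set{\ob_i = \ob^\ast}}$ are i.i.d.\ Bernoulli with mean $q_{\ob^\ast} > 0$, so Hoeffding's inequality yields $Q(\Omega_n^c) \ll e^{-\delta n}$ for some $\delta = \delta(q_{\ob^\ast}) > 0$. This verifies the second bullet of the proposition.

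Finally, on $\Omega_n$ the random composition $\cL_{ib}^{(\sigma^{n-1}(\b))} \circ \cdots \circ \cL_{ib}^{(\b)}$ contains a positive density of factors drawn from the UNI IFS associated to $\ob^\ast$. This is precisely the input required for the random Dolgopyat-type theorem of \cite{BakerSahlsten}, which produces constants $\rho_1 \in (0,1)$ and $\Theta > 0$ (depending only on the UNI constants of $\set{f^{(1)}_a}_{a\in \cA_{\ob^\ast}}$ and on the uniform $C^2$-distortion bounds for $\set{f^{(1)}_a}_{a\in \cA_1}$) such that, for all $|b| > \Theta$, all $n\in\N$, all $h \in C^{1,b}(U)$, and all $\b \in \Omega_n$,
\begin{align*}
\norm{\cL_{ib}^{(\sigma^{n-1}(\b))} \circ \cdots \circ \cL_{ib}^{(\b)} h}_\infty \ll \rho_1^n |b|^{1/2} \norm{h}_b.
\end{align*}
This is the first bullet of the proposition. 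The main obstacle, already resolved in \cite{BakerSahlsten}, is the random refinement of Dolgopyat's method: one must combine the oscillatory cancellation supplied by the UNI factors with Lasota-Yorke estimates controlling the contribution of the non-UNI factors, while keeping the contracting exponent $\rho_1^n$ uniform in $\b \in \Omega_n$. The only new ingredient needed here, beyond what is in \cite{BakerSahlsten}, is the identification via assumption (3) of a single letter $\ob^\ast \in \cB$ whose occurrence produces a UNI operator; once that is in place, the random spectral gap is immediate from the cited result.
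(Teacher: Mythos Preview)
Your proposal is correct and follows essentially the same approach as the paper's own sketch: identify $\ob^\ast \in \cB$ via assumption (3) so that $\{f_a^{(1)} : a \in \cA_{\ob^\ast}\}$ satisfies UNI, use a large deviation bound (Hoeffding) to show $\ob^\ast$ occurs with positive density outside an exponentially small set, and then invoke the random Dolgopyat result of \cite{BakerSahlsten} on $\Omega_n$. The paper's proof is only a sketch and defers the substance to \cite{BakerSahlsten} in exactly the way you do.
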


\begin{proof}[Sketch of proof of Proposition~\ref{Prop:Spectral gap on average}]

We do not include a full proof of Proposition  \ref{Prop:Spectral gap on average} because it is essentially contained in \cite{BakerSahlsten}. See Theorem 3.2 and (3.9) from this paper. We give a rough outline of the argument for convenience of the reader. 
The third assumption in Theorem \ref{Theorem:Non-conformal} implies that the IFS $\set{f_{a}^{(1)}:a\in \cA_{\overline{b}^{*}}}$ must satisfy the UNI property for some $\overline{b}^{*}\in B$.
A large deviation argument tells us that outside of an exponentially small measure set, for a typical $\b\in \cB^{\N}$, a significant proportion of the operators appearing in the composition $\cL_{ib}^{(\sigma^{n-1}(\b))}\circ \cdots \circ \cL_{ib}^{(\b)}$ will equal $\cL_{ib}^{(\overline{b}^{*})}.$ The existence of the desired spectral gap for the single operator $\cL_{ib}^{(\overline{b}^{*})}$ goes back to Naud \cite{Naud-Cantor} and Stoyanov \cite{Stoyanov}. By adapting Naud's argument, the first author and third author showed in \cite{BakerSahlsten} that if $\cL_{ib}^{(\overline{b}^{*})}$ appeared sufficiently many times within $\cL_{ib}^{(\sigma^{n-1}(\b))}\circ \cdots \circ \cL_{ib}^{(\b)}$ then we have the desired spectral gap. Combining this with our large deviation bound implies Proposition \ref{Prop:Spectral gap on average}.

\end{proof}
\begin{remark}
    We remark that in \cite{BakerSahlsten} the separation assumption for our IFS is that $f_{a}^{(1)}([0,1])\cap f_{a'}^{(1)}([0,1])=\emptyset$ for distinct $a,a'\in \cA_{1}$. This is a stronger assumption than the strong separation condition. Nevertheless it is possible to adapt the arguments in \cite{BakerSahlsten} to weaken this assumption to the strong separation condition. 
\end{remark}

\begin{remark}
    We note that Proposition \ref{Prop:Spectral gap on average} gives a spectral gap in terms of the $\|\cdot\|_{\infty}$ norm instead of the usual $\|\cdot\|_{b}$ norm. Despite the spectral gap being phrased in terms of this norm, it causes no issues in our proof of Theorem \ref{Theorem:Non-conformal}. 
\end{remark}

Equipped with Proposition \ref{Prop:Spectral gap on average}, we can now prove our non-concentration statement for frequencies. 
\begin{prop}
	\label{Prop:Applying the random spectral gap}
	There exists $\kappa>0$ such that for any $\b\in \Omega_{n(\xi_1)},$ $\c\in \cA_{\xi_1,\b}$ and $n\in \N$ satisfying $|\xi_{1}|^{1/6}\leq n \leq |\xi_{1}|^{1/3}$ we have 
	$$\sum_{\a\in \cA_{\overline{b}_1}\times \cdots \times  \cA_{\overline{b}_{n(\xi_1)}}}m_{\b}([\a])\chi_{[n,n+1]}(\xi_1^{1/3}|(f_{\a}^{(1)})'(x_{\c})|)=\cO(|\xi_{1}|^{-\kappa}).$$
\end{prop}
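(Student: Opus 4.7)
The plan is to mirror the argument used for Proposition \ref{Prop:Self-conformal non-concentration} in the self-conformal case, with the key difference being that we must use the random spectral gap of Proposition \ref{Prop:Spectral gap on average} in place of the deterministic one. The restriction $\b\in\Omega_{n(\xi_1)}$ is precisely what allows us to invoke that random spectral gap.

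First I would rewrite the condition $\xi_1^{1/3}|(f_{\a}^{(1)})'(x_\c)|\in[n,n+1]$ as the condition $\log|(f_{\a}^{(1)})'(x_\c)|\in I_{n,\xi_1}$, where $I_{n,\xi_1}=[\log(n\,|\xi_1|^{-1/3}),\log((n+1)|\xi_1|^{-1/3})]$ has length $\asymp 1/n\leq |\xi_1|^{-1/6}$. Next, I would enlarge $I_{n,\xi_1}$ to a slightly larger interval $\tilde I_{n,\xi_1}$ of length $|\xi_1|^{-\delta}$, with $\delta>0$ chosen so that
\begin{align*}
    \delta<\min\{1/6,\,-c\log\rho_1\},
\end{align*}
where $\rho_1\in(0,1)$ and $c>0$ come from Proposition \ref{Prop:Spectral gap on average} and from \eqref{Eq:Define n}, respectively. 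Approximating $\chi_{\tilde I_{n,\xi_1}}$ from above by a smooth mollifier $h_{n,\xi_1}$ with $\|h_{n,\xi_1}\|_1\ll|\xi_1|^{-\delta}$ and $\|h_{n,\xi_1}''\|_1\ll|\xi_1|^{\delta}$ (cf.\ the construction used in \cite{BakerSahlsten,SahlstenStevens}), and applying Fourier inversion, I would recognize by \eqref{Eq:Iterated random transfer operators} that
\begin{align*}
    \sum_{\a\in \cA_{\overline b_1}\times\cdots\times\cA_{\overline b_{n(\xi_1)}}} m_\b([\a])\, |(f_\a^{(1)})'(x_\c)|^{2\pi i\eta} = \big(\cL_{2\pi i\eta}^{(\sigma^{n(\xi_1)-1}(\b))}\circ\cdots\circ\cL_{2\pi i\eta}^{(\b)}\big)(1)(x_\c),
\end{align*}
so the sum we wish to bound is dominated by $\int_{\mathbb R}\widehat{h_{n,\xi_1}}(\eta)\,\big(\cL_{2\pi i\eta}^{(\sigma^{n(\xi_1)-1}(\b))}\circ\cdots\circ\cL_{2\pi i\eta}^{(\b)}\big)(1)(x_\c)\,d\eta$.

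I would then split this integral according to a threshold $\Theta>0$ as in Proposition \ref{Prop:Spectral gap on average}. On the bounded window $|\eta|<\Theta$, the operator composition acting on $1$ is uniformly bounded (the $\eta=0$ operator is a stochastic Markov averaging, and small $\eta$ perturbations are controlled by continuity), so the contribution is $\ll \Theta\cdot\|h_{n,\xi_1}\|_1\ll|\xi_1|^{-\delta}$. On the tail $|\eta|\geq\Theta$, the hypothesis $\b\in\Omega_{n(\xi_1)}$ licenses the random spectral gap bound
\begin{align*}
    \big\|\cL_{2\pi i\eta}^{(\sigma^{n(\xi_1)-1}(\b))}\circ\cdots\circ\cL_{2\pi i\eta}^{(\b)}(1)\big\|_\infty \ll \rho_1^{n(\xi_1)}|\eta|^{1/2}\|1\|_{2\pi\eta} \ll |\xi_1|^{c\log\rho_1}|\eta|^{1/2}.
\end{align*}
Combining this with the standard integration-by-parts estimate $|\widehat{h_{n,\xi_1}}(\eta)|\ll (1+|\eta|^2)^{-1}(\|h_{n,\xi_1}\|_1+\|h_{n,\xi_1}''\|_1)$ and integrating over $|\eta|\geq\Theta$, the tail contribution is $\ll|\xi_1|^{c\log\rho_1+\delta}$. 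Since both exponents are negative by the choice of $\delta$, taking $\kappa=\min\{\delta,\,-\delta-c\log\rho_1\}>0$ yields the desired bound.

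The main point where care is required is the tracking of the dependence of the random spectral gap constants on $\b$: Proposition \ref{Prop:Spectral gap on average} provides a bound whose implicit constant is independent of $\b\in\Omega_{n}$, which is precisely what is needed to make the resulting estimate uniform over $\b\in\Omega_{n(\xi_1)}$ and $\c\in\cA_{\xi_1,\b}$. Beyond this, the argument is a faithful adaptation of the proof of Proposition \ref{Prop:Self-conformal non-concentration}, and no additional ingredients are needed.
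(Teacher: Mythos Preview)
Your proposal is correct and follows essentially the same route as the paper: the paper's proof explicitly states that it is analogous to Proposition \ref{Prop:Self-conformal non-concentration}, with the only change being the appearance of the random transfer operator composition via \eqref{Eq:Iterated random transfer operators} and the use of the random spectral gap from Proposition \ref{Prop:Spectral gap on average} on the tail $|\eta|\geq\Theta$. Your choice of $\delta$, the mollifier bounds, the Fourier inversion step, the splitting at $\Theta$, and the final exponent all match the paper's argument.
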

\begin{proof}
	The proof of Proposition \ref{Prop:Applying the random spectral gap} is analogous to Proposition \ref{Prop:Self-conformal non-concentration}. Thus, we only explain how the random transfer operators appear in our analysis. This is the only point where the proof differs.
	
	As in the proof of Proposition \ref{Prop:Self-conformal non-concentration}, we can replace $\chi_{[n,n+1]}$ with a smooth function $h_{n,\xi_1}$ that satisfies $$\chi_{[n,n+1]}(\xi_1^{1/3}|(f_{\a}^{(1)})'(x_{\c})|)\leq h_{n,\xi_1}(\log |(f_{\a}^{(1)})'(x_{\c})|)$$ and for which we have useful bounds on the $L^{1}$ norm of its second derivative. Applying the above and the Fourier inversion formula, we have the following:
	\begin{align*}
		&\sum_{\a\in \cA_{\overline{b}_1}\times \cdots \times  \cA_{\overline{b}_{n(\xi_1)}}}
        m_{\b}([\a]) 
        \chi_{[n,n+1]}(\xi^{1/3}|
        (f_{\a}^{(1)})'(x_{\c})|)\\
		&\leq \sum_{\a\in \cA_{\overline{b}_1}\times \cdots \times  \cA_{\overline{b}_{n(\xi_1)}}}m_{\b}([\a])h_{n,\xi}(\log |(f_{\a}^{(1)})'(x_{\c})|)\\
		&\leq  \sum_{\a\in \cA_{\overline{b}_1}\times \cdots \times  \cA_{\overline{b}_{n(\xi_1)}}}m_{\b}([\a])\int \widehat{h_{n,\xi}}(\eta)e^{-2\pi i\eta \log |(f_{\a}^{(1)})'(x_{\c})|}\, d\eta\\
	   &=
        \sum_{\a\in \cA_{\overline{b}_1}\times \cdots \times  \cA_{\overline{b}_{n(\xi_1)}}}m_{\b}([\a])\int \widehat{h_{n,\xi}}(\eta)|(f_{\a}^{(1)})'(x_{\c})|^{-2\pi i\eta }\, d\eta\\
		&=
        \int \widehat{h_{n,\xi}}(\eta)\sum_{\a\in \cA_{\overline{b}_1}\times \cdots \times  \cA_{\overline{b}_{n(\xi_1)}}}m_{\b}([\a])|(f_{\a}^{(1)})'(x_{\c})|^{-2\pi i\eta }\, d\eta\\
	    &=
        \int \widehat{h_{n,\xi}}(\eta)(\cL_{-2\pi i\eta}^{(\sigma^{n(\xi_1)-1}(\b))}\circ \cdots \circ \cL_{-2\pi i\eta}^{(\b)})(1)(x_{\c})\, d\eta,
	\end{align*}
where, in the last line, we used \eqref{Eq:Iterated random transfer operators}.
The rest of our proof is identical to the proof of Proposition \ref{Prop:Self-conformal non-concentration}. We split out integral into two parts, one part that can be controlled using the bound $|\widehat{h_{n,\xi}}(\eta)|\leq |\widehat{h_{n,\xi}}(0)|$, and one part on which we use our assumption $\b\in \Omega_{n(\xi_1)}$ and the spectral gap guaranteed by Proposition \ref{Prop:Spectral gap on average}.
\end{proof}

\subsection{Conclusion of the proof of Theorem~\ref{Theorem:Non-conformal}}

Recall that, by our assumption, we have $\max_{1\leq i\leq d}|\xi_i|=|\xi_1|$ and, hence. it suffices to show that \eqref{Eq:xi_1 decay} holds. By \eqref{Eq:Disintegrating the Fourier transform} and Proposition \ref{Prop:Non-conformal averaging}, we have 
\begin{align*}
    |\widehat{\mu}(\xi)| &\leq \cO(|\xi_1|^{-1/3})
    \nonumber\\
    &+\int \sum_{\a\in \cA_{\overline{b}_1}\times \cdots \times  \cA_{\overline{b}_{n(\xi_1)}}}m_{\b}([\a])\sum_{\c\in \cA_{\xi_1,\b}}m_{\sigma^{n(\xi_1)}(\b)}([\c])|\widehat{\mu_{\c,\sigma^{n(\xi_1)}(\b),\xi_1}}(\xi_{1}^{1/3} (f_{\a}^{(1)})'(x_{\c}))|\, dQ(\b).    
\end{align*}
So to complete our proof it is sufficient to show that 
\begin{equation}
	\label{Eq:Non-conformal WTS}
	\int \sum_{\a\in \cA_{\overline{b}_1}\times \cdots \times  \cA_{\overline{b}_{n(\xi_1)}}}m_{\b}([\a])\sum_{\c\in \cA_{\xi_1,\b}}m_{\sigma^{n(\xi_1)}(\b)}([\c])|\widehat{\mu_{\c,\sigma^{n(\xi_1)}(\b),\xi_1}}(\xi_{1}^{1/3} (f_{\a}^{(1)})'(x_{\c}))|\, dQ(\b)\ll |\xi_{1}|^{-\eta}
\end{equation} for some $\eta>0$. We begin by splitting our outer integral. Let $\Omega_{n(\xi)}$ be as in Proposition \ref{Prop:Spectral gap on average}. It follows from this proposition that 
\begin{align}
	\label{Eq:Reducing to Omega}
	\int &\sum_{\a\in \cA_{\overline{b}_1}\times \cdots \times  \cA_{\overline{b}_{n(\xi_1)}}}m_{\b}([\a])\sum_{\c\in \cA_{\xi_1,\b}}m_{\sigma^{n(\xi_1)}(\b)}([\c])|\widehat{\mu_{\c,\sigma^{n(\xi_1)}(\b),\xi_1}}(\xi_{1}^{1/3} (f_{\a}^{(1)})'(x_{\c}))|\, dQ(\b)\nonumber\\
	&\ll 
    |\xi_{1}|^{-\eta_1}
    \nonumber\\
    &+\int_{\Omega_{n(\xi_1)}} \sum_{\a\in \cA_{\overline{b}_1}\times \cdots \times  \cA_{\overline{b}_{n(\xi_1)}}}m_{\b}([\a])\sum_{\c\in \cA_{\xi_1,\b}}m_{\sigma^{n(\xi_1)}(\b)}([\c])|\widehat{\mu_{\c,\sigma^{n(\xi_1)}(\b),\xi_1}}(\xi_{1}^{1/3} (f_{\a}^{(1)})'(x_{\c}))|\, dQ(\b) 
    \nonumber\\
	&= |\xi_{1}|^{-\eta_1}
    \nonumber\\
    &+\int_{\Omega_{n(\xi_1)}}\sum_{\c\in \cA_{\xi_1,\b}}m_{\sigma^{n(\xi_1)}(\b)}([\c])\sum_{\a\in \cA_{\overline{b}_1}\times \cdots \times  \cA_{\overline{b}_{n(\xi_1)}}}m_{\b}([\a])|\widehat{\mu_{\c,\sigma^{n(\xi_1)}(\b),\xi_1}}(\xi_{1}^{1/3} (f_{\a}^{(1)})'(x_{\c}))|\, dQ(\b),
\end{align}
for some $\eta_{1}>0$. For each $\c\in \cA_{\xi_1,\b},$ $\b\in \cB^{\N}$ and $\tau>0$, we let $$\mrm{Bad}_{\c,\b,\tau}:=\set{n\in\mathbb{Z}:|n|\leq |\xi_{1}|^{1/3}\textrm{ and }\exists \zeta\in [n,n+1] \textrm{ satisfying }|\widehat{\mu_{\c,\sigma^{n(\xi_1)}(\b),\xi_1}}(\zeta)|\geq |\xi_1|^{-\tau}}.$$
By Proposition \ref{Prop:Non-conformal affinely non-concentrated}, we know that each $\mu_{\c,\sigma^{n(\xi_1)}(\b),\xi_1}$ is uniformly affinely non-concentrated and that the non-concentration parameters can be chosen independently of the measure. 
As such, we can apply Theorem \ref{thm:flattening} to assert that there exists $\tau>0$ such that 
\begin{equation}
	\label{Eq: Non-conformal number of bad frequency patches}
\#	\mrm{Bad}_{\c,\b,\tau}=\cO_{\kappa}(|\xi_{1}|^{\kappa/2}),
\end{equation} where $\kappa$ is as in Proposition \ref{Prop:Applying the random spectral gap}. Crucially the underlying constants in \eqref{Eq: Non-conformal number of bad frequency patches} do not depend upon $\c,$  $\b$ or $\xi$. Using Proposition \ref{Prop:Applying the random spectral gap} and \eqref{Eq: a words contraction}, we see that for any $\c\in \cA_{\xi_1,\b}$ and $\b\in \Omega_{n(\xi_1)}$ we have
\begin{align}
	\label{Eq:Almost finished}
&\sum_{\a\in \cA_{\overline{b}_1}\times \cdots \times  \cA_{\overline{b}_{n(\xi_1)}}}m_{\b}([\a])|\widehat{\mu_{\c,\sigma^{n(\xi_1)}(\b),\xi_1}}(\xi_{1}^{1/3} (f_{\a}^{(1)})'(x_{\c}))|\nonumber\\
\leq &\sum_{n\in \mrm{Bad}_{\c,\b,\tau}}\sum_{\a\in \cA_{\overline{b}_1}\times \cdots \times  \cA_{\overline{b}_{n(\xi_1)}}}m_{\b}([\a])\chi_{[n,n+1]}(\xi_{1}^{1/3}(f_{\a}^{(1)})'(x_{\c}))\nonumber\\
+&\sum_{\a\in \cA_{\overline{b}_1}\times \cdots \times  \cA_{\overline{b}_{n(\xi_1)}}}m_{\b}([\a])\chi_{(\cup_{n\in \mrm{Bad}_{\c,\b,\tau}}[n,n+1])^{c}}(\xi_{1}^{1/3}(f_{\a}^{(1)})'(x_{\c}))|\widehat{\mu_{\c,\sigma^{n(\xi_1)}(\b),\xi_1}}(\xi_{1}^{1/3} (f_{\a}^{(1)})'(x_{\c}))|\nonumber\\
\leq &\sum_{n\in \mrm{Bad}_{\c,\b,\tau}}|\xi_1|^{-\kappa}+\sum_{\a\in \cA_{\overline{b}_1}\times \cdots \times  \cA_{\overline{b}_{n(\xi_1)}}}m_{\b}([\a])|\xi_{1}|^{-\tau}\nonumber\\
\ll &|\xi_{1}|^{-\min\set{\kappa/2,\tau}}. 
\end{align}
In the last line, we used \eqref{Eq: Non-conformal number of bad frequency patches}. Substituting \eqref{Eq:Almost finished} into \eqref{Eq:Reducing to Omega}, we have 
\begin{align*}
	&\int_{\Omega_{n(\xi_1)}}\sum_{\c\in \cA_{\xi_1,\b}}m_{\sigma^{n(\xi_1)}(\b)}([\c])\sum_{\a\in \cA_{\overline{b}_1}\times \cdots \times  \cA_{\overline{b}_{n(\xi_1)}}}m_{\b}([\a])|\widehat{\mu_{\c,\sigma^{n(\xi_1)}(\b),\xi}}(\xi_{1}^{1/3} (f_{\a}^{(1)})'(x_{\c}))|\, dQ+|\xi_{1}|^{-\eta_1}\\
	\ll &\int_{\Omega_{n(\xi_1)}}|\xi_{1}|^{-\min\set{\kappa/2,\tau}}\sum_{\c\in \cA_{\xi_1,\b}}m_{\sigma^{n(\xi_1)}(\b)}([\c])\, dQ +|\xi_{1}|^{-\eta_1}\\
		\leq &|\xi_{1}|^{-\min\set{\kappa/2,\tau,\eta_{1}}}.
\end{align*}
Thus, \eqref{Eq:Non-conformal WTS} holds and our proof is complete.

\appendix
\section{Higher dimensional Davenport-Erd\Horig{o}s-LeVeque criterion}\label{appendix:DELhigh}

The purpose of this appendix is to give the proof of the following:

\begin{thm}
\label{thm:DELhigh}
Let $\mu$ be a probability measure on $\R^d$ with polylogarithmic Fourier decay. Then, $\mu$ almost every $x$ is $A$-normal for any expanding integer valued matrix $A$.
\end{thm}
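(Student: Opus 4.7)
The plan is to follow the classical Davenport-Erd\Horig{o}s-LeVeque strategy, upgrading it to dimension $d$ by isolating the key quantitative ingredient in the separation of iterates of $A$. By Weyl's equidistribution criterion, it suffices to prove that for every fixed nonzero $\mathbf{k} \in \Z^d$ the Weyl sum
\begin{align*}
    S_N(\mathbf{k}, x) := \frac{1}{N} \sum_{n=1}^N e^{2\pi i \langle \mathbf{k}, A^n x \rangle}
\end{align*}
tends to $0$ for $\mu$-almost every $x$; taking a countable union of null sets over $\mathbf{k} \in \Z^d \setminus \{0\}$ then yields $A$-normality $\mu$-almost everywhere.

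First, I would carry out the standard variance computation
\begin{align*}
    \int_{\R^d} |S_N(\mathbf{k}, x)|^2 \, d\mu(x)
    = \frac{1}{N^2} \sum_{n,m=1}^N \widehat{\mu}\bigl((A^n - A^m)^T \mathbf{k}\bigr),
\end{align*}
whose diagonal contributes $1/N$. To control the off-diagonal terms, the crucial step is a separation lemma: since $A$ is expanding with integer entries, for any fixed $\rho \in (1, \min_i |\lambda_i(A)|)$ there exists $c = c(A) > 0$ with
\begin{align*}
    \norm{(A^n - A^m)^T \mathbf{k}} \geq c\, \rho^{\max(n,m)} \norm{\mathbf{k}}
\end{align*}
for all nonzero $\mathbf{k} \in \Z^d$ and $n \neq m$. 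I would prove this by factoring $(A^n - A^m)^T = (A^T)^m\bigl((A^T)^{n-m} - I\bigr)$ for $n > m$ and splitting into two regimes: when $n-m$ is large, $(A^T)^n\mathbf{k}$ dominates $(A^T)^m\mathbf{k}$ directly from expansion; when $n-m$ is bounded, the matrix $(A^T)^{n-m} - I$ ranges over a finite family of uniformly invertible matrices (invertible since $A$ has no eigenvalue of modulus $1$), and the expansion of $(A^T)^m$ applied to $\bigl((A^T)^{n-m}-I\bigr)\mathbf{k}$ yields the claim.

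Combined with the polylogarithmic hypothesis $|\widehat{\mu}(\xi)| \leq C(\log \norm{\xi})^{-\kappa}$, the separation yields $|\widehat{\mu}((A^n - A^m)^T \mathbf{k})| \ll_{\mathbf{k}, A} n^{-\kappa}$ for $n > m$, so that
\begin{align*}
    \norm{S_N(\mathbf{k}, \cdot)}_{L^2(\mu)}^2 \ll_{\mathbf{k}, A} \frac{1}{N} + \frac{1}{N^2} \sum_{n=2}^N n \cdot n^{-\kappa} \ll N^{-\min(\kappa,1)} \log(1+N).
\end{align*}
Along a polynomial subsequence $N_j = j^c$ with $c > 2/\min(\kappa,1)$, the series $\sum_j \norm{S_{N_j}(\mathbf{k}, \cdot)}_{L^2(\mu)}^2$ converges, so Borel-Cantelli yields $S_{N_j}(\mathbf{k}, x) \to 0$ for $\mu$-a.e.\ $x$. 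A final elementary interpolation---for $N_j \leq N \leq N_{j+1}$ one has $|S_N - (N_j/N) S_{N_j}| \leq (N_{j+1}-N_j)/N$ and $N_{j+1}/N_j \to 1$---transfers this convergence from the subsequence to the full sequence.

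The principal technical obstacle is the separation bound $\norm{(A^n - A^m)^T \mathbf{k}} \gg \rho^{\max(n,m)}\norm{\mathbf{k}}$: immediate in dimension one, but in higher dimension it requires care to accommodate non-diagonalizable $A$ and the near-diagonal regime of bounded $|n-m|$. The integrality of $\mathbf{k} \in \Z^d$ enters crucially through the apriori bound $\norm{\mathbf{k}} \geq 1$, which prevents the logarithm in the Fourier decay estimate from being evaluated near the origin.
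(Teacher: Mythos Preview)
Your proof is correct and follows the same Davenport--Erd\Horig{o}s--LeVeque architecture as the paper: Weyl's criterion, the $L^2$ variance computation, a separation estimate for the iterates $(A^n)^T\mathbf{k}$, passage to a subsequence with summable $L^2$ norms and $N_{j+1}/N_j \to 1$, and interpolation back to the full sequence. Two technical choices differ. Your separation lemma $\|(A^n-A^m)^T\mathbf{k}\| \gg \rho^{\max(n,m)}$, proved via the two-regime split into large versus bounded $|n-m|$, is somewhat sharper than the paper's bound $\geq \sigma_d^{|n-m|}$ in terms of the smallest singular value; the aggregate variance bounds come out equivalent, but your argument handles more cleanly expanding matrices whose smallest Euclidean singular value may fall below $1$. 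For the subsequence, you construct $N_j = \lfloor j^c \rfloor$ explicitly from the quantitative bound $r_N \ll N^{-\min(\kappa,1)}\log N$, whereas the paper establishes only $\sum_N r_N/N < \infty$ and then invokes the Queff\'elec--Ramar\'e lemma to extract a subsequence with the required properties. Both routes are standard; yours is marginally more self-contained.
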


The strategy of proof for Theorem \ref{thm:DELhigh} is similar to that implemented by Queff\'elec and Ramar\'e \cite{QueRam} for the one dimensional version of the DEL criterion. The third author and Jonathan Fraser wrote a proof of Theorem \ref{thm:DELhigh} in an unpublished note. We thank Jonathan for his permission to include the argument here.

To prove Theorem \ref{thm:DELhigh}, we need the following Lemma.

\begin{lem}[Lemma 7.3 \cite{QueRam}]\label{lma:sum}
Suppose $(r_N) \subset (0,\infty)$ is a sequence of reals such that $\sum_{N = 1}^\infty \frac{r_N}{N} < \infty$. Then there is a subsequence $N_j \to \infty$, $j \in \N$, such that $\sum_{j = 1}^\infty r_{N_j} < \infty$ and $\lim_{j \to \infty} \frac{N_{j+1}}{N_j} =1.$
\end{lem}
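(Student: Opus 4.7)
The plan is to pass to logarithmic coordinates, where both the hypothesis $\sum_N r_N/N < \infty$ and the target ratio condition $N_{j+1}/N_j \to 1$ take a cleaner form. Writing $u = \log N$, the assumption becomes (up to bounded discretization errors) that $g(u) := r_{\lfloor e^u\rfloor}$ satisfies $\int_0^\infty g(u)\,du < \infty$, and the desired conclusion becomes: find $u_j \to \infty$ with $u_{j+1}-u_j \to 0$ and $\sum_j g(u_j) < \infty$.

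First I would choose checkpoints $M_1 < M_2 < \cdots \to \infty$ with $\int_{\log M_k}^{\log M_{k+1}} g(u)\,du < 2^{-k}$, which is possible by the tail-vanishing of the integral; I would also arrange $\log M_k > k$. On each block $B_k := [\log M_k, \log M_{k+1})$, I would partition into sub-intervals $J_{k,1},\dots,J_{k,n_k}$ each of length between $1/(2k)$ and $1/k$, and in each $J_{k,i}$ select a point $u_{k,i}$ minimizing $g$ (equivalently, select an integer $N_{k,i}$ minimizing $r_N$ over the corresponding interval of integers).

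The key estimate is the elementary minimum-versus-average inequality $g(u_{k,i})\cdot |J_{k,i}| \le \int_{J_{k,i}} g$, which yields $g(u_{k,i}) \le 2k\int_{J_{k,i}}g$. Summing over $i$ inside block $k$ gives $\sum_i g(u_{k,i}) \le 2k\int_{B_k} g \le 2k\cdot 2^{-k}$, and summing over $k$ yields the required convergence $\sum_{k,i} g(u_{k,i}) < \infty$. Listing the $u_{k,i}$ in increasing order as $(u_j)$, consecutive gaps both within a block and between adjacent blocks are $O(1/k)$, so $u_{j+1}-u_j \to 0$, which translates precisely into $N_{j+1}/N_j \to 1$.

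The only technical point is the passage between the integral formulation and the original discrete sum $\sum_{N \in e^{J_{k,i}}} r_N/N$, which must be lossless up to universal constants for the min-versus-average step to apply to the actual sequence $(r_N)$. This is where the arrangement $\log M_k > k$ plays its role: each $J_{k,i}$ corresponds to an interval of integers $[P,P']$ with $P$ large enough that $\sum_{N=P}^{P'-1} 1/N \asymp \log(P'/P) = |J_{k,i}|$, so the min-versus-average inequality applies verbatim to the discrete setting. This discretization check is the principal obstacle, but it is entirely elementary once the checkpoints are chosen sufficiently far out.
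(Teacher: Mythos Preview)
The paper does not prove this lemma; it is quoted verbatim as Lemma~7.3 of Queff\'elec--Ramar\'e and used as a black box in the proof of Theorem~\ref{thm:DELhigh}. So there is no ``paper's own proof'' to compare against.

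Your argument is correct and is in fact the standard one. The logarithmic change of variable is exactly right: the hypothesis becomes $\int_0^\infty r_{\lfloor e^u\rfloor}\,du<\infty$, and the ratio condition $N_{j+1}/N_j\to 1$ becomes $u_{j+1}-u_j\to 0$. The two-scale construction (coarse blocks $B_k$ with tail mass $<2^{-k}$, refined into sub-intervals of logarithmic length $\asymp 1/k$, with a minimizer chosen in each) is the right mechanism, and the min-versus-average step $r_{N_{k,i}}\sum_{N\in J_{k,i}}1/N\le\sum_{N\in J_{k,i}}r_N/N$ is the heart of the matter. Summing gives $\sum_{k,i}r_{N_{k,i}}\ll\sum_k k\cdot 2^{-k}<\infty$, and the gap bound across block boundaries is $\le 1/k+1/(k+1)=O(1/k)$, as you say.

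One small quantitative slip: the condition $\log M_k>k$ gives each sub-interval $J_{k,i}$ an integer-length of at least $e^k\cdot(e^{1/(2k)}-1)\ge e^k/(2k)$, which is indeed $\ge 1$ for all $k\ge 1$, so integers are present. But to get $\sum_{N\in J_{k,i}}1/N\ge c/k$ with a uniform constant $c>0$ you need the additive error $\log(1+e^{-a})$ (coming from $\lceil e^a\rceil\le e^a+1$) to be small relative to $1/(2k)$; the bound $a>k$ only guarantees this for $k\ge 3$ or so. This is trivially repaired by requiring, say, $\log M_k>k^2$, or simply by starting the construction at a sufficiently large $k_0$ and handling the finitely many initial terms separately. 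With that adjustment the proof is complete.
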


\begin{proof}[Proof of Theorem \ref{thm:DELhigh}]
Let $\mu$ be a measure on the $d$-torus $\T^d$ for which there exists some $\alpha >0$ and $C > 0$ such that the Fourier transform satisfies $|\widehat{\mu}(\xi)| \leq C|\log |\xi||^{-\alpha}$ for all $\xi \in \R^2$ with $|\xi | >1$.  Also, let $T(x) = Ax \mod 1$, $x \in \T^d$ for some $A \in \Z^{d \times d}$. Fix $\k \in \Z^d\setminus\{0\}$ and for $N \in\N$ write
$$S_N(x) := \frac{1}{N} \sum_{n = 1}^N \exp(2\pi i \k \cdot T^n(x)),$$
where $\k$ has been suppressed from the notation. Weyl's equidistribution criterion in $\T^d$ (see e.g. \cite[Proposition 1.1.2]{TaoHigher}) says that $(x_n) \subset \T^d$ is equidistributed in $\T^d$ if and only if for any $\k \in \Z^d \setminus \{0\}$ we have $\lim_{N \to \infty}\frac{1}{N} \sum_{n = 1}^N \exp(2\pi i \, \k \cdot x_n) = 0.$ Thus it is enough to prove that $S_N(x) \to 0$ as $N \to \infty$ for $\mu$ almost every $x$. Note that from this one can deduce that the result holds simultaneously for \emph{all} expanding toral endomorphisms  since such maps are represented by two dimensional matrices with integer coefficients and so there are only countably many possibilities.

Let $r_N := \int |S_N(x)|^2 \, d\mu(x)$. For each $x \in \T^d$ as $A$ has integer entries, we know that $\exp(2\pi i \k \cdot T^n(x)) = \exp(2\pi i(A^*)^n (\k) \cdot x).$ 
Then we may estimate
\begin{align*}
r_N & = \frac{1}{N^2}\sum_{m,n  = 1}^N \int \exp(-2\pi i \k \cdot (T^m(x)-T^n(x))) \, d\mu(x) \\
& =  \frac{1}{N^2}\sum_{m,n  = 1}^N \int \exp(-2\pi i ((A^*)^m(\k) - (A^*)^n(\k)) \cdot x) \, d\mu(x) \\
& \leq \frac{1}{N} + \frac{2}{N^2}\sum_{m = 2}^N \sum_{n = 1}^{m-1} |\widehat{\mu}((A^*)^m(\k) - (A^*)^n(\k))|.
\end{align*}
Let $\sigma_d = \inf\{|Ax| : |x| = 1\}$ be the smallest singular value of $A$, which is the same as the smallest singular value of $A^*$. Thus we have for any $n \in \N$ and $y \in \R^d$ that $|(A^*)^n y| \geq \sigma_d^n |y|$. Since $\sigma_d > 1$, we have $\sigma_d^n \to \infty$ as $n \to \infty$. Thus there exists $n_0 \in \N$ such that whenever $m,n \in \N$ with $m > n \geq n_0$, we have
$$\sigma_d^{n} \geq  1 + \frac1{\sigma_d - 1} \geq 1 + \frac1{\sigma_d^{m-n} - 1} = \frac{\sigma_d^{m-n}}{\sigma_d^{m-n} - 1}.$$
Thus for $m > n \geq n_0$ we can bound $(\sigma_d^{m-n} - 1) \sigma_d^n \geq  \sigma_d^{m-n}$, giving
\begin{align*}
    |(A^*)^m(\k) - (A^*)^n(\k)| &\geq ||(A^*)^m(\k)| - |(A^*)^n(\k)|| \geq (\sigma_d^{m-n} - 1)\sigma_d^{n}|\k| \geq \sigma_d^{m-n}
\end{align*}
again, valid for $m > n \geq n_0$. By the Fourier decay and $|(A^*)^n y| \geq \sigma_d^n |y|$ we obtain
\begin{align*}
\sum_{m = 2}^N \sum_{n = 1}^{m-1} |\widehat{\mu}((A^*)^m(\k) - (A^*)^n(\k))| & \leq N(n_0 - 1) + \sum_{m = 2}^N \sum_{n = n_0}^{m - 1} |\widehat{\mu}((A^*)^m(\k) - (A^*)^n(\k))|\\ &\lesssim  N + \sum_{m = 2}^N \sum_{n = n_0}^{m-1} (\log |(A^*)^m(\k) - (A^*)^n(\k)|)^{-\alpha}  \\
& \leq N + \sum_{m = 2}^N \sum_{n = n_0}^{m-1} (\log \sigma_d^{m-n})^{-\alpha} \\
& = N + \log(\sigma_d)^{-\alpha}\sum_{m = 2}^N \sum_{n = n_0}^{m-1}  \frac{1}{(m-n)^{\alpha} }\\
& = \cO(N + N^{2-\alpha}),
\end{align*}
 Since $\alpha > 0$ we have that $\sum_{N = 1}^\infty \frac{r_N}{N} < \infty$. Thus by Lemma \ref{lma:sum} there is a subsequence $N_j \to \infty$, $j \in \N$, such that $\sum_{j = 1}^\infty r_{N_j} < \infty$ and $\lim_{j \to \infty} \frac{N_{j+1}}{N_j} = 1.$ In particular, the former condition yields that
$$\int \sum_{j = 1}^\infty |S_{N_j}(x)|^2 \, d\mu(x) < \infty$$
and so $S_{N_j}(x) \to 0$ for $\mu$ almost every $x$ as $j \to \infty$. Now we just check that the latter condition actually yields $S_{N}(x) \to 0$ for $\mu$ almost every $x$ as $N \to \infty$, which is what we need. Fix $N \in \N$ and find $j \in \N$ such that $N_j \leq N \leq N_{j+1}$. This yields that
$$|NS_N(x) - N_j S_{N_j}(x)| = \Big|\sum_{n = N_j+1}^{N} \exp(-2\pi i \k \cdot T^n(x))\Big| \leq N-N_j \leq N_{j+1} - N_j.$$
Hence
$$|S_N(x)| \leq |S_{N_j}(x)| + \frac{N_{j+1} - N_j}{N_j},$$
which converges to $0$ as $j \to \infty$ at $\mu$ almost every $x$.
\end{proof}

\section{Proof of Lemma \ref{Lemma:Maximising multinomial}}\label{appendix:Multinomial}
In this section we will prove lemma \ref{Lemma:Maximising multinomial}. Let us recall its statement. 
\begin{lem}
	Let $(p_{a})_{a\in \cA}$ be a probability vector and $n\in \mathbb{N}$, then 
	$$\frac{n!}{\prod_{a\in \cA} k_{a}!}\prod_{a\in \cA}p_{a}^{k_{a}}\ll n^{-1/2(\#\cA-1)}$$ for any $(k_{a})_{a\in \cA}\in \mathbb{N}^{\#\cA}$ satisfying $\sum_{a\in \cA}k_{a}=n.$ Here the underlying constants only depend upon the probability vector.
\end{lem}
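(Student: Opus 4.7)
The plan is a standard Stirling--plus--Kullback-Leibler computation. After discarding any letters with $p_a=0$ (which must satisfy $k_a=0$), we may assume $p_a>0$ for every $a\in \cA$. Next, if some $k_{a_0}=0$, then summing the multinomial probabilities over all compositions with this constraint gives
\begin{align*}
    \frac{n!}{\prod_a k_a!}\prod_a p_a^{k_a}\leq (1-p_{a_0})^n,
\end{align*}
which is exponentially small in $n$ and hence dominates the target polynomial bound.

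Assume now $k_a\geq 1$ for all $a$. Applying Stirling's formula $m!=\sqrt{2\pi m}(m/e)^m(1+O(1/m))$ to $n!$ and to each $k_a!$, and using $\sum_a k_a=n$ to cancel the $(n/e)^n$ factor against $\prod_a (k_a/e)^{k_a}$, a short computation yields
\begin{align*}
    \frac{n!}{\prod_a k_a!}\prod_a p_a^{k_a}\asymp_{\#\cA}\ \frac{\sqrt n}{\prod_a \sqrt{k_a}}\,e^{-nD(q\|p)},
\end{align*}
where $q_a=k_a/n$ and $D(q\|p)=\sum_a q_a\log(q_a/p_a)\geq 0$ is the Kullback-Leibler divergence between the empirical and target distributions.

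A dichotomy then finishes the argument. If $k_a\geq np_a/2$ for every $a$, then $\prod_a \sqrt{k_a}\gg_p n^{\#\cA/2}$, and, discarding the nonnegative exponential, we obtain the desired bound $\ll_p n^{-(\#\cA-1)/2}$ directly. Otherwise some $k_a$ lies below $np_a/2$, so $\|q-p\|_1\geq p_a/2$; Pinsker's inequality then furnishes a constant $c=c(p)>0$ with $D(q\|p)\geq c$, and the factor $e^{-nD(q\|p)}\leq e^{-cn}$ is exponentially small in $n$ and trivially absorbs any polynomial factor.

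The only step that is not purely routine is the Stirling identity displayed above, which is a cancellation exercise using $\sum_a k_a=n$. The dichotomy is the conceptual heart of the argument, but each branch collapses to a one-line estimate once set up; the main obstacle, if any, is simply the bookkeeping in the Stirling step and verifying that the implied constants depend only on $\#\cA$ and the probability vector $(p_a)$ as claimed in the statement.
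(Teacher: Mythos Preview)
Your proof is correct and takes a genuinely different route from the paper's. The paper first localizes the maximizing tuple $(k_a^*)$ to within $O(1)$ of $(\lfloor p_a n\rfloor)$ via a pairwise shifting argument (if some coordinate is too small, another must be too large, and moving one unit between them increases the probability), then shows all tuples in this neighborhood give comparable values, and finally evaluates one such tuple by Stirling. Your approach instead applies Stirling uniformly to obtain the exact asymptotic $\asymp_{\#\cA} \sqrt{n}/\prod_a\sqrt{k_a}\cdot e^{-nD(q\|p)}$ and then runs a dichotomy: near the center the prefactor alone gives the bound, while off-center Pinsker's inequality converts the $\ell^1$-gap into an exponential KL penalty. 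Your argument is more conceptual and slightly shorter; the paper's is more elementary in that it avoids Pinsker and KL divergence, relying only on the combinatorial comparison and a single Stirling computation. Both deliver constants depending only on $(p_a)$ and $\#\cA$.
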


\begin{proof} 
Let $(k_{a}^{*})_{a\in \cA}$ be such that 
$$\max_{(k_{a})_{a\in \cA}:\sum_{a\in \cA} k_{a}=n}\frac{n!}{\prod_{a\in \cA} k_{a}!}\prod_{a\in \cA}p_{a}^{k_{a}}=\frac{n!}{\prod_{a\in \cA} k_{a}^{*}!}\prod_{a\in \cA}p_{a}^{k_{a}^{*}}.$$
We claim that $(k_{a}^{*})_{a\in \cA}$ must satisfy 
\begin{equation}
	\label{Eq:Maximising regime}
k_{a}^{*}\in \set{\lfloor p_{a}n\rfloor - 10\#\cA,\ldots, \lfloor p_{a}n\rfloor+ 10\#\cA}
\end{equation} for all $a\in \cA$. Suppose for instance that $k_{a'}^{*}<\lfloor p_{a'}n\rfloor - 10\#\cA$ for some $a'\in \cA$, then there must exists $a''\in \cA$ such that $k_{a''}^{*}\geq \lfloor p_{a''}n\rfloor+2$. We have 
\begin{align}
	\label{Eq:Comparing probabilities}
	&\frac{n!}{(k_{a'}^{*}+1)!(k_{a''}^{*}-1)!\prod_{a\in \cA\setminus\set{a',a''}} k_{a}^{*}!}p_{a'}^{k_{a'}^{*}+1}p_{a''}^{k_{a''}^{*}-1}\prod_{a\in \cA\setminus\set{a',a''}}p_{a}^{k_{a}^{*}}\nonumber\\
	=&\frac{k_{a''}^{*}p_{a'}}{(k_{a'}^{*}+1)p_{a''}}\frac{n!}{\prod_{a\in \cA} k_{a}^{*}!}\prod_{a\in \cA}p_{a}^{k_{a}^{*}}
\end{align}
However, we have the following lower bound for $\frac{k_{a''}^{*}p_{a'}}{(k_{a'}^{*}+1)p_{a''}}$:
\begin{equation}
	\label{Eq:>1 bound}
	\frac{k_{a''}^{*}p_{a'}}{(k_{a'}^{*}+1)p_{a''}}\geq \frac{(\lfloor p_{a''}n\rfloor+2)p_{a'}}{(\lfloor p_{a'}n\rfloor - 10\#\cA)p_{a''}}\geq \frac{p_{a''}p_{a'}n+p_{a'}}{p_{a'}p_{a''}n-10\#\cA p_{a''}}>1.\end{equation}
Substituting \eqref{Eq:>1 bound} into \eqref{Eq:Comparing probabilities} we have $$\frac{n!}{(k_{a'}^{*}+1)!(k_{a''}^{*}-1)!\prod_{a\in \cA\setminus\set{a',a''}} k_{a}^{*}!}p_{a'}^{k_{a'}^{*}+1}p_{a''}^{k_{a''}^{*}-1}\prod_{a\in \cA\setminus\set{a',a''}}p_{a}^{k_{a}^{*}}>\frac{n!}{\prod_{a\in \cA} k_{a}^{*}!}\prod_{a\in \cA}p_{a}^{k_{a}^{*}}.$$ However this contradicts our assumption that $(k_{a}^{*})$ was maximizing. The case where $k_{a'}^{*}>\lfloor p_{a'}n\rfloor - 10\#\cA$ follows similarly. Thus we have proved our claim.

We now claim that if $(k_{a})_{a\in \cA}$ satisfies 
\begin{equation}
	\label{Eq:k_a maximising regime}
k_{a}\in \set{\lfloor p_{a}n\rfloor - 10\#\cA,\ldots, \lfloor p_{a}n\rfloor+ 10\#\cA }
\end{equation} for all $a\in \cA$ then 
\begin{equation}
	\label{Eq:Bounding the maximum}
	\frac{n!}{\prod_{a\in \cA} k_{a}^{*}!}\prod_{a\in \cA}p_{a}^{k_{a}^{*}}\ll \frac{n!}{\prod_{a\in \cA} k_{a}!}\prod_{a\in \cA}p_{a}^{k_{a}}.	
\end{equation}We remark that if $(k_a)$ satisfies \eqref{Eq:k_a maximising regime} then $$\frac{n!}{\prod_{a\in \cA} k_{a}^{*}!}\prod_{a\in \cA}p_{a}^{k_{a}^{*}}$$ can be obtained from $$\frac{n!}{\prod_{a\in \cA} k_{a}!}\prod_{a\in \cA}p_{a}^{k_{a}}$$ by multiplying the latter term a bounded number of times by terms each of which is bounded above by \begin{equation}
\label{Eq:Change of probability factor bound}
\frac{\max_{a\in \cA}\set{\lfloor p_{a}n\rfloor+ 10\#\cA}\max_{a\in \cA}\set{p_{a}}}{\min_{a\in \cA}\set{\lfloor p_{a}n\rfloor- 10\#\cA}\min_{a\in \cA}\set{p_{a}}}.
\end{equation} Since \eqref{Eq:Change of probability factor bound} is uniformly bounded from above our claim follows. 

It follows from \eqref{Eq:Bounding the maximum} that to prove our lemma we just need to show that the desired bound holds for a specific $(k_{a})$ satisfying \eqref{Eq:k_a maximising regime}. This we do now. Pick $a'\in \cA$ arbitrarily, then by Stirling's formula we have
\begin{align*}
&\frac{n!}{(n-\sum_{a\in \cA\setminus\set{a'}}\lfloor p_an\rfloor)!\prod_{a\in \cA\setminus\set{a'}}\lfloor p_{a}n\rfloor!}p_{a'}^{(n-\sum_{a\in \cA\setminus\set{a'}}\lfloor p_an\rfloor)}\prod_{a\in \cA\setminus\set{a'}}p_{a}^{\lfloor p_{a}n\rfloor}\\
&\ll \frac{\sqrt{n}n^{n}p_{a'}^{(n-\sum_{a\in \cA\setminus\set{a'}}\lfloor p_an\rfloor)}}{\sqrt{(n-\sum_{a\in \cA\setminus\set{a'}}\lfloor p_an\rfloor)}(n-\sum_{a\in \cA\setminus\set{a'}}\lfloor p_an\rfloor)^{(n-\sum_{a\in \cA\setminus\set{a'}}\lfloor p_an\rfloor)}} \prod_{a\in \cA\setminus \set{a'}}\frac{p_{a}^{\lfloor p_{a}n\rfloor}}{\sqrt{\lfloor p_{a}n\rfloor}\lfloor p_{a}n\rfloor^{\lfloor p_{a}n\rfloor}}.
\end{align*}
The following bounds are straightforward to verify \begin{equation}
	\label{Eq:Integer part bounds}
	\frac{(p_{a}n)^{\lfloor p_{a}n\rfloor}}{\lfloor p_{a}n\rfloor^{\lfloor p_{a}n\rfloor}}\ll 1\qquad { and }\qquad \frac{(p_{a'}n)^{(n-\sum_{a\in \cA\setminus\set{a'}}\lfloor p_an\rfloor)}}{(n-\sum_{a\in \cA\setminus\set{a'}}\lfloor p_an\rfloor)^{(n-\sum_{a\in \cA\setminus\set{a'}}\lfloor p_an\rfloor)}}\ll 1.
\end{equation} Substituting \eqref{Eq:Integer part bounds} into the above yields:
\begin{align*}
&\frac{n!}{(n-\sum_{a\in \cA\setminus\set{a'}}\lfloor p_an\rfloor)!\prod_{a\in \cA\setminus\set{a'}}\lfloor p_{a}n\rfloor!}p_{a'}^{(n-\sum_{a\in \cA\setminus\set{a'}}\lfloor p_an\rfloor)}\prod_{a\in \cA\setminus\set{a'}}p_{a}^{\lfloor p_{a}n\rfloor}\\
\ll &\frac{\sqrt{n}}{\sqrt{(n-\sum_{a\in \cA\setminus\set{a'}}\lfloor p_an\rfloor)}\prod_{a\in\cA\setminus\set{a'}}\sqrt{p_{a}n}}\\
\ll &n^{-1/2(\#\cA-1)}.
\end{align*} Thus our result follows.
\end{proof}


\section{Proof of Proposition \ref{prop:spectralgap}}\label{appendix:Spectral gap}
The purpose of this section is to prove Proposition \ref{prop:spectralgap}, which we recall here:

\begin{prop}[Spectral gap]  Let $\Phi$ be a conformal IFS satisfying the strong separation condition and $\Sigma_{A}$ be a subshift of finite type. Suppose that $(\Phi,\Sigma_{A})$ satisfies the UNI condition, then for any $C^{1}$ potential $\psi$ satisfying $P(\psi)=0,$  there exists $0 < \rho < 1$ such that for all large enough $|b|$, $n \in \N$ and $h \in C^{1,b}(U)$, we have
$$\|\mathcal{L}_{ib}^{n} h\|_{b} \ll \rho^{n}|b|^{1/2}\|h\|_b.$$ In particular, if $(\Phi,\Sigma_{A})$ satisfies the UNI condition, then for any $C^{1}$ potential $\psi$ satisfying $P(\psi)=0$ the triple $(\Phi,\Sigma_{A},\psi)$ has a spectral gap.
\end{prop}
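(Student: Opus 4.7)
The plan is to adapt the Dolgopyat--Naud method as extended to higher dimensions by Li--Pan, using the stronger form of the UNI condition established in Appendix~\ref{appendix:Weak uni} (valid on a ball $B(x_0, r_0)$ with uniform lower bound $\epsilon_0$ on directional derivatives). Throughout, fix a $C^1$ potential $\psi$ with $P(\psi) = 0$ and work with the normalized transfer operator, using standard conjugation to reduce to a potential whose associated non-twisted operator $\mathcal{L}_0$ preserves the constant function $1$.

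First, I would establish a Lasota--Yorke type inequality: for $h \in C^{1,b}(U)$ and $n$ a fixed multiple of $n_0$ (where $n_0$ is the UNI length scale), one obtains bounds of the form
\[
\|\mathcal{L}_{ib}^{n} h\|_b \leq A \rho_0^{n} |b|\,\|h\|_b + B\,|b|\,\|h\|_\infty,
\]
with $\rho_0 < 1$ coming from the uniform contraction of $\Phi$ and bounded distortion (Lemmas \ref{Lemma:Bounded distortions}, \ref{Lemma:Derivative and Diameter}). This reduces the problem to proving a decay estimate of $\|\mathcal{L}_{ib}^n h\|_\infty$ (or $L^2$) in terms of $\|h\|_b$. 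A key preparatory step is to show that if one writes $\mathcal{L}_{ib}^n h = \sum_{\a \in \cW_A \cap \cA^n, \a \leadsto x} w_\a(x) |\lambda_\a(x)|^{ib} h(f_\a(x))$, then on each small ball of radius $\asymp |b|^{-1}$ inside $U$, the amplitudes $w_\a(x)$ are essentially constant while the phases $b \log|\lambda_\a(x)|$ vary in a controlled way.

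The heart of the argument is the construction of Dolgopyat operators. Using the stronger UNI condition, for each $n$ in an infinite set $\mathcal{N}$ and each unit vector $e \in S^{d-1}$ one obtains admissible words $\a_1^{(e)}, \a_2^{(e)} \in \cW_A \cap \cA^n$ with
\[
\bigl|\partial_e(\log|\lambda_{\a_1^{(e)}}| - \log|\lambda_{\a_2^{(e)}}|)(y)\bigr| \geq \epsilon_0
\]
uniformly on $B(x_0, r_0)$. Cover $S^{d-1}$ by a $\delta$-net and, for each net direction and each pair $(J_1, J_2)$ of cylinders $X_{\a_1}, X_{\a_2}$ meeting a ball of radius $\sim |b|^{-1}$, construct a smooth cutoff $\beta_{J} \leq 1$ that equals $1$ outside $J_1 \cup J_2$ and is strictly less than $1$ on the cylinder where the phase $b\log|\lambda_{\a_k}(y)|$ is "compatible with cancellation" (the Naud / Stoyanov dichotomy). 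Define the Dolgopyat operator $\mathcal{M}_J h = \mathcal{L}_0^n(\beta_J h)$. Writing $N\mathcal{L}_{ib}^n h = \sum_\a w_\a |\lambda_\a|^{ib} h \circ f_\a$ and using the triangle inequality leveraged by the oscillation of the phase difference on the scale $|b|^{-1}$ (which is where the factor $|b|^{1/2}$ enters, via the uncertainty between amplitude-scale $|b|^{-1}$ and length-scale of cancellation), one shows pointwise that $|\mathcal{L}_{ib}^n h(x)|^2 \leq \mathcal{M}_{J(x)}(|h|^2)(x)$ for an appropriately chosen $J(x)$.

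The final step is an $L^2$-contraction estimate: one proves that for a suitable reference probability measure $\nu$ (for instance the conformal measure associated to $\psi$, which exists by the Ruelle--Perron--Frobenius theorem since $P(\psi)=0$), there is a constant $\rho_1 < 1$ independent of $b$ so that $\|\mathcal{M}_J f\|_{L^2(\nu)} \leq \rho_1 \|f\|_{L^2(\nu)}$ uniformly over all admissible $J$. Iterating this $L^2$ bound, combining it with the $|b|^{1/2}$ cost of passing from $L^2$ to $L^\infty$ via the Lasota--Yorke inequality, and choosing $n$ a suitable multiple of the UNI-length, yields $\|\mathcal{L}_{ib}^n h\|_b \ll \rho^n |b|^{1/2} \|h\|_b$ for some $\rho \in (\rho_1, 1)$ and all sufficiently large $|b|$.

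The main obstacle is the construction and analysis of the Dolgopyat cutoffs in arbitrary dimension $d$. In dimension one, a single direction suffices and the cancellation geometry is elementary; in higher dimensions one must simultaneously control a $\delta$-net of directions on $S^{d-1}$, and for each direction carefully select the pair $(\a_1^{(e)}, \a_2^{(e)})$ so that the cylinders $X_{\a_k^{(e)}}$ are sufficiently well-separated to place meaningful cutoffs. Overcoming this requires the uniform-in-$y$ UNI condition from Appendix~\ref{appendix:Weak uni} together with the bounded distortion estimates. Once the cutoff construction is in place, the $L^2$-contraction argument proceeds essentially as in Naud~\cite{Naud-Cantor} and Li--Pan~\cite{LiPan}, from which we borrow the organization of the proof.
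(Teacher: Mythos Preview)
Your overall strategy is correct and matches the paper's approach: normalization so that $\mathcal{L}_0 1 = 1$, the cone $C_{A|b|}$ of positive functions with logarithmic derivative bounded by $A|b|$, Dolgopyat operators $\mathcal{N}_b^J(H) = \mathcal{L}_0^N(\chi H)$ built from smooth cutoffs, a three-part key lemma (cone stability, $L^2$-contraction against the Gibbs measure via doubling, and the domination $|\mathcal{L}_{ib}^N u| \leq \mathcal{N}_b^J H$), followed by iteration. The $|b|^{1/2}$ indeed arises from the $L^2$-to-$L^\infty$ passage via Cauchy--Schwarz together with the factor of $|b|$ in the derivative bounds.

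The one substantive difference is your handling of directions in the higher-dimensional UNI. You propose covering $S^{d-1}$ by a $\delta$-net and building cutoffs for each net direction; this would require organizing compatible cutoffs simultaneously across many directions, which is delicate. The paper (following Li--Pan) avoids this entirely. In the key cancellation proposition, for each base point $y \in X_A$ one first chooses a nearby point $x' \in X_A$ so that the two balls $B_1 = B(y,\epsilon_2/|b|)$ and $B_2 = B(x',\epsilon_2/|b|)$ sit inside a common ball of radius $\asymp \epsilon_1/|b|$, and then applies UNI for the \emph{single} direction $e_0 = (y-x')/\|y-x'\|$ determined by this geometry. One studies the phase difference along the segment from $y$ to $x'$ and shows (via a mean-value argument on $\Phi(t) = \mathrm{Im}(L_1(t)-L_2(t))$) that the arguments cannot be close to each other modulo $2\pi$ at both endpoints; hence cancellation occurs on at least one of $B_1$, $B_2$. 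The cutoff $\chi$ is then placed on the image $f_{\a_l}(B(x_l',\epsilon_2/|b|))$ of the ball where cancellation was found. This single-direction-per-point construction is the main simplification that makes the $d\geq 2$ case go through cleanly.
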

In our proof of this proposition we will make use of a uniform non-integrability condition introduced in \cite{LiPan}. It follows from Proposition \ref{prop:weaktostrongUNI} that this condition is equivalent to our UNI condition. Thus in this section, by Proposition \ref{prop:weaktostrongUNI} we may freely assume the existence of $r>0$ and $\epsilon_{0}>0$ such that the following holds for any large $n\in \N$. 
For any $x\in X_A$ and unit vector $e\in\mathbb{R}^{d}$, there exist $\a_1, \a_2\in \cW_{A}\cap \cA^n$ such that:
    \begin{itemize}
    \item For all $y\in B(x,r)$ we have $$\left|\partial_{e}\left(\log |\lambda_{\a_1}(y)|-\log |\lambda_{\a_2}(y)|\right)\right|\geq \epsilon_{0}.$$
        \item For all $y\in B(x,r)$ we have $\a_1\leadsto y$ and $\a_2\leadsto y$.
    \end{itemize}

Without loss of generality we may assume that our (untwisted) transfer operator satisfies 
\begin{equation}
    \label{Eq:Eigenfunction 1}
    \sum_{\stackrel{a\in \cA}{a\leadsto x}}w_{a}(x)=1 
\end{equation}for all $x\in X_{A}.$ This is permissible as one can conjugate our transfer operator by a multiplication operator so that the resulting transfer operator satisfies \eqref{Eq:Eigenfunction 1}. Moreover, it can be shown that if this new operator satisfies the conclusion of Proposition \ref{prop:spectralgap}, then our original transfer operator also satisfies this conclusion. Thus there is no loss of generality in making this assumption. For further details of this reduction we refer the reader to \cite[Section 5.1]{Naud-Cantor}.

Recall that $U = \bigcup_{a \in \cA} U_a$ is a choice of open set satisfying $X_{\Phi}\subset U$, where the $U_a$ are disjoint open sets with $f_a(U) \subset U_a$ for all $a \in \cA$.

In our proof of Proposition \ref{prop:spectralgap} we will make regular use of the following set of functions: For $A>0$ let
$$C_{A}:=\{u\in C^{1}(U): u>0, \|D_{x}u\|\leq A u(x)\}.$$ We record here a useful property of the cone $C_{A}.$ If $u\in C_{A}$ then 
\begin{equation}
    \label{Eq:Property of cone}
e^{-A|x-y|}\leq\frac{u(x)}{u(y)}\leq e^{A|x-y|}
\end{equation} for all $x,y\in U.$ 

We will also make regular use of the following lemma. It records various well known bounded distortion estimates whose proofs we omit.
\begin{lem}
\label{Lemma:Bounded distortion collection}
Let $\Phi=\{f_{a}\}_{a\in \cA}$ be a conformal IFS satisfying the strong separation condition and $\psi:U\to\mathbb{R}$ be a $C^{1}$ potential. Then there exists $C_{1},C_{2},C_{3}>0$ such that the following statements are true:
\begin{itemize}
\item For any $x,y\in X_{\Phi}$ and $\a\in \cA^{*}$ we have $$e^{-C_{1}|x-y|}\leq\frac{w_{\a}(x)}{w_{\a}(y)}\leq e^{C_{1}|x-y|}.$$
\item For any $x\in X_{\Phi}$ and $\a\in \cA^*$ we have $$\|D_{x}w_{\a}\|\leq C_{2}w_{\a}(x).$$
\item For any $x\in X_{\Phi}$ and $\a\in \cA^{*}$ we have $$\|D_{x}\log |\lambda_{\a}|\|\leq C_{3}.$$
\end{itemize}
\end{lem}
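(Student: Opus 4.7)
The plan is to prove all three estimates from the chain rule together with the single geometric input that the IFS contracts at a uniform geometric rate: there exists $\kappa\in(0,1)$, depending only on $\Phi$, such that $\|D_xf_\a\| \leq C\kappa^{|\a|}$ for every $\a\in\cA^*$ and $x\in U$. This follows because $|\lambda_a(x)|<1$ on the compact set $[0,1]^d$, combined with the chain rule $D_xf_\a = \prod_i D_{\cdot}f_{a_i}$ and Lemma~\ref{Lemma:Bounded distortions}.

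I would establish (3) first. Writing $\log|\lambda_\a(x)| = \sum_{i=1}^{|\a|}\log|\lambda_{a_i}|(f_{a_{i+1}\cdots a_{|\a|}}(x))$ and applying the chain rule gives
\[
D_x\log|\lambda_\a| \;=\; \sum_{i=1}^{|\a|} \bigl(D_{y_i}\log|\lambda_{a_i}|\bigr)\cdot D_xf_{a_{i+1}\cdots a_{|\a|}}, \qquad y_i := f_{a_{i+1}\cdots a_{|\a|}}(x).
\]
Since $\Phi$ is $C^2$ and $|\lambda_a|$ is bounded away from $0$ on $[0,1]^d$, the functions $\log|\lambda_a|$ are $C^1$ with $\|D\log|\lambda_a|\|$ uniformly bounded. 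Together with $\|D_xf_{a_{i+1}\cdots a_{|\a|}}\|\leq C\kappa^{|\a|-i}$, the right-hand side is dominated by a convergent geometric series, yielding a bound $C_3$ independent of $\a$.

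Next I would prove (2). Recall $w_\a(x) = \exp\bigl(\sum_{j=0}^{|\a|-1}\psi(f_{a_{j+1}\cdots a_{|\a|}}(x))\bigr)$, so
\[
\frac{D_xw_\a}{w_\a(x)} \;=\; \sum_{j=0}^{|\a|-1}\bigl(D_{z_j}\psi\bigr)\cdot D_xf_{a_{j+1}\cdots a_{|\a|}},
\]
with $z_j := f_{a_{j+1}\cdots a_{|\a|}}(x)$. Since $\psi\in C^1$ has $\|D\psi\|_\infty<\infty$ on the compact set $U$, and the $\|D_xf_{a_{j+1}\cdots a_{|\a|}}\|$ decay geometrically in $|\a|-j$, the sum is bounded uniformly in $\a$, giving (2) with $C_2$ equal to that uniform bound.

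Finally, (1) is now essentially free. It can be obtained by integrating (2) along the line segment from $y$ to $x$ (after noting $U$ can be taken convex on each component, or by using the polygonal path inside $X_\Phi$ induced by the support structure). Alternatively, one estimates directly
\[
|\log w_\a(x)-\log w_\a(y)| \;\leq\; \sum_{j=0}^{|\a|-1}\|D\psi\|_\infty\,\|D f_{a_{j+1}\cdots a_{|\a|}}\|\,|x-y| \;\leq\; C_1|x-y|,
\]
where the last sum is again a geometric series in $\kappa$. Exponentiating produces the two-sided bound. The only step requiring any care is verifying that the estimate $\|D_xf_\a\|\leq C\kappa^{|\a|}$ holds with constants uniform in $\a$ and $x\in U$; this is where bounded distortion (Lemma~\ref{Lemma:Bounded distortions}) is convenient, letting one pass from the diameter of $f_\a(U)$, which clearly shrinks at a uniform geometric rate, to a pointwise derivative bound.
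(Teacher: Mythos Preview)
Your proposal is correct and follows the standard bounded-distortion argument via the chain rule and geometric contraction; there is nothing to compare against here, since the paper states this lemma as well known and omits the proof entirely.
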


\subsection{Construction of Dolgopyat operators}

In what follows we will fix an IFS $\Phi$ and a subshift of finite type $\Sigma_{A}$ such that the assumptions of Proposition \ref{prop:spectralgap} hold. We will also fix a $C^1$ potential $\psi$ satisfying $P(\psi)=0.$ In this section we will denote the corresponding Gibbs measure by $\mu$.

In this section we will define a family of \textit{Dolgopyat operators} $(\cN_{b}^{J})_{J\in \cE_{b}}$ that will allow us to prove Proposition \ref{prop:spectralgap} via properties of complex transfer operators. First of all, let us fix $A>1$ sufficiently large such that 
\begin{equation}
	\label{Eq:A large 1}
	\left(C_2+2|b|+A|b|\gamma^{n}\right)\leq A|b|
\end{equation}
 and 
\begin{equation}
	\label{Eq:A large 2}
	\left(2C_{2}+2C_{3}|b|+2A|b|\gamma^{n}\right)\leq A|b|
\end{equation} for all $|b|>1$ and $n$ sufficiently large. This is slightly abusing notation with the matrix $A$ defining the subshift of finite type. Here $\gamma\in(0,1)$ is the uniform contraction factor in our IFS and $C_{2},C_{3}>0$ are as in Lemma \ref{Lemma:Bounded distortion collection}.

We now state the key proposition where the UNI condition manifests. 

\begin{prop}
	\label{Proposition:Arguments cancellation}
There exist $\eta_{0}=\eta_{0}(\Phi,\Sigma_{A})\in(0,1),$ $D=D(\Phi,\Sigma_{A})>0,$ 
$\epsilon_{1}=\epsilon_{1}(\Phi,\Sigma_{A})$ and $\epsilon_{2}=\epsilon_{2}(\Phi,\Sigma_{A})>0$
such that for $N\in \N$ sufficiently large, $|b|>1$, $H\in C_{A|b|},$ and $u\in C^{1}(U),$ if $u$ satisfies $|u|\leq H$ and $\|Du\|\leq A|b|H,$ then for any $y\in X_{A}$ there exists $x'\in X_{A}\cap B\left(y,\frac{\epsilon_{1} D}{|b|}\right)$ such that 
$$B\left(x',\tfrac{\epsilon_{2}}{|b|}\right)\subset  B\left(y,\tfrac{\epsilon_{1} D}{|b|}\right)$$ 
and for which we have the following: there exists $\a_1, \a_2\in \cW_{A}\cap \cA^N$ such that $\a_{1}\leadsto x$ and $\a_{2}\leadsto x$ for all $x\in B\left(y,\frac{\epsilon_{1}D}{|b|}\right),$ and one of the following holds for all $x\in B\left(x',\frac{\epsilon_{2}}{|b|}\right)$:
\begin{align}\label{eq:cancel1}
	&\left|w_{\a_{1}}(x)|\lambda_{\a_{1}}(x)|^{ib}u(f_{\a_{1}}(x))+w_{\a_{2}}(x)|\lambda_{\a_{2}}(x)|^{ib}u(f_{\a_{2}}(x))\right|\\
 &\leq \eta_{0}w_{\a_{1}}(x)H(f_{\a_{1}}(x))+w_{\a_{2}}(x)H(f_{\a_{2}}(x)), 
 \nonumber
 \end{align}
 or
 \begin{align}\label{eq:cancel2}
&\left|w_{\a_{1}}(x)|\lambda_{\a_{1}}(x)|^{ib}u(f_{\a_{1}}(x))+w_{\a_{2}}(x)|\lambda_{\a_{2}}(x)|^{ib}u(f_{\a_{2}}(x))\right|\\
&\leq w_{\a_{1}}(x)H(f_{\a_{1}}(x))+\eta_{0}w_{\a_{2}}(x)H(f_{\a_{2}}(x))\nonumber
\end{align}
\end{prop}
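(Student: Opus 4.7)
\medskip

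The proof proceeds by a dichotomy familiar from Dolgopyat's work: either one of the pieces $u(f_{\a_i}(x))$ is small in modulus compared to $H(f_{\a_i}(x))$ on a small ball (``magnitude cancellation''), or it is comparable, in which case the strong UNI lower bound on $|\partial_e(\log|\lambda_{\a_1}|-\log|\lambda_{\a_2}|)|$ forces the two complex numbers $T_i(x):=w_{\a_i}(x)|\lambda_{\a_i}(x)|^{ib}u(f_{\a_i}(x))$ to point in nearly opposite directions on a sub-ball (``phase cancellation''). In both cases, the choice of constants $\eta_0,D,\epsilon_1,\epsilon_2$ and the word length $N$ must be fixed so that the factor $\gamma^N$ coming from uniform contraction dominates the factor $A|b|$ coming from the cone condition on $H$.

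\medskip

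First, I would invoke the strong form of the UNI condition from Appendix~\ref{appendix:Weak uni}: there exist a fixed $r_0,\epsilon_0>0$ (independent of $b$) such that, for every sufficiently large $N$ and every $y\in X_A$, one can pick a direction $e$ and admissible words $\a_1,\a_2\in\cW_A\cap\cA^N$ satisfying $\a_i\leadsto x$ for all $x\in B(y,r_0)$ and
\[
\big|\partial_e\big(\log|\lambda_{\a_1}(x)|-\log|\lambda_{\a_2}(x)|\big)\big|\geq \epsilon_0
\quad\text{for all } x\in B(y,r_0).
\]
For $|b|$ large, the ball $B(y,\epsilon_1 D/|b|)$ sits inside $B(y,r_0)$. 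Thus the phase $\Psi(x):=b(\log|\lambda_{\a_1}(x)|-\log|\lambda_{\a_2}(x)|)$ satisfies $|\partial_e\Psi|\geq \epsilon_0|b|$ on $B(y,\epsilon_1 D/|b|)$, so along a line segment through $y$ in direction $e$ of length $\epsilon_1 D/|b|$ the function $\Psi$ is monotone and its total variation is at least $\epsilon_0\epsilon_1 D$; choosing $D$ so that $\epsilon_0\epsilon_1 D\geq 100\pi$ guarantees that for any constant $c\in\R$ one can find many sub-intervals of length $\asymp 1/|b|$ on which $\Psi+c$ lies close to $\pi\ (\mathrm{mod}\ 2\pi)$.

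\medskip

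Next, set $H_i(x):=H(f_{\a_i}(x))$ and $u_i(x):=u(f_{\a_i}(x))$. Lemma~\ref{Lemma:Bounded distortions} and the chain rule, combined with $\|Df_{\a_i}\|\leq C\gamma^N$ and the cone hypotheses, yield
\[
\|DH_i(x)\|\leq A|b|\gamma^N\,H_i(x),\qquad \|Du_i(x)\|\leq A|b|\gamma^N\,H_i(x).
\]
Therefore the complex-valued quotient $\rho_i:=u_i/H_i$ satisfies $|\rho_i|\leq 1$ and $\|D\rho_i\|\leq 2A|b|\gamma^N$, so $\rho_i$ varies by at most $2A\gamma^N\epsilon_2\ll 1$ on any ball of radius $\epsilon_2/|b|$ once $N$ is fixed with $4A\gamma^N\leq 1/100$. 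Now I fix $\eta_0\in(\tfrac12,1)$ and consider two cases along the segment. In the \emph{magnitude case}, there is $x_0$ on the segment with $|\rho_i(x_0)|\leq \tfrac{1+\eta_0}{2}$ for some $i\in\{1,2\}$; by slow variation, $|\rho_i|\leq\eta_0$ throughout $B(x_0,\epsilon_2/|b|)$, and the triangle inequality $|T_1+T_2|\leq w_{\a_1}|u_1|+w_{\a_2}|u_2|$ yields \eqref{eq:cancel1} or \eqref{eq:cancel2} respectively. In the \emph{phase case}, $|\rho_i|\geq\tfrac{1+\eta_0}{2}>0$ throughout the segment for both $i$, so $\arg\rho_i$ is well-defined and smooth with $\|D\arg\rho_i\|\leq 4A|b|\gamma^N$. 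Hence $\arg(T_1/T_2)=\Psi+\arg\rho_1-\arg\rho_2$ varies by essentially the same $\geq 100\pi$ as $\Psi$, so I can find a sub-segment of length $\asymp 1/|b|$ on which $\cos\arg(T_1/T_2)\leq-c_0<0$; the polarization identity $|T_1+T_2|^2=|T_1|^2+|T_2|^2+2|T_1||T_2|\cos\arg(T_1/T_2)$ combined with $|T_i|\geq\tfrac{1+\eta_0}{2}w_{\a_i}H_i$ then gives $|T_1+T_2|\leq(1-\eta)(w_{\a_1}H_1+w_{\a_2}H_2)$ for some $\eta=\eta(\eta_0,c_0)>0$, which in particular implies both \eqref{eq:cancel1} and \eqref{eq:cancel2} after redefining $\eta_0$ slightly.

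\medskip

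The main technical obstacle I anticipate is twofold. First, the choice of constants must be carefully orchestrated: $N$ large so that $A\gamma^N$ is tiny; $D$ large so that $\Psi$ oscillates $\gtrsim 100\pi$ times on the big ball; $\epsilon_2$ small so that $\rho_i$ and $\arg\rho_i$ are effectively constant on sub-balls of radius $\epsilon_2/|b|$; and $\eta_0<1$ chosen compatibly with the resulting quantitative phase cancellation. Second, one must produce the required point $x'\in X_A\cap B(y,\epsilon_1 D/|b|)$ with $B(x',\epsilon_2/|b|)$ sitting inside a ``good'' sub-segment. This is handled by using the Gibbs/quasi-Bernoulli property \eqref{eq:quasibernoulli} together with the bounded distortion estimates in Lemma~\ref{Lemma:Derivative and Diameter}: at scale $1/|b|$, the attractor $X_A$ is dense in the relevant part of $B(y,\epsilon_1 D/|b|)$ at a coarseness controlled by the cylinder structure, so one selects $x'$ as (a point of) a cylinder $X_\b$ of diameter $\asymp \epsilon_2/|b|$ contained in a good sub-segment, chosen along the direction $e$ provided by UNI.
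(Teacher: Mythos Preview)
Your dichotomy (magnitude vs.\ phase) is the right shape, but the argument as written has a real gap in the last step, and it comes from the \emph{order} in which you choose the direction $e$ and the point $x'$. You pick a direction $e$ ``provided by UNI'' first, run the oscillation argument along the segment through $y$ in direction $e$, and then ask for a point $x'\in X_A$ lying in (a tube around) one of the good sub-segments. But UNI does not \emph{provide} a direction: it asserts that for \emph{every} unit vector $e$ there exist words $\a_1,\a_2$. Nothing then guarantees that $X_A$ has any points displaced from $y$ in your chosen direction $e$; the attractor of a conformal IFS can easily be lower-dimensional and locally lie in a hyperplane orthogonal to $e$, so your claim that ``at scale $1/|b|$, the attractor $X_A$ is dense in the relevant part of $B(y,\epsilon_1 D/|b|)$'' is simply false in general, and the Gibbs/quasi-Bernoulli property does not rescue it. In that situation all points of $X_A\cap B(y,\epsilon_1 D/|b|)$ have essentially the same $e$-coordinate as $y$, so you cannot locate $x'\in X_A$ in a prescribed good slab.

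The paper's proof reverses the order: it first uses perfectness of $X_A$ (from strong separation and topological mixing) to pick some $x'\in X_A$ in an \emph{annulus} $B(y,\tfrac{\epsilon_1 D}{|b|})\setminus B(y,\tfrac{\epsilon_1 D'}{|b|})$, and only then sets $e_0=(y-x')/\|y-x'\|$ and invokes UNI for \emph{that} direction. This automatically makes $e_0$ a direction along which $X_A$ has the two points $y$ and $x'$. Because one now has only these two controlled points, the parameter regime is also opposite to yours: rather than taking $D$ huge so the phase swings through $100\pi$, the paper takes $\epsilon_1$ \emph{small} so that the phase change $|\Phi(1)-\Phi(0)|$ lies in $[\epsilon_0\epsilon_1/2,\,2m(D+2)\epsilon_1]\subset(0,\pi)$. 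This both bounds the change below (it is nonzero) and above (it cannot wrap around $2\pi$), forcing $\arg z_1-\arg z_2$ to be bounded away from $0\pmod{2\pi}$ at at least one of the two endpoints $y,x'\in X_A$; the triangle lemma then finishes exactly as in your phase case. If you rewrite your argument with ``pick $x'\in X_A$ first, then $e_0$, then small $\epsilon_1$'' the rest of your outline goes through.
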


The proof of Proposition~\ref{Proposition:Arguments cancellation} will be given in Section~\ref{sec:Proof of cancellation} below.
Having found the parameters $\eta_0,D,\eps_1,\eps_2$ in Proposition \ref{Proposition:Arguments cancellation}, we can now define the family of Dolgopyat operators.

\begin{definition}[Dolgopyat operators $(\cN_{b}^J)_{J \in \cE_b}$]
Assume $|b|> 1$ is fixed and $N\in \N$ is a large number so that Proposition \ref{prop:spectralgap} applies. We now define our Dolgopyat operators. We fix a maximal set of points $\{y_l\}_{l\in S(b)}\subset X_{A}$ with some index set $S(b)$ such that $$B\left(y_{l},\frac{\epsilon_{1}D}{|b|}\right)\cap B\left(y_{l'},\frac{\epsilon_{1}D}{|b|}\right)=\emptyset$$ for distinct $l,l'\in S(b)$. By Proposition \ref{Proposition:Arguments cancellation}, applied to $y_l$ for each $l\in S(b)$, we can always find a point $x_{l}'\in B\left(y_{l},\frac{\epsilon_{1}D}{|b|}\right)$ satisfying  $$B\left(x_{l}',\frac{\epsilon_{2}}{|b|}\right)\subset B\left(y_{l},\frac{\epsilon_{1}D}{|b|}\right)$$ 
and a word $\a_{l}\in \cA^{N}$ satisfying $\a_{l}\leadsto x$ for all $x\in B\left(y_{l},\frac{\epsilon_{1}D}{|b|}\right)$ such that either \eqref{eq:cancel1} holds with $\a_1 = \a_l$ or \eqref{eq:cancel2} holds with $\a_2 = \a_l$. Let $\eta_0 \in (0,1)$ be the value from Proposition \ref{Proposition:Arguments cancellation} and fix $\eta\in [\eta_{0},1)$, then we can choose a $C^{1}$ function $\chi:U\to [\eta,1]$ such that $\chi(x)=1$ outside of $\cup_{l\in S(b)}f_{\a_{l}}(B(x_{l}',\frac{\epsilon_{2}}{|b|}))$ and $\chi(x)=\eta$ within $\cup_{l\in S(b)}f_{\a_{l}}(B(x_{l}',\frac{\epsilon_{2}}{3|b|}))$. Moreover, we can choose $\eta$ in such a way that $\|D(\chi\circ f_{\a})\|\leq |b|$ for all $\a\in \cA^N$. We can also freely assume $\eta\geq 1/2$. We emphasize that the function $\chi$ only depends upon the pairs $\{(x_l',\a_{l})\}_{l\in S(b)}.$ We let
\begin{align*}
    &\cE_{b} :=
    \left\lbrace (x_{l}',\a_{l})_{l\in S(b)}:
    \begin{array}{cc}
         B\left(x_{l}',\frac{\epsilon_{2}}{|b|}\right)\subset B\left(y_{l},\frac{\epsilon_{1}D}{|b|}\right)\, \textrm{ and }  \\
         \a_{l}\in \cA^{N} \textrm{ satisfies } \a_{l}\leadsto x\, \forall x\in B\left(y_{l},\frac{\epsilon_{1}D}{|b|}\right)
    \end{array}
    \right\rbrace.
\end{align*}
 Given $J\in \cE_{b}$ we define the \textit{Dolgopyat operator} $\cN_{b}^{J}$ on $C^{1}(U)$ according to the rule $$\cN_{b}^{J}(H):=\cL_{0}^{N}(\chi H).$$
\end{definition}

Crucially, Dolgopyat operators satisfy the following key properties, which we will prove using Proposition \ref{Proposition:Arguments cancellation}, the doubling property of the Gibbs measure $\mu,$ and basic inequalities for complex transfer operators.

\begin{lem}[Key properties of Dolgopyat operators]
\label{Lemma:Important lemma}
There exists $N\in \N,$ $A>1$ and $\rho\in(0,1)$ such that for $|b|$ sufficiently large the Dolgopyat operators $(\mathcal{N}_{b}^{J})_{J\in \cE_{b}}$ on $C^{1}(U)$ satisfy the following properties:
\begin{enumerate}
	\item The cone $C_{A|b|}$ is stable under $\mathcal{N}_{b}^{J}$ for all $J\in \cE_{b}$, that is, if $H \in C_{A|b|}$, then
	$$\|D_{x}\mathcal{N}_{b}^{J}(H)\| \leq A |b| \mathcal{N}_{b}^{J}(H)(x), \quad x \in U.$$
	\item For all $H\in C_{A|b|}$ and $J\in \mathcal{E}_{b}$, $$\int |\cN_{b}^{J}H|^{2}d\mu\leq \rho \int |H|^{2}d\mu.$$
	\item Given $H\in C_{A|b|}$ and $u\in C^{1}(U)$ such that $|u|\leq H$ and $\|Du\|\leq A|b|H$, there exists $J\in \cE_{b}$ such that $$|\cL^{N}_{ib}u|\leq \mathcal{N}_{b}^{J}(H)$$
	and $$\|D(\cL^{N}_{ib}u)\|\leq A|b|\mathcal{N}_{b}^{J}(H).$$
\end{enumerate} 
\end{lem}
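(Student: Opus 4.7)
The plan is to verify the three properties sequentially, with $N$ taken large enough for Proposition~\ref{Proposition:Arguments cancellation} to apply, $A$ chosen so that both~\eqref{Eq:A large 1} and~\eqref{Eq:A large 2} hold, and $|b|$ sufficiently large throughout. Property~(1) will follow from a direct termwise differentiation of $\cN_b^J(H)(x)=\sum_{\a\leadsto x}w_\a(x)(\chi H)(f_\a(x))$: the three resulting sums are bounded respectively by $C_2\cN_b^J H$ via $\|Dw_\a\|\leq C_2 w_\a$ from Lemma~\ref{Lemma:Bounded distortion collection}, by $2|b|\cN_b^J H$ using $\|D(\chi\circ f_\a)\|\leq|b|$ from the construction of $\chi$ together with the pointwise bound $\chi\geq\eta\geq 1/2$, and by $A|b|\gamma^N\cN_b^J H$ using $H\in C_{A|b|}$ and the uniform contraction of $f_\a$ for $|\a|=N$. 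Summing and invoking~\eqref{Eq:A large 1} yields the required estimate.

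For~(3), given $(u,H)$ I construct $J$ as follows: at each base point $y_l$, Proposition~\ref{Proposition:Arguments cancellation} supplies a point $x_l'$ and a pair of words $(\a_{1,l},\a_{2,l})$ realising one of~\eqref{eq:cancel1} or~\eqref{eq:cancel2} on $B(x_l',\eps_2/|b|)$, and I set $\a_l$ to be whichever of the two has its $H$-coefficient reduced to $\eta_0$, so that $J=(x_l',\a_l)_l\in\cE_b$; write $\tilde\a_l$ for the other (``partner'') word. The key pointwise inequality $|\cL_{ib}^N u(x)|\leq\cN_b^J H(x)$ splits according to whether $x$ lies in some $B(x_l',\eps_2/|b|)$; by the disjointness of the balls $B(y_{l'},\eps_1 D/|b|)$, such an $l$ is unique when it exists. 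In the interior case, the terms indexed by $\a_{1,l}$ and $\a_{2,l}$ are grouped and controlled via Proposition~\ref{Proposition:Arguments cancellation}, with the reduction factor $\eta_0$ absorbed by $\chi(f_{\a_l}(x))\geq\eta\geq\eta_0$ and with $\chi(f_{\tilde\a_l}(x))=1$ (a consequence of the strong separation condition of the IFS combined with the ball disjointness); all other terms have $\chi\equiv 1$ at the relevant arguments by the same reasoning and so are bounded by $|u|\leq H$. The exterior case is similar and simpler as $\chi\equiv 1$ at every argument. The derivative bound $\|D(\cL_{ib}^N u)\|\leq A|b|\cN_b^J H$ is an analogous computation to~(1), now invoking both $|u|\leq H$ and $\|Du\|\leq A|b|H$ together with~\eqref{Eq:A large 2}.

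For~(2), I would begin with Cauchy--Schwarz combined with the normalisation $\cL_0^N 1=1$ (which follows from~\eqref{Eq:Eigenfunction 1}) to obtain $|\cL_0^N(\chi H)|^2\leq\cL_0^N(\chi^2 H^2)$, then integrate against $\mu$ via the invariance~\eqref{Eq:Gibbs measure invariance} to reduce matters to $\int\chi^2 H^2\,d\mu\leq\rho\int H^2\,d\mu$ with $\rho<1$ uniform in $|b|,H,J$. Writing $\chi^2\leq 1-(1-\eta^2)\mathbbm{1}_E$ for $E:=\bigcup_l f_{\a_l}(B(x_l',\eps_2/(3|b|)))$ converts this into an $H^2$-weighted lower bound on $\mu(E)$. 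Since $H\in C_{A|b|}$,~\eqref{Eq:Property of cone} makes $H^2$ essentially constant on every ball of radius $\lesssim 1/|b|$, and maximality of $\{y_l\}$ together with doubling of $\mu$ compares both $\int H^2\,d\mu$ and $\int_E H^2\,d\mu$ to sums $\sum_l H(y_l)^2\mu(B(y_l,\eps_1 D/|b|))$ up to uniform constants. The Gibbs property~\eqref{eq:GibbsProperty} together with~\eqref{eq:quasibernoulli} and bounded distortion then yields $\mu(f_{\a_l}(B(x_l',\eps_2/(3|b|))))\gtrsim w_{\a_l}(y_l)\mu(B(y_l,\eps_1 D/|b|))$, and since $N$ is fixed, $w_{\a_l}(y_l)$ is uniformly bounded below by a constant depending only on $N$ and $\psi$, giving the required $\rho<1$.

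The main obstacle will be this measure comparison in~(2): both sets $f_{\a_l}(B(x_l',\eps_2/(3|b|)))$ and $B(y_l,\eps_1 D/|b|)$ shrink as $|b|\to\infty$, so the uniformity of $\rho$ depends not on any absolute mass estimate but on the $|b|$-uniform comparability of their $\mu$-masses through the Gibbs and doubling structure of $\mu$ at the fixed depth $N$. This is the step that ultimately ties the spectral gap to the thermodynamic formalism of $\psi$, beyond merely the UNI condition.
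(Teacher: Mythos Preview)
Your arguments for~(1) and~(3) are correct and match the paper's proof.

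For~(2), however, your approach has a genuine gap. After your Cauchy--Schwarz step and invariance you are reduced to $\int_E H^2\,d\mu \gtrsim \int H^2\,d\mu$ with $E = \bigcup_l f_{\a_l}(B(x_l',\epsilon_2/(3|b|)))$. You then assert that $\int_E H^2\,d\mu$ is comparable to $\sum_l H(y_l)^2\,\mu(B(y_l,\epsilon_1 D/|b|))$, but this fails: on the piece $f_{\a_l}(B(x_l',\epsilon_2/(3|b|)))$ the function $H^2$ is approximately $H(f_{\a_l}(y_l))^2$, and $f_{\a_l}(y_l)$ lies in the cylinder $X_{\a_l}$, which can be at distance $\asymp 1$ from $y_l$. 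Since $H\in C_{A|b|}$ permits variation by a factor $e^{A|b|\,\mrm{diam}(X_A)}$ across $X_A$, there is no uniform relation between $H(f_{\a_l}(y_l))$ and $H(y_l)$. Concretely, nothing in the definition of $\cE_b$ forces the cylinders $\{X_{\a_l}:l\in S(b)\}$ to cover $X_A$ (for the full shift one may take all $\a_l$ equal to a single word $\a_0$, so that $E\subset X_{\a_0}$); choosing $H\in C_{A|b|}$ concentrated away from $\bigcup_l X_{\a_l}$ then drives $\int_E H^2\,d\mu\big/\int H^2\,d\mu$ to $0$ as $|b|\to\infty$.

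The paper avoids this by applying Cauchy--Schwarz so as to separate $\chi$ from $H$: one gets the pointwise bound $|\cN_b^J H(x)|^2 \leq \bigl(\sum_{\a\leadsto x} w_\a(x)\chi(f_\a(x))\bigr)\,\cL_0^N H^2(x)$, where the bracketed factor is $\leq 1$ everywhere and $\leq \kappa:=1-(1-\eta)\min_{\a,x} w_\a(x)<1$ on $\bigcup_l B(x_l',\epsilon_2/(3|b|))$. After integrating, the required estimate becomes $\int_{\cup_l B(x_l',\epsilon_2/(3|b|))}\cL_0^N H^2\,d\mu \gtrsim \int \cL_0^N H^2\,d\mu$. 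This works because $\cL_0^N H^2\in C_{A|b|}$ is essentially constant at scale $1/|b|$, and the \emph{preimage} balls $B(x_l',\epsilon_2/(3|b|))$ inherit the covering property of the maximal packing $\{y_l\}$ via doubling of $\mu$; your image balls $f_{\a_l}(B(x_l',\epsilon_2/(3|b|)))$ do not.
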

The proof of this lemma will be given in Section~\ref{sec:Dolgopyat operators properties} below.

\subsection{Reduction of spectral gap to Dolgopyat operators}

Let us now show how Proposition \ref{prop:spectralgap} follows using Lemma \ref{Lemma:Important lemma}. Our proof is based upon arguments of Naud \cite{Naud-Cantor}. For the purposes of brevity we will omit certain calculations from our proof. 

\begin{proof}[Proof of Proposition \ref{prop:spectralgap}]
We start by proving that Lemma \ref{Lemma:Important lemma} implies an $L^{2}$ decay statement. Let $u\in C^{1}(U)$ satisfy $\|u\|_{b}\leq 1$. Set $H=\|u\|_{b}1$. Then $H\in C_{A|b|}$ and $|u|\leq H$, $\|Du\|\leq  |b|\|u\|_{b}\leq A|b|H.$ Thus by the definition of $\cL_{ib}$ and iterating Lemma \ref{Lemma:Important lemma}(3), we get for all $n\geq 1$, $|\cL_{ib}^{nN}u|\leq \cN_{b}^{J_n}\cN_{b}^{J_{n-1}}\ldots \cN_{b}^{J_{1}}H$ and $\|D\cL_{b}^{nN}u\|\leq A|b|\cN_{b}^{J_n}\cN_{b}^{J_{n-1}}\ldots \cN_{b}^{J_{1}}H$ for some $J_1,\ldots, J_{n}\in \cE_{b}$. Hence 
\begin{equation}
	\label{Eq:L^{2} decay}
\int_{X}|\cL_{b}^{nN}u|^{2}\, d\mu\leq \int_{X}|\cN_{b}^{J_n}\cN_{b}^{J_{n-1}}\ldots \cN_{b}^{J_{1}}H|^{2}\, d\mu \leq \rho^{n}\int_{X}|H|^{2} \, d\mu\leq \rho^{n}.
\end{equation}
Now we will show how \eqref{Eq:L^{2} decay} leads to a spectral gap. We will use the following well known inequalities: Let $u\in C^{1}(U)$. There exists $C_1,C_{2}>0$ and $\rho_{1}\in(0,1)$ such that
\begin{itemize}
	\item[(i)] $\sup_{x\in U}\|D_{x}\cL_{ib}^{n}u\|\leq C_{1}|b|\|\cL_{0}^{n}u\|_{\infty}+\rho_{1}^{n}\|\cL_{0}^{n}(\|Du\|)\|_{\infty}$
	\item[(ii)] $\|\cL_{ib}^{n}u\|_{\infty}\leq \int|u|\, d\mu+C_{2}\rho_{1}^{n}\sup_{x\in U}\|D_{x}u\|$
\end{itemize}
We remark that the key inequality for complex transfer operators (i) was proved in \cite{ParryPollicott} in the symbolic setting and (ii) follows from the quasi-compactness of $\cL_{ib}$.

Now we prove our spectral gap result. Fix $h\in C^{1,b}(U)$ with $\|h\|_{b}\leq 1$. Using the above and the Cauchy-Schwartz inequality, we have
\begin{align*}
	|\cL_{ib}^{2nN}h(x)|\leq |\cL_{ib}^{nN}(\cL_{ib}^{nN}h)(x)|&\leq \cL_{0}^{nN}(|(\cL_{ib}^{nN}h|)(x)\\
	&\ll (\cL_{0}^{nN}(|(\cL_{ib}^{nN}h|^{2})(x))^{1/2}\\
	&\leq \left(\int_{X}|\cL_{ib}^{nN}h|^{2}\, d\mu +C_{2}\rho_{1}^{nN}\sup_{x\in U}\|D_{x}|\cL_{ib}^{nN}h|^{2}\|\right)^{1/2}\\
	&\ll \left(\int_{X}|\cL_{ib}^{nN}h|^{2}\, d\mu +C_{2}\rho_{1}^{nN}|b|\right)^{1/2}.
\end{align*} 
In the penultimate line we used the second of the inequalities listed above.
In the final line we used the product rule, the first of the inequalities listed above, and the fact $\|h\|_{b}\leq 1$ to assert that $\sup_{x\in U}\|D_{x}|\cL_{ib}^{nN}h|^{2}\|\ll |b|.$ Now using \eqref{Eq:L^{2} decay} we have 
$$|\cL_{ib}^{2nN}h(x)|\ll \left(\rho^{n}+C_{2}\rho_{1}^{nN}|b|\right)^{1/2}\ll \max\{\rho^{1/4N},\rho_{1}^{1/4}\}^{2nN}|b|^{1/2}.$$ This implies 
\begin{equation}
\label{Eq:inftynorm bound}
\|\cL_{ib}^{2nN}\|_{\infty}\ll\max\{\rho^{1/4N},\rho_{1}^{1/4}\}^{2nN}|b|^{1/2}\|h\|_{b}.
\end{equation}This establishes the sup norm part of our spectral gap bound. It remains to prove the derivative part. Using the first of the inequalities listed above we have:
\begin{align*}
	\frac{1}{|b|}\sup_{x\in U}\|D_{x}\cL_{ib}^{2nN}h\|&\leq C_{1}\|\cL^{nN}_{0}(\cL_{ib}^{nN}h)\|_{\infty}+\frac{\rho_{1}^n}{|b|}\|\cL_{0}^{nN}(\|D\cL_{ib}^{nN}h\|)\|_{\infty}\\
	&\ll C_{1}\|\cL^{nN}_{0}(\cL_{ib}^{nN}h)\|_{\infty}+\rho_{1}^n\|\cL_{ib}^{nN}h\|_{b}.
\end{align*}
The first term in the above can be bounded using similar ideas to those used above to establish \eqref{Eq:inftynorm bound}. The second term can be bounded by evaluating $D_{x}\cL_{ib}^{nN}h$ and bounding accordingly. 
\end{proof}

\subsection{Proof of Proposition \ref{Proposition:Arguments cancellation}}
\label{sec:Proof of cancellation}
Let us now prove Proposition \ref{Proposition:Arguments cancellation}. To this end we will need the following two lemmas. Firstly, we need the following higher dimensional analogue of Lemma 5.11 from \cite{Naud-Cantor}:

\begin{lem}
\label{Lemma:fH lemma}
Let $Z\subset U$ be a set satisfying $Diam(Z)\leq \frac{c}{|b|}$. Let $H\in C_{A|b|}$ and $u\in C^{1}(U)$ satisfy $|u|\leq H$ and $\|Du\|\leq A|b|H$. Then for $c$ sufficiently small, we have either $|u(x)|\leq \frac{3}{4}H(x)$ for all $x\in Z$, or $|u(x)|\geq \frac{1}{4}H(x)$ for all $x\in Z$.
\end{lem}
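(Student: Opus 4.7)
The plan is to argue by contradiction using the small oscillations afforded by the cone condition on $H$ and the derivative bound on $u$. Suppose the conclusion fails: there exist $x_1,x_2 \in Z$ with $|u(x_1)|>\tfrac{3}{4}H(x_1)$ and $|u(x_2)|<\tfrac{1}{4}H(x_2)$. I would first observe that for $|b|$ large the diameter bound $\mathrm{Diam}(Z)\leq c/|b|$ forces $Z$ to lie inside a single component $U_a$ of $U$, since the strong separation condition ensures the components $U_a$ have a definite positive separation; this guarantees that the straight segment from $x_1$ to $x_2$ stays in $U$ so that $\|Du\|\leq A|b|H$ is available along it.

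From the cone property $H\in C_{A|b|}$ and the estimate \eqref{Eq:Property of cone}, I get for all $x,y\in Z$ the two-sided comparison $e^{-Ac}\leq H(x)/H(y)\leq e^{Ac}$, in particular $\sup_{Z}H \leq e^{Ac}H(x_i)$ for $i=1,2$. On the other hand, applying the mean value theorem along the segment from $x_1$ to $x_2$ and using $\|Du\|\leq A|b|H$ together with the diameter bound gives
\begin{equation*}
|u(x_1)-u(x_2)| \leq A|b|\,\sup_Z H \cdot \frac{c}{|b|} \leq Ac\,e^{Ac}H(x_1).
\end{equation*}
Combining these, $|u(x_1)| \leq |u(x_2)| + |u(x_1)-u(x_2)| < \tfrac{1}{4}e^{Ac}H(x_1) + Ac\,e^{Ac}H(x_1)$, and choosing $c$ small enough so that $\tfrac{1}{4}e^{Ac} + Ac\,e^{Ac}<\tfrac{3}{4}$ yields $|u(x_1)|<\tfrac{3}{4}H(x_1)$, contradicting the assumption on $x_1$. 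Crucially, the threshold for $c$ depends only on $A$ (hence only on $\Phi,\Sigma_A,\psi$), not on $b$, $u$, $H$, or $Z$.

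The main (minor) obstacle is the technical point in the first step: one needs to confirm that $Z$ lies in a single $U_a$ so that the mean value inequality applies on a path in $U$. This is immediate once one takes $|b|$ large enough relative to the separation of the $U_a$'s guaranteed by the strong separation condition; alternatively, one can extend $u$ to a neighborhood of $\overline{U}$ and argue on a straight segment, since the derivative bound is pointwise. All other ingredients are soft: only the cone inequality \eqref{Eq:Property of cone}, the derivative bound, and the elementary inequality $\tfrac{1}{4}e^{Ac}+Ac\,e^{Ac}<\tfrac{3}{4}$ for small $c$ are used.
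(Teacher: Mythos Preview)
Your proof is correct and follows essentially the same approach as the paper: both use the mean value inequality together with the cone estimate \eqref{Eq:Property of cone} to obtain the bound $(cA+\tfrac14)e^{Ac}<\tfrac34$ for small $c$, the only cosmetic difference being that the paper argues directly (if some $x_0$ has $|u(x_0)|\le\tfrac14 H(x_0)$ then all $x\in Z$ satisfy $|u(x)|\le\tfrac34 H(x)$) whereas you phrase it as a contradiction. Your extra remark about $Z$ lying in a single component $U_a$ is a worthwhile technical observation that the paper leaves implicit.
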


\begin{proof}The proof is same as in \cite{Naud-Cantor} and we include it for completeness.  If $x_0 \in Z$ has $|u(x_0)| \leq \frac{1}{4} H(x_0)$, then at any $x \in Z$, we have for all small enough $c > 0$:
$$|u(x)| \leq |u(x) - u(x_0)|  +   \frac{1}{4} H(x_0) \leq A|b| \diam Z \sup_{x \in Z} H  +   \frac{1}{4} H(x_0) \leq (cA + \tfrac{1}{4})e^{Ac} H(x) \leq \frac{3}{4}H(x).$$ In the penultimate inequality we used \eqref{Eq:Property of cone}.
\end{proof}

We also recall the following triangle lemma that follows from applying trigonometric identities:

\begin{lem}[Triangle lemma]
\label{Lemma: different arguments}
Let $z_{1},z_{2}\geq 0$ be two complex numbers such that $|z_{1}/z_{2}|\leq L$ and $2\pi-\epsilon\geq |arg(z_1)-arg(z_2)|\geq \epsilon>0$. Then there exists $0<\delta(L,\epsilon)<1$ such that $$|z_{1}+z_{2}|\leq (1-\delta)|z_{1}|+|z_{2}|.$$
\end{lem}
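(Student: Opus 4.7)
The plan is to reduce this to the law of cosines in the plane. Writing $\theta := \arg(z_1)-\arg(z_2)$, the identity
\begin{equation*}
|z_1+z_2|^2 = |z_1|^2+|z_2|^2 + 2|z_1||z_2|\cos\theta
\end{equation*}
converts the claim into an inequality in the single real variable $\cos\theta$. Under the hypothesis $\epsilon \leq |\theta|\leq 2\pi-\epsilon$, a glance at the graph of $\cos$ on $[\epsilon,2\pi-\epsilon]$ (using $2\pi$-periodicity and evenness) gives the uniform bound $\cos\theta \leq \cos\epsilon$, so that
\begin{equation*}
|z_1+z_2|^2 \leq |z_1|^2+|z_2|^2 + 2|z_1||z_2|\cos\epsilon.
\end{equation*}

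Next I would compare this to the square of the desired right-hand side. Expanding,
\begin{equation*}
((1-\delta)|z_1|+|z_2|)^2 = (1-\delta)^2|z_1|^2+|z_2|^2+2(1-\delta)|z_1||z_2|,
\end{equation*}
so the claim reduces to showing
\begin{equation*}
\bigl(1-(1-\delta)^2\bigr)|z_1|^2 \leq 2\bigl((1-\delta)-\cos\epsilon\bigr)|z_1||z_2|,
\end{equation*}
i.e., after dividing by $|z_1||z_2|$ and using the hypothesis $|z_1|/|z_2|\leq L$, it suffices to check $\delta(2-\delta) \leq \tfrac{2}{L}\bigl((1-\delta)-\cos\epsilon\bigr)$. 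For $\delta\in(0,1/2]$, the left side is at most $2\delta$ and $(1-\delta)-\cos\epsilon \geq \tfrac{1}{2}(1-\cos\epsilon)$ provided $\delta\leq \tfrac{1}{2}(1-\cos\epsilon)$, so the inequality holds as soon as $\delta \leq \tfrac{1-\cos\epsilon}{2L}$.

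Thus I would take $\delta := \min\{1/2,\, (1-\cos\epsilon)/(2L)\}$, which is strictly between $0$ and $1$ and depends only on $L$ and $\epsilon$, and the computation above yields $|z_1+z_2|\leq (1-\delta)|z_1|+|z_2|$ as required. There is no real obstacle here; the only mild point to be careful about is the range of $\theta$, where one must note that even though $\cos\theta$ can be very negative (helpful for the inequality), the worst case for our bound is the endpoints $\theta=\pm\epsilon$ of the allowed set, giving the uniform upper bound $\cos\epsilon<1$ that drives the argument.
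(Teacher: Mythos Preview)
Your proof is correct and is precisely the law-of-cosines computation the paper alludes to when it says the lemma ``follows from applying trigonometric identities'' (the paper does not spell out a proof). One cosmetic remark: your final choice $\delta=\min\{1/2,\,(1-\cos\epsilon)/(2L)\}$ tacitly uses $L\geq 1$ so that $(1-\cos\epsilon)/(2L)\leq \tfrac{1}{2}(1-\cos\epsilon)$, which is the condition you invoked a line earlier; either note that one may assume $L\geq 1$ without loss of generality, or simply include $\tfrac{1}{2}(1-\cos\epsilon)$ in the minimum.
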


\begin{proof}[Proof of Proposition \ref{Proposition:Arguments cancellation}]
Let $N\in \N$ be large and $|b|>1$. Let us also fix $H\in C_{A|b|}$ and $u\in C^{1}(U)$ satisfying the assumptions of Proposition \ref{Proposition:Arguments cancellation}. Let $y\in X_{A}$. Let $\epsilon_1>\epsilon_{2}>0$ be parameters that only depend upon $\Phi$ and $\Sigma_{A}$. Using properties of $\Phi$ and the fact $\Sigma_{A}$ is topologically mixing, by the strong separation of the IFS $\Phi$, there exists $D>D'>2$ depending only on $\Phi$ and $\Sigma_{A}$ such that there exists $x'\in X_{A}$ satisfying 
$$x'\in B\left(y,\frac{\epsilon_{1} D}{|b|}\right)\setminus B\left(y,\frac{\epsilon_{1} D'}{|b|}\right)$$ 
and $$B\left(x',\frac{\epsilon_{2}}{|b|}\right)\subset  B\left(y,\frac{\epsilon_{1} D}{|b|}\right).$$ Let $B_{1}=B\left(y,\frac{\epsilon_{2}}{|b|}\right),$ $B_{2}= B\left(x',\frac{\epsilon_{2}}{|b|}\right)$. Let $\hat{B}$ be the smallest ball containing $B_{1}\cup B_{2}$. We can assume that $\epsilon_{1}$ and $\epsilon_{2}$ are sufficiently small that $\hat{B}\subset B(y,r)$. Here $r$ is as in the uniform non-integrability condition stated at the beginning of this section (recall that by Proposition \ref{prop:weaktostrongUNI} this is equivalent to our UNI condition).  We remark that for any $x_{1}\in B_{1}$ and $x_{2}\in B_{2}$ we have 
\begin{equation}
\label{Eq:Distance between balls}
\frac{\epsilon_{1}}{b}\leq \|x_1-x_2\|\leq (D+2)\frac{\epsilon_{1}}{b}.
\end{equation}

Let $e_0=\frac{(y-x')}{\|y-x'\|}$. Applying the uniform non-integrability condition stated at the start of this section for this $e_0$ and $y,$ we see that there exists $\a_{1},\a_{2}\in \cW_{A}\cap \cA^{N}$ such that 	
\begin{equation}
\label{Eq:UNI lower bound}
\left|\partial_{e_0}\left(\log |\lambda_{\a_{1}}(x)|-\log |\lambda_{\a_{2}}(x)|\right)\right|\geq \epsilon_{0}
\end{equation}
and $$\a_{1}\leadsto x\quad \textrm{and} \quad \a_{2}\leadsto x$$ for all $x\in B(y,r)$.  

Since our IFS is contracting, and we can choose $\epsilon_{1}$ to be small, we can assume that for $m\in \{1,2\}$ the set $f_{\a_m}(\hat{B})$ is sufficiently small so that we can apply Lemma \ref{Lemma:fH lemma} and assert that for $m\in \{1,2\}$ either $|u(f_{\a_m}(x))|\leq \frac{3}{4}H(f_{\a_m}(x))$ for all $x\in \hat{B}$ or $|u(f_{\a_m}(x))|\geq \frac{1}{4}H(f_{\a_m}(x))$ for all $x\in \hat{B}.$ If $|u(f_{\a_m}(x))|\leq \frac{3}{4}H(f_{\a_m}(x))$ for all $x\in \hat{B}$ for some $m\in \{1,2\}$ then our result follows immediately. Thus we will assume in what follows that 
\begin{equation}
	\label{eq:quarter bound}
	|u(f_{\a_m}(x))|\geq \frac{1}{4}H(f_{\a_m}(x))
\end{equation}
for all $x\in \hat{B}$ for $m\in \{1,2\}.$ 

For $x\in \hat{B},$ we set $$z_{1}(x)=w_{\a_1}(x)|\lambda_{\a_1}(x)|^{ib}u(f_{\a_1}(x)) \textrm{ and } z_{\a_2}(x)=w_{\a_2}(x)|\lambda_{\a_2}(x)|^{ib}u(f_{\a_2}(x)).$$

We claim that there exists $M>0$ such that either
$|z_{1}(x)|/|z_{2}(x)|\leq M$ for all $x\in \hat{B}$ or $|z_{2}(x)/z_{1}(x)|\leq M$ for all $x\in \hat{B}$. Using \eqref{eq:quarter bound} and our assumptions on $u$ we see that for all $x\in \hat{B}$ we have
$$\frac{w_{\a_1}(x)H(f_{\a_1}(x))}{4w_{\a_2}(x)H(f_{\a_2}(x))}\leq \left|\frac{z_{1}(x)}{z_{2}(x)}\right|\leq \frac{4w_{\a_1}(x)H(f_{\a_1}(x))}{w_{\a_2}(x)H(f_{\a_2}(x))}.$$
If there exists $x_{0}\in \hat{B}$ such that 	$$\frac{w_{\a_1}(x_0)H(f_{\a_1}(x_0))}{w_{\a_2}(x_0)H(f_{\a_2}(x_0))}\leq 1,$$ then using  \eqref{Eq:Property of cone}, and Lemma \ref{Lemma:Bounded distortion collection}, for all $x\in \hat{B}$ we have 
$$\frac{w_{\a_1}(x)H(f_{\a_1}(x))}{w_{\a_2}(x)H(f_{\a_2}(x))}\leq \frac{w_{\a_1}(x_0)e^{(2D+2)\epsilon_{1} (A+C_{1})}H(f_{\a_1}(x_0))}{w_{\a_2}(x_0)e^{-(2D+2)\epsilon_{1} (A+C_{1})}H(f_{\a_2}(x_0))}\leq e^{2(2D+2)\epsilon_{1} (A+C_{1})}.$$ Here $C_{1}>0$ is as in Lemma \ref{Lemma:Bounded distortion collection}. Thus in this case we can take $M:=4e^{2(2D+2)\epsilon_{1} (A+C_{1})}$. Similarly it can be shown that a suitable $M'$ exists such that if $$\frac{w_{\a_1}(x)H(f_{\a_1}(x))}{w_{\a_2}(x)H(f_{\a_2}(x))}\geq 1$$ for all $x\in \hat{B}$ then $|z_{2}(x)/z_{1}(x)|\leq M'$ for all $x\in \hat{B}.$ This completes the proof of our claim.

For each $m\in \{1,2\}$ define the function $g_{m}:[0,1]\to \mathbb{C}$ by $g_{m}(t)=z_{m}(y+t(x'-y))$. Then define $L_{m}:[0,1]\to \mathbb{C}$ by $$L_{m}(t)=\int_{0}^{t}\frac{g_{m}'(s)}{g_{m}(s)}\, ds + l_{0,m}$$ where $l_{0,m}$ satisfies $e^{l_{0,m}}=z_{m}(y)$.  $L_{m}$ is well defined as $g_{m}(t)\neq 0$ for all $t\in [0,1]$ and $e^{L_{m}(t)}=z_{m}(t)$ for all $t\in[0,1].$ Set $\Phi(t)=Im(L_{1}(t))-Im(L_{2}(t))$. Since the potential $\psi$ defining the Gibbs measure is differentiable, we have:
\begin{align*}
	\Phi'(t)&=Im\left(\frac{g_{1}'(t)}{g(t)}-\frac{g_{2}'(t)}{g_{2}(t)}\right)\\
	&=\, Im\left(\frac{\partial_{e_0}w_{\a_1}(y+t(x'-y))\|x'-y\||\lambda_{\a_1}(y+t(x'-y))|^{ib}u(f_{\a_1}(y+t(x'-y)))}{w_{\a_1}(y+t(x'-y))|\lambda_{\a_1}(y+t(x'-y))|^{ib}u(f_{\a_1}(y+t(x'-y)))}\right)\\
	&+Im\left(\frac{w_{\a_1}(y+t(x'-y))ib\partial_{e_0}(\log |\lambda_{\a_1}(y+t(x'-y))|)|x'-y|u(f_{\a_1}(y+t(x'-y)))}{w_{\a_1}(y+t(x'-y))u(f_{\a_1}(y+t(x'-y)))}\right)\\
	&+Im\left(\frac{w_{\a_1}(y+t(x'-y))|\lambda_{\a_1}(y+t(x'-y))|^{ib}D_{f_{\a_1}(y+t(x'-y))}uD_{y+t(x'-y)}f_{\a_1}(x'-y)}{w_{1}(y+t(x'-y))|\lambda_{\a_1}(y+t(x'-y))|^{ib}u(f_{\a_1}(y+t(x'-y)))}\right)\\
	&-Im\left(\frac{\partial_{e_0}w_{\a_2}(y+t(x'-y))\|x'-y\||\lambda_{\a_2}(y+t(x'-y))|^{ib}u(f_{\a_2}(y+t(x'-y)))}{w_{\a_2}(y+t(x'-y))|\lambda_{\a_2}(y+t(x'-y))|^{ib}u(f_{\a_2}(y+t(x'-y)))}\right)\\
	&-Im\left(\frac{w_{\a_2}(y+t(x'-y))ib\partial_{e_0}(\log |\lambda_{\a_2}(y+t(x'-y))|)|x'-y|u(f_{\a_2}(y+t(x'-y)))}{w_{\a_2}(y+t(x'-y))u(f_{\a_2}(y+t(x'-y)))}\right)\\
	&-Im\left(\frac{w_{\a_2}(y+t(x'-y))|\lambda_{\a_2}(y+t(x'-y))|^{ib}D_{f_{\a_2}(y+t(x'-y))}uD_{y+t(x'-y)}f_{\a_2}(x'-y)}{w_{\a_2}(y+t(x'-y))|\lambda_{\a_2}(y+t(x'-y))|^{ib}u(f_{\a_2}(y+t(x'-y)))}\right).
	\end{align*}
Note that the first and fourth term are equal to $0$ as we are taking imaginary parts of real numbers. Thus after some cancellation we are left with the identity:
\begin{align*}	
	\Phi'(t)&=b\|x'-y\|\left(\partial_{e_0}(\log |\lambda_{\a_1}(y+t(x'-y))|-\log |\lambda_{\a_2}(y+t(x'-y))|)\right)\\
	&+Im\left(\frac{D_{f_{\a_1}(y+t(x'-y))}uD_{y+t(x'-y)}f_{\a_1}(x'-y)}{u(f_{\a_1}(y+t(x'-y)))} \right)\\
	&-Im\left(\frac{D_{f_{\a_2}(y+t(x'-y))}uD_{y+t(x'-y)}f_{\a_2}(x'-y)}{u(f_{\a_2}(y+t(x'-y)))} \right).
\end{align*}
By our assumptions on $u$ and \eqref{eq:quarter bound}, we have
$$\left|\frac{D_{f_{\a_1}(y+t(x'-y))}uD_{y+t(x'-y)}f_{\a_1}(x'-y)}{u(f_{\a_1}(y+t(x'-y)))}\right|\leq \frac{4A|b|H(f_{\a_1}(y+t(x'-y)))\gamma^{n_{0}}|x'-y|}{H(f_{\a_1}(y+t(x'-y))}=4A|b|\gamma^{N}|x'-y|.$$
Similarly, 
$$\left|\frac{D_{f_{\a_2}(y+t(x'-y))}uD_{y+t(x'-y)}f_{\a_2}(x'-y)}{u(f_{\a_2}(y+t(x'-y)))}\right|\leq 4A|b|\gamma^{N}|x'-y|.$$ 
Appealing to bounded distortions it can be shown that there exists $m>0$ depending only on our IFS such that 
\begin{equation}
    \label{Eq:UNI upper bound}
\left| \partial_{e_0}(\log |\lambda_{\a_1}(y+t(x'-y))|-\log |\lambda_{\a_2}(y+t(x'-y))|)\right|\leq m, \end{equation} for any $t\in[0,1]$ (see also \eqref{Eq:directions and Jacobians}). Combining \eqref{Eq:UNI lower bound} and \eqref{Eq:UNI upper bound} with the above we have $$\left(\epsilon_{0}-8A\gamma^{N}\right)|b|\|x'-y\|\leq |\Phi'(t)|\leq \left(m+8A\gamma^{N}\right)|b|\|x'-y\|.$$ Thus for $N$ sufficiently large we have
$$\frac{\epsilon_{0}|b|}{2}\|x'-y\|\leq |\Phi'(t)|\leq 2m|b|\|x'-y\|.$$ Therefore $$\frac{\epsilon_{0}|b|}{2}\|x'-y\|\leq |\Phi(1)-\Phi(0)|\leq 2m|b|\|x'-y\|.$$ By \eqref{Eq:Distance between balls} we have $$\frac{\epsilon_{1}}{|b|} \leq \|x'-y\|\leq (D+2)\frac{\epsilon_{1}}{|b|}.$$ This implies 
$$\frac{\epsilon_{0}\epsilon_{1}}{2}\leq |\Phi(1)-\Phi(0)|\leq 2m(D+2)\epsilon_{1}.$$ Now choose $\epsilon_{1}$ sufficiently small that $(2m(D+2)+\epsilon_{0}/2)\epsilon_{1}\leq \pi$ and let $\epsilon'= \frac{\epsilon_{0}\epsilon_{1}}{8}.$ Suppose now that $$\Phi(1),\Phi(0)\in \cup_{k\in \mathbb{Z}}[2\pi k-\epsilon',2\pi k+\epsilon'].$$ Since $|\Phi(1)-\Phi(0)|\leq 2m(D+2)\epsilon_{1}$ we cannot have $\Phi(1)\in [2\pi k_{1}-\epsilon',2\pi k_{1}+\epsilon']$ and $\Phi(0)\in [2\pi k_{2}-\epsilon',2\pi k_{2}+\epsilon']$ for $k_{1}\neq k_{2}$. As we would then have $$2m(D+2)\epsilon_{1}\geq |\Phi(1)-\Phi(0)|\geq 2\pi -2\epsilon'=2\pi-\frac{\epsilon_{0}\epsilon_{1}}{4}$$ which is not possible given our choice of $\epsilon_{1}$. Therefore we must have $$\frac{\epsilon_{0}\epsilon_{1}}{2}\leq |\Phi(1)-\Phi(0)|\leq 2\epsilon' =\frac{\epsilon_{0}\epsilon_{1}}{4}.$$ Which is also not possible. Recall now the definition of $\Phi(t)=Im(L_{1}(t))-Im(L_{2}(t))$, where for $m\in \{1,2\}$ one defined $L_{m}(t)=\int_{0}^{t}\frac{g_{m}'(s)}{g_{m}(s)}\, ds + l_{0,m}$ and $g_{m}(t)=z_{m}(y+t(x'-y))$. It follows from the above that we cannot have $$|arg(z_{1}(y))-arg(z_{2}(y))|<\epsilon'$$ and 
$$|arg(z_{1}(x'))-arg(z_{2}(x'))|<\epsilon'$$ for both $y$ and $x'$ simultaneously. 

 Let us assume that $$|arg(z_{1}(x'))-arg(z_{2}(x'))|\geq \epsilon'.$$ The other case when $|arg(z_{1}(y))-arg(z_{2}(y))|\geq\epsilon'$ follows similarly. Appealing to the bound $$\left|\partial_{e}\left(\log |\lambda_{\a_{1}}(y)|-\log |\lambda_{\a_{2}}(y)|\right)\right|\leq m$$ which holds for all $y\in U$ and unit vectors $e\in\mathbb{R}^{d}$, it can be shown using a similar argument to that given above describing how arguments change along paths that there exists $\epsilon_{2}>0$ depending only on our IFS such that if $x\in B\left(x',\frac{\epsilon_{2}}{|b|}\right)\subset B\left(y,\frac{\epsilon_1 D}{|b|}\right)$ then $$|arg(z_{1}(x))-arg(z_{2}(x))|\geq \epsilon'/2.$$ Applying Lemma \ref{Lemma: different arguments} and our assumption $|u|\leq H,$ we can assert that there exists $\eta_{0}=\eta_{0}(M,\epsilon')$ such that one of the following holds for all $x\in B\left(x',\frac{\epsilon_{2}}{|b|}\right)$: either
 $$\left|w_{\a_1}(x)|\lambda_{\a_1}(x)|^{ib}u(f_{\a_1}(x))+w_{\a_2}(x)|\lambda_{\a_2}(x)|^{ib}u(f_{\a_2}(x))\right|\leq \eta_{0}w_{\a_1}(x)H(f_{\a_1}(x))+w_{\a_2}(x)H(f_{\a_2}(x)),$$
 or 
$$\left|w_{\a_1}(x)|\lambda_{\a_1}(x)|^{ib}u(f_{\a_1}(x))+w_{\a_2}(x)|\lambda_{\a_2}(x)|^{ib}u(f_{\a_2}(x))\right|\leq w_{\a_1}(x)H(f_{\a_1}(x))+\eta_{0}w_{\a_2}(x)H(f_{\a_2}(x)),$$ 
depending on whether $|z_{1}(x)|/|z_{2}(x)|\leq M$ for all $x\in B\left(x',\frac{\epsilon_{2}}{|b|}\right)$ or $|z_{2}(x)|/|z_{1}(x)|\leq M$ for all $x\in B\left(x',\frac{\epsilon_{2}}{|b|}\right)$. This completes our proof. 
\end{proof}

\subsection{Verifying Lemma \ref{Lemma:Important lemma}}
\label{sec:Dolgopyat operators properties}

Let us now verify all the parts of Lemma \ref{Lemma:Important lemma}. We first note that Proposition \ref{Proposition:Arguments cancellation} gives the following immediate corollary:
\begin{cor}
Let $N\in\mathbb{N}$ be sufficiently large and $|b|>1$. If $u\in C^{1}(U)$ satisfies $|u|\leq H$ and $\|Du\|\leq A|b|H,$ then there exists $J\in \cE_{b}$ such that
$$|\cL_{ib}^{N}u|\leq \cN_{b}^{J}(H).$$
\end{cor}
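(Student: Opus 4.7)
The plan is to choose $J \in \cE_b$ adapted to $(u,H)$ using Proposition~\ref{Proposition:Arguments cancellation} applied at each center $y_l$, and then verify the pointwise inequality $|\cL_{ib}^N u(x)| \leq \cN_b^J(H)(x)$ by a case analysis depending on whether $x$ lies in one of the cancellation balls. Concretely, for each $l \in S(b)$ I apply Proposition~\ref{Proposition:Arguments cancellation} at $y_l$ with $N$ large, obtaining a pair of words $\{\a_l^{(1)}, \a_l^{(2)}\} \subset \cW_{A} \cap \cA^{N}$, a point $x_l' \in B(y_l, \epsilon_1 D/|b|)$ with $B(x_l', \epsilon_2/|b|) \subset B(y_l, \epsilon_1 D/|b|)$, and one of the two displayed cancellation inequalities valid on $B(x_l', \epsilon_2/|b|)$. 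Let $\a_l$ denote whichever word of the pair appears multiplied by $\eta_0$ in that inequality, let $\a_l^{\circ}$ denote the other, and set $J := (x_l', \a_l)_{l \in S(b)} \in \cE_b$ with associated cut-off $\chi$.

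The crucial bookkeeping observation is that $\chi$ is discounted at very few places: since the IFS satisfies strong separation, the images $f_\a(U)$ for distinct $\a \in \cA^N$ are pairwise disjoint. Therefore for any $\a \leadsto x$ with $\chi(f_\a(x)) < 1$, one must have $\a = \a_l$ and $x \in B(x_l', \epsilon_2/|b|)$ for some $l \in S(b)$. Moreover the balls $B(y_l, \epsilon_1 D/|b|)$ are pairwise disjoint by the maximal-separation choice of $\{y_l\}$, so any given $x$ lies in at most one ball $B(x_l', \epsilon_2/|b|) \subset B(y_l, \epsilon_1 D/|b|)$.

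With this structural fact in hand, the pointwise comparison is immediate. If $x$ lies in none of the cancellation balls, then $\chi(f_\a(x)) = 1$ for every $\a \leadsto x$, so the triangle inequality combined with $|u| \leq H$ yields $|\cL_{ib}^N u(x)| \leq \cL_0^N(H)(x) = \cN_b^J(H)(x)$. Otherwise there is a unique $l_0 \in S(b)$ with $x \in B(x_{l_0}', \epsilon_2/|b|)$, whence Proposition~\ref{Proposition:Arguments cancellation} gives $\a_{l_0}, \a_{l_0}^{\circ} \leadsto x$ and the bound
\begin{align*}
\bigl| w_{\a_{l_0}}(x) &|\lambda_{\a_{l_0}}(x)|^{ib} u(f_{\a_{l_0}}(x)) + w_{\a_{l_0}^{\circ}}(x) |\lambda_{\a_{l_0}^{\circ}}(x)|^{ib} u(f_{\a_{l_0}^{\circ}}(x)) \bigr| \\
&\leq \eta_0 \, w_{\a_{l_0}}(x) H(f_{\a_{l_0}}(x)) + w_{\a_{l_0}^{\circ}}(x) H(f_{\a_{l_0}^{\circ}}(x)).
\end{align*}
Bounding the remaining terms in $\cL_{ib}^N u(x)$ trivially by $w_\a(x) H(f_\a(x))$ (using $|u| \leq H$), and invoking $\chi(f_\a(x)) = 1$ for $\a \neq \a_{l_0}$ together with $\chi(f_{\a_{l_0}}(x)) \geq \eta \geq \eta_0$, the resulting upper bound equals $\cN_b^J(H)(x)$ exactly. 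The only subtle point is the verification that the discount $1 - \chi$ is active at at most one pair $(\a, l_0)$ for each $x$: this is what makes the $\eta_0$-gain from a single pair sufficient to absorb the $\chi$-discount, and it follows cleanly from strong separation combined with the disjointness of the maximal packing $\{y_l\}$.
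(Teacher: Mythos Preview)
Your proof is correct and follows exactly the approach the paper intends: the paper simply declares this corollary ``immediate'' from Proposition~\ref{Proposition:Arguments cancellation} and the definition of the Dolgopyat operators, and you have carefully unpacked that immediacy by constructing $J$ via the Proposition at each $y_l$ and verifying the pointwise bound through the same case analysis (no cancellation ball versus the unique cancellation ball $l_0$). The only minor imprecision is the word ``exactly'' in your final sentence---the upper bound is $\leq \cN_b^J(H)(x)$ rather than equal, since $\chi(f_{\a_{l_0}}(x))$ may lie strictly between $\eta_0$ and $1$---but this does not affect the argument.
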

Notice the corollary above gives us the first part of item 3 from Lemma \ref{Lemma:Important lemma}. It remains to prove the remaining parts.

We now prove the first part of Lemma \ref{Lemma:Important lemma}.

\begin{proof}[Proof of Lemma \ref{Lemma:Important lemma}(1)]
Let $J\in \cE_{b}$ and $H\in C_{A|b|}$. Then we have
$$\cN_{b}^{J}(H)=\sum_{\substack{\a\in \cA^{N} \\ \a \leadsto x}}w_{\a}(x)\chi(f_{\a}(x))H(f_{\a}(x)).$$ Applying the product rule for differentiation this yields
\begin{align*}
	D_{x}\cN_{b}^{J}(H)&=\sum_{\substack{\a\in \cA^{N} \\ \a \leadsto x}}D_{x}w_{\a}\chi(f_{\a}(x))H(f_{\a}(x))
	+w_{\a}(x)D_{x}\chi(f_{\a})H(f_{\a}(x))
	+w_{\a}(x)\chi(f_{\a}(x))D_{x}H(f_{\a}).
\end{align*}We will focus on each of the terms appearing in this summation individually. Applying Lemma \ref{Lemma:Bounded distortion collection} to the first term we have 
$$\Big\|\sum_{\substack{\a\in \cA^{N} \\ \a \leadsto x}}D_{x}w_{\a}\chi(f_{\a}(x))H(f_{\a}(x))\Big\|\leq  C_{2}\sum_{\substack{\a\in \cA^{N} \\ \a \leadsto x}}w_{\a}(x)\chi(f_{\a}(x))H(f_{\a}(x))\leq C_{2}\cN_{b}^{J}(H)(x).$$ Here $C_{2}$ is as in the statement of Lemma \ref{Lemma:Bounded distortion collection}. For the second term, it follows by our assumptions on $\chi$ that 
\begin{align*}
	\|\sum_{\substack{\a\in \cA^{N} \\ \a \leadsto x}}w_{\a}(x)D_{x}\chi(f_{\a})H(f_{\a}(x))\|&\leq |b|\sum_{\substack{\a\in \cA^{N} \\ \a \leadsto x}}w_{\a}(x)H(f_{\a}(x))\\
	&\leq 2|b|\cN_{b}^{J}(H)(x).
\end{align*}
In the above we used that $H=\frac{\chi H}{\chi}\leq \frac{\chi H}{\eta}\leq 2 \chi H$ (recall $\chi$ takes values in $[\eta,1]$ and $\eta\geq 1/2$). 
For the third term we have 
\begin{align*}
\Big\|\sum_{\substack{\a\in \cA^{N} \\ \a \leadsto x}}w_{\a}(x)\chi(f_{\a}(x))D_{x}H(f_{\a})\Big\|&=	\Big\|\sum_{\substack{\a\in \cA^{N} \\ \a \leadsto x}}w_{\a}(x)\chi(f_{\a}(x))D_{f_{\a}(x)}HD_{x}f_{\a}\Big\|\\
&\leq \sum_{\substack{\a\in \cA^{N} \\ \a \leadsto x}}w_{\a}(x)\chi(f_{\a}(x))A|b|H(f_{\a}(x))\gamma^{N}\\
&\leq A|b|\gamma^{N}\cN_{b}^{J}(H)(x).
\end{align*}
Combining the bounds collected above we have $$\|D \cN_{b}H\|\leq \left(C_2+2|b|+A|b|\gamma^{N}\right)\cN_{b}^{J}(H).$$ Therefore \eqref{Eq:A large 1} implies that for $N$ sufficiently large
$$\|D \cN_{b}^{J}H\|\leq A|b|\cN_{b}^{J}(H).$$ Thus the cone is preserved and our proof is complete.
\end{proof}

We now set out to prove the second part of Lemma \ref{Lemma:Important lemma}.

\begin{proof}[Proof of Lemma \ref{Lemma:Important lemma}(2)]
Applying the Cauchy Schwartz inequality and the fact $\chi\leq 1,$ for all $x\in X_{A}$ we have
\begin{align*}
|\cN_{b}^{J}H(x)|^{2}&= |\sum_{\substack{\a\in \cA^{N} \\ \a \leadsto x}}w_{\a}(x)\chi(f_{\a}(x))H(f_{\a}(x))|^{2}\\
&\leq \sum_{\substack{\a\in \cA^{N} \\ \a \leadsto x}}w_{\a}(x)\chi(f_{\a}(x))^{2}\sum_{\substack{\a\in \cA^{N} \\ \a \leadsto x}}w_{\a}(x)H(f_{\a}(x))^{2}\\
&\leq \sum_{\substack{\a\in \cA^{N} \\ \a \leadsto x}}w_{\a}(x)\chi(f_{\a}(x))\sum_{\substack{\a\in \cA^{N} \\ \a \leadsto x}}w_{\a}(x)H(f_{\a}(x))^{2}.
\end{align*}
Using \eqref{Eq:Eigenfunction 1} and the fact $\chi\leq 1$, we see that the above implies the following bound for all $x\in X_{A}$:
$$|\cN_{b}^{J}H(x)|^{2}\leq (\cL_{0}H^{2})(x).$$
If $x\in B(x_{l}',\frac{\epsilon_{2}}{3|b|})\cap X_{A}$ then $\chi(f_{\a_{l}}(x))=\eta,$ by the definition of the Dolgopyat operators, thus for these $x$ the above implies that 
\begin{align*}
|\cN_{b}^{J}H(x)|^{2}&= \left(\sum_{\substack{\a\in \cA^{N} \\ \a \leadsto x}}w_{\a}(x)-w_{\a_{l}}(x)(1-\eta)\right)\left(\sum_{\substack{\a\in \cA^{N} \\ \a \leadsto x}}w_{\a}(x)H(f_{\a}(x))^{2}\right)\\
&\leq \kappa(\cL_{0}^{N}H^{2})(x)
\end{align*}
where $\kappa\in(0,1)$ is given by $$\kappa=1-(1-\eta)\min_{\a\in \cA^{N},x\in X_{A}}\{w_{\a}(x)\}.$$ Here we have again used our assumption \eqref{Eq:Eigenfunction 1}.

Combining the above bounds, we have 
\begin{align*}
\int |\cN_{b}^{J}H|^{2}\, d\mu&\leq \kappa\int_{\cup_{l\in S(b)}B(x_{l}',\frac{\epsilon_{2}}{3|b|})}\cL_{0}^{N}H^{2}\, d\mu+\int_{(\cup_{l\in S(b)}B(x_{l}',\frac{\epsilon_{2}}{3|b|}))^{c}}\cL_{0}^{N}H^{2}\, d\mu\\
&=\int \cL_{0}^{N}H^{2}\, d\mu- (1-\kappa)\int_{\cup_{l\in S(b)}B(x_{l}',\frac{\epsilon_{2}}{3|b|})}\cL_{0}^{N}H^{2}\, d\mu\\
&=\int H^{2}\, d\mu- (1-\kappa)\int_{\cup_{l\in S(b)}B(x_{l}',\frac{\epsilon_{2}}{3|b|})}\cL_{0}^{N}H^{2}\, d\mu
\end{align*}
In the final line we used \eqref{Eq:Gibbs measure invariance}. Therefore to complete our proof it suffices to show that there exists $\delta>0$ such that 
$$\int_{\cup_{l\in S(b)}B(x_{l}',\frac{\epsilon_{2}}{3|b|})}\cL_{0}^{N}H^{2}\, d\mu\geq \delta \int H^{2}\, d\mu.$$ We start by remarking that since we choose $\{y_{l}\}_{l\in S(b)}\subset X_{A}$ to be a maximal set for which $B\left(y_{l},\frac{\epsilon_{1}D}{|b|}\right)\cap B\left(y_{l'},\frac{\epsilon_{1}D}{|b|}\right)=\emptyset$ for distinct $l,l'\in S(b),$  we must have 
\begin{equation}
    \label{Eq:Maximal set covering}
    X_{A}\subset \bigcup_{l\in S(b)}B\left(y_{l},\frac{2\epsilon_{1}D}{|b|}\right).
\end{equation}
 Moreover, using the doubling property of Gibbs measures, there must exists $C>0$ such that 
$$\mu\left(B\left(y_{l},\frac{2\epsilon_{1}D}{|b|}\right)\right)\leq C\mu\left(B\left(x_{l}',\frac{\epsilon_{2}}{3|b|}\right)\right).$$ Combining this bound with \eqref{Eq:Gibbs measure invariance}, \eqref{Eq:Property of cone}, \eqref{Eq:Maximal set covering} and the fact $\cL_{0}^{N}H^{2}\in C_{A|b|}$ for $N$ sufficiently large, we have 
\begin{align*}
	\int H^{2}d\mu= \int \cL_{0}^{N}H^{2}\, d\mu
	&\leq \sum_{l\in S(b)}\int_{B\left(y_{l},\frac{2\epsilon_{1}D}{|b|}\right)}\cL_{0}^{N}H^{2}\, d\mu\\
	&\leq e^{4\epsilon_{1}DA}\sum_{l\in S(b)}\mu\left(B\left(y_{l},\frac{2\epsilon_{1}D}{|b|}\right)\right)\min_{x\in B\left(y_{l},\frac{2\epsilon_{1}D}{|b|}\right)}\cL_{0}^{N}H^{2}(x)\\
	&\leq e^{4\epsilon_{1}DA}C\sum_{l\in S(b)}\mu\left(B\left(x_{l}',\frac{\epsilon_{2}}{3|b|}\right)\right)\min_{x\in B\left(y_{l},\frac{2\epsilon_{1}D}{|b|}\right)}\cL_{0}^{N}H^{2}(x)\\
	&\leq e^{4\epsilon_{1}DA}C\int_{\cup_{l\in S(b)}B(x_{l}',\frac{\epsilon_{2}}{3|b|})}\cL_{0}^{N}H^{2}\, d\mu
\end{align*}In the last line we used that $B\left(x_{l}',\frac{\epsilon_{2}}{3|b|}\right)\subset B\left(y_{l},\frac{2\epsilon_{1}D}{|b|}\right)$. Taking $\delta=(e^{4\epsilon_{1}DA}C)^{-1}$ this completes our proof.
\end{proof}

We will now show that the second part of the third statement in Lemma \ref{Lemma:Important lemma} is satisfied and in doing so complete our proof of this lemma.

\begin{proof}[Proof of Lemma \ref{Lemma:Important lemma}(3)]
We have 

\begin{align*}
	\|D_{x}\cL_{ib}^{N}u\|=\|\sum_{\substack{\a\in \cA^{N} \\ \a \leadsto x}}D_{x}w_{\a}|\lambda_{\a}(x)|^{ib}u(f_{\a}(x))+w_{\a}(x)D_{x}|\lambda_{\a}|^{ib}u(f_{\a}(x))+w_{\a}(x)|\lambda_{\a}(x)|^{ib}D_{x}u(f_{\a})\|.
\end{align*}	We will consider each of the three terms above individually. In what follows $C_{2}$ and $C_{3}$ are as in the statement of Lemma \ref{Lemma:Bounded distortion collection}. Applying Lemma \ref{Lemma:Bounded distortion collection} to the first term we have
\begin{align*}
	\|\sum_{\substack{\a\in \cA^{N} \\ \a \leadsto x}}D_{x}w_{\a}|\lambda_{\a}(x)|^{ib}u(f_{\a}(x))\|\leq C_{2}\sum_{\substack{\a\in \cA^{N} \\ \a \leadsto x}}w_{\a}(x)H(f_{\a}(x)).
\end{align*}
Here we have used that $|u|\leq H$. Recalling that $H\leq 2\chi H,$ the above implies 
$$\|\sum_{a\in \cA^{N}}D_{x}w_{\a}|\lambda_{\a}(x)|^{ib}u(f_{\a}(x))\|\leq 2C_{2}\cN_{b}^{J}H.$$ For the second term, using Lemma \ref{Lemma:Bounded distortion collection} and $H\leq 2\chi H$ we have
$$\|\sum_{\substack{\a\in \cA^{N} \\ \a \leadsto x}}w_{\a}(x)D_{x}|\lambda_{\a}|^{ib}u(f_{\a}(x))\|\leq C_{3}|b|\sum_{\substack{\a\in \cA^{N} \\ \a \leadsto x}}w_{\a}(x)|u(f_{\a}(x))|\leq 2C_{3}|b|\cN_{b}^{J}H.$$
For the third term we have 
\begin{align*}
	\|\sum_{\substack{\a\in \cA^{N} \\ \a \leadsto x}}w_{\a}(x)|\lambda_{\a}(x)|^{ib}D_{x}u(f_{\a})\|&=\|\sum_{\substack{\a\in \cA^{N} \\ \a \leadsto x}}w_{\a}(x)|\lambda_{\a}(x)|^{ib}D_{f_{\a}(x)}uD_{x}f_{\a}\|\\
	&\leq \sum_{\substack{\a\in \cA^{N} \\ \a \leadsto x}}w_{\a}(x)\|D_{f_{\a}(x)}u\|\gamma^{n}\\
	&\leq \sum_{\substack{\a\in \cA^{N} \\ \a \leadsto x}}w_{\a}(x)A|b|H(f_{\a}(x))\gamma^{n}\\
	&\leq 2A|b|\gamma^{n}\cN_{b}^{J}H(x).
\end{align*}In the final line we used $H\leq 2\chi H$. Summarizing the above, we have 
$$	\|D_{x}\cL_{ib}^{N}u\|\leq \left(2C_{2}+2C_{3}|b|+2A|b|\gamma^{n}\right)\cN_{b}^{J}H.$$ Which by \eqref{Eq:A large 2} implies
$$	\|D_{x}\cL_{ib}^{N}u\|\leq A|b|\cN_{b}^{J}H(x).$$ This completes our proof.
\end{proof}


\section{Equivalence of the UNI conditions}\label{appendix:Weak uni}
In this section we will prove that our UNI condition is equivalent to the UNI condition stated by Li and Pan \cite{LiPan}. To help with our exposition we recall its statement below. Our proof of this proposition is based upon an argument of Avila, Gou\"{e}zel and Yoccoz \cite{AGY}.

\begin{prop}\label{prop:weaktostrongUNI}
Let $\Phi = \{f_a : a \in \cA\}$ be a conformal IFS (i.e. $D_x f_a = \lambda_a(x) O_a(x)$, $x \in \R^d$, for some $\lambda_a(x) \in (-1,1)$ and $O_a(x) \in \mathrm{SO}(d)$) satisfying the strong separation condition and $\Sigma_{A}$ be a subshift of finite type. Assume that there exists $\epsilon_{0}>0$ such that the following holds for infinitely many $n\in \N$: There exists $x\in X_{A}$ such that for any unit vector $e\in\mathbb{R}^{d}$ there exists $\a_{1},\a_{2}\in \cW_{A}\cap \cA^{n}$ such that: 
    \begin{itemize}
    \item We have $$\left|\partial_{e}\left(\log |\lambda_{\a_1}(x)|-\log |\lambda_{\a_2}(x)|\right)\right|\geq \epsilon_{0}.$$
        \item $\a_1\leadsto x$ and $\a_2\leadsto x$.
    \end{itemize} 
Then there exists $r>0$ and $\epsilon_{0}>0$ such that for any large $n\in \N$, any $x\in X_A$ and unit vector $e\in\mathbb{R}^{d}$, there exist $\a_1, \a_2\in \cW_{A}\cap \cA^n$ such that:
    \begin{itemize}
    \item For all $y\in B(x,r)$ we have $$\left|\partial_{e}\left(\log |\lambda_{\a_1}(y)|-\log |\lambda_{\a_2}(y)|\right)\right|\geq \epsilon_{0}.$$
        \item For all $y\in B(x,r)$ we have $\a_1\leadsto y$ and $\a_2\leadsto y$.
    \end{itemize} 
\end{prop}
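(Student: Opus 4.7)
The plan is to follow the concatenation strategy of Avila, Gou\"ezel and Yoccoz~\cite{AGY}, leveraging the strong separation condition and topological mixing of $\Sigma_A$ to transport the weak UNI at a single point and single scale to every point at every sufficiently large scale. First, fix one scale $n_0$ from the infinite set of values given by the weak UNI hypothesis, large enough that $\gamma^{n_0} \leq \epsilon_0/(16 C_3)$, where $\gamma\in(0,1)$ is the uniform contraction ratio of $\Phi$ and $C_3$ is the constant from Lemma~\ref{Lemma:Bounded distortion collection}. At the associated base point $x_0 \in X_A$, the hypothesis supplies for each $e\in S^{d-1}$ a pair $\a_1(e),\a_2(e)\in\cA^{n_0}\cap\cW_A$ with $\a_i(e)\leadsto x_0$ and derivative gap $\geq\epsilon_0$ at $x_0$. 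Since $\Phi$ is $C^2$, a standard bounded distortion argument upgrades Lemma~\ref{Lemma:Bounded distortion collection} to a uniform bound on the Hessian $\|D_y^2\log|\lambda_\a|\|$, so $(y,e)\mapsto\partial_e\log|\lambda_\a(y)|$ is uniformly Lipschitz in both variables. Consequently the gap remains $\geq\epsilon_0/2$ on $B(x_0,r_1)\times V(e)$ for a small ball and cone of uniform size; covering $S^{d-1}$ by finitely many such cones $V_1,\dots,V_J$ yields finitely many pairs $\{(\a_1^{(j)},\a_2^{(j)})\}_{j=1}^{J}$, all of length $n_0$.

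The key identity governing the construction is a chain rule computation: writing a candidate word as $\bd\,\a_i^{(j)}\,\tilde\c$ and using conformality $Df_{\tilde\c}(y)=\lambda_{\tilde\c}(y)O_{\tilde\c}(y)$, the common term $\log|\lambda_{\tilde\c}(y)|$ cancels in the difference $G(y):=\log|\lambda_{\bd\,\a_1^{(j)}\,\tilde\c}(y)|-\log|\lambda_{\bd\,\a_2^{(j)}\,\tilde\c}(y)|$, producing
\[
\partial_{e'}G(y)\;=\;\lambda_{\tilde\c}(y)\,\partial_{O_{\tilde\c}(y)e'}\!\bigl(\log|\lambda_{\a_1^{(j)}}|-\log|\lambda_{\a_2^{(j)}}|\bigr)(f_{\tilde\c}(y))\;+\;R,
\]
where the remainder $R$ collecting the $\bd$-terms is bounded in norm by $2C_3\gamma^{n_0+|\tilde\c|}$ thanks to Lemma~\ref{Lemma:Bounded distortion collection} and the uniform contraction across $\a_i^{(j)}\tilde\c$. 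With our choice of $n_0$ the remainder is dominated by half the main term.

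To build $\bd,\tilde\c$ for a given $(y,e')$ and target length $n$, first pick $m$ large enough that the $m$-th level cylinder containing $x_0$ lies in $B(x_0,r_1/2)$; then use topological mixing (with exponent $p$) to extend it to an admissible $\tilde\c$ of length $m+p$ satisfying $\tilde\c\leadsto y$. Select $j$ so that $O_{\tilde\c}(y)e'\in V_j$. Finally, by mixing once more, for every $n\geq n_0+m+2p$ choose $\bd$ of length $n-n_0-(m+p)$ whose last letter is compatible with the initial letter of $\a_i^{(j)}$; the full word $\bd\,\a_i^{(j)}\,\tilde\c$ is then admissible of length exactly $n$, and the identity above gives a derivative gap at $y$ of at least $\gamma^{m+p}\epsilon_0/8$, a uniform positive constant. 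Under the strong separation condition, for $r$ smaller than $\min_{a\neq b}\dist(f_a(X_\Phi),f_b(X_\Phi))$, every $y'\in B(y,r)\cap X_A$ shares the initial letter of $y$, so the same $\tilde\c$ remains admissible. Since $|\tilde\c|=m+p$ is bounded, the Lipschitz estimates propagate the gap to all $y'\in B(y,r)$ with a uniform constant $\epsilon_0'\asymp\gamma^{m+p}\epsilon_0$, once the cover in the first step is slightly refined to absorb the $O_{\tilde\c}$-perturbation. This yields the strong UNI with threshold $N_0:=n_0+m+2p$ and uniform constants $r,\epsilon_0'$.

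The main obstacle is the Jacobian factor $\lambda_{\tilde\c}(y)\asymp\gamma^{|\tilde\c|}$ introduced by the chain rule when teleporting the gap from $x_0$ to a generic $y$: a naive implementation would collapse the gap exponentially in $n$. The resolution is that $|\tilde\c|$ depends only on the fixed parameters $r_1$ and $p$ and is therefore uniformly bounded, while the additional length needed to reach an arbitrary target $n$ is absorbed into the prefix $\bd$, whose contribution to $\partial_{e'}G$ is negligible thanks to the extra $\gamma^{n_0}$ factor coming from the contraction across $\a_i^{(j)}\tilde\c$.
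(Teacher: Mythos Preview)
Your proof is correct and follows essentially the same AGY concatenation strategy as the paper: a bounded-length connector word $\tilde\c$ (the paper's $\c$) teleports an arbitrary point into a neighbourhood of the base point $x_0$, the chain rule transports the gap at the cost of a fixed factor $\asymp\gamma^{|\tilde\c|}$, and an arbitrary prefix $\bd$ (the paper's $\b_i$) adjusts the length without affecting the gap thanks to the tail estimate. The only cosmetic difference is that you pre-select finitely many word pairs by covering $S^{d-1}$ with cones and then choose $j$ according to the rotated direction $O_{\tilde\c}(y)e'$, whereas the paper simply applies the hypothesis directly at $x_0$ for the specific direction $D_{y}f_{\c}\cdot e/\|D_{y}f_{\c}\cdot e\|$ that arises---this avoids the compactness step but is otherwise identical.
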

\begin{proof}
Recall that $U = \bigcup_{a \in \cA} U_a$ is a choice of open set satisfying $X_{\Phi}\subset U$, where the $U_a$ are disjoint open sets with $f_a(U) \subset U_a$ for all $a \in \cA$. We begin our proof by observing the following equality: 
$$\log |\lambda_{\a}(x)|=\sum_{i=1}^{n}\log |\lambda_{a_{i}}(f_{a_{i+1}\ldots a_{n}}(x))|$$ for any $x\in U$ and $\a\in \cW_{A}\cap \cA^{n}$ satisfying $\a\leadsto x.$ Consequently, for any unit vector $e\in \mathbb{R}^{d},$ if we apply the chain rule we have 
\begin{equation}
\label{Eq:directions and Jacobians}
\partial_{e} \log |\lambda_{\a}(x)|=\left(\sum_{i=1}^{n}\frac{D_{f_{a_{i+1}\ldots a_{n}}(x)}|\lambda_{a_{i}}|D_{x}f_{a_{i+1}\ldots a_{n}}}{|\lambda_{a_{i}}(f_{a_{i+1}\ldots a_{n}}(x))|}\right)\cdot e.
\end{equation}Now using the fact that our IFS is uniformly contracting and therefore $\|D_{x}f_{a_{i+1}\ldots a_{n}}\|\ll \gamma^{n-i}$ for some $\gamma\in (0,1)$, we can assert that for any $x\in U$ and $\a\in \cW_{A}\cap \cA^{n}$ satisfying $\a\leadsto x$ we have the following for any $1\leq m\leq n$:
$$\left|\partial_{e} \log |\lambda_{\a}(x)|-\left(\sum_{i=m}^{n}\frac{D_{f_{a_{i+1}\ldots a_{n}}(x)}|\lambda_{a_{i}}|D_{x}f_{a_{i+1}\ldots a_{n}}}{|\lambda_{a_{i}}(f_{a_{i+1}\ldots a_{n}}(x))|}\right)\cdot e\right|\ll \gamma^{n-m}.$$ This is equivalent to \begin{equation}
    \label{Eq:Lead and tail}
    \left|\partial_{e} \log |\lambda_{\a}(x)|-\partial_{e} \log |\lambda_{a_{m+1}\ldots a_{n}}(x)\right|\ll \gamma^{n-m}
\end{equation} for $1\leq m\leq n$. Equation \eqref{Eq:Lead and tail} has two important consequences. The first is that for $n$ sufficiently large depending on $\epsilon_{0}$, if \begin{equation}
\label{Eq:nonlinear gap} \left|\partial_{e}\left(\log |\lambda_{\a_1}(x)|-\log |\lambda_{\a_2}(x)|\right)\right|\geq \epsilon_{0}
\end{equation}for some $x\in U$ and $\a_{1},\a_{2}\in \cW_{A}\cap \cA^{n}$ satisfying $\a_{1}\leadsto x$ and $\a_{2}\leadsto x$, then 
$$\left|\partial_{e}\left(\log |\lambda_{\b_{1}\a_1}(x)|-\log |\lambda_{\b_{2}\a_2}(x)|\right)\right|\geq \frac{\epsilon_{0}}{2}$$ for any $\b_{1},\b_{2}\in \cW_{A}$ satisfying $\b_{1}\leadsto \a_{1}$ and $\b_{2}\leadsto \a_{2}.$ Consequently, at the cost of swapping $\epsilon_{0}$ with a potentially smaller constant, we can assume that the hypothesis of our lemma holds for all $n$ sufficiently large. The second consequence of \eqref{Eq:Lead and tail} is that there exists $r_{1}>0$ depending only on our IFS and $\epsilon_{0},$ such that if \eqref{Eq:nonlinear gap} holds for some $x\in X_{A},$ $\a_{1},\a_{2}\in \cW_{A}\cap \cA^{n}$ satisfying $\a_{1}\leadsto x$ and $\a_{2}\leadsto x$ and unit vector $e\in \R^{d},$ then for all $y\in B(x,r_{1})$ we have 
\begin{equation}
    \label{Eq:eps/2 bound}
    \left|\partial_{e}\left(\log |\lambda_{\a_1}(y)|-\log |\lambda_{\a_2}(y)|\right)\right|\geq \frac{\epsilon_{0}}{2},
\end{equation}
  $\a_{1}\leadsto y$ and $\a_{2}\leadsto y$. 

Now let $n\in \N$ be a large number and $x\in X_{A}$ be such that for all unit vectors $e\in \R^{d}$ there exists $\a_{1},\a_{2}\in \cW_{A}\cap \cA^{n}$ such that \eqref{Eq:nonlinear gap} is satisfied, $\a_{1}\leadsto x$, and $\a_{2}\leadsto x.$ Using that $\Sigma_{A}$ is topologically mixing and $r_{1}$ depends only on our IFS and $\epsilon_{0}$, we can assert that there exists $m\in \N$ depending only on our IFS, $\Sigma_{A}$ and $\epsilon_{0}$ such that for any $x'\in X_{A}$ there exists $\c\in \cW_{A}\cap \cA^{m}$ so that $\c\leadsto x'$ and \begin{equation}
\label{Eq:x' into x ball}
    f_{\c}(x')\in B(x,r_{1}). 
\end{equation} For such an $x'$ and $\c$ we have the following for any unit vector $e\in \R^{d}$ and $\a_{1},\a_{2}\in \cW_{A}\cap \cA^{n}$ satisfying $\a_{1}\leadsto \c$ and $\a_{2}\leadsto \c$:
\begin{align*}
&\left|\partial_{e}\left(\log |\lambda_{\a_1\c}(x')|-\log |\lambda_{\a_2\c}(x')|\right)\right|\\
=&\left|\partial_{e}\left(\log |\lambda_{\a_1}(f_{\c}(x'))|-\log |\lambda_{\a_2}(f_{\c}(x'))|\right)\right|\\
=&\left(\sum_{i=1}^{n}\frac{D_{f_{a_{i+1,1}\ldots a_{n,1}}(x)}|\lambda_{a_{i,1}}|D_{x}f_{a_{i+1,1}\ldots a_{n,1}}}{|\lambda_{a_{i,1}}(f_{a_{i+1,1}\ldots a_{n,1}}(x))|}-\sum_{i=1}^{n}\frac{D_{f_{a_{i+1,2}\ldots a_{n,2}}(x)}|\lambda_{a_{i,2}}|D_{x}f_{a_{i+1,2}\ldots a_{n,2}}}{|\lambda_{a_{i,2}}(f_{a_{i+1,2}\ldots a_{n,2}}(x))|}\right)D_{x}f_{\c}\cdot e.
\end{align*}In the final line we used the chain rule and \eqref{Eq:directions and Jacobians}. If we let $v_{x',e}=D_{x'}f_{\c}\cdot e$ then it follows from the above and \eqref{Eq:directions and Jacobians} that 
\begin{equation}
\label{Eq:v direction} \left|\partial_{e}\left(\log |\lambda_{\a_1\c}(x')|-\log |\lambda_{\a_2\c'}(x')|\right)\right|=\left|\partial_{v_{x',e}}\left(\log |\lambda_{\a_{1}}(f_{\c}(x'))|-\log |\lambda_{\a_2}(f_{\c}(x'))|\right)\right|.
\end{equation}The vector $v_{x',e}$ is not necessarily of unit length and instead satisfies the lower bound $\|v_{x',e}\|\geq \kappa^{m}$ for some $\kappa\in (0,1)$ depending on the IFS.

Using our assumptions on $x,$ we can assert that there exists $\a_{1},\a_{2}\in \cW_{A}$ such that $\a_{1}\leadsto x,$ $\a_{2}\leadsto x$ and
$$\left|\partial_{\frac{v_{x',e}}{\|v_{x',e}\|}}\left(\log |\lambda_{\a_{1}}(x)|-\log |\lambda_{\a_2}(x)|\right)\right|\geq \epsilon_{0}.$$ Now using the above together with our lower bound for the norm of $v_{x',e},$ \eqref{Eq:eps/2 bound}, \eqref{Eq:x' into x ball}, and \eqref{Eq:v direction}, yields that for this choice of $\a_{1},\a_{2}$ we have $\a_1\c\leadsto x',$ $\a_{2}\c\leadsto x'$ and 
$$\left|\partial_{e}\left(\log |\lambda_{\a_1\c}(x')|-\log |\lambda_{\a_2\c'}(x')|\right)\right|\geq \frac{\epsilon_{0}\kappa^{m}}{2}.$$ Summarizing the above, we have shown that for any $n\in \N$ sufficiently large, for any $x\in X_{A}$ and unit vector $e\in \R^{d}$ there exists $\a_{1},\a_{2}\in \cW_{A}\cap \cA^{n}$ with $\a_{1}\leadsto x,$ and $\a_{2}\leadsto x$ such that $$\left|\partial_{e}\left(\log |\lambda_{\a_1}(x)|-\log |\lambda_{\a_2}(x)|\right)\right|\geq \frac{\epsilon_{0}\kappa^{m}}{2}.$$ 
Appealing again to \eqref{Eq:Lead and tail} as in the derivation of \eqref{Eq:eps/2 bound}, it can be shown that there exists $r>0$ depending only on our IFS and $\epsilon_{0}$, such that for any $n\in \mathbb{N}$ sufficiently large, for any $x\in X_{A}$ and unit vector $e\in \mathbb{R}^{d},$ there exits $\a_1,\a_{2}\in \cW_{A}\cap \cA^{n}$ such that for all $y\in B(x,r)$ and $\a_{1}\leadsto y,$ and $\a_{2}\leadsto y$, we have 
$$\left|\partial_{e}\left(\log |\lambda_{\a_1}(y)|-\log |\lambda_{\a_2}(y)|\right)\right|\geq \frac{\epsilon_{0}\kappa^{m}}{4}.$$
Recalling that $m$ and $\kappa$ only depend on our IFS, $\Sigma_{A}$ and $\epsilon_{0}$, we see that we can take $\frac{\epsilon_{0}\kappa^{m}}{4}$ as our choice of $\epsilon_{0}$. Taking $r$ as chosen above completes our proof. 
\end{proof}

\bibliography{bibliography}
\bibliographystyle{plain}

\end{document}